\definecolor{colA}{HTML}{2277AA}
\definecolor{colB}{HTML}{008844}
\definecolor{colC}{HTML}{AA0000}
\colorlet{linkcolor}{colA}
\colorlet{citecolor}{colB}
\colorlet{urlcolor}{colC}
\colorlet{ctrlcolor}{colC}
\renewcommand{\fnum@figure}{Fig. \thefigure}
\renewcommand*{\geq}{\geqslant}
\renewcommand*{\leq}{\leqslant}
\preto\subequations{\ifhmode\unskip\fi}
\newcommand{\textspace}{\quad}
\tikzset{>=latex}
\pgfmathsetmacro\gratio{(1+sqrt(5))/2}
\newcommand{\drawArithmeticBrownianMotionPath}[5]{
    \draw [#1] (0, 0) node (ArithmeticBrownianMotionStart_#2) {}
    \foreach \x in {1, ..., #5}
    { -- ++( 1 / #5, { #3 / #5 + #4 * invgauss(rnd, rnd) * sqrt(1 / #5) } ) }
    node (ArithmeticBrownianMotionEnd_#2) {};
}
\numberwithin{dummy}{section}
\newcommand{\makeNewTheorem}[4]{
    \newaliascnt{#3}{dummy}
    %\newbreakabletheorem[%
        %size=\textwidth,%
        %small box style={fill=#4!20, draw=#4, thick},%
        %big box style={fill=#4!10, draw=#4, thick},%
        %headfont=\bfseries,
        %broken edges={decoration=zigzag}%
    %]{#1}{#2}{#3}
    \newtheorem{#1}[dummy]{#2}
}
\newglossaryentry{wiener}
{
    name={\ensuremath{B}},
    description={Brownian motion},
    sort=B
}
\newglossaryentry{interp}
{
    name={\ensuremath{\operatorname{interp}}},
    description={monotone linear interpolation},
    sort=interp
}
\newglossaryentry{diag}
{
    name={\ensuremath{\operatorname{diag}}},
    description={map from vector to diagonal matrix},
    sort=diag
}
\newglossaryentry{meshing_parameter}
{
    name={\ensuremath{h}},
    description={meshing parameter},
    sort=h,
}
\newglossaryentry{wcdd}
{
    name={w.c.d.d.},
    description={weakly chained diagonally dominant}
}
\newglossaryentry{MDP}
{
    name={MDP},
    description={Markov decision process},
    plural={MDPs}
}
\newglossaryentry{proj_i}
{
    name={\ensuremath{\operatorname{proj}_i}},
    description={$i$-th projection map},
    sort=proj
}
\newglossaryentry{sdd}
{
    name={s.d.d.},
    description={strictly diagonally dominant}
}
\newglossaryentry{wdd}
{
    name={w.d.d.},
    description={weakly diagonally dominant}
}
\newglossaryentry{const}
{
    name={\ensuremath{\operatorname{const.}}},
    description={positive constant that may vary from line to line},
    sort=const.
}
\newglossaryentry{FEX}
{
    name={FEX},
    description={foreign exchange}
}
\newglossaryentry{GMWB}
{
    name={GMWB},
    description={guaranteed minimum withdrawal benefit},
    plural={GMWBs}
}
\newglossaryentry{GLWB}
{
    name={GLWB},
    description={guaranteed lifelong withdrawal benefit},
    plural={GLWBs}
}
\newglossaryentry{bdd}{
    name={\ensuremath{B(\overline{\Omega})}},
    description={set of bounded real-valued maps from $\overline{\Omega}$},
    sort={Bdd}
}
\newglossaryentry{trace}{
    name={\ensuremath{\operatorname{trace}}},
    description={trace of a matrix},
    sort={trace}
}
\newglossaryentry{extreme_points}{
    name={\ensuremath{\operatorname{vert}}},
    description={vertices of a polytope},
    sort={vert}
}
\newglossaryentry{domain}{
    name={\ensuremath{\mathfrak{D}}},
    description={domain of stochastic process},
    sort={D}
}
\newglossaryentry{hausdorff}{
    name={\ensuremath{d_{H}}},
    description={Hausdorff metric},
    sort={dH}
}
\newcommand{\introductionHJBQVIInterior}{
    \min \left\{
        - V_t - \sup_{\textcolor{ctrlcolor}{w} \in W} \left\{
            \frac{1}{2} b(\cdot, \textcolor{ctrlcolor}{w})^2 V_{xx}
            {+} a(\cdot, \textcolor{ctrlcolor}{w}) V_x
            {+} f(\cdot, \textcolor{ctrlcolor}{w})
        \right\},
        V - \mathcal{M}V
    \right\} & = 0 \text{ on } [0,T) \times \gls*{domain}
}
\newcommand{\introductionHJBQVIBoundary}{
    \min \left \{
        V(T, \cdot) - g,
        V(T, \cdot) - \mathcal{M} V(T, \cdot)
    \right \}
    & = 0 \text{ on } \gls*{domain}
}
\newcommand{\introductionIntervention}{
    \mathcal{M}V(t,x) = \sup_{\textcolor{ctrlcolor}{z}\in Z(t,x)} \left\{
        V(
            t,
            %x +
            \Gamma(t, x, \textcolor{ctrlcolor}{z})
        )
        + K(t, x, \textcolor{ctrlcolor}{z})
    \right\}
}
\newcommand{\resultsFEXIntervention}{
    \mathcal{M} V(t,x) =
    \sup_{ \textcolor{ctrlcolor}{z} \in \mathbb{R} }
    \left \{
        V(t, x + \textcolor{ctrlcolor}{z}) - e^{-\beta t} \left(
            \kappa \left| \textcolor{ctrlcolor}{z} \right| + c
        \right)
    \right \}.
}
\newcommand{\schemesDiscretizedIntervention}{
    (\mathcal{M}_n U)_i =
    \sup_{\textcolor{ctrlcolor}{z_i} \in Z^{\gls*{meshing_parameter}}(\tau^n, x_i)}
    \left\{
        \gls*{interp}(
            U,
            %x_i +
            \Gamma(\tau^n, x_i, \textcolor{ctrlcolor}{z_i})
        ) + K(\tau^n, x_i, \textcolor{ctrlcolor}{z_i})
    \right\}
}
\newcommand{\partitionFootnote}{\footnote{A partition of an interval $[a,b]$ is understood to be a set of points $\{ x_0, \ldots, x_M \}$ satisfying $a = x_0 < x_1 < \cdots < x_M = b$.}}
\newcommand{\variableAnnuitiesDescription}{variable annuity is a contract between an individual and an insurance company that provides a guaranteed stream of cash flows.
The contract is bootstrapped by an up-front premium from the individual to the insurance company, which is immediately invested in a diversified portfolio (e.g., an exchange-traded or mutual fund).
While the contract guarantees a minimum stream of cash flows, these flows may exceed the guaranteed amount if the portfolio performs well.}
\newcommand{\artificial}{artificial}
\newcommand{\Artificial}{Artificial}
\newcommand{\wideEll}{{\scalebox{1.2}[1]{$\ell$}}}
\let\cleardoublepage\clearemptydoublepage
\begin{document}

%----------------------------------------------------------------------
% FRONT MATERIAL
%----------------------------------------------------------------------
% T I T L E   P A G E
% -------------------
% Last updated May 24, 2011, by Stephen Carr, IST-Client Services
% The title page is counted as page `i' but we need to suppress the
% page number.  We also don't want any headers or footers.
\pagestyle{empty}
\pagenumbering{roman}

% The contents of the title page are specified in the "titlepage"
% environment.
\begin{titlepage}
        \begin{center}
        \vspace*{1.0cm}

        \Huge
        {Impulse Control in Finance: \\ Numerical Methods and Viscosity Solutions}

        \vspace*{1.0cm}

        \normalsize
        by \\

        \vspace*{1.0cm}

        \Large
        Parsiad Azimzadeh \\

        \vspace*{3.0cm}

        \normalsize
        A thesis \\
        presented to the University of Waterloo \\
        in fulfillment of the \\
        thesis requirement for the degree of \\
        Doctor of Philosophy \\
        in \\
        Computer Science \\

        \vspace*{2.0cm}

        Waterloo, Ontario, Canada, 2017 \\

        \vspace*{1.0cm}

        \copyright\ Parsiad Azimzadeh 2017 \\
        \end{center}
\end{titlepage}

% The rest of the front pages should contain no headers and be numbered using Roman numerals starting with `ii'
\pagestyle{plain}
\setcounter{page}{2}

\clearpage % Ends the current page and causes all figures and tables that have so far appeared in the input to be printed.
% In a two-sided printing style, it also makes the next page a right-hand (odd-numbered) page, producing a blank page if necessary.

% E X A M I N I N G   C O M M I T T E E   M E M B E R S H I P
% -----------------------------------------------------------

\begin{center}\textbf{Examining Committee Membership}\end{center}

  \noindent
The following served on the Examining Committee for this thesis. The decision of the Examining Committee is by majority vote.

  \bigskip

  \noindent External Examiner \hfill
  \begin{tabular}[t]{r@{}}
    %TBA \\
    %TBA
    Catherine Rainer \\
    %Ma\^{i}tre de conf\'{e}rences
    Universit\'{e} de Brest
  \end{tabular}

  \bigskip

  \noindent Supervisor \hfill
  \begin{tabular}[t]{r@{}}
    George Labahn \\
    Professor
  \end{tabular}

  \bigskip

  \noindent Internal Member \hfill
  \begin{tabular}[t]{r@{}}
    Yuying Li \\
    Professor
  \end{tabular}

  \bigskip

  \noindent Internal Member \hfill
  \begin{tabular}[t]{r@{}}
    Justin Wan \\
    Associate Professor
  \end{tabular}

  \bigskip

  \noindent Internal-External Member \hfill
  \begin{tabular}[t]{r@{}}
    David Saunders \\
    Associate Professor
  \end{tabular}
\clearpage
%\newpage

% D E C L A R A T I O N   P A G E
% -------------------------------
  % The following is the sample Declaration Page as provided by the GSO
  % December 13th, 2006.  It is designed for an electronic thesis.
  \noindent
I hereby declare that I am the sole author of this thesis. This is a true copy of the thesis, including any required final revisions, as accepted by my examiners.

  \bigskip

  \noindent
I understand that my thesis may be made electronically available to the public.

\clearpage
%\newpage

% A B S T R A C T
% ---------------

\begin{center}\textbf{Abstract}\end{center}

The goal of this thesis is to provide efficient and provably convergent numerical methods for solving partial differential equations (PDEs) coming from \emph{impulse control problems} motivated by finance.
Impulses, which are controlled jumps in a stochastic process, are used to model realistic features in financial problems which cannot be captured by ordinary stochastic controls.
\iftoggle{thesis}{%
In this thesis, we consider two distinct cases of impulse control: one in which impulses can occur at any time and one in which they occur only at ``fixed'' (i.e., nonrandom and noncontrollable) times.

The first case is used to model features in finance such as fixed transaction costs, liquidity risk, execution delay, etc.
In this case, the corresponding PDEs are Hamilton-Jacobi-Bellman quasi-variational inequalities (HJBQVIs).
}{%

The dynamic programming equations associated with impulse control problems are Hamilton-Jacobi-Bellman quasi-variational inequalities (HJBQVIs)
}
Other than in certain special cases, the numerical schemes that come from the discretization of HJBQVIs take the form of complicated nonlinear matrix equations also known as \emph{Bellman problems}.
We prove that a policy iteration algorithm can be used to compute their solutions.
In order to do so, we employ the theory of weakly chained diagonally dominant (\gls*{wcdd}) matrices.
As a byproduct of our analysis, we obtain some new results regarding a particular family of Markov decision processes which can be thought of as impulse control problems on a discrete state space and the relationship between \gls*{wcdd} matrices and M-matrices.
Since HJBQVIs are nonlocal PDEs, we are unable to directly use the seminal result of Barles and Souganidis (concerning the convergence of \emph{monotone}, \emph{stable}, and \emph{consistent} numerical schemes to the \emph{viscosity solution}) to prove the convergence of our schemes.
We address this issue by extending the work of Barles and Souganidis to nonlocal PDEs in a manner general enough to apply to HJBQVIs.
We apply our schemes to compute the solutions of various classical problems from finance concerning optimal control of the exchange rate, optimal consumption with fixed and proportional transaction costs, and guaranteed minimum withdrawal benefits in variable annuities.

\iftoggle{thesis}{%
The second case of impulse control, involving impulses occurring at fixed times, is frequently used in pricing and hedging insurance contracts.
In this case, the impulses correspond to regular anniversaries (e.g., monthly, yearly, etc.) at which the holder of the contract can perform certain actions (e.g., lapse the contract).
The corresponding pricing equations are a sequence of linear PDEs coupled by nonlinear constraints corresponding to the impulses.
For these problems, our focus is on speeding up the computation associated with the nonlinear constraints by means of a control reduction.
We apply our results to price guaranteed lifelong withdrawal benefits in variable annuities.
}{}

\clearpage
%\newpage

% A C K N O W L E D G E M E N T S
% -------------------------------

\begin{center}\textbf{Acknowledgements}\end{center}

I would like to thank:

\begin{itemize}
    \itemsep1.3em
    \item[]
        My supervisor, George Labahn, for his support and belief in me.
    \item[]
        Peter Forsyth for introducing me to computational finance.
    \item[]
        Various faculty members at the University of Waterloo and abroad, with special mention to Edward Vrscay for being a positive influence during my doctorate.
    \item[]
        The University of Waterloo Scientific Computing (a.k.a. ``SciCom'') affiliated faculty and lab members (past and present) for the fond memories.
        Special mention goes to Edward Cheung for keeping me sane during the preparation of this document!
    \item[]
        The internal committee members -- Yuying Li, David Saunders, and Justin Wan -- for their time and role in the defense.
        Special mention goes to the external examiner, Catherine Rainer, for her insightful comments and for her wonderful body of work on stochastic differential games.
    \item[]
        Erhan Bayraktar for providing me with fruitful ideas and exciting future employment at the University of Michigan.
    \item[]
        All of my family and friends for their support, with special mention to my mother and father (for the obvious reasons!)
\end{itemize}
\clearpage
%\newpage

% D E D I C A T I O N
% -------------------

\begin{center}\textbf{Dedication}\end{center}

\begin{center}To my parents.\end{center}
\clearpage
%\newpage

% T A B L E   O F   C O N T E N T S
% ---------------------------------
\newpage
\phantomsection
\renewcommand\contentsname{Table of Contents}
\begingroup
\let\cleardoublepage\clearpage
\tableofcontents
\endgroup
\clearpage
\phantomsection
%\newpage

% L I S T   O F   T A B L E S
% ---------------------------
%\addcontentsline{toc}{chapter}{List of Tables}
\begingroup
\let\cleardoublepage\clearpage
\listoftables
\endgroup
\clearpage
\phantomsection		% allows hyperref to link to the correct page
%\newpage

% L I S T   O F   F I G U R E S
% -----------------------------
%\addcontentsline{toc}{chapter}{List of Figures}
\begingroup
\let\cleardoublepage\clearpage
\listoffigures
\endgroup
\clearpage
\phantomsection		% allows hyperref to link to the correct page
%\newpage

% L I S T   O F   S Y M B O L S
% -----------------------------
% To include a Nomenclature section
%\printglossaries
%\cleardoublepage
%\phantomsection % allows hyperref to link to the correct page
% \newpage

%----------------------------------------------------------------------
% MAIN BODY
%----------------------------------------------------------------------

\pagenumbering{arabic}
\pagestyle{headings}
%\setboolean{@twoside}{true}

\setcounter{chapter}{0}
\chapter{Introduction}
\label{chap:introduction}

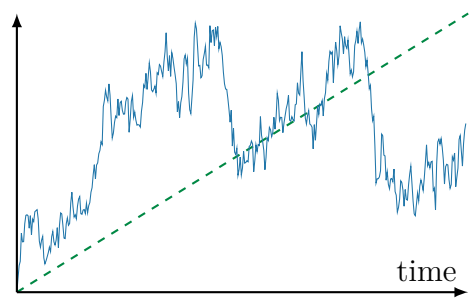
\begin{wrapfigure}{r}{0.4\textwidth}
    \centering
    \begin{tikzpicture}[scale=6]
        \pgfmathsetseed{10}
        \draw [colB,thick,dashed] (0,0) -- (1,1/\gratio);
        \drawArithmeticBrownianMotionPath{colA}{A}{1/\gratio}{1/\gratio}{400}
        \draw [thick, ->] (0,0) -- (1,0) node [above left] {time};
        \draw [thick, ->] (0,0) -- (0,1/\gratio);
    \end{tikzpicture}
    \caption{Sample path of process driven by Brownian motion plotted alongside its mean}
\end{wrapfigure}

Financial problems and stochastic processes have a long, symbiotic history.
The first known use of Brownian motion to evaluate stock options appears in Louis Bachelier's doctoral thesis, ``Th\'{e}orie de la sp\'{e}culation'', defended in 1900 \cite{MR1397712}.
In this document, Bachelier reasoned that increments of stock prices should be independent and normally distributed.
Though a rigorous construction of Brownian motion only appeared in 1923 due to Wiener, Bachelier was able to (at least intuitively) connect the price of an option to a heat equation, which he solved using a Green's function.

In 1931, Kolmogorov proved that a \emph{continuous} random process not depending on its past information (i.e., a Markov process) is wholly characterized by two parameters: the drift, describing the speed of its deterministic evolution, and the diffusion, describing the speed of its random evolution \cite{MR1153022}.
In 1944, It\^{o} introduced a particular class of such processes described by the stochastic differential equation (SDE) \cite{MR0014633}
\begin{equation}
    dX_t = \underbrace{a(X_t)}_{\text{drift}} dt + \underbrace{b(X_t)}_{\text{diffusion}} d\gls*{wiener}_t
\label{eqn:introduction_sde}
\end{equation}
where $\gls*{wiener}$ is a Brownian motion.
Given a twice differentiable function $f$, It\^{o} characterized the rate of change of $f(X_t)$ by means of what is now known as It\^{o}'s formula \cite{MR0044063}:
\[
    df(X_t) = f^\prime(X_t) dX_t + \frac{1}{2} f^{\prime\prime}(X_t) dX_t^2.
    %\text{ for any twice-differentiable } f.
\]

In 1965, Samuelson, having rediscovered the work of Bachelier, published two revolutionary works \cite{samuelson1965rational,samuelson1965proof}.
The first gives economic basis for Bachelier's belief that prices fluctuate randomly and suggests that discounted future prices should follow a martingale, laying out the theory of rational option pricing.
The second suggests that a good model for options pricing is geometric Brownian motion (i.e., $a(x) = r x$ and $b(x) = \sigma x$ in \cref{eqn:introduction_sde}).
Therein, he also derived prices for European and American options.
However, these ideas only gained widespread popularity when Black and Scholes gave derivations using It\^{o}'s formula along with replicating portfolio arguments to show that the value of a European option is obtained by solving the partial differential equation (PDE) \cite{MR3363443}
\[
    V_t + \frac{1}{2} \sigma^2 x^2 V_{xx} + r x V_x - r V = 0,
\]
now known as the Black-Scholes PDE. The above is a linear (degenerate) parabolic PDE, and as such, is well-understood.

As the field of finance matured, there came a need to solve problems more complicated than European options pricing. %(e.g., pricing under uncertain volatility, portfolio selection, etc.).
As an example, the price of a European option when the volatility is uncertain but is known to lie within the interval $[\sigma_{\min}, \sigma_{\max}]$ is obtained by solving the nonlinear PDE \cite{avellaneda1995pricing,lyons1995uncertain}
\begin{equation}
    V_t + \sup_{\sigma \in [\sigma_{\min}, \sigma_{\max}]} \left\{
        \frac{1}{2} \sigma^2 x^2 V_{xx} + r x V_x - r V
    \right\} = 0.
    \label{eqn:introduction_uncertain}
\end{equation}
%In general, such problems correspond to nonlinear PDEs of the form
%\begin{equation}
    %G(D^2 V, D V, V, x) = 0.
    %\label{eqn:introduction_pde}
%\end{equation}

Since nonlinear PDEs do not generally admit smooth solutions, a notion of weak solution is required to study them.
The appropriate notion of weak solution in the setting of finance (and more generally, optimal control) is that of a viscosity solution, introduced for first order equations by Crandall and Lions \cite{MR690039} and extended to second order equations by Jensen \cite{MR920674}.
Following these works, Barles and Souganidis gave very general criteria for a numerical method to converge to the viscosity solution of a second order equation \cite{MR1115933}.
This provided a theoretical backing for employing numerical PDE methods to compute solutions of complicated financial problems such as \cref{eqn:introduction_uncertain}.
This brings us to the goal of this thesis: to study numerically one such class of problems, impulse control problems.

%%%%%%%%%%%%%%%%%%%%%%%%%%%%%%%%%%%%%%
\section{Motivation and contributions}

The goal of this thesis is to provide efficient and provably convergent numerical methods for solving PDEs coming from impulse control problems motivated by finance.
Impulse control problems involve a stochastic process $X$ with dynamics %\cite[Eq. (1.2)]{MR2486085}
\begin{subequations}
    \begin{align}
        dX_t & = a(t, X_t, \textcolor{ctrlcolor}{w_t}) dt + b(t, X_t, \textcolor{ctrlcolor}{w_t}) d\gls*{wiener}_t
        & \text{for } t \neq \xi_1, \xi_2, \ldots
        \label{eqn:introduction_controlled_sde_between} \\
        X_{\textcolor{ctrlcolor}{\xi_\ell}} & = %X_{\textcolor{ctrlcolor}{\xi_\ell}-} +
        \Gamma(\textcolor{ctrlcolor}{\xi_\ell}, X_{\textcolor{ctrlcolor}{\xi_\ell}-}, \textcolor{ctrlcolor}{z_\ell})
        & \text{for } \ell = \phantom{\xi_1} \mathllap{1}, \phantom{\xi_2} \mathllap{2}, \ldots
        \label{eqn:introduction_controlled_sde_across}
    \end{align}
    \label{eqn:introduction_controlled_sde}%
\end{subequations}
where $X_{t-} = \lim_{s\uparrow t} X_s$ is a limit from the left and $\textcolor{ctrlcolor}{\theta} = (\textcolor{ctrlcolor}{w}; \textcolor{ctrlcolor}{\xi_{1}}, \textcolor{ctrlcolor}{z_{1}}; \textcolor{ctrlcolor}{\xi_{2}}, \textcolor{ctrlcolor}{z_{2}}; \ldots)$ is a so-called combined stochastic and impulse control.
Aside from influencing the process $X$ continuously via the stochastic control $\textcolor{ctrlcolor}{w}$, the controller is also able to induce jumps in the process by choosing ``intervention times'' $\textcolor{ctrlcolor}{\xi_{\ell}}$ and ``impulses'' $\textcolor{ctrlcolor}{z_{\ell}}$ (\cref{fig:introduction_controlled_sde}).
The size and direction of these jumps is determined by the function $\Gamma$.

\begin{remark}
    In this thesis, probabilistic ``objects'' such as the process $X$ are used only to motivate, heuristically, several PDE problems (the rigorous link can be established by various arguments; see the discussion in \cref{sec:convergence_viscosity_solution}).
    As such, we ignore issues related to the existence of solutions $X$ to the SDE \cref{eqn:introduction_controlled_sde}. Nevertheless, it is understood that $\gls*{wiener}$ is a Brownian motion generating the (completed and right-continuous) filtration $(\mathcal{F}_t)_{t \geq 0}$, $\textcolor{ctrlcolor}{w}$ is progressively measurable, $(\textcolor{ctrlcolor}{\xi_\ell})_\ell$ is an increasing sequence of stopping times, and each $\textcolor{ctrlcolor}{z_\ell}$ is an $\mathcal{F}_{\textcolor{ctrlcolor}{\xi_\ell}}$ measurable random variable.
\end{remark}

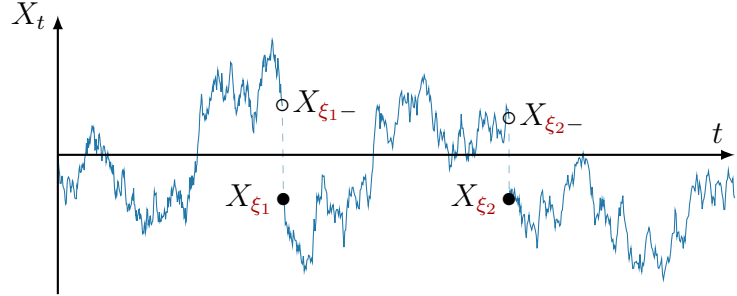
\begin{figure}
    \centering
    \begin{tikzpicture}[scale=3]
        \pgfmathsetseed{32}
        \drawArithmeticBrownianMotionPath{colA}{A}{1/\gratio}{1/\gratio}{400}
        \drawArithmeticBrownianMotionPath{colA,xshift=1cm,yshift=-0.2cm}{B}{1/\gratio}{1/\gratio}{250}
        \drawArithmeticBrownianMotionPath{colA,xshift=2cm,yshift=-0.2cm}{C}{1/\gratio}{1/\gratio}{250}
        \draw [dashed, colA!50] (ArithmeticBrownianMotionStart_B) -- (ArithmeticBrownianMotionEnd_A);
        \draw [dashed, colA!50] (ArithmeticBrownianMotionStart_C) -- (ArithmeticBrownianMotionEnd_B);
        \node at (ArithmeticBrownianMotionEnd_A) {$\circ$};
        \node at (ArithmeticBrownianMotionStart_B) {\textbullet};
        \node at (ArithmeticBrownianMotionEnd_B) {$\circ$};
        \node at (ArithmeticBrownianMotionStart_C) {\textbullet};
        \node [right=of ArithmeticBrownianMotionEnd_A,xshift=-1.15cm] {$X_{\textcolor{ctrlcolor}{\xi_1}-}$};
        \node [left=of ArithmeticBrownianMotionStart_B,xshift=1.15cm] {$X_{\textcolor{ctrlcolor}{\xi_1}}$};
        \node [right=of ArithmeticBrownianMotionEnd_B,xshift=-1.15cm] {$X_{\textcolor{ctrlcolor}{\xi_2}-}$};
        \node [left=of ArithmeticBrownianMotionStart_C,xshift=1.15cm] {$X_{\textcolor{ctrlcolor}{\xi_2}}$};
        \draw [thick, ->] (0,0) -- (3,0) node [above left] {$t$};
        \draw [thick, ->] (0,-1/\gratio) -- (0,1/\gratio) node [left] {$X_t$};
    \end{tikzpicture}
    \caption{Sample path of stochastic process with impulses}
    \label{fig:introduction_controlled_sde}
\end{figure}

Our interest in the SDE \cref{eqn:introduction_controlled_sde} in lieu of the simpler \cref{eqn:introduction_sde} lies in its ability to capture realistic features in financial decision-making including, for example, fixed transaction costs, liquidity risk, or execution delay.
Problems where such features appear include optimal consumption, optimal portfolio selection, hedging and pricing insurance contracts, and optimal liquidation \cite{MR1897195,MR2284012,MR2407322,MR2513141,MR2745264,MR3145856,MR3163980,MR3464413}.
Of course, this list is far from exhaustive.

\subsection{Hamilton-Jacobi-Bellman quasi-variational inequalities}

The general form of an optimal control problem involving impulse control is to maximize, over all controls $\textcolor{ctrlcolor}{\theta}$, the quantity
\begin{equation}
    J(t, x; \textcolor{ctrlcolor}{\theta}) = \mathbb{E} \left[
        \int_t^T f(u, X_u, \textcolor{ctrlcolor}{w_u}) du
        + \sum_{t \leq \textcolor{ctrlcolor}{\xi_\ell} \leq T} K(
            \textcolor{ctrlcolor}{\xi_\ell},
            X_{\textcolor{ctrlcolor}{\xi_\ell}-},
            \textcolor{ctrlcolor}{z_\ell}
        )
        + g(X_T)
        \middle | X_{t-} = x
    \right].
    \label{eqn:introduction_functional}
\end{equation}
Here, the terms $\int f$, $\sum K$, and $g$ represent cash flows obtained continuously, from impulses, and at the terminal time, respectively.
%Note that the expectation is conditional on a fixed value of $X_{t-}$ and not $X_t$ since an impulse can occur at time $t$.
Directly computing $V(t,x) = \sup_{\textcolor{ctrlcolor}{\theta}} J(t,x;\textcolor{ctrlcolor}{\theta})$ is not an easy, nor even feasible, task.
As such, we use an equivalent PDE formulation of the problem which can then be solved via numerical methods.

If we assume that $\textcolor{ctrlcolor}{w_t}$ and $\textcolor{ctrlcolor}{z_{\ell}}$ take values in the control sets $W$ and $Z(\textcolor{ctrlcolor}{\xi_{\ell}},X_{\textcolor{ctrlcolor}{\xi_{\ell}}-})$ respectively, then using dynamic programming arguments \cite{MR2568293}, we obtain the PDE (restricting our attention to the case in which $X$ is a one-dimensional process for simplicity)
\begin{subequations}
    \begin{align}
        \introductionHJBQVIInterior
        \label{eqn:introduction_hjbqvi_interior} \\
        \introductionHJBQVIBoundary
        \label{eqn:introduction_hjbqvi_boundary}
    \end{align}
    \label{eqn:introduction_hjbqvi}%
\end{subequations}
where
\begin{equation}
    \introductionIntervention
    \label{eqn:introduction_intervention}
\end{equation}
is a so-called ``intervention operator'' and $\gls*{domain}$ is a closed and connected subset of $\mathbb{R}$ determined by the support of the process $X$. %(e.g., if \cref{eqn:introduction_controlled_sde_between} is a geometric Brownian motion, $\gls*{domain} = [0, \infty)$).
The PDE \cref{eqn:introduction_hjbqvi} is called a Hamilton-Jacobi-Bellman quasi-variational inequality\footnote{While the term ``inequality'' may seem out of place in describing the PDE \cref{eqn:introduction_hjbqvi}, we point out that the equation $\min \{ a, b \} = 0$ is equivalent to $0 \leq a \perp b \geq 0$.} (HJBQVI).

%There are a number of numerical schemes for tackling the HJBQVI \cref{eqn:introduction_hjbqvi}.
%In our case we consider three:
We consider three numerical schemes for tackling the HJBQVI \cref{eqn:introduction_hjbqvi}: the \emph{direct control}, \emph{penalty}, and \emph{explicit-impulse} schemes.
The first two schemes require the solution of complicated nonlinear matrix equations.
We can use policy iteration to solve these equations.
However, in the case of the direct control scheme, the corresponding nonlinear matrix equations involve singular matrices, and as a result, convergence of policy iteration is not immediate.
In our work, we solve the convergence problem by making use of the theory of weakly chained diagonally dominant (\gls*{wcdd}) matrices.
As a byproduct of our analysis, we obtain some new results regarding a particular family of Markov decision processes which can be thought of as impulse control problems on a discrete state space and the relationship between \gls*{wcdd} matrices and M-matrices.
%As a by-product of our analysis, we obtain additional new results regarding \gls*{wcdd} matrices and their relationship to M-matrices.
On the other hand, the explicit-impulse scheme involves solving a linear system at each timestep and hence no policy iteration is necessary.
However, it can only be used when the diffusion coefficient $b$ does not depend on the control (i.e., $b(t,x,\textcolor{ctrlcolor}{w})=b(t,x)$) and the time horizon is finite (i.e., $T < \infty$).

Since our PDEs are nonlinear, we cannot expect to obtain unique smooth solutions.
Rather, we seek \emph{viscosity solutions}, the relevant notion of weak solution for optimal control.
The typical method for showing convergence of a numerical scheme to a viscosity solution is via the classical work of Barles and Souganidis which proves the convergence of a monotone, stable, and consistent numerical scheme to the unique viscosity solution of a \emph{local} PDE satisfying a comparison principle \cite{MR1115933}.
However, the intervention operator $\mathcal{M}$ appearing in the HJBQVI is \emph{nonlocal}: the value of $\mathcal{M} V(t, x)$ depends on the value of $V$ at points other than $(t, x)$ (see \cref{eqn:introduction_intervention}).
Therefore, the results of \cite{MR1115933} do not immediately apply.
Instead, we extend the work of Barles and Souganidis to allow for such nonlocal operators, with the main feature being the introduction of the notion of nonlocal consistency.
We are then able to prove convergence of our three schemes to the viscosity solution  by showing that they are all monotone, stable, and nonlocally consistent.
%We also include an implementation and numerical tests to further illustrate convergence.
In order to illustrate convergence and compare the schemes, we apply them to obtain numerical solutions of various classical problems from finance.

\subsection{Example: optimal control of the foreign exchange rate}

To make matters concrete, we introduce below an impulse control problem involving a government interested in controlling the foreign exchange (\gls*{FEX}) rate of its currency (i.e., the number of domestic monetary units it takes to buy one foreign monetary unit).
This problem, which will serve as a running example for a majority of the thesis, is perhaps one of the oldest and simplest financial applications of impulse control \cite{MR1614233,MR1705310,MR1802595}.

\begin{example}
    \label{exa:introduction_fex}
    Consider a government with two ways of influencing the \gls*{FEX} rate of its own currency:
    \begin{itemize}
        \item At all times, the government picks the domestic interest rate.
        \item The government picks specific times at which to intervene in the \gls*{FEX} market by buying or selling foreign currency in large quantities.
    \end{itemize}
    
    Let $\textcolor{ctrlcolor}{w_t}$ denote the difference between the domestic and foreign interest rate at time $t$.
    Let $\textcolor{ctrlcolor}{\xi_1} \leq \textcolor{ctrlcolor}{\xi_2} \leq \cdots \leq \infty$ be the times at which the government intervenes in the \gls*{FEX} market, with corresponding amounts $\textcolor{ctrlcolor}{z_1}, \textcolor{ctrlcolor}{z_2}, \ldots$
    A positive value for $\textcolor{ctrlcolor}{z_\ell}$ indicates the purchase of foreign currency at time $\textcolor{ctrlcolor}{\xi_\ell}$, while a negative value indicates the sale of foreign currency.

    Subject to the above, the \gls*{FEX} rate $X$ (in log space) evolves according to
    \begin{align*}
        dX_t & = -\mu \textcolor{ctrlcolor}{w_t} dt + \sigma d\gls*{wiener}_t
        & \text{for } t \neq \xi_1, \xi_2, \ldots
        %\nonumber
        \\
        X_{\textcolor{ctrlcolor}{\xi_\ell}} & = X_{\textcolor{ctrlcolor}{\xi_\ell}-} + \textcolor{ctrlcolor}{z_\ell}
        & \text{for } \ell = \phantom{\xi_1} \mathllap{1}, \phantom{\xi_2} \mathllap{2}, \ldots
        %\label{eqn:results_fex_sde}%
    \end{align*}
    where $\mu$ and $\sigma$ are nonnegative constants representing the drift speed and variability of $X$.
    
    To prevent the domestic government from choosing an arbitrarily large interest rate, we assume that $\textcolor{ctrlcolor}{w_t}$ takes values in the set $W = [-w^{\max}, w^{\max}]$ where $w^{\max} > 0$.
    We assume that $\textcolor{ctrlcolor}{z_\ell}$ takes values in $\mathbb{R}$ %the set $Z(\textcolor{ctrlcolor}{\xi_\ell}, X_{\textcolor{ctrlcolor}{\xi_\ell}-})$ where $Z(t, x) = \mathbb{R}$
    (i.e., no restrictions are imposed on the amounts that the domestic government can buy or sell in the FEX market).
    
    The government's objective is to keep $X$ as close as possible to some target rate $m$.
    Letting $\textcolor{ctrlcolor}{\theta} = (\textcolor{ctrlcolor}{w}; \textcolor{ctrlcolor}{\xi_1}, \textcolor{ctrlcolor}{z_1}; \textcolor{ctrlcolor}{\xi_2}, \textcolor{ctrlcolor}{z_2}; \ldots)$ denote a control, the government's costs are captured by the objective function (compare with \cref{eqn:introduction_functional})
    \begin{equation*}
        J(t, x; \textcolor{ctrlcolor}{\theta})
        = \mathbb{E} \left[
            - \int_t^T e^{-\beta u}
                \left(
                    \left( X_u - m \right)^2 + \gamma \textcolor{ctrlcolor}{w_u}^2
                \right)
            du
            - \sum_{t \leq \textcolor{ctrlcolor}{\xi_\ell} \leq T}
                e^{-\beta \textcolor{ctrlcolor}{\xi_\ell}} \left(
                    \kappa \left| \textcolor{ctrlcolor}{z_\ell} \right| + c
                \right)
            \middle | X_{t-} = x
        \right]
        %\label{eqn:results_fex_functional}
    \end{equation*}
    where $\beta$ is a nonnegative discount factor.
    The term $(X_u - m)^2$ penalizes the government for straying from the target rate $m$.
    The term $\gamma \textcolor{ctrlcolor}{w_u}^2$, where $\gamma \geq 0$, penalizes the government for choosing an interest rate that is either too low or too high with respect to the foreign interest rate.
    The term $\kappa \left| \textcolor{ctrlcolor}{z_\ell} \right| + c$, where $\kappa \geq 0$ and $c > 0$, captures the fixed and proportional transaction costs paid by the government for intervening in the \gls*{FEX} market.
    
    Standard dynamic programming arguments (cf. \cite[Example 8.2]{MR2109687}) are used to transform the optimization problem $V(t, x) = \sup_{\textcolor{ctrlcolor}{\theta}} J(t, x; \textcolor{ctrlcolor}{\theta})$ into the equivalent HJBQVI
    \begin{align}
        \min \left \{ %\left\{
            -V_t - \sup_{\textcolor{ctrlcolor}{w} \in W} \left \{
                %\begin{gathered}
                    \frac{\sigma^2}{2} V_{xx}
                    - \mu \textcolor{ctrlcolor}{w} V_x
                    %\\
                    - e^{-\beta t} \left(
                        \left( x - m \right)^2
                        + \gamma \textcolor{ctrlcolor}{w}^2
                    \right )
                %\end{gathered}
                %\mathcal{L}^{\textcolor{ctrlcolor}{w}} V
            \right \},
            V - \mathcal{M} V
        \right \} %\right\}
        & = 0 \text{ on } [0,T) \times \mathbb{R}
        \nonumber
        \\
        \min \left\{
            V(T, \cdot), %- g,
            V(T, \cdot) - \mathcal{M} V(T, \cdot)
        \right\}
        & = 0 \text{ on } \mathbb{R}
        \label{eqn:results_fex_hjbqvi}
    \end{align}
    where
    \begin{equation}
        \resultsFEXIntervention
        \label{eqn:results_fex_intervention}
    \end{equation}
    Note that the above is just a special case of the HJBQVI \cref{eqn:introduction_hjbqvi}.
\end{example}

\iftoggle{thesis}{%
\subsection{Fixed intervention times}

We also consider a type of impulse control problem arising from the pricing and hedging of insurance contracts.
In this context, we restrict impulses in \cref{eqn:introduction_functional} to occur at the ``fixed'' (i.e., nonrandom and noncontrollable) times $\xi_\ell$ satisfying
\[
    0 < \xi_1 < \xi_2 < \cdots < \xi_L < T.
\]
These intervention times correspond to regular anniversaries (e.g., monthly, yearly, etc.) on which the holder of the insurance contract can perform certain actions (e.g., lapse the contract).
Away from these times, the holder has no control over the contract.
This corresponds to the functions $a$ and $b$ in \cref{eqn:introduction_controlled_sde} and $f$ in \cref{eqn:introduction_functional} being independent of the control $\textcolor{ctrlcolor}{w}$.
This setting appears in, for example, \cite{MR2655287,MR2407322,chen2008effect,MR2844716,MR3215438,MR3494613,huang2016optimal}. %MR2811899
In this case, a standard dynamic programming argument gives the pricing problem as finding a function $V(t,x)$ which satisfies (once again restricting our attention to the case in which $X$ is a one-dimensional process for simplicity)
\begin{subequations}
    \begin{align}
        %\introductionRestrictedInterior
        V_t(t, x) & = - \frac{1}{2} b(t, x)^2 V_{xx}(t, x) - a(t, x) V_x(t, x) - f(t, x)
        & \text{for } t \neq \xi_1, \ldots, \xi_L
        \label{eqn:introduction_restricted_pde_interior} \\
        %\introductionRestrictedIntervention
        V(t-, x) & = \mathcal{M} V(t, x)
        & \text{for } t = \xi_1, \ldots, \xi_L
        \label{eqn:introduction_restricted_pde_boundary} \\
        %\introductionRestrictedBoundary
        V(T, x) & = g(x)
        \label{eqn:introduction_restricted_pde_terminal}
    \end{align}
    \label{eqn:introduction_restricted_pde}%
\end{subequations}
%satisfying the additional (nonlinear) constraint
%\begin{equation}
    %\introductionRestrictedIntervention
    %\label{eqn:introduction_restricted_intervention}
%\end{equation}
%
%standard dynamic programming argument tells us that between intervention times, $V$ satisfies the linear PDE (once again restricting our attention to the case in which $X$ is a one-dimensional process for simplicity)
%\[
    %V_{t}+\frac{1}{2}b^{2}V_{xx}+aV_{x}+f = 0
%\]
%while at an intervention time $\xi_\ell$,
%\[
    %V(\xi_{\ell}-,\cdot) = \mathcal{M}V(\xi_{\ell},\cdot)
%\]
where $V(t-, \cdot) = \lim_{s \uparrow t} V(s, \cdot)$ is a limit from the left, $\mathcal{M}$ is the intervention operator \cref{eqn:introduction_intervention}, and it is understood that the equations \cref{eqn:introduction_restricted_pde_interior,eqn:introduction_restricted_pde_boundary,eqn:introduction_restricted_pde_terminal} hold at each $x$ in the domain.
%, and equations \cref{eqn:introduction_restricted_pde_interior,eqn:introduction_restricted_pde_boundary} are understood to hold for each $\ell = 1, \ldots, L$.
Under reasonable conditions discussed in the sequel, problem \cref{eqn:introduction_restricted_pde} admits a solution $V$ that is smooth everywhere except possibly at $t = \xi_1, \ldots, \xi_L$.
As such, viscosity arguments are not required to ensure convergence of numerical methods for the problem.
In this case, we focus our attention on the numerical discretization of the intervention operator $\mathcal{M}$, as it is the main computational bottleneck.
In particular, since the control set $Z(t,x)$ appearing in \cref{eqn:introduction_intervention} is generally infinite, the numerical method needs to resort to a linear search over a discretization of the control set $Z(t,x)$ to approximate the supremum.
Convergence to a desired tolerance is obtained by refining this discretization.
Since the cost of linear search is proportional to the level of refinement, we identify practical scenarios in which the refinement step can be skipped while retaining a convergent method.
}{}

\clearpage

The main research contributions in this thesis are:
\begin{itemize}
    \item Creating the explicit-impulse scheme and generalizing the direct control and penalty schemes.
    \item Ensuring that a policy iteration algorithm can be applied to solve the direct control scheme by using (and contributing to) the theory of \gls*{wcdd} matrices.
    \item Of the three different schemes -- direct control, penalty, and explicit-impulse -- we show that the second and third are the methods of choice for the HJBQVI.
    \item Developing the concept of nonlocal consistency for handling convergence proofs for nonlocal PDEs and using this to prove the convergence of the three schemes.
    \iftoggle{thesis}{%
        \item Finding sufficient conditions under which impulse control problems with fixed intervention times can be solved without needing to successively refine a discretization of the control set, resulting in a faster numerical method.
    }
\end{itemize}

Our results appear in the following articles (in chronological order):
\begin{enumerate}[label=(\arabic*)]
    \iftoggle{thesis}{%
        \item \fullcite{MR3316194}
    }{}
    \item \fullcite{MR3493959}
    \item \fullcite{azimzadeh2017zero}
    \item \fullcite{azimzadeh2017fast}
    \item \fullcite{azimzadeh2017convergence}
\end{enumerate}

%%%%%%%%%%%%%%%%%
\section{Outline}

This thesis is organized as follows:
\begin{itemize}
    \item
        In \cref{chap:schemes}, we describe three numerical schemes for the HJBQVI: the direct control, penalty, and explicit-impulse schemes.
        These schemes are derived using heuristic arguments, with rigorous proofs of convergence deferred to a later chapter.
    \item
        In \cref{chap:matrix}, we consider solving the nonlinear matrix equations associated with the direct control and penalty schemes by policy iteration.
        In the case of the direct control scheme, the nonlinear matrix equations involve singular matrices, and new techniques involving \gls*{wcdd} matrices are introduced to handle this case.
    \item
        In \cref{chap:convergence}, we introduce the notion of nonlocal consistency in order to extend the results of Barles and Souganidis to viscosity solutions of nonlocal PDEs.
        We apply our findings to prove that the schemes of \cref{chap:schemes} converge to the viscosity solution of the HJBQVI.
    \item
        In \cref{chap:results}, we apply the schemes of \cref{chap:schemes} to compute numerical solutions of three impulse control problems from finance.
        We use our numerical results to compare the relative efficiency of the schemes.
    \iftoggle{thesis}{%
        \item
            In \cref{chap:bang}, we consider the impulse control problem involving fixed (i.e., nonrandom and noncontrollable) intervention times.
            We show that computing solutions of such a problem can often be sped up by skipping the control set refinement step (while retaining provable convergence) and apply our result to obtain a fast numerical pricer for a particular insurance contract.
    }{}
    \item
        In \cref{chap:summary}, we summarize our findings and discuss possible avenues for future research.
\end{itemize}
\setcounter{chapter}{1}
\chapter{Numerical schemes for the HJBQVI}
\label{chap:schemes}

In this chapter, we introduce three ``implicit'' numerical schemes for HJBQVIs: the \emph{direct control}, \emph{penalty}, and \emph{explicit-impulse} schemes.
By implicit, we mean that each scheme requires the solution of a nontrivial (and possibly nonlinear) system of equations at each timestep.
Our motivation for considering such schemes is to avoid the usual timestep restrictions of ``explicit'' schemes.

To simplify presentation, we restrict our attention to the one dimensional HJBQVI \cref{eqn:introduction_hjbqvi}, repeated below for the reader's convenience:
\begin{align}
    \introductionHJBQVIInterior
    \tag{\ref{eqn:introduction_hjbqvi_interior}} \\
    \introductionHJBQVIBoundary
    \tag{\ref{eqn:introduction_hjbqvi_boundary}}
\end{align}
where
\begin{equation}
    \introductionIntervention.
    \tag{\ref{eqn:introduction_intervention}}
\end{equation}
Modifications required to extend the schemes of this chapter to solve HJBQVIs of higher dimension are discussed in \cref{chap:convergence}.
\cref{chap:convergence} also includes the rigorous convergence arguments used to justify the heuristic derivations in this chapter.

%Our contributions in this chapter are:
%\begin{itemize}
    %\item Giving, in full generality, the direct control and penalty schemes for the HJBQVI in both finite (i.e., $T < \infty$) and infinite horizon (i.e., $T = \infty$) settings.
    %\item Creating the explicit-impulse scheme for the HJBQVI.
%\end{itemize}
%
%The results of this chapter appear in our article 
%
%\fullcite{MR3493959}.

%%%%%%%%%%%%%%%%%%%%%%%
\section{Preliminaries}

\subsection{Terminal condition}
\label{subsec:schemes_terminal_condition}

The terminal condition \cref{eqn:introduction_hjbqvi_boundary} can be written equivalently as
\begin{equation}
    V(T, \cdot)
    = \max \left \{
        \mathcal{M} V(T, \cdot),
        g
    \right \}.
    \label{eqn:schemes_implicit_boundary}
\end{equation}
Unlike most problems in finance, which involve an explicit terminal condition of the form $V(T, \cdot) = g$, \cref{eqn:schemes_implicit_boundary} is an implicit terminal condition: $V(T, \cdot)$ appears on both sides of the equation.
%\cite[Eq. (12)]{MR2568293} points out that an explicit solution of \cref{eqn:schemes_implicit_boundary} is
%\begin{equation}
    %V(T, \cdot) = \sup \left \{
        %g, \mathcal{M} g, \mathcal{M}^2 g, \ldots \right
    %\}
    %\label{eqn:schemes_explicit_boundary}
%\end{equation}
%where superscripts indicate composition (i.e., $\mathcal{M}^j g = \mathcal{M}^{j-1} [\mathcal{M} g]$) and it is understood that for a function $g$ whose only input is the space variable $x$ (i.e., $g(x)$),
For a function $g$ whose only input is the space variable $x$ (i.e., $g(x)$), define
\[%\begin{equation}
    \mathcal{M} g(x) = \sup_{\textcolor{ctrlcolor}{z}\in Z(T,x)} \left\{
        g(
            %x +
            \Gamma(T, x, \textcolor{ctrlcolor}{z})
        )
        + K(T, x, \textcolor{ctrlcolor}{z})
    \right\}
    %\label{eqn:schemes_intervention_space_only}
\]%\end{equation}
(compare with \cref{eqn:introduction_intervention}).
If we assume the pointwise inequality
\begin{equation}
    \mathcal{M} g \leq g,
    \label{eqn:schemes_implicit_boundary_assumption}
\end{equation}
%it follows, since $\mathcal{M}$ is a monotone operator \cite[Lemma 4.3]{MR2568293}, that
%\[
    %\mathcal{M}^j g
    %= \mathcal{M}^{j-1} [\mathcal{M} g]
    %\leq \mathcal{M}^{j-1} g
    %\leq \cdots
    %\leq g
%\]
%and hence \cref{eqn:schemes_explicit_boundary} simplifies to the ordinary Cauchy terminal condition $V(T, \cdot) = g$.
then $V(T, \cdot) = g$ is a solution of \cref{eqn:schemes_implicit_boundary}, returning us to an explicit terminal condition.%\footnote{To see this, note that if $V(T, \cdot) = g$ and \cref{eqn:schemes_implicit_boundary_assumption} hold, then $V(T, \cdot) = g = \max \{ g, g \} \geq \max \{g, \mathcal{M} g \} = \max \{g, \mathcal{M} V(T, \cdot) \}$.}
Assumption \cref{eqn:schemes_implicit_boundary_assumption} has the interpretation that performing an impulse at the final time $T$ never increases the value of the payoff \cite{belak2016utility,MR3615458}.
This assumption is satisfied by most reasonable impulse control problems, including those in this thesis.

%Since the terminal condition \cref{eqn:schemes_implicit_boundary} occurs at time $t = T$, the numerical method proceeds ``backwards in time'' to obtain a solution at time $t = 0$.
%Therefore, with a slight abuse of notation, we write $V(\tau, x)$ to denote the function $V$ after the change of variables $\tau = T - t$, so that the terminal condition is transformed into an initial condition occurring at $\tau = 0$.
%We abuse notation similarly for all functions of time appearing in \cref{eqn:introduction_hjbqvi,eqn:introduction_intervention} (e.g., $a(\tau, x, \textcolor{ctrlcolor}{w})$, $b(\tau, x, \textcolor{ctrlcolor}{w})$, etc.).
%Subject to this change of variables, \cref{eqn:introduction_hjbqvi} becomes
%\begin{subequations}
    %\begin{align}
        %\min \left\{
            %V_\tau - \sup_{\textcolor{ctrlcolor}{w} \in W} \left\{
                %\frac{1}{2} b(\cdot, \textcolor{ctrlcolor}{w})^2 V_{xx}
                %{+} a(\cdot, \textcolor{ctrlcolor}{w}) V_x
                %{+} f(\cdot, \textcolor{ctrlcolor}{w})
            %\right\},
            %V - \mathcal{M}V
        %\right\} & = 0 \text{ on } (0,T] \times \gls*{domain}
        %\label{eqn:schemes_hjbqvi_interior}
        %\\
        %\min \left \{
            %V(0, \cdot) - g,
            %V(0, \cdot) - \mathcal{M} V(0, \cdot)
        %\right \}
        %& = 0 \text{ on } \gls*{domain}.
        %\label{eqn:schemes_hjbqvi_boundary}
    %\end{align}
    %\label{eqn:schemes_hjbqvi}
%\end{subequations}

\subsection{Numerical grid}

Since the terminal condition \cref{eqn:schemes_implicit_boundary} occurs at time $t = T$, the numerical method proceeds ``backwards in time'' to obtain a solution at time $t = 0$.
Therefore, it is natural to refer to the procedure which evolves a numerical solution from time $t = T$ to some earlier time $t = T - \Delta \tau$ as the \emph{first} timestep, that which evolves the numerical solution from time $t = T - \Delta \tau$ to $t = T - 2 \Delta \tau$ as the \emph{second} timestep, etc.

In light of this, we denote by $V_i^n \approx V(\tau^n, x_i)$ the numerical solution at time $\tau^n = T - n \Delta \tau$ and point $x_i$ in space where $0 \leq i \leq M$ and $\Delta \tau = T / N$ (\cref{fig:schemes_grid}).
We denote by $V^n = (V_0^n, \ldots, V_M^n)^\intercal$ the numerical solution vector at the $n$-th timestep.
For brevity, we use the shorthand $a_i^n(\textcolor{ctrlcolor}{w}) = a(\tau^n, x_i, \textcolor{ctrlcolor}{w})$ with $b_i^n$ and $f_i^n$ defined similarly.

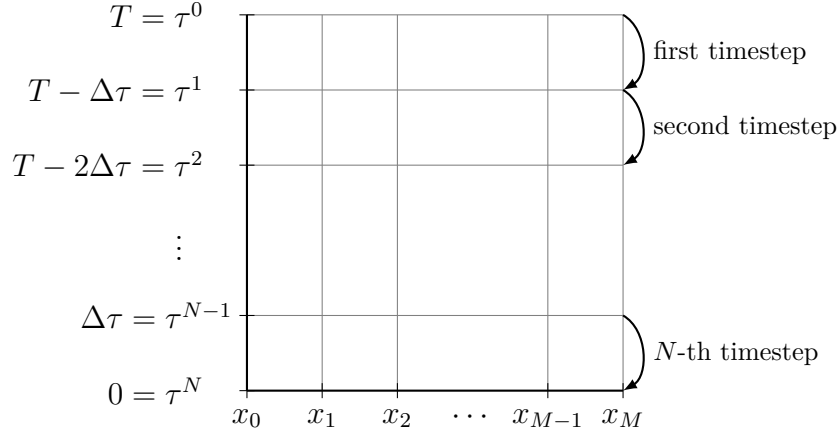
\begin{figure}
    \centering
    \begin{tikzpicture}
        % Grid
        \foreach \n in {1, 3, 4, 5}
            \draw [gray]
                (0, \n) -- (5, \n)
        ;
        \foreach \n in {1, 2, 4, 5}
            \draw [gray]
                (\n, 0) -- (\n, 5)
        ;
        \draw [thick]
            (0, 0) -- (5, 0)
            (0, 0) -- (0, 5)
        ;
        \path [thick, ->] (5,5) edge [bend left=60] node [anchor=west] {\footnotesize first timestep} (5,4);
        \path [thick, ->] (5,4) edge [bend left=60] node [anchor=west] {\footnotesize second timestep} (5,3);
        \path [thick, ->] (5,1) edge [bend left=60] node [anchor=west] {\footnotesize $N$-th timestep} (5,0);
        % Ticks on the x axis
        \foreach \n/\i in {0/0, 1/1, 2/2, 4/M-1, 5/M} {
            \draw (\n, -0.1) -- (\n, 0.1);
            \node [yshift=-0.1cm, below] at (\n, 0) {$x_{\i}$};
        };
        \node [yshift=-0.35cm] at (3,0) {$\cdots$};
        % Ticks on the y axis
        \foreach \n/\i/\extra in {0/N\phantom{-1}/0=, 1/N-1/\Delta\tau=, 3/2\phantom{-1}/T-2\Delta\tau=, 4/1\phantom{-1}/T-\Delta\tau=, 5/0\phantom{-1}/T=} {
            \draw (-0.1, \n) -- (0.1, \n);
            \node [xshift=-0.05cm, left] at (0, \n) {$\extra\tau^{\i}$};
        }
        \node [xshift=-0.9cm] at (0,2) {$\vdots$};
    \end{tikzpicture}
    \caption{Numerical grid}
    \label{fig:schemes_grid}
\end{figure}

\begin{example}
    \label{exa:schemes_grid}
    The HJBQVI \cref{eqn:results_fex_hjbqvi} of \cref{exa:introduction_fex} is posed on the unbounded set $[0,T] \times \mathbb{R}$.
    A numerical grid is obtained by ``truncating'' this set to a bounded region $[0,T] \times [-R, R]$ and taking $\{ x_0, \ldots, x_M \}$ to be a partition\partitionFootnote~of the interval $[-R, R]$.
\end{example}

Following the discussion of the previous section, all schemes in this chapter satisfy
\[%\begin{equation}
    V_i^0 = g(x_i)
    %\label{eqn:schemes_initial_condition}
\]%\end{equation}
corresponding to the terminal condition $V(T, \cdot) = g$.
Therefore, in describing a scheme, we specify only the equations required to determine $V_i^n$ for $n > 0$.
%We defer consideration of boundary conditions to the sequel, focusing in this chapter only on the equations used to determine interior points (i.e., $V_i^n$ at $0 < i < M$).

\subsection{Standard stencils}

Unless otherwise specified, we discretize the time derivative $V_t$ appearing in \cref{eqn:introduction_hjbqvi_interior} by
\[
    V_t(\tau^n, x_i)
    \approx \frac{V(\tau^n + \Delta \tau, x_i) - V(\tau^n, x_i)}{\Delta \tau}
    = \frac{V(\tau^{n-1}, x_i) - V(\tau^n, x_i)}{\Delta \tau}
    \approx \frac{V_i^{n-1} - V_i^n}{\Delta \tau}.
\]

We use $\mathcal{D}_2$ to denote a three point discretization of the second spatial derivative so that $V_{xx}(\tau^n, x_i) \approx (\mathcal{D}_2 V^n)_i$.
Precisely, for each vector $U = (U_0, \ldots, U_M)^\intercal$, we define
\begin{equation}
    (\mathcal{D}_2 U)_i = \begin{cases}
        \displaystyle{
            \frac{U_{i+1} - U_i}{\left(x_{i+1} - x_i\right) \left(x_{i+1} - x_{i-1}\right)}
            - \frac{U_i - U_{i-1}}{\left(x_i - x_{i-1}\right) \left(x_{i+1} - x_{i-1}\right)}
        } & \text{if } 0 < i < M \\
        0 & \text{otherwise}.
    \end{cases}
    \label{eqn:schemes_second_derivative}
\end{equation}
%We have, for notational convenience, defined $(\mathcal{D}_2 U)_i$ at the boundary points $i=0,M$, setting it to zero there.
To make matters concrete, we have set $(\mathcal{D}_2 U)_i$ to be zero at the boundary points $x_0$ and $x_M$, corresponding to the artificial Neumann boundary condition $V_{xx}(t, x_0) = V_{xx}(t, x_M) = 0$
(we mention, however, that the techniques in this thesis are not specific to this choice of boundary condition).
Note that when the grid points $\{x_0, \ldots, x_M\}$ are uniformly spaced (i.e., $x_{i+1} - x_i = \Delta x$), $(\mathcal{D}_2 U)_i$ takes the familiar form
\[
    (\mathcal{D}_2 U)_i = %\begin{cases}
        %\displaystyle{
            \frac{U_{i+1} - 2 U_i + U_{i-1}}{(\Delta x)^2}
        %}
        %&
        \textspace \text{if } 0 < i < M. \\
        %0
        % \text{otherwise}.
    %\end{cases}
\]

Similarly, we use $\mathcal{D}_\pm$ to denote forward ($+$) and backward ($-$) discretizations of the first spatial derivative so that $V_x(\tau^n, x_i) \approx (\mathcal{D}_\pm V^n)_i$.
In particular,
\begin{equation}
    (\mathcal{D}_+ U)_i = \begin{cases}
        \displaystyle{
            \frac{U_{i+1} - U_i}{x_{i+1} - x_i}
        } & \text{if } 0 < i < M \\
        0 & \text{otherwise}
    \end{cases}
    \textspace \text{and} \textspace
    (\mathcal{D}_- U)_i = \begin{cases}
        \displaystyle{
            \frac{U_i - U_{i-1}}{x_i - x_{i-1}}
        } & \text{if } 0 < i < M \\
        0 & \text{otherwise}.
    \end{cases}
    \label{eqn:schemes_first_derivative_pm}
\end{equation}
As usual, to make matters concrete, we have set $(\mathcal{D}_\pm U)_i$ to be zero at the boundary points $x_0$ and $x_M$, corresponding to the artificial Neumann boundary condition $V_x(t, x_0) = V_x(t, x_M) = 0$.
With a slight abuse of notation, we define the ``upwind'' discretization
\begin{equation}
    a_i^n(\textcolor{ctrlcolor}{w})
    (\mathcal{D} U)_i =
    \begin{cases}
        a_i^n(\textcolor{ctrlcolor}{w})
        (\mathcal{D}_+ U)_i
        & \text{if } a_i^n(\textcolor{ctrlcolor}{w}) > 0 \\
        a_i^n(\textcolor{ctrlcolor}{w})
        (\mathcal{D}_- U)_i
        & \text{otherwise}
    \end{cases}
    \label{eqn:schemes_first_derivative}
\end{equation}
which employs a forward difference when the coefficient of the first derivative is positive and a backward difference otherwise.
The choice of upwinding will be useful in the sequel to ensure the convergence of our schemes.

\begin{remark}
    It is often convenient to view $\mathcal{D}_2$ (resp. $\mathcal{D}_\pm$) as a matrix mapping each vector $U$ to the vector $\mathcal{D}_2U$ (resp. $\mathcal{D}_\pm U$) with entries given by \cref{eqn:schemes_second_derivative} (resp. \cref{eqn:schemes_first_derivative_pm}).
\end{remark}

%\subsection{Control sets}
%\label{subsec:schemes_control_sets}
%
%Since the control set $W$ appearing in \cref{eqn:introduction_hjbqvi_interior} is not necessarily finite, we discretize it by a nonempty finite subset $W^{\gls*{meshing_parameter}} \subset W$.
%This is done so as to make computationally tractable the task of maximizing an arbitrary function over the control set $W$.
%We do the same for the control set $Z(t, x)$ appearing in \cref{eqn:introduction_intervention}, discretizing it by a nonempty finite subset $Z^{\gls*{meshing_parameter}}(t, x) \subset Z(t, x)$ at each point $(t, x)$.
%
%The superscript $\gls*{meshing_parameter}$ above is a positive number that refers to the coarseness of the discretization: as $\gls*{meshing_parameter}$ approaches zero, the discretized control sets $W^{\gls*{meshing_parameter}}$ and $Z^{\gls*{meshing_parameter}}(t, x)$ become better approximations of the original control sets $W$ and $Z(t, x)$.
%Though this intuitive description of the parameter $h$ is sufficient for the time being, the convergence proofs of \cref{chap:convergence} will require us to specify exactly what we mean by ``better approximations''.

\subsection{Intervention operator}

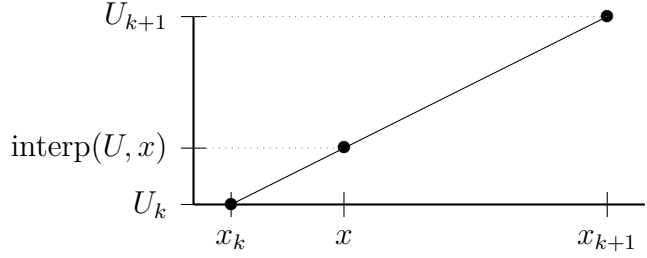
\begin{figure}
    \centering
    \begin{tikzpicture}[scale=5]
        % Interpolation
        \draw (0, 0) -- (1, 0.5);
        \foreach \x/\y in {0/0, 1/0.5, 0.3/0.15}
            \node at (\x, \y) {\textbullet};
        % Axes
        \draw [thick]
            (-0.1, 0) -- (1+0.1, 0)
            (-0.1, 0) -- (-0.1, 0.5)
        ;
        % Ticks on the x axis
        \foreach \n/\lbl in {0/x_k, 1/x_{k+1}, 0.3/x} {
            \draw (\n, -0.1/3) -- (\n, 0.1/3);
            \node [yshift=-0.2cm, below] at (\n, 0) {$\lbl$};
        };
        % Ticks on the y axis
        \foreach \n/\lbl in {0/U_k, 0.5/U_{k+1}, 0.15/{\gls*{interp}(U,x)}} {
            \draw (-0.1-0.1/3, \n) -- (-0.1+0.1/3, \n);
            \node [xshift=-0.7cm, left] at (0, \n) {$\lbl$};
        }
        % Dotted lines
        \draw [dotted, gray]
            (-0.1, 0.15) -- (0.3, 0.15)
            (-0.1, 0.5) -- (1, 0.5)
        ;
    \end{tikzpicture}
    \caption{Linear interpolation}
    \label{fig:schemes_interpolation}
\end{figure}

The point $%x +
\Gamma(t, x, \textcolor{ctrlcolor}{z})$ appearing in the intervention operator $\mathcal{M}$ defined in \cref{eqn:introduction_intervention} is not necessarily a point on the numerical grid, hence a discretization of $\mathcal{M}$ requires interpolation.
We use $\gls*{interp}(V^n, x)$ to denote the value of the numerical solution at $(\tau^n, x)$ as approximated by a standard monotone linear interpolant.
That is, for each vector $U = (U_0, \ldots, U_M)^\intercal$ and point $x$ contained in the grid (i.e., $x_0 < x < x_M$), we define
\begin{equation}
    \gls*{interp}(U, x) = \alpha U_{k+1} + \left(1 - \alpha\right) U_k
    \label{eqn:schemes_interpolation}
\end{equation}
where $k$ is the unique integer satisfying $x_k \leq x < x_{k+1}$ and $\alpha = (x - x_k) / (x_{k+1} - x_k)$ (\cref{fig:schemes_interpolation}).
Though we have suppressed this in the notation, the quantities $\alpha$ and $k$ depend on $x$.
If $x$ is not contained in the grid (i.e., $x \leq x_0$ or $x \geq x_M$), we define
\[
    \gls*{interp}(U, x) = \begin{cases}
        U_0 & \text{if } x \leq x_0 \\
        U_M & \text{if } x \geq x_M,
    \end{cases}
\]
so that no extrapolation is performed.
We can now discretize the intervention operator according to
\begin{equation}
    \schemesDiscretizedIntervention
    \label{eqn:schemes_discretized_intervention}
\end{equation}
Using the notation above, $\mathcal{M}V(\tau^n, x_i) \approx (\mathcal{M}_n V^n)_i$.

Since the control set $Z(t, x)$ appearing in \cref{eqn:introduction_intervention} is not, in general, a finite set, we have introduced into \cref{eqn:schemes_discretized_intervention} a subset $Z^{\gls*{meshing_parameter}}(\tau^n, x_i)$ of $Z(\tau^n, x_i)$ to serve as an approximation of $Z(\tau^n, x_i)$.
We are purposely vague in our use of the term ``approximation'' (the convergence proofs of \cref{chap:convergence} will require us to attach a rigorous meaning to the term).
For the time being, we assume only that $Z^{\gls*{meshing_parameter}}(t, x)$ is finite and nonempty.
The finitude of $Z^{\gls*{meshing_parameter}}(t, x)$ ensures that the supremum in \cref{eqn:schemes_discretized_intervention} can be computed by performing a linear search over finitely many elements.
To make matters concrete, an example is given below.

\begin{example}
    \label{exa:schemes_intervention}
    Recall the intervention operator \cref{eqn:results_fex_intervention} of \cref{exa:introduction_fex}:
    \begin{equation}
        \resultsFEXIntervention
        \tag{\ref{eqn:results_fex_intervention}}
    \end{equation}
    The control set $Z(t, x) = \mathbb{R}$ is infinite.
    We take $Z^{\gls*{meshing_parameter}}(t, x) = \{ x_0 - x, \ldots, x_M - x \}$ so that
    \begin{align*}
        (\mathcal{M}_n U)_i
        & = \max_{\textcolor{ctrlcolor}{z_i} \in \{x_0 - x_i, \ldots, x_M - x_i\}} \left \{
            \gls*{interp}(U, x_i + \textcolor{ctrlcolor}{z_i})
            - e^{-\beta \tau^n} \left(
                \kappa \left| \textcolor{ctrlcolor}{z_i} \right| + c
            \right)
        \right \}
        \\
        & = \max_{ 0 \leq j \leq M } \left \{
            \gls*{interp}(U, x_j)
            - e^{-\beta \tau^n} \left(
                \kappa \left| x_j - x_i \right| + c
            \right)
        \right \}
        \\
        & = \max_{ 0 \leq j \leq M } \left \{
            U_j
            - e^{-\beta \tau^n} \left(
                \kappa \left| x_j - x_i \right| + c
            \right)
        \right \}.
    \end{align*}
\end{example}

%The superscript $\gls*{meshing_parameter}$ in $Z^{\gls*{meshing_parameter}}(t, x)$ is a positive number that refers to the coarseness of the discretization: as $\gls*{meshing_parameter}$ approaches zero, $Z^{\gls*{meshing_parameter}}(t, x)$ becomes a better approximation of $Z(t, x)$.
%Though this intuitive description of the parameter $h$ is sufficient for the time being, the convergence proofs of \cref{chap:convergence} will require us to specify exactly what we mean by ``better approximation''.

%%%%%%%%%%%%%%%%%%%%%%%%%%%%%%%%%%%
\section{The direct control scheme}

We are now ready to introduce our first scheme, the direct control scheme.
We first introduce an ``auxiliary'' control $\textcolor{ctrlcolor}{d}$ in order to rewrite \cref{eqn:introduction_hjbqvi_interior} without the outer minimum:
\[
    \sup_{\substack{
        \textcolor{ctrlcolor}{d} \in \{0,1\} \\
        \textcolor{ctrlcolor}{w} \in W
    }}
    \left\{
        \left( 1 - \textcolor{ctrlcolor}{d} \right)
        \left(
            %- V_\tau
            V_t
            + \frac{1}{2} b(\cdot, \textcolor{ctrlcolor}{w})^2 V_{xx}
            + a(\cdot, \textcolor{ctrlcolor}{w}) V_x
            + f(\cdot, \textcolor{ctrlcolor}{w})
        \right)
        + \textcolor{ctrlcolor}{d} \left(
            \mathcal{M}V - V
        \right)
    \right\} = 0.
\]
From this point on, we will use the shorthand $\textcolor{ctrlcolor}{\overline{d}} = 1 - \textcolor{ctrlcolor}{d}$ for brevity.
To obtain the direct control scheme, we then replace all operators in the above by their discretized versions (i.e., $\mathcal{D}_2$, $\mathcal{D}$, $\mathcal{M}_n$, etc.):
\begin{multline}
    %\boxed{
        \sup_{\substack{
            \textcolor{ctrlcolor}{d_i} \in \{0,1\} \\
            \textcolor{ctrlcolor}{w_i} \in W^{\gls*{meshing_parameter}}
        }}
        \biggl \{ %\left \{
            %\begin{gathered}
                \textcolor{ctrlcolor}{\overline{d_i}} \left(
                    \frac{V_i^{n-1} - V_i^n}{\Delta \tau}
                    + \frac{1}{2} b_i^n(\textcolor{ctrlcolor}{w_i})^2 (\mathcal{D}_2 V^n)_i
                    + a_i^n(\textcolor{ctrlcolor}{w_i}) (\mathcal{D} V^n)_i
                    + f_i^n(\textcolor{ctrlcolor}{w_i})
                \right)
                \\
                + \textcolor{ctrlcolor}{d_i}
                \left(
                    (\mathcal{M}_n V^n)_i - V_i^n
                \right)
            %\end{gathered}
        \biggr \} %\right \}
        = 0.
    %}
    \label{eqn:schemes_direct_control}
\end{multline}
It is understood that equation \cref{eqn:schemes_direct_control} holds for all $n = 1, \ldots, N$ and $i = 0, \ldots, M$.

In \cref{eqn:schemes_direct_control}, we have introduced a finite nonempty subset $W^{\gls*{meshing_parameter}}$ of $W$ to serve as an approximation of $W$, analogously to the approximation of $Z(t, x)$ by $Z^{\gls*{meshing_parameter}}(t, x)$.
For example, in the case of the control set $W = [-w^{\max}, w^{\max}]$ of \cref{exa:introduction_fex}, we can take $W^{\gls*{meshing_parameter}}$ to be a partition of $W$.

The direct control scheme was suggested in \cite{MR2302036} to solve a particular problem involving optimal consumption.
In that paper, the authors focused on solving the nonlinear matrix equations associated with the direct control scheme by policy iteration.
No proofs of convergence (in the viscosity sense) or an implementation are given in that paper.

%%%%%%%%%%%%%%%%%%%%%%%%%%%%
\section{The penalty scheme}

In \cite[Chapter 5, Section 1.6]{bensoussan1984impulse}, the authors study the HJBQVI \cref{eqn:introduction_hjbqvi} in its variational form. In order to show existence of a solution, they introduce what they refer to as the ``penalized'' form of \cref{eqn:introduction_hjbqvi}:
\begin{subequations}
    \begin{align}
        %V_\tau^\epsilon
        - V_t^\epsilon
        - \sup_{\textcolor{ctrlcolor}{w} \in W} \left\{
            \frac{1}{2} b(\cdot, \textcolor{ctrlcolor}{w})^2 V_{xx}^\epsilon
            {+} a(\cdot, \textcolor{ctrlcolor}{w}) V_x^\epsilon
            {+} f(\cdot, \textcolor{ctrlcolor}{w})
        \right\}
        - \frac{1}{\epsilon} \max \left\{
            \mathcal{M}V^\epsilon - V^\epsilon,
            0
        \right\}
        & = 0 \text{ on } [0,T) \times \gls*{domain}
        \label{eqn:schemes_penalized_interior} \\
        V^\epsilon(T, \cdot) - g
        & = 0 \text{ on } \gls*{domain}.
        \label{eqn:schemes_penalized_boundary}
    \end{align}
    \label{eqn:schemes_penalized}%
\end{subequations}
The authors of \cite{bensoussan1984impulse} then show, subject to some technical conditions, that the solution $V$ of the original HJBQVI \cref{eqn:introduction_hjbqvi} is the pointwise limit of the solution $V^\epsilon$ of \cref{eqn:schemes_penalized} as $\epsilon \downarrow 0$. %(subject to some technical assumptions).

The basic idea behind the penalty scheme is to discretize \cref{eqn:schemes_penalized_interior}.
In particular, we first rewrite \cref{eqn:schemes_penalized_interior} using the auxiliary control $\textcolor{ctrlcolor}{d}$ to get, omitting the superscript $\epsilon$ in $V^\epsilon$,
\[
    \sup_{\substack{
        \textcolor{ctrlcolor}{d} \in \{0,1\} \\
        \textcolor{ctrlcolor}{w} \in W
    }}
    \left\{
        V_t
        + \frac{1}{2} b(\cdot, \textcolor{ctrlcolor}{w})^2 V_{xx}
        + a(\cdot, \textcolor{ctrlcolor}{w}) V_x
        + f(\cdot, \textcolor{ctrlcolor}{w})
        + \frac{1}{\epsilon} \textcolor{ctrlcolor}{d} \left(
            \mathcal{M}V - V
        \right)
    \right\} = 0.
\]
We then replace all operators by their discretized versions (i.e., $\mathcal{D}_2$, $\mathcal{D}$, $\mathcal{M}_n$, etc.) to obtain the penalty scheme:
\begin{multline}
    %\boxed{
        \sup_{\substack{
            \textcolor{ctrlcolor}{d_i} \in \{0,1\} \\
            \textcolor{ctrlcolor}{w_i} \in W^{\gls*{meshing_parameter}}
        }}
        \biggl \{ %\left \{
            %\begin{gathered}
                \frac{V_i^{n-1} - V_i^n}{\Delta \tau}
                + \frac{1}{2} b_i^n(\textcolor{ctrlcolor}{w_i})^2 (\mathcal{D}_2 V^n)_i
                + a_i^n(\textcolor{ctrlcolor}{w_i}) (\mathcal{D} V^n)_i
                + f_i^n(\textcolor{ctrlcolor}{w_i})
                \\
                + \frac{1}{\epsilon} \textcolor{ctrlcolor}{d_i}
                \left(
                    (\mathcal{M}_n V^n)_i - V_i^n
                \right)
            %\end{gathered}
        \biggr \} %\right \}
        = 0.
    %}
    \label{eqn:schemes_penalty}
\end{multline}
It is understood that equation \cref{eqn:schemes_penalty} holds for all $n = 1, \ldots, N$ and $i = 0, \ldots, M$.
%Throughout this thesis, the parameter $\epsilon$ is always assumed to be positive.

An alternate (and perhaps easier) way to motivate the penalty scheme \cref{eqn:schemes_penalty} is to note that it is an approximation of the direct control scheme \cref{eqn:schemes_direct_control}.
%The fact that the penalty scheme is an approximation to the direct control scheme is made even clearer by noting that \cref{eqn:schemes_penalty} is equivalent to, after some simplification,
In particular, after some simplification, \cref{eqn:schemes_penalty} is equivalent to
\begin{equation}
    \sup_{\substack{
        \textcolor{ctrlcolor}{d_i} \in \{0,1\} \\
        \textcolor{ctrlcolor}{w_i} \in W^{\gls*{meshing_parameter}}
    }}
    \left\{
        \textcolor{ctrlcolor}{\overline{d_i}} \gamma_i^n(V, \textcolor{ctrlcolor}{w_i})
        + \textcolor{ctrlcolor}{d_i} \left(
            (\mathcal{M}_n V^n)_i - V_i^n + \epsilon \gamma_i^n(V, \textcolor{ctrlcolor}{w_i})
        \right)
    \right\} = 0
    \label{eqn:schemes_penalty_rearranged}
\end{equation}
where
\begin{equation}
    \gamma_i^n(V, \textcolor{ctrlcolor}{w_i}) =
    \frac{V_i^{n-1} - V_i^n}{\Delta \tau}
    + \frac{1}{2} b_i^n(\textcolor{ctrlcolor}{w_i})^2 (\mathcal{D}_2 V^n)_i
    + a_i^n(\textcolor{ctrlcolor}{w_i}) (\mathcal{D} V^n)_i
    + f_i^n(\textcolor{ctrlcolor}{w_i}).
    \label{eqn:schemes_penalty_gamma}
\end{equation}
Aside from the term $\epsilon \gamma_i^n(V, \textcolor{ctrlcolor}{w_i}) = O(\epsilon)$, \cref{eqn:schemes_direct_control} and \cref{eqn:schemes_penalty_rearranged} are identical.
Hence, as the positive parameter $\epsilon$ vanishes, we expect the penalty scheme to behave like the direct control scheme.

A penalty scheme was first applied to an HJBQVI in \cite{MR2597608}.
The authors in \cite{MR2597608} consider a very specific form for the intervention operator $\mathcal{M}$ motivated by the application problem studied therein.
This specific form is exploited in the convergence proofs, which do not generalize.
To the best of our knowledge, the penalty scheme was first considered in full generality (i.e., with a general form for $\mathcal{M}$) in our own work \cite{MR3493959,azimzadeh2017convergence}.

%%%%%%%%%%%%%%%%%%%%%%%%%%%%%%%%%%%%%%%%%%%%%%%%%%
\section{The infinite horizon (steady state) case}
\label{sec:schemes_infinite_horizon}

Assume now that the functions $a$, $b$, $f$, $Z$, $\Gamma$, and $K$ do not depend on time (i.e., $a(t, x, \textcolor{ctrlcolor}{w}) = a(0, x, \textcolor{ctrlcolor}{w})$, $b(t, x, \textcolor{ctrlcolor}{w}) = b(0, x, \textcolor{ctrlcolor}{w})$, etc.).
Then, the infinite horizon analogue of \cref{eqn:introduction_hjbqvi} is given by
\begin{equation}
    \min \left\{
        \beta V - \sup_{\textcolor{ctrlcolor}{w} \in W} \left\{
            \frac{1}{2} b(0, \cdot, \textcolor{ctrlcolor}{w})^2 V_{xx}
            + a(0, \cdot, \textcolor{ctrlcolor}{w}) V_x
            + f(0, \cdot, \textcolor{ctrlcolor}{w})
        \right\},
        V - \mathcal{M} V
    \right\} = 0 \text{ on } \gls*{domain}
    \label{eqn:schemes_infinite_horizon}
\end{equation}
where $\beta > 0$ is a positive discount factor.
Note that in the above, $V$ is no longer a function of time and space but rather a function of space alone (i.e., $V(x)$).

Recall that in the context of impulse control, the finite horizon HJBQVI \cref{eqn:introduction_hjbqvi} was related to picking a control $\textcolor{ctrlcolor}{\theta}$ to maximize the quantity \cref{eqn:introduction_functional}.
Similarly, the infinite horizon HJBQVI \cref{eqn:schemes_infinite_horizon} is related to picking a control $\textcolor{ctrlcolor}{\theta}$ to maximize the quantity \cite{MR2568293}
\begin{equation}
    J_\infty(t, x; \textcolor{ctrlcolor}{\theta}) = \mathbb{E} \left[
        \int_t^\infty e^{-\beta u} f(u, X_u, \textcolor{ctrlcolor}{w_u}) du
        + \sum_{t \leq \textcolor{ctrlcolor}{\xi_\ell}} e^{-\beta \xi_\ell} K(
            \textcolor{ctrlcolor}{\xi_\ell},
            X_{\textcolor{ctrlcolor}{\xi_\ell}-},
            \textcolor{ctrlcolor}{z_\ell}
        )
        \middle | X_{t-} = x
    \right].
    \label{eqn:schemes_infinite_horizon_functional}
\end{equation}
We will see an example of such a problem involving an investor whose goal it is to consume optimally in a market consisting of a risky investment and bank account in \cref{chap:results}.

Dropping the term corresponding to the discretization of the time derivative and introducing a term for the discount factor in \cref{eqn:schemes_direct_control}, we obtain an extension of the direct control scheme for the infinite horizon problem:
\begin{equation}
    %\boxed{
        \sup_{\substack{
            \textcolor{ctrlcolor}{d_i} \in \{0,1\} \\
            \textcolor{ctrlcolor}{w_i} \in W^{\gls*{meshing_parameter}}
        }} \left\{
            %\begin{gathered}
                \textcolor{ctrlcolor}{\overline{d_i}} \left(
                    - \beta V_i
                    +\frac{1}{2} b_i^0(\textcolor{ctrlcolor}{w_i})^2 (\mathcal{D}_2 \vec{V})_i
                    + a_i^0(\textcolor{ctrlcolor}{w_i}) (\mathcal{D} \vec{V})_i
                    + f_i^0(\textcolor{ctrlcolor}{w_i})
                \right)
                + \textcolor{ctrlcolor}{d_i}
                \left(
                    (\mathcal{M}_0 \vec{V})_i - V_i
                \right)
            %\end{gathered}
        \right\} = 0.
    %}
    \label{eqn:schemes_direct_control_infinite_horizon}
\end{equation}
In the above, the numerical solution $\vec{V} = (V_0, \ldots, V_M)^\intercal$ is a vector of $M+1$ points (one per grid point $x_i$) since there is no time dependency.
It is understood that equation \cref{eqn:schemes_direct_control_infinite_horizon} holds for all $n = 1, \ldots, N$ and $i = 0, \ldots, M$.
An identical exercise can be performed to extend the penalty scheme \cref{eqn:schemes_penalty} to the infinite horizon case:
\begin{equation}
    %\boxed{
        \sup_{\substack{
            \textcolor{ctrlcolor}{d_i} \in \{0,1\} \\
            \textcolor{ctrlcolor}{w_i} \in W^{\gls*{meshing_parameter}}
        }} \left\{
            %\begin{gathered}
                - \beta V_i
                + \frac{1}{2} b_i^0(\textcolor{ctrlcolor}{w_i})^2
                (\mathcal{D}_2 \vec{V})_i
                + a_i^0(\textcolor{ctrlcolor}{w_i})
                (\mathcal{D} \vec{V})_i
                + f_i^0(\textcolor{ctrlcolor}{w_i})
                + \frac{1}{\epsilon} \textcolor{ctrlcolor}{d_i}
                \left(
                    (\mathcal{M}_0 \vec{V})_i - V_i
                \right)
            %\end{gathered}
        \right\} = 0.
    %}
    \label{eqn:schemes_penalty_infinite_horizon}
\end{equation}

%\bigskip{}
%\begin{figure}[H]
    %\centering
    %\begin{tikzpicture}[node distance=1.5cm, inner sep=7pt, thick]
        %\node [draw] (R1) {
            %Direct control scheme \cref{eqn:schemes_direct_control}
        %};
        %\node [draw, below=of R1] (R2) {
            %$
                %\sup_{\substack{
                    %\textcolor{ctrlcolor}{d_i} \in \{0,1\} \\
                    %\textcolor{ctrlcolor}{w_i} \in W^{\gls*{meshing_parameter}}
                %}} \left\{
                    %\begin{gathered}
                        %\textcolor{ctrlcolor}{\overline{d_i}} \left(
                            %- \beta V_i
                            %+\frac{1}{2} b_i^0(\textcolor{ctrlcolor}{w_i})^2 (\mathcal{D}_2 V)_i
                            %+ a_i^0(\textcolor{ctrlcolor}{w_i}) (\mathcal{D} V)_i
                            %+ f_i^0(\textcolor{ctrlcolor}{w_i})
                        %\right) \\
                        %+ \textcolor{ctrlcolor}{d_i}
                        %\left(
                            %(\mathcal{M}_0 V)_i - V_i
                        %\right)
                    %\end{gathered}
                %\right\} = 0
            %$
        %};
        %\draw [->] (R1) -- (R2);
    %\end{tikzpicture}
%\end{figure}
%\bigskip{}
%
%\bigskip{}
%\begin{figure}[H]
    %\centering
    %\begin{tikzpicture}[node distance=1.5cm, inner sep=7pt, thick]
        %\node [draw] (R1) {
            %Penalty scheme \cref{eqn:schemes_penalty}
        %};
        %\node [draw, below=of R1] (R2) {
            %$
                %\sup_{\substack{
                    %\textcolor{ctrlcolor}{d_i} \in \{0,1\} \\
                    %\textcolor{ctrlcolor}{w_i} \in W^{\gls*{meshing_parameter}}
                %}} \left\{
                    %\begin{gathered}
                        %- \beta V_i
                        %+ \frac{1}{2} b_i^0(\textcolor{ctrlcolor}{w_i})^2
                        %(\mathcal{D}_2 V)_i
                        %+ a_i^0(\textcolor{ctrlcolor}{w_i})
                        %(\mathcal{D} V)_i
                        %+ f_i^0(\textcolor{ctrlcolor}{w_i}) \\
                        %+ \frac{1}{\epsilon} \textcolor{ctrlcolor}{d_i}
                        %\left(
                            %(\mathcal{M}_0 V)_i - V_i
                        %\right)
                    %\end{gathered}
                %\right\} = 0
            %$
            %\refstepcounter{equation}\label{eqn:schemes_penalty_infinite_horizon}
            %(\theequation)
        %};
        %\draw [->] (R1) -- (R2);
    %\end{tikzpicture}
%\end{figure}
%\bigskip{}

%%%%%%%%%%%%%%%%%%%%%%%%%%%%%%%%%%%%%
\section{The explicit-impulse scheme}

The direct control and penalty schemes of the previous sections are versatile in the sense that they are able to handle both finite and infinite horizon problems with minimal restrictions on the coefficients of the HJBQVI.
However, as we will see later in \cref{chap:matrix}, the equations associated with these schemes are nonlinear in $V^n$ and as such, require the use of an expensive iterative method at each timestep.

If the horizon is finite (i.e., $T < \infty$) and the second derivative coefficient does not depend on the control $\textcolor{ctrlcolor}{w}$ (i.e., $b(t, x, \textcolor{ctrlcolor}{w}) = b(t, x)$), then we can produce a scheme whose associated equations are linear in $V^n$, thereby requiring only a single linear system be solved at each timestep.
The idea is to rewrite \cref{eqn:introduction_hjbqvi_interior} as
\begin{equation}
    \min \left\{
        - \sup_{\textcolor{ctrlcolor}{w} \in W} \left\{
            \left( V_t + a(\cdot, \textcolor{ctrlcolor}{w}) V_x \right)
            + \frac{1}{2} b(\cdot)^2 V_{xx}
            %- \left( V_\tau - a(\cdot, \textcolor{ctrlcolor}{w}) V_x \right)
            + f(\cdot, \textcolor{ctrlcolor}{w})
        \right\},
        V - \mathcal{M}V
    \right\} = 0 \text{ on } (0,T] \times \gls*{domain}
    \label{eqn:schemes_hjbqvi_interior_modified}
\end{equation}
and approximate the term $V_t + a(\cdot, \textcolor{ctrlcolor}{w}) V_x$
%$V_\tau - a(\cdot, \textcolor{ctrlcolor}{w}) V_x$
by
\begin{equation}
    V_t(\tau^n, x_i) + a(\tau^n, x_i, \textcolor{ctrlcolor}{w}) V_x(\tau^n, x_i)
    %V_{\tau}(\tau^n, x_i) - a(\tau^n, x_i, \textcolor{ctrlcolor}{w}) V_x(\tau^n, x_i)
    \approx
    \frac{\gls*{interp}(V^{n-1}, x_i + a_i^n(\textcolor{ctrlcolor}{w}) \Delta \tau) - V_i^n}{\Delta \tau}.
    %\frac{V_i^n - \gls*{interp}(V^{n-1}, x_i + a_i^n(\textcolor{ctrlcolor}{w}) \Delta \tau)}{\Delta \tau}.
    \label{eqn:schemes_lagrangian_derivative}
\end{equation}
This approximation can be derived in two ways: by a Lagrangian argument involving tracing the path of a particle \cite[Section 2.3.1]{chen2008numerical}, or by a Taylor series (see \cref{sec:schemes_convergence_rates}).
As usual, the second derivative is approximated by $\mathcal{D}_2$ and the intervention operator by $\mathcal{M}_n$.
Substituting these approximations into \cref{eqn:schemes_hjbqvi_interior_modified}:
\begin{multline*}
    \sup_{\substack{
        \textcolor{ctrlcolor}{d_i} \in \{0,1\} \\
        \textcolor{ctrlcolor}{w_i} \in W^{\gls*{meshing_parameter}}
    }}
    \biggl \{ %\left \{
        %\begin{gathered}
            \textcolor{ctrlcolor}{\overline{d_i}}
            \left(
                \displaystyle{
                    \frac{\gls*{interp}(
                        V^{n-1},
                        x_i + a_i^n(\textcolor{ctrlcolor}{w_i}) \Delta \tau
                    ) - V_i^n}{\Delta \tau}
                }
                + \frac{1}{2} (b_i^n)^2
                (\mathcal{D}_2 V^n)_i
                + f_i^n(\textcolor{ctrlcolor}{w_i})
            \right) \\
            + \textcolor{ctrlcolor}{d_i} \left(
                (\mathcal{M}_n V^{n-1})_i
                - V_i^n
            \right)
        %\end{gathered}
    \biggr \} %\right \}
    = 0
\end{multline*}
where $b_i^n = b(\tau^n, x_i)$.
Note that unlike the previous schemes, we have not used $V^n$ but rather $V^{n-1}$ as an argument to the discretized intervention operator $\mathcal{M}_n$, hence the name ``explicit-impulse''.
Next, an $O(\Delta \tau)$ term is added to the equation:
\begin{multline}
    \sup_{\substack{
        \textcolor{ctrlcolor}{d_i} \in \{0,1\} \\
        \textcolor{ctrlcolor}{w_i} \in W^{\gls*{meshing_parameter}}
    }}
    \biggl \{ %\left \{
        %\begin{gathered}
            \textcolor{ctrlcolor}{\overline{d_i}}
            \left(
                \displaystyle{
                    \frac{\gls*{interp}(
                        V^{n-1},
                        x_i + a_i^n(\textcolor{ctrlcolor}{w_i}) \Delta \tau
                    ) - V_i^n}{\Delta \tau}
                }
                + \frac{1}{2} (b_i^n)^2 (\mathcal{D}_2 V^n)_i
                + f_i^n(\textcolor{ctrlcolor}{w_i})
            \right) \\
            + \textcolor{ctrlcolor}{d_i} \left(
                (\mathcal{M}_n V^{n-1})_i
                - V_i^n
                + \frac{1}{2} (b_i^n)^2 (\mathcal{D}_2 V^n)_i \Delta \tau
            \right)
        %\end{gathered}
    \biggr \} %\right \}
    = 0.
    \label{eqn:schemes_explicit_impulse_rearranged}
\end{multline}
After some simplification, we can isolate all $V^n$ terms to one side of the equation, arriving at the explicit-impulse scheme:
\begin{multline}
    %\boxed{
        %\begin{gathered}
            V_i^n - \frac{1}{2} (b_i^n)^2
            (\mathcal{D}_2 V^n)_i \Delta \tau \\
            = \sup_{\substack{
                \textcolor{ctrlcolor}{d_i} \in \{0,1\} \\
                \textcolor{ctrlcolor}{w_i} \in W^{\gls*{meshing_parameter}}
            }}
            \left\{
                \textcolor{ctrlcolor}{\overline{d_i}}
                \left(
                    \gls*{interp}(
                        V^{n-1},
                        x_i + a_i^n(\textcolor{ctrlcolor}{w_i}) \Delta \tau
                    )
                    + f_i^n(\textcolor{ctrlcolor}{w_i})
                    \Delta \tau
                \right)
                + \textcolor{ctrlcolor}{d_i}
                (\mathcal{M}_n V^{n-1})_i
            \right\}.
        %\end{gathered}
    %}
    \label{eqn:schemes_explicit_impulse}
\end{multline}
%The name ``explicit-impulse'' is in reference to the fact that the argument to the discretized impulse operator $\mathcal{M}_n$ is the solution at the previous timestep, $V^{n-1}$.
It is understood that equation \cref{eqn:schemes_explicit_impulse} holds for all $n = 1, \ldots, N$ and $i = 0, \ldots, M$.
As we will see below, adding the $O(\Delta \tau)$ term allows us to express the scheme as a linear system of equations at each timestep.
However, adding this term does not come for free: it introduces an additional source of discretization error.

Equation \cref{eqn:schemes_explicit_impulse} is the $i$-th row of the linear system
\begin{equation}
    A V^n = y
    \label{eqn:schemes_linear_system}
\end{equation}
where $y$ is the vector whose $i$-th component is the right hand side of \cref{eqn:schemes_explicit_impulse}: %\footnote{In the sequel, we consider situations in which it is necessary to modify the first and last rows of the vector $y$ to incorporate specific boundary conditions (for details, see \cref{app:truncated}).}
\[
    y_i = \sup_{\substack{
        \textcolor{ctrlcolor}{d_i} \in \{0,1\} \\
        \textcolor{ctrlcolor}{w_i} \in W^{\gls*{meshing_parameter}}
    }} \left\{
        \textcolor{ctrlcolor}{\overline{d_i}}
        \left(
            \gls*{interp}(
                V^{n-1},
                x_i + a_i^n(\textcolor{ctrlcolor}{w_i}) \Delta \tau
            )
            + f_i^n(\textcolor{ctrlcolor}{w_i})
            \Delta \tau
        \right)
        + \textcolor{ctrlcolor}{d_i}
        (\mathcal{M}_n V^{n-1})_i
    \right\}
    %\label{eqn:schemes_explicit_impulse_vector}
\]
and
\[%\begin{equation}
    A = I - \frac{\Delta \tau}{2} \gls*{diag}((b_0^n)^2,\ldots, (b_M^n)^2) \mathcal{D}_2.
    %\label{eqn:schemes_explicit_impulse_matrix}
\]%\end{equation}
For example, if the grid points $\{ x_0, \ldots, x_M \}$ are uniformly spaced (i.e., $x_{i+1} - x_i = \Delta x$), the matrix $A$ simplifies to
\begin{equation}
    A = I + \frac{\Delta \tau}{2(\Delta x)^2}
    \begin{pmatrix}
        0 \\
        -(b_1^n)^2 & 2(b_1^n)^2 & -(b_1^n)^2 \\
        & -(b_2^n)^2 & 2(b_2^n)^2 & -(b_2^n)^2 \\
        &  & \ddots & \ddots & \ddots \\
        &  &  & -(b_{M-1}^n)^2 & 2(b_{M-1}^n)^2 & -(b_{M-1}^n)^2 \\
        &  &  &  &  & 0
    \end{pmatrix},
    \label{eqn:schemes_tridiagonal}
\end{equation}
which is strictly diagonally dominant (in fact, $A$ is also strictly diagonally dominant if the grid is not uniform).
Since strict diagonal dominance implies nonsingularity \cite[Theorem 1.21]{MR1753713}, the linear system \cref{eqn:schemes_linear_system} has a unique solution.

\begin{remark}
    If the coefficient of the second spatial derivative does not depend on time (i.e., $b(t, x) = b(0, x)$), then the matrix $A$ does not depend on the timestep $n$.
    In this case, we can speed up computation by factoring (or preconditioning) the matrix $A$ exactly once instead of at each timestep.
    \label{rem:schemes_factorization}
\end{remark}

The explicit-impulse scheme was first introduced in our own work \cite{MR3493959}.

%%%%%%%%%%%%%%%%%%%%%%%%%%%
\section{Convergence rates}
\label{sec:schemes_convergence_rates}

We close this chapter with a study of convergence rates.
Doing so will help our understanding of the convergence rates witnessed in experiments appearing in the sequel.

To simplify notation, we assume temporarily that the grid points $\{x_0, \ldots, x_M\}$ are uniformly spaced (i.e., $x_{i+1} - x_i = \Delta x$).
Now, let $\varphi$ be a smooth function (of time and space) and $\varphi^n = (\varphi(\tau^n, x_0), \ldots, \varphi(\tau^n, x_M))^\intercal$ be a vector whose components are obtained by evaluating $\varphi(\tau^n, \cdot)$ on the spatial grid.
Using Taylor series, it follows that
\[
    (\mathcal{D}_2 \varphi^n)_i = \varphi_{xx}(\tau^n, x_i) + O((\Delta x)^2)
    \textspace \text{and} \textspace
    (\mathcal{D}_{\pm} \varphi^n)_i = \varphi_x(\tau^n, x_i) + O(\Delta x)
    \textspace \text{if } 0 < i < M
\]
Similarly,\footnote{We are, for the time being, ignoring ``overstepping'' error that occurs in \cref{eqn:schemes_interpolation_taylor} when the point $x$ does not lie between two grid points. This issue is handled rigorously in the sequel (see, in particular, the text preceding \cref{lem:convergence_explicit_impulse_consistency} of \cref{chap:convergence}).}
\begin{equation}
    \gls*{interp}(\varphi^n, x) = \varphi(\tau^n, x) + O((\Delta x)^2).
    \label{eqn:schemes_interpolation_taylor}
\end{equation}
Assuming $\Gamma$ is a bounded function, it follows from the above that
\[
    (\mathcal{M}_n \varphi^n)_i
    \approx \sup_{\textcolor{ctrlcolor}{z_i} \in Z(\tau^n, x_i)}
    \left\{
        \gls*{interp}(
            \varphi^n,
            %x_i +
            \Gamma(\tau^n, x_i, \textcolor{ctrlcolor}{z_i})
        ) + K(\tau^n, x_i, \textcolor{ctrlcolor}{z_i})
    \right\} %\\
    = \mathcal{M}\varphi(\tau^n, x_i) + O((\Delta x)^2).
\]
Moreover, substituting $\varphi$ into the right hand side of \cref{eqn:schemes_lagrangian_derivative},
\begin{multline*}
    \frac{
        \gls*{interp}(\varphi^{n-1}, x_i + a_i^n(\textcolor{ctrlcolor}{w}) \Delta \tau) - \varphi(\tau^n, x_i)
    }{\Delta \tau} \\
    = \frac{
        \varphi(\tau^{n-1}, x_i + a_i^n(\textcolor{ctrlcolor}{w}) \Delta \tau) - \varphi(\tau^n, x_i)
    }{\Delta \tau}
    + O \left( \frac{(\Delta x)^2}{\Delta \tau} \right) \\
    = \varphi_t(\tau^n, x_i) + a(\tau^n, x_i, \textcolor{ctrlcolor}{w}) \varphi_x(\tau^n, x_i)
    + O \left( \frac{(\Delta x)^2}{\Delta \tau} + \Delta \tau \right).
\end{multline*}

Assuming that the timestep and spatial grid size are of the same order (i.e., $\Delta \tau = \gls*{const} \gls*{meshing_parameter}$ and $\Delta x = \gls*{const} \gls*{meshing_parameter}$), the above approximations suggest that the schemes of this chapter are all first order accurate.
Obtaining higher order accuracy requires a higher order discretization of, for example, the first order spatial derivative.
However, as we will see in \cref{chap:convergence}, the use of higher order approximations violates a key requirement known as monotonicity, which is used to ensure the theoretical convergence of our schemes (nonmonotone schemes may fail to converge in the viscosity sense; see \cite[Section 1.3]{MR2218974} for a simple example).

%\begin{remark}
%    It is worthwhile to mention that recently, some work has been done in obtaining ``nearly monotone'' schemes for various PDEs \cite{MR3033017,MR3449908}.
%    A nearly monotone scheme is one which can be written as a sum of monotone and nonmonotone parts.
%    The nonmonotone part may employ, for example, higher order approximations.
%    To ensure convergence, the nonmonotone part is required to vanish as the grid is refined (hence ``nearly'').
%    While such schemes are still asymptotically first order, they have exhibited some success in obtaining (empirically) higher order convergence.
%    We do not consider nearly monotone schemes in this thesis.
%\end{remark}

\setcounter{chapter}{2}
%\chapter{Solving the nonlinear matrix equations}
\chapter{Convergence of policy iteration}
\label{chap:matrix}

We saw in \cref{chap:schemes} that a numerical solution of the HJBQVI \cref{eqn:introduction_hjbqvi} can be obtained by the explicit-impulse scheme if the time horizon is finite and the second derivative coefficient is independent of the control.
If either of these assumptions are violated, then we should use the direct control or penalty scheme.
As we will see in this chapter, solutions of these schemes are obtained by solving nonlinear matrix equations of the form
\begin{equation}
    \text{find }
    U \in \mathbb{R}^{M+1}
    \text{ such that }
    \sup_{P\in\mathcal{P}} \left\{
        -A(P) U + y(P)
    \right\} = 0
    \label{eqn:matrix_bellman}
\end{equation}
where $A(P)$ is a real square matrix, $y(P)$ is a real vector, and $\mathcal{P}$ is a set.

Problem \cref{eqn:matrix_bellman} is commonly referred to as a \emph{Bellman problem} and is usually solved by one of two procedures.
The first is value iteration, which -- in the context of numerical schemes for PDEs -- exhibits slow convergence as the grid is refined \cite[Section 6.1]{forsyth2007numerical}.
The second is Howard's \emph{policy iteration}, which is celebrated for its superlinear convergence \cite[Section 3]{MR2551155}.
We consider only the latter.
The policy iteration procedure given in this thesis is our own extension of Howard's original policy iteration that makes no assumptions about the compactness of the set $\mathcal{P}$ or continuity of the functions $A$ and $y$ (compare with, e.g., the standard setting in \cite{MR2551155}).

In the case of the penalty scheme, convergence of policy iteration is a consequence of the strict diagonal dominance of the matrices $A(P)$.
The direct control scheme, however, involves matrices that are singular and as such, more delicate arguments are required to ensure convergence.

The paper \cite{MR2302036} gives sufficient conditions for the convergence of policy iteration as applied to the direct control scheme.
In that paper, the authors consider a more general version of the direct control scheme involving nonlinear operators in lieu of matrices.
As a result of this, the sufficient conditions given therein are too conservative to be applied to the problems appearing in this thesis.
In addition, convergence in \cite{MR2302036} depends on the choice of initial guess. %and thereby may not be robust with respect to errors introduced by floating point arithmetic.
Our goal is to give more lenient sufficient conditions for convergence that are also independent of the choice of initial guess.
We also apply our findings to obtain some new results for infinite horizon Markov decision processes (\glspl*{MDP}).

In order to do so, we use \emph{weakly chained diagonally dominant} (\gls*{wcdd}) matrices.
As a byproduct of our analysis, we prove that \gls*{wcdd} matrices give a graph-theoretic characterization of weakly diagonally dominant \emph{M-matrices}.
This generalizes some well-known results regarding M-matrices.
M-matrices are common in the scientific computing community since they arise naturally in discretizations of elliptic operators (e.g., the Laplacian $\Delta$), Markov decision processes, linear complementarity problems, etc.

Our contributions in this chapter are:
\begin{itemize}
    \item
        Extending Howard's original policy iteration to a setting in which no assumptions are made about the compactness of $\mathcal{P}$ or continuity of $A$ and $y$ (\cref{thm:matrix_policy_iteration}).
    \item
        Providing a graph-theoretic characterization of weakly diagonally dominant M-matrices (\cref{thm:matrix_characterization}).
    \item
        Showing that policy iteration applied to the penalty scheme always converges.
    \item
        Showing that a na\"{i}ve application of policy iteration to the direct control scheme can (and often does) fail.
    \item
        Using the theory of \gls*{wcdd} matrices to provide a provably convergent modification of policy iteration for the direct control scheme.
    \item
        Applying our findings to obtain new results for infinite horizon \glspl*{MDP}.
\end{itemize}

The results of this chapter appear in our articles

\fullcite{MR3493959}

\fullcite{azimzadeh2017fast}

%%%%%%%%%%%%%%%%%%%%%%%%%%
\section{Policy iteration}
\label{sec:matrix_policy_iteration}

In this section, we give some results regarding policy iteration.
In order to simplify notation, we work with the general form of the Bellman problem \cref{eqn:matrix_bellman}, for which it is understood that
\begin{enumerate}[label=(\roman*)]
    \item
        $\mathcal{P} = \mathcal{P}_{0} \times \cdots \times \mathcal{P}_{M}$ is a product of nonempty sets.
    \item
        Controls are ``row-decoupled'' (cf. \cite{MR2551155}).
        That is, using the notation $P = (P_0, \ldots, P_M)$ to denote a control in $\mathcal{P}$,
        \[
            [A(P)]_{ij} \text{ and } [y(P)]_i \text{ depend only on } P_i \in \mathcal{P}_i.
        \]
        In other words, the $i$-th row of $A(P)$ and $y(P)$ are determined solely by the $i$-th coordinate of the control $P$.
    \item
        The order on $\mathbb{R}^{M+1}$ (and $\mathbb{R}^{(M+1) \times (M+1)}$) is element-wise:
        \[
            \text{for } x, y \in \mathbb{R}^{M+1} \text{, } x \geq y
            \text{ if and only if } x_i \geq y_i \text{ for all } i.
        \]
    \item
        The supremum in \cref{eqn:matrix_bellman} is with respect to the element-wise order:
        \[
            \text{for }
            \left\{ y(P) \right\}_{P \in \mathcal{P}}
            \subset \mathbb{R}^{M+1} \text{, }
            x = \sup_{P \in \mathcal{P}} y(P)
            \text{ is a vector with components }
            x_i = \sup_{P \in \mathcal{P}}[y(P)]_i.
        \]
\end{enumerate}

We recall Howard's policy iteration procedure \cite[Algorithm Ho-1]{MR2551155} to solve problem \cref{eqn:matrix_bellman} below.

\vspace{0.25em}

\algrenewcommand\algorithmicdo{\textbf{until convergence} (or max iteration count reached)}
\begin{algorithmic}[1]
    \Procedure{Policy-Iteration}{$\mathcal{P}$}
        \State Pick an initial guess $U^0 \in \mathbb{R}^{M + 1}$
        \For{$\ell \gets 1,2,\ldots$}
            \State Pick $P^\ell$ such that $-A(P^\ell) U^{\ell - 1} + y(P^\ell) = \sup_{P \in \mathcal{P}} \{ -A(P) U^{\ell - 1} + y(P) \}$ \label{line:matrix_exact_supremum}
            \State Solve the linear system $A(P^\ell) U^\ell = y(P^\ell)$ to obtain $U^\ell$
        \EndFor
        \State \Return $U^\ell$
    \EndProcedure
\end{algorithmic}

We have purposely not specified a convergence criterion since the results of this chapter characterize convergence of the iterates $U^\ell$ as $\ell \rightarrow \infty$.
In a practical implementation, the iteration should be terminated when the relative error between subsequent iterates is desirably small (see \cref{eqn:results_convergence_criterion} in \cref{chap:results}).

Before continuing, we recall a definition from linear algebra.

\begin{definition}[{\cite[Section II.23]{MR0205126}}]
    A real square matrix $A$ is monotone if %for all real vectors $U$, $AU \geq 0$ implies $U \geq 0$.
    it is nonsingular and $A^{-1} \geq 0$ (i.e., the entries of $A^{-1}$ are nonnegative).
\end{definition}

We can now state sufficient conditions for the convergence of {\sc Policy-Iteration}.

\begin{proposition}[\cite{MR2551155} Theorem 2.1]
    Suppose that $\mathcal{P}$ is a compact topological space, $A$ and $y$ are continuous functions, and that $A(P)$ is monotone for each $P \in \mathcal{P}$.
    Then, the sequence $(U^\ell)_{\ell \geq 1}$ defined by \Call{Policy-Iteration}{$\mathcal{P}$} converges from below to the unique solution of \cref{eqn:matrix_bellman}.
    Moreover, if $\mathcal{P}$ is finite, convergence occurs in at most $|\mathcal{P}|$ iterations (i.e., $U^{|\mathcal{P}|} = U^{|\mathcal{P}| + 1} = \cdots$).
    \label{prop:matrix_policy_iteration}
\end{proposition}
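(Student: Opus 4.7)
The plan is to establish (i) that the iteration is well-defined and yields a monotone non-decreasing sequence, (ii) that this sequence converges to a solution of \cref{eqn:matrix_bellman}, and (iii) that the solution is unique, which gives the ``from below'' property; the finite-$\mathcal{P}$ bound then follows from a pigeonhole argument. First, since $\mathcal{P} = \prod_{i} \mathcal{P}_i$ is compact, each factor $\mathcal{P}_i$ is compact (as a continuous image of $\mathcal{P}$ under projection), and by row-decoupling the supremum in Line~4 reduces to $M+1$ independent maximizations of a continuous function over these compact sets, each of which is attained. Monotonicity then follows from the identity $y(P^{\ell+1}) = A(P^{\ell+1}) U^{\ell+1}$ together with the key chain
\begin{equation*}
    0 = -A(P^\ell) U^\ell + y(P^\ell) \leq -A(P^{\ell+1}) U^\ell + y(P^{\ell+1}) = A(P^{\ell+1})\bigl(U^{\ell+1} - U^\ell\bigr),
\end{equation*}
which, after applying the nonnegative matrix $A(P^{\ell+1})^{-1}$, gives $U^\ell \leq U^{\ell+1}$.

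For convergence, I would exploit compactness and continuity to uniformly bound $\|A(P)^{-1}\|$ and $\|y(P)\|$ on $\mathcal{P}$; since matrix inversion is continuous on the compact set $\{A(P) : P \in \mathcal{P}\}$, such bounds exist, and $U^\ell = A(P^\ell)^{-1} y(P^\ell)$ is therefore a bounded sequence. Combined with monotonicity, $U^\ell \to U^*$ for some $U^* \in \mathbb{R}^{M+1}$. To identify $U^*$ as a solution of \cref{eqn:matrix_bellman}, I would rewrite the supremum as $\sup_{P \in \mathcal{P}}\{-A(P) U^{\ell-1} + y(P)\} = A(P^\ell)(U^\ell - U^{\ell-1})$, which tends to $0$ by uniform boundedness of $A(P^\ell)$ and $U^\ell - U^{\ell-1} \to 0$; meanwhile $U \mapsto \sup_{P}\{-A(P)U + y(P)\}$ is Lipschitz in $U$ with constant $\sup_{P}\|A(P)\|$, so I can pass to the limit in $U^{\ell-1} \to U^*$ to conclude $\sup_{P}\{-A(P)U^* + y(P)\} = 0$.

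For uniqueness, any solution $U^{**}$ must satisfy $A(P) U^{**} \geq y(P)$ for every $P$; in particular $A(P^{\ell+1}) U^{**} \geq y(P^{\ell+1}) = A(P^{\ell+1}) U^{\ell+1}$, so monotonicity of $A(P^{\ell+1})$ yields $U^{\ell+1} \leq U^{**}$, and letting $\ell \to \infty$ gives $U^* \leq U^{**}$; swapping $U^*$ and $U^{**}$ gives equality. This simultaneously proves convergence from below. In the finite case, $(U^\ell)$ takes values in $\{A(P)^{-1} y(P) : P \in \mathcal{P}\}$, a set of cardinality at most $|\mathcal{P}|$, and any stagnation $U^\ell = U^{\ell-1}$ forces the Bellman residual $A(P^\ell)(U^\ell - U^{\ell-1})$ to vanish, producing a solution; the monotone sequence therefore stabilises within $|\mathcal{P}|$ iterations. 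The only genuinely delicate step is the limit-passage that identifies $U^*$ as a solution: it is precisely the uniform bounds on $\|A(P)\|$ and $\|A(P)^{-1}\|$ supplied by compactness plus continuity that let one interchange limit and supremum and control the Bellman residual, and this is the ingredient that will need to be replaced by the \gls*{wcdd} machinery once these hypotheses are relaxed in the singular case treated later in the chapter.
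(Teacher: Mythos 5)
The paper does not actually prove this proposition directly; it cites it to \cite{MR2551155} and immediately afterward proves the more general \cref{thm:matrix_policy_iteration} (for $\epsilon$-approximate policy iteration) using the same ingredients you invoke: boundedness of $A(P)^{-1}$ and $y(P)$, identification of the limit via the Lipschitz map $H(U) = \sup_{P \in \mathcal{P}}\{-A(P)U + y(P)\}$ (\cref{lem:proofs_matrix_policy_iteration_2}) and the vanishing Bellman residual $A(P^\ell)(U^\ell - U^{\ell-1})$, and the one-sided comparison \cref{lem:matrix_one_side} for uniqueness. Your argument is correct and follows essentially the same route; the only structural difference is that the paper must relax exact monotonicity of $(U^\ell)_\ell$ to almost-monotonicity and invoke a summability lemma in place of the monotone-bounded-convergence argument you use, precisely because it accommodates the error $\epsilon^\ell$.

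One step worth tightening is your uniqueness claim that ``swapping $U^*$ and $U^{**}$ gives equality.'' As written, the inequality $U^* \leq U^{**}$ rests on the fact that the iterates supply policies $(P^{\ell+1})_\ell$ whose linear solves sandwich $U^{**}$ against $U^{\ell+1}$. The reverse inequality does not come for free from a ``swap'': you need a policy achieving the supremum at $U^{**}$, not at $U^*$. This is where compactness of $\mathcal{P}$ and continuity of $A$ and $y$ re-enter a second time: the supremum $\sup_{P}\{-A(P)U^{**} + y(P)\} = 0$ is attained at some $\hat{P}$, and then $A(\hat{P})U^{**} = y(\hat{P})$ together with $A(\hat{P})U^* \geq y(\hat{P})$ and $A(\hat{P})^{-1} \geq 0$ gives $U^{**} \leq U^*$. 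Spelling this out is exactly \cref{lem:matrix_one_side} applied symmetrically, and it makes your uniqueness argument airtight.
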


The compactness and continuity assumptions in \cref{prop:matrix_policy_iteration} ensure that the supremum on \cref{line:matrix_exact_supremum} of {\sc Policy-Iteration} is attained at a point in $\mathcal{P}$.
To remove these assumptions (specifically in order to strengthen our results in our study of \glspl*{MDP}), we introduce below an extension of Howard's policy iteration.
In the pseudocode description of the procedure, we use $\vec{e} = (1, \ldots, 1)^\intercal$ to denote the vector whose entries are one.

\vspace{0.25em}

\begin{algorithmic}[1]
    \Procedure{$\epsilon$-Policy-Iteration}{$\mathcal{P}$}
        \State Pick an initial guess $U^0 \in \mathbb{R}^{M + 1}$
        \State Pick a sequence $(\epsilon^k)_{k \geq 1}$ of positive numbers satisfying $\sum_k \epsilon^k < \infty$ \Comment{e.g., $\epsilon^k = 1/k^2$}
        \For{$\ell \gets 1,2,\ldots$}
            \State Pick $P^\ell$ such that $-A(P^\ell) U^{\ell - 1} + y(P^\ell) + \epsilon^\ell \vec{e} \geq \sup_{P \in \mathcal{P}} \{ -A(P) U^{\ell - 1} + y(P) \}$ \label{line:matrix_supremum}
            \State Solve the linear system $A(P^\ell) U^\ell = y(P^\ell)$ to obtain $U^\ell$ \label{line:matrix_solve}
        \EndFor
        \State \Return $U^\ell$
    \EndProcedure
\end{algorithmic}
\algrenewcommand\algorithmicdo{\textbf{do}}

We can think of the parameter $\epsilon^\ell$ in the above as the maximum error allowed in approximating the supremum on \cref{line:matrix_supremum} at each step $\ell$.
%As we will see below, our extension requires no topological assumptions (e.g., compactness) on the set $\mathcal{P}$, and thereby no topological assumptions (e.g., continuity) on the functions $A$ and $y$.

To establish convergence of {\sc $\epsilon$-Policy-Iteration}, we require some assumptions:

\begin{enumerate}[label=(H\arabic*)]
    \item
        \label{enum:matrix_bounded_inverse}
        The function $P \mapsto A(P)^{-1}$ is bounded on the set $\{ P \in \mathcal{P} \colon A(P) \text{ is nonsingular} \}$.
    \item
        \label{enum:matrix_bounded}
        The functions $A$ and $y$ are bounded.
\end{enumerate}

Note that the boundedness in assumptions \cref{enum:matrix_bounded_inverse,enum:matrix_bounded} is stated without reference to a particular norm (all norms on a finite dimensional vector space are equivalent).
We are now ready to state the convergence result.

\begin{theorem}
    Suppose \cref{enum:matrix_bounded_inverse}, \cref{enum:matrix_bounded}, and that $A(P)$ is monotone for each $P \in \mathcal{P}$.
    Then, the sequence $(U^\ell)_\ell$ defined by \Call{$\epsilon$-Policy-Iteration}{$\mathcal{P}$} converges to the unique solution of \cref{eqn:matrix_bellman}.
    \label{thm:matrix_policy_iteration}
\end{theorem}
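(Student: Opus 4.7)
The plan is to combine uniform boundedness of the iterates $(U^\ell)_\ell$, an approximate monotonicity property forced by the summability of $(\epsilon^k)$, and Lipschitz continuity of the Bellman operator $\phi(U) := \sup_{P \in \mathcal{P}}\{-A(P) U + y(P)\}$ to pin down a limit, and then to verify that this limit solves the Bellman equation \eqref{eqn:matrix_bellman}, which I read as $\phi(U) = 0$.

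First I would establish a uniform bound on the iterates. Each $A(P^\ell)$ is monotone (hence nonsingular), and line 6 gives $U^\ell = A(P^\ell)^{-1} y(P^\ell)$; combining \cref{enum:matrix_bounded_inverse,enum:matrix_bounded} then yields $\sup_\ell \|U^\ell\|_\infty < \infty$. Next, the $\epsilon$-optimality on line 5 together with the identity $A(P^\ell) U^\ell = y(P^\ell)$ from line 6 gives
$$A(P^{\ell+1}) U^\ell \leq y(P^{\ell+1}) + \epsilon^{\ell+1} \vec{e},$$
so applying $A(P^{\ell+1})^{-1} \geq 0$ and using \cref{enum:matrix_bounded_inverse} to bound $A(P^{\ell+1})^{-1} \vec{e} \leq C' \vec{e}$ componentwise produces the approximate monotonicity $U^{\ell+1} \geq U^\ell - C' \epsilon^{\ell+1} \vec{e}$. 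Because $\sum_k \epsilon^k < \infty$, the shifted sequence $W^\ell := U^\ell - C' \bigl(\sum_{k > \ell} \epsilon^k\bigr) \vec{e}$ is \emph{genuinely} nondecreasing and remains bounded above by the step-one estimate; hence $W^\ell$ converges, and since the correction tends to $0$, so does $U^\ell$. Call the limit $U^\infty$.

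To see that $U^\infty$ solves $\phi(U) = 0$, I first observe that the row-wise $\ell^1$ norms of $A(P)$ are uniformly bounded by \cref{enum:matrix_bounded}, which makes $\phi$ Lipschitz continuous. Line 6 gives $\phi(U^\ell) \geq -A(P^\ell) U^\ell + y(P^\ell) = 0$, while the $\epsilon$-optimality yields
$$\phi(U^\ell) \leq -A(P^{\ell+1}) U^\ell + y(P^{\ell+1}) + \epsilon^{\ell+1} \vec{e} = A(P^{\ell+1})\bigl(U^{\ell+1} - U^\ell\bigr) + \epsilon^{\ell+1} \vec{e},$$
and the right-hand side vanishes as $\ell \to \infty$ since $A$ is bounded, $U^{\ell+1} - U^\ell \to 0$, and $\epsilon^{\ell+1} \to 0$. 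Lipschitz continuity then forces $\phi(U^\infty) = 0$.

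For uniqueness I would exploit the row-decoupled structure of $\mathcal{P}$: given two solutions $U, U'$ and any $\delta > 0$, pick $P^\delta \in \mathcal{P}$ componentwise so that $A(P^\delta) U \leq y(P^\delta) + \delta \vec{e}$, while $\phi(U') = 0$ forces $A(P^\delta) U' \geq y(P^\delta)$; subtracting, applying $A(P^\delta)^{-1} \geq 0$, and using \cref{enum:matrix_bounded_inverse} to control the inverse yields $U - U' \leq \delta C' \vec{e}$, so sending $\delta \downarrow 0$ together with the symmetric inequality gives $U = U'$. The main obstacle is the failure of \emph{strict} monotonicity of $U^\ell$ caused by the $\epsilon$-approximate maximization on line 5; the $W^\ell$ construction is precisely where the summability $\sum_k \epsilon^k < \infty$ earns its keep, and \cref{enum:matrix_bounded_inverse} is what keeps the cumulative correction both finite and vanishing in the limit.
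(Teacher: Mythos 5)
Your argument is correct and matches the paper's proof in all essential respects: you derive the approximate monotonicity $U^{\ell+1}\geq U^\ell-C'\epsilon^{\ell+1}\vec{e}$ from $\epsilon$-optimality together with $A(P^\ell)^{-1}\geq 0$ and \cref{enum:matrix_bounded_inverse}, establish uniform boundedness of the iterates via $U^\ell=A(P^\ell)^{-1}y(P^\ell)$, deduce convergence from summability of $(\epsilon^k)$, pass to the limit using Lipschitz continuity of $\phi$, and obtain uniqueness by comparing two solutions through $\delta$-optimal controls and the monotone inverse. The paper packages the convergence step in an abstract scalar lemma and uses approximating sequences of controls rather than a fixed $P^\delta$ for uniqueness, but those differences are cosmetic; your explicit shifted sequence $W^\ell$ is an equally clean way to obtain the same conclusion.
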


To prove the result, we require a few lemmas.
The first is just a general form of the results \cite[Eq. (5.8)]{MR2551155} and \cite[Proposition 3.1]{MR3022201}.

\begin{lemma}
    Let $(x^\ell)_{\ell \geq 0}$ be a sequence of real numbers that is bounded from above.
    Suppose there exists a sequence $(\epsilon^\ell)_{\ell \geq 1}$ of positive numbers such that $\sum_\ell \epsilon^\ell < \infty$ and $x^\ell - x^{\ell - 1} \geq -\epsilon^\ell$ for $\ell \geq 1$.
    Then, the sequence $(x^\ell)_{\ell \geq 0}$ converges to a real number.
    \label{lem:proofs_matrix_policy_iteration_1}
\end{lemma}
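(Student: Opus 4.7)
The plan is to reduce the statement to a monotone convergence argument by ``absorbing'' the allowed decreases into an auxiliary sequence. The key observation is that the hypothesis $x^\ell - x^{\ell-1} \geq -\epsilon^\ell$ says the sequence $(x^\ell)$ is only mildly non-monotone, with total possible cumulative decrease bounded by $\sum_\ell \epsilon^\ell < \infty$.

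Concretely, I would define the auxiliary sequence
\[
    y^\ell = x^\ell + \sum_{k=1}^{\ell} \epsilon^k
    \quad \text{for } \ell \geq 0,
\]
with the convention that the empty sum is zero. Then
\[
    y^\ell - y^{\ell-1} = (x^\ell - x^{\ell-1}) + \epsilon^\ell \geq -\epsilon^\ell + \epsilon^\ell = 0,
\]
so $(y^\ell)$ is non-decreasing. Moreover, since $(x^\ell)$ is bounded above by some constant $C$ and $\sum_k \epsilon^k$ converges to a finite value $S$, we have $y^\ell \leq C + S < \infty$, so $(y^\ell)$ is bounded above. By the monotone convergence theorem for real sequences, $y^\ell$ converges to a real limit $y^\infty$.

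Finally, since the partial sums $\sum_{k=1}^{\ell}\epsilon^k$ converge to the real number $S$, I conclude
\[
    x^\ell = y^\ell - \sum_{k=1}^{\ell}\epsilon^k \longrightarrow y^\infty - S \in \mathbb{R},
\]
which is the desired conclusion. There is no real obstacle here: the only subtlety is choosing the right auxiliary sequence so that the telescoping absorbs the $-\epsilon^\ell$ term in the hypothesis, and the summability of $(\epsilon^\ell)$ is exactly what guarantees both the boundedness of $(y^\ell)$ and that subtracting the partial sums at the end yields a finite limit.
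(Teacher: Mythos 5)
Your proof is correct, and it takes a genuinely different route from the paper's. The paper telescopes the difference $x^q - x^p \geq -\sum_{\ell=p+1}^q \epsilon^\ell > -\sum_{\ell>p}\epsilon^\ell$, sets $p=0$ to get a lower bound, and then takes $\liminf$ over $q$ and $\limsup$ over $p$ in the same inequality to conclude $\liminf_q x^q \geq \limsup_p x^p$ (using that the tail sums vanish), forcing equality. You instead absorb the allowed decreases into a monotone auxiliary sequence $y^\ell = x^\ell + \sum_{k=1}^\ell \epsilon^k$, apply monotone convergence, and subtract off the convergent partial sums at the end. Both hinge on the summability of $(\epsilon^\ell)$, but your reduction is structurally cleaner: it isolates the one real ingredient (a bounded monotone sequence converges) and avoids juggling $\liminf$ and $\limsup$ of the same double inequality. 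The paper's version has the minor advantage of not introducing any new object and of delivering the lower bound $x^q > x^0 - \sum_\ell \epsilon^\ell$ as a byproduct, but your proof yields the same lower bound immediately from $x^\ell = y^\ell - \sum_{k\leq\ell}\epsilon^k \geq y^0 - S = x^0 - S$. Either proof is perfectly acceptable.
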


\begin{proof}
    Note that
    \begin{equation}
        x^q - x^p
        = \left( x^q - x^{q-1} \right) + \cdots + \left( x^{p+1} - x^p \right)
        \geq - \sum_{\ell = p+1}^q \epsilon^\ell
        > - \sum_{\ell > p} \epsilon^\ell
        \label{eqn:proofs_matrix_policy_iteration_0}
    \end{equation}
    for any integers $q$ and $p$ satisfying $q \geq p \geq 0$.
    Picking $p = 0$ in \cref{eqn:proofs_matrix_policy_iteration_0}, we get $x^q > x^0 - \sum_\ell \epsilon^\ell$, establishing that the sequence $(x^\ell)_\ell$ is also bounded from below.
    Taking limits in \cref{eqn:proofs_matrix_policy_iteration_0}, we get $\liminf_q x^q - \limsup_p x^p \geq 0$.
    Since $\liminf_q x^q - \limsup_p x^p \leq 0$ by definition, this implies that $\liminf_q x^q = \limsup_p x^p$, and hence the sequence $(x^\ell)_\ell$ has a limit.
\end{proof}

\begin{lemma}
    %If \cref{enum:matrix_bounded} holds,
    The function $H$ defined by $H(U) = \sup_{P \in \mathcal{P}} \{ -A(P) U + y(P) \}$ is Lipschitz continuous.
    \label{lem:proofs_matrix_policy_iteration_2}
\end{lemma}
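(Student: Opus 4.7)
The plan is to establish Lipschitz continuity of $H$ componentwise, leveraging the row-decoupled structure of $\mathcal{P}$ together with assumption \cref{enum:matrix_bounded}. Since controls are row-decoupled, for each index $i$ we may write
\[
    [H(U)]_i
    = \sup_{P_i \in \mathcal{P}_i} \left\{ - \sum_{j} [A(P)]_{ij} U_j + [y(P)]_i \right\},
\]
where both $[A(P)]_{ij}$ and $[y(P)]_i$ depend only on the coordinate $P_i$ of $P$.

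Fix $U, V \in \mathbb{R}^{M+1}$. I would then invoke the elementary inequality
\[
    \left| \sup_{P_i \in \mathcal{P}_i} f(P_i) - \sup_{P_i \in \mathcal{P}_i} g(P_i) \right|
    \leq \sup_{P_i \in \mathcal{P}_i} |f(P_i) - g(P_i)|,
\]
applied to $f(P_i) = -[A(P)]_{i,\cdot} U + [y(P)]_i$ and $g(P_i) = -[A(P)]_{i,\cdot} V + [y(P)]_i$. The $y$-contributions cancel, leaving
\[
    \left| [H(U)]_i - [H(V)]_i \right|
    \leq \sup_{P_i \in \mathcal{P}_i} \left| [A(P)]_{i,\cdot} (V - U) \right|.
\]

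To close the argument, I would use \cref{enum:matrix_bounded} to select a constant $C > 0$ such that $|[A(P)]_{ij}| \leq C$ uniformly in $i, j, P$ (all norms on a finite-dimensional space being equivalent, the choice of norm used to express boundedness is immaterial). Bounding the inner product row-by-row then gives
\[
    \left| [A(P)]_{i,\cdot}(V - U) \right|
    \leq \sum_{j} |[A(P)]_{ij}| \, |V_j - U_j|
    \leq C (M+1) \, \|V - U\|_\infty.
\]
Taking the maximum over $i$ yields $\|H(U) - H(V)\|_\infty \leq C(M+1) \|V - U\|_\infty$, establishing Lipschitz continuity.

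There is no real obstacle here: the only subtlety is the careful use of the supremum-difference inequality, which holds unconditionally (no attainment of the supremum is needed), and the observation that the $y$-terms cancel -- so only boundedness of $A$, and not of $y$, is actually invoked at this step. The full strength of \cref{enum:matrix_bounded_inverse} is not required for Lipschitz continuity of $H$ itself; it will presumably enter later in the convergence analysis of $\epsilon$-{\sc Policy-Iteration}.
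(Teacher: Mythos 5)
Your proof is correct and follows essentially the same route as the paper's: both use the elementary inequality $|\sup f - \sup g| \leq \sup |f - g|$ (applied componentwise), observe that the $y$-terms cancel, and then invoke the boundedness of $A$ from \cref{enum:matrix_bounded}. The only cosmetic difference is that you bound $|[A(P)]_{i,\cdot}(V-U)|$ by $C(M+1)\|V-U\|_\infty$ via entrywise estimates, whereas the paper directly uses $\sup_P \|A(P)\|_\infty$ as the Lipschitz constant.
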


\begin{proof}
    The claim follows from the fact that for any two vectors $U$ and $\hat{U}$,
    \begin{multline*}
        \left \Vert
            H(U) - H(\hat{U})
        \right \Vert_\infty
        \leq \sup_{P \in \mathcal{P}} \left \Vert
            - A(P) (U - \hat{U}) + y(P) - y(P)
        \right \Vert_\infty
        \\
        = \sup_{P \in \mathcal{P}} \left \Vert
            A(P) ( U - \hat{U} )
        \right \Vert_\infty
        \leq \left(
            \sup_{P \in \mathcal{P}} \left \Vert
                A(P)
            \right \Vert_\infty
        \right)
        \left \Vert
            U - \hat{U}
        \right \Vert_\infty
        = \gls*{const} \left \Vert
            U - \hat{U}
        \right \Vert_\infty.
    \end{multline*}
    The last equality follows from the boundedness of the function $A$ in \cref{enum:matrix_bounded}.
\end{proof}

\begin{lemma}
    Let $U$ be a solution of \cref{eqn:matrix_bellman} and $\hat{U}$ be a vector.
    Suppose %\cref{enum:matrix_bounded_inverse} and that
    we can find a sequence $(P^\ell)_\ell$ such that $A(P^\ell)$ is monotone for each $\ell$ and
    \begin{equation}
        - A(P^\ell) \hat{U} + y(P^\ell) \rightarrow 0.
        \label{eqn:matrix_one_side}
    \end{equation}
    Then, $U - \hat{U} \geq 0$.
    \label{lem:matrix_one_side}
\end{lemma}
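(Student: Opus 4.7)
My plan is to exploit the fact that $U$, being a solution of \cref{eqn:matrix_bellman}, is a \emph{supersolution} in the sense that $-A(P)U + y(P) \leq 0$ for every $P \in \mathcal{P}$. Applied to the particular sequence $(P^\ell)_\ell$ given in the hypothesis, this yields $A(P^\ell) U \geq y(P^\ell)$ for each $\ell$. The strategy is then to combine this supersolution inequality with the convergence \cref{eqn:matrix_one_side} and apply the monotone inverse $A(P^\ell)^{-1}$ to extract the desired sign on $U - \hat{U}$.

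First I would introduce the residual $r^\ell := -A(P^\ell)\hat{U} + y(P^\ell)$, which tends to $0$ by \cref{eqn:matrix_one_side}. Subtracting $A(P^\ell)\hat{U}$ from the supersolution inequality, I obtain
\[
    A(P^\ell)(U - \hat{U}) \geq y(P^\ell) - A(P^\ell)\hat{U} = r^\ell.
\]
Since $A(P^\ell)$ is monotone, its inverse $A(P^\ell)^{-1}$ is nonnegative, and multiplying a componentwise inequality by a nonnegative matrix preserves the inequality. Hence
\[
    U - \hat{U} \geq A(P^\ell)^{-1} r^\ell
\]
for every $\ell$.

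The final step is to pass to the limit on the right-hand side. By hypothesis \cref{enum:matrix_bounded_inverse}, the family $\{A(P^\ell)^{-1}\}_\ell$ is uniformly bounded in any (equivalent) matrix norm, while $r^\ell \to 0$. Thus $\Vert A(P^\ell)^{-1} r^\ell \Vert \leq \bigl(\sup_\ell \Vert A(P^\ell)^{-1} \Vert\bigr) \Vert r^\ell \Vert \to 0$, so $A(P^\ell)^{-1} r^\ell \to 0$ componentwise. Taking limits in the previous display gives $U - \hat{U} \geq 0$, as required.

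The argument is essentially routine once one recognizes that $U$ is automatically a supersolution; the only subtle point is ensuring the inverses do not blow up, which is exactly what \cref{enum:matrix_bounded_inverse} is designed to prevent. I expect this to be the main place where the reader needs to invoke a hypothesis explicitly, since the boundedness of $y$ from \cref{enum:matrix_bounded} is not needed here — only the boundedness of $A(P^\ell)^{-1}$ combined with the already-known decay of $r^\ell$.
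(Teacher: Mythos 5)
Your proof is correct and follows essentially the same route as the paper's: use that $U$ being a solution implies $-A(P^\ell)U + y(P^\ell) \le 0$, combine with the vanishing residual from \cref{eqn:matrix_one_side}, multiply by the nonnegative inverse $A(P^\ell)^{-1}$, and pass to the limit using \cref{enum:matrix_bounded_inverse}. Your passage to the limit via matrix norms is, if anything, slightly more careful than the paper's one-line bound; and you are right that \cref{enum:matrix_bounded} is not needed here.
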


\begin{proof}
    By \cref{eqn:matrix_one_side}, we can find a sequence $(\delta^\ell)_\ell$ of vectors converging to zero such that
    \begin{equation}
        -A(P^\ell) \hat{U} + y(P^\ell) + \delta^\ell = 0.
        \label{eqn:matrix_one_side_1}
    \end{equation}
    Moreover, since $U$ is a solution of \cref{eqn:matrix_bellman},
    \begin{equation}
        0 = \sup_{P \in \mathcal{P}} \left \{
            -A(P) U + y(P)
        \right \}
        \geq -A(P^\ell) U + y(P^\ell).
        \label{eqn:matrix_one_side_2}
    \end{equation}
    Combining \cref{eqn:matrix_one_side_1,eqn:matrix_one_side_2},
    \begin{equation}
        U - \hat{U}
        \geq A(P^\ell)^{-1} (- \delta^\ell )
        \geq - \gls*{const} \delta^\ell.
        \label{eqn:matrix_one_side_3}
    \end{equation}
    In the above, we have used the monotonicity of $A(P^\ell)$ (which implies $A(P^\ell)^{-1} \geq 0$) along with \cref{enum:matrix_bounded_inverse}.
    Taking limits in \cref{eqn:matrix_one_side_3}, we find that $U - \hat{U} \geq 0$.
\end{proof}

We can now prove \cref{thm:matrix_policy_iteration}:

\begin{proof}[Proof of \cref{thm:matrix_policy_iteration}]
    By \cref{line:matrix_solve,line:matrix_supremum} of {\sc $\epsilon$-Policy-Iteration},
    \begin{equation}
        A(P^\ell) \left( U^\ell - U^{\ell - 1} \right)
        = -A(P^\ell) U^{\ell - 1} + y(P^\ell)
        \geq \sup_{P \in \mathcal{P}} \left\{
            -A(P) U^{\ell - 1} + y(P)
        \right\} - \epsilon^{\ell} \vec{e}
        \label{eqn:proofs_matrix_policy_iteration_1}
    \end{equation}
    and, for $\ell > 1$,
    \begin{equation}
        \sup_{P \in \mathcal{P}} \left\{
            -A(P) U^{\ell - 1} + y(P)
        \right\}
        \geq -A(P^{\ell - 1}) U^{\ell - 1} + y(P^{\ell - 1})
        = 0.
        \label{eqn:proofs_matrix_policy_iteration_2}
    \end{equation}
    Combining \cref{eqn:proofs_matrix_policy_iteration_1,eqn:proofs_matrix_policy_iteration_2}, we find that for $\ell > 1$,
    \begin{equation}
        A(P^\ell) \left(
            U^\ell - U^{\ell - 1}
        \right) + \epsilon^\ell \vec{e}
        \geq \sup_{P \in \mathcal{P}} \left\{
            -A(P) U^{\ell - 1} + y(P)
        \right\}
        \geq 0
        \label{eqn:proofs_matrix_policy_iteration_3}
    \end{equation}
    and hence, similarly to the proof of \cref{lem:matrix_one_side},
    \[
        U^\ell - U^{\ell - 1}
        \geq A(P^\ell)^{-1} ( - \epsilon^\ell \vec{e} \, )
        \geq - \gls*{const} \epsilon^\ell \vec{e} \, .
    \]
    Note also that since $U^\ell = A(P^\ell)^{-1} y(P^\ell)$ for $\ell \geq 1$, the sequence $(U^\ell)_\ell$ is bounded by \cref{enum:matrix_bounded_inverse,enum:matrix_bounded}.
    Therefore, we can apply \cref{lem:proofs_matrix_policy_iteration_1} to conclude that the sequence $(U^\ell)_\ell$ converges to some real vector $U$.
    Now, since $U^\ell \rightarrow U$ and $\epsilon^\ell \rightarrow 0$,
    \begin{equation}
        \lim_{\ell \rightarrow \infty} \left\{
            A(P^\ell) \left(
                U^\ell - U^{\ell - 1}
            \right) + \epsilon^\ell \vec{e}
        \right\}
        = 0.
        \label{eqn:proofs_matrix_policy_iteration_4}
    \end{equation}
    Taking limits in \cref{eqn:proofs_matrix_policy_iteration_3} and applying \cref{eqn:proofs_matrix_policy_iteration_4},
    \[
        0
        = \lim_{\ell \rightarrow \infty} \left\{
            A(P^\ell) \left(
                U^\ell - U^{\ell - 1}
            \right) + \epsilon^\ell \vec{e}
        \right\}
        \geq \lim_{\ell \rightarrow \infty} \sup_{P \in \mathcal{P}} \left\{
            -A(P) U^{\ell - 1} + y(P)
        \right\}
        \geq 0
    \]
    and hence
    \[
        \lim_{\ell \rightarrow \infty} \sup_{P \in \mathcal{P}}
        \left\{ -A(P) U^{\ell - 1} + y(P) \right\} = 0.
    \]
    Using the notation of \cref{lem:proofs_matrix_policy_iteration_2}, we can rewrite the above as
    \begin{equation}
        \lim_{\ell \rightarrow \infty} H(U^{\ell-1}) = 0
        \label{eqn:proofs_matrix_policy_iteration_5}
    \end{equation}
    Since limits and continuous functions commute, \cref{lem:proofs_matrix_policy_iteration_2} implies
    \[
        \lim_{\ell \rightarrow \infty} H(U^{\ell-1})
        = H(\lim_{\ell \rightarrow \infty} U^{\ell-1})
        = H(U)
    \]
    and hence $H(U) = 0$ by \cref{eqn:proofs_matrix_policy_iteration_5}.
    Equivalently, substituting in the definition of $H$,
    \[
        \sup_{P \in \mathcal{P}} \left\{ -A(P)U + y(P) \right\} = 0.
    \]
    That is, $U$ is a solution of \cref{eqn:matrix_bellman}.

    It remains only to prove uniqueness.
    Let $U$ and $\hat{U}$ be solutions of \cref{eqn:matrix_bellman}.
    It follows that we can find a sequence $(P^\ell)_\ell$ such that
    \[
        -A(P^\ell) \hat{U} + y(P^\ell) \rightarrow 0.
    \]
    Therefore, by \cref{lem:matrix_one_side}, $U - \hat{U} \geq 0$.
    Switching the roles of $U$ and $\hat{U}$ in the above gives $\hat{U} - U \geq 0$.
    Therefore, $U = \hat{U}$.
\end{proof}

%\begin{remark}
    %If $\mathcal{P}$ is a compact topological space and $A$ and $y$ are continuous functions, then \cref{enum:matrix_bounded_inverse,enum:matrix_bounded} are trivially satisfied.
    %In this case, since the supremum on \cref{line:matrix_supremum} of {\sc $\epsilon$-Policy-Iteration} is attained at a point in $\mathcal{P}$, we can use Howard's original {\sc Policy-Iteration} and still obtain convergence.
    %\label{rem:matrix_compact}
%\end{remark}

We close this section by pointing out that it is sometimes advantageous to scale the inputs to policy iteration in order to obtain faster convergence.
While this was observed in \cite{MR3102414} for a specific instance of problem \cref{eqn:matrix_bellman}, we give a general result which codifies precisely when this scaling can be performed without changing the set of solutions to \cref{eqn:matrix_bellman}.
This will be used in the sequel %\cref{chap:results}
to speed up computations.

\begin{lemma}
    \label{lem:matrix_scaling}
    Let $s \colon \mathcal{P} \rightarrow \mathbb{R}^{M+1}$ be a vector-valued function such that
    \begin{equation}
        \inf_{P \in \mathcal{P}} \min_i [s(P)]_i > 0
        \textspace \text{and} \textspace
        \sup_{P \in \mathcal{P}} \max_i [s(P)]_i < \infty.
        \label{eqn:matrix_scaling_1}
    \end{equation}
    Then, $U$ is a solution of \cref{eqn:matrix_bellman} if and only if
    \begin{equation}
        \sup_{P \in \mathcal{P}} \left\{
            \gls*{diag}(s(P)) \left( -A(P)U + y(P) \right)
        \right\} = 0.
        \label{eqn:matrix_scaling_2}
    \end{equation}
\end{lemma}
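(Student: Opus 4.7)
The plan is to exploit the componentwise nature of the supremum in the Bellman problem together with the two-sided bounds on $s(P)$. Because the controls are row-decoupled and the order on $\mathbb{R}^{M+1}$ is element-wise, the equation \cref{eqn:matrix_bellman} is equivalent to the system of scalar conditions
\[
    \sup_{P \in \mathcal{P}} [-A(P)U + y(P)]_i = 0
    \text{ for each } i = 0, \ldots, M,
\]
and likewise \cref{eqn:matrix_scaling_2} is equivalent to
\[
    \sup_{P \in \mathcal{P}} \bigl\{ [s(P)]_i \cdot [-A(P)U + y(P)]_i \bigr\} = 0
    \text{ for each } i.
\]
Since $[s(P)]_i > 0$, multiplication by $[s(P)]_i$ preserves signs, so the two conditions share the same ``nonpositivity'' half. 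The only delicate issue is matching the approach-to-zero half, which is where the bounds \cref{eqn:matrix_scaling_1} enter.

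First I would dispatch the forward direction. Assume $U$ solves \cref{eqn:matrix_bellman}. Fix $i$. For every $P$, $[-A(P)U + y(P)]_i \leq 0$, and multiplying by the positive number $[s(P)]_i$ gives $[s(P)]_i \cdot [-A(P)U + y(P)]_i \leq 0$. Next, pick a sequence $(P^k)_k \subset \mathcal{P}$ with $[-A(P^k)U + y(P^k)]_i \to 0$. Using the upper bound $C := \sup_{P} \max_i [s(P)]_i < \infty$ in \cref{eqn:matrix_scaling_1}, we obtain
\[
    \bigl| [s(P^k)]_i \cdot [-A(P^k)U + y(P^k)]_i \bigr|
    \leq C \cdot \bigl| [-A(P^k)U + y(P^k)]_i \bigr|
    \longrightarrow 0,
\]
so the scaled row-$i$ supremum is likewise $0$.

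For the reverse direction, assume \cref{eqn:matrix_scaling_2} holds and fix $i$. For every $P$ we have $[s(P)]_i \cdot [-A(P)U + y(P)]_i \leq 0$, and dividing by the positive number $[s(P)]_i$ gives $[-A(P)U + y(P)]_i \leq 0$. Now pick $(P^k)_k$ so that $[s(P^k)]_i \cdot [-A(P^k)U + y(P^k)]_i \to 0$. Using the lower bound $c := \inf_{P} \min_i [s(P)]_i > 0$ from \cref{eqn:matrix_scaling_1},
\[
    \bigl| [-A(P^k)U + y(P^k)]_i \bigr|
    \leq \frac{1}{c} \bigl| [s(P^k)]_i \cdot [-A(P^k)U + y(P^k)]_i \bigr|
    \longrightarrow 0,
\]
yielding the row-$i$ statement of \cref{eqn:matrix_bellman}.

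I do not anticipate any deep obstacle here; the statement is essentially that multiplying by a uniformly positive, uniformly bounded diagonal scaling is a ``row-wise isomorphism'' of the sup-equals-zero condition. The only mildly subtle point is that the supremum is componentwise, so no joint-in-$P$ argument is needed, and the two uniform bounds in \cref{eqn:matrix_scaling_1} give exactly the squeeze needed in each direction. If one wanted to avoid committing to whether $[s(P)]_i$ depends on coordinates other than $P_i$, the estimates above still go through since they only ever use the two-sided bounds on $[s(P)]_i$ rather than any decoupling property of $s$.
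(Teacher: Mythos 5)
Your proof is correct and rests on the same essential observations as the paper's: positivity of $[s(P)]_i$ preserves the sign of $[-A(P)U + y(P)]_i$, and the two uniform bounds in \cref{eqn:matrix_scaling_1} give the squeeze needed to transfer the approach-to-zero along a maximizing sequence in each direction. The organization differs slightly — you argue row-by-row and handle the converse symmetrically and directly, while the paper works at the vector level, establishes ``$\geq 0$'' via a $\limsup$ estimate, disposes of ``$>0$'' by a short contradiction (multiplying by $\gls*{diag}(s(P_0))^{-1}$), and then reduces the converse to the forward direction by replacing $(A,y)$ with $(\gls*{diag}(s)A, \gls*{diag}(s)y)$ — but these are presentational variants of one argument, not a different route.
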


\begin{proof}
    First, we prove that if $U$ is a solution of \cref{eqn:matrix_bellman} then \cref{eqn:matrix_scaling_2} holds.
    Indeed, if $U$ is a solution of \cref{eqn:matrix_bellman}, there exists a sequence $(P^\ell)_\ell$ such that
    \[
        - A(P^\ell) U + y(P^\ell) \uparrow 0
    \]
    where we have used the symbol $\uparrow$ to mean that the limit approaches zero monotonically from below so that $- A(P^\ell) U + y(P^\ell) \leq 0$ for each $\ell$.
    Since $s$ is bounded above by \cref{eqn:matrix_scaling_1},
    \[
        \limsup_{\ell \rightarrow \infty} \left\{
            \gls*{diag}(s(P^\ell))
            \left( -A(P^\ell)U + y(P^\ell) \right)
        \right\}
        \geq
        \gls*{const}
        \lim_{\ell \rightarrow \infty} \left\{
            -A(P^\ell)U + y(P^\ell)
        \right\}
        = 0,
    \]
    and so
    \[
        \sup_{P \in \mathcal{P}} \left\{
            \gls*{diag}(s(P))
            \left( -A(P)U + y(P) \right)
        \right\} \geq 0.
    \]
    Note that if the above holds with equality, we have established \cref{eqn:matrix_scaling_2}, as desired.
    Therefore, in order to arrive at a contradiction, suppose the above holds with strict inequality.
    In this case, we can find some $P_0$ for which
    \[
        \gls*{diag}(s(P_0)) \left(
            -A(P_0)U + y(P_0)
        \right) > 0.
    \]
    Since $s$ is bounded below by \cref{eqn:matrix_scaling_1}, $s(P_0) > 0$ and hence we can multiply both sides of the above inequality by the inverse of $\gls*{diag}(s(P_0))$ to obtain
    \[
        -A(P_0)U + y(P_0) > 0,
    \]
    contradicting the assumption that $U$ is a solution of \cref{eqn:matrix_bellman}.

    The converse is handled by symmetry.
    In particular, suppose that $U$ satisfies \cref{eqn:matrix_scaling_2}.
    Defining $A_0(P) = \gls*{diag}(s(P)) A(P)$ and $y_0(P) = \gls*{diag}(s(P)) y(P)$, \cref{eqn:matrix_scaling_2} is equivalent to
    \[
        \sup_{P \in \mathcal{P}} \left\{
            - A_0(P) U + y_0(P)
        \right\} = 0.
    \]
    The arguments of the previous paragraph imply that
    \[
        \sup_{P \in \mathcal{P}} \left\{
            - A(P) U + y(P)
        \right\}
        = \sup_{P \in \mathcal{P}} \left\{
            \gls*{diag}(s(P))^{-1}
            \left( - A_0(P) U + y_0(P) \right)
        \right\}
        = 0,
    \]
    as desired.
\end{proof}

%%%%%%%%%%%%%%%%%%%%%%%%%%%%%%%%%%%%%%%%%%%%%%%%%%%%%
\section{Weakly chained diagonally dominant matrices}

In this section, we describe weakly chained diagonally dominant (\gls*{wcdd}) matrices \cite{MR0332820} and give some new results relating them to M-matrices.
We will ultimately apply the theory developed in this section to the Bellman problem \cref{eqn:matrix_bellman}.
We first recall a few definitions from linear algebra.

\begin{definition}
    Let $A = (a_{ij})$ be a complex matrix.
    Row $i$ of $A$ is strictly diagonally dominant (\gls*{sdd}) if $|a_{ii}| > \sum_{j \neq i} | a_{ij} |$.
    The matrix $A$ is \gls*{sdd} if all of its rows are \gls*{sdd}
    Similarly, row $i$ of $A$ is weakly diagonally dominant (\gls*{wdd}) if $|a_{ii}| \geq \sum_{j \neq i} | a_{ij} |$.
    The matrix $A$ is \gls*{wdd} if all of its rows are \gls*{wdd}
\end{definition}

\begin{definition}
    Let $A = (a_{ij})_{i,j \in \{0, \ldots, M\}}$ be an $(M+1) \times (M+1)$ complex matrix.
    The (directed) adjacency graph of $A$ is given by the vertices $\{0, \ldots, M\}$ and edges defined as follows: there exists an edge from $i$ to $j$ if and only if $a_{ij} \neq 0$.
\end{definition}

In light of the above, we use the terms ``row'' and ``vertex'' interchangeably.
To simplify notation, we write $i \rightarrow j$ to denote an edge from $i$ to $j$.
Similarly, we write $i_1 \rightarrow \cdots \rightarrow i_k$ to denote a walk from vertex $i_1$ to vertex $i_k$ passing through vertices $i_2$, $i_3$, etc.

We are now ready to define \gls*{wcdd} matrices.

\begin{definition}[\cite{MR0332820}]
    A square complex matrix $A$ is \gls*{wcdd} if it is \gls*{wdd} and for each row $i_1$ that is not \gls*{sdd}, there exists a walk $i_1 \rightarrow \cdots \rightarrow i_k$ in the adjacency graph of $A$ such that $i_k$ is \gls*{sdd} (\cref{fig:matrix_wcdd}).
    \label{def:matrix_wcdd}
\end{definition}

\begin{figure}
    \subfloat[An $(M+1) \times (M+1)$ matrix]{
        $
            A = \begin{pmatrix}
                +1\\
                -1 & +1\\
                & -1 & +1\\
                &  & \ddots & \ddots\\
                &  &  & -1 & +1
            \end{pmatrix}
        $
    }
    \hfill{}
    \subfloat[Directed adjacency graph of $A$ (vertices corresponding to \gls*{sdd} vertices are \colorbox{colA}{\textcolor{white}{highlighted}})]{
        \begin{tikzpicture}[node distance=2cm]
            \node [ellipse, fill=colA, text=white] (0) {0};
            \node [ellipse, fill=gray!20, right of=0] (1) {1};
            \node [right of=1] (dots) {$\cdots$};
            \node [ellipse, fill=gray!20, right of=dots] (end) {$M$};
            \draw [thick, ->, loop above] (0) to (0);
            \draw [thick, ->, loop above] (1) to (1);
            \draw [thick, ->, loop above] (end) to (end);
            \draw [thick, ->] (end) -- (dots);
            \draw [thick, ->] (dots) -- (1);
            \draw [thick, ->] (1) -- (0);
        \end{tikzpicture}
    }
    \caption{An example of a \gls*{wcdd} matrix and its adjacency graph}
    \label{fig:matrix_wcdd}
\end{figure}

Below, we recall the definition of Z-matrices and M-matrices.

\begin{definition}[{\cite{MR0444681}}]
    A Z-matrix is a real matrix with nonpositive off-diagonals.
    A nonsingular M-matrix is a monotone Z-matrix.
\end{definition}

In this thesis, we will often drop the term ``nonsingular'' and simply refer to a nonsingular M-matrix as an M-matrix (there is a notion of singular M-matrix in the literature, though we do not use it).
The main result of this section is given below.

\begin{theorem}
    If $A$ is a square \gls*{wdd} Z-matrix with nonnegative diagonals, then the following are equivalent:
    \begin{enumerate}[label=(\roman*)]
        \item
            \label{enum:matrix_characterization_1}
            $A$ is a (nonsingular) M-matrix.
        \item
            \label{enum:matrix_characterization_2}
            $A$ is a nonsingular matrix.
        \item
            \label{enum:matrix_characterization_3}
            $A$ is a \gls*{wcdd} matrix.
    \end{enumerate}
    \label{thm:matrix_characterization}
\end{theorem}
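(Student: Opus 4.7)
The natural strategy is to close the cycle $\text{(i)} \Rightarrow \text{(ii)} \Rightarrow \text{(iii)} \Rightarrow \text{(i)}$.  The first implication is free: a (nonsingular) M-matrix is by definition monotone, hence nonsingular.

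For $\text{(ii)} \Rightarrow \text{(iii)}$, I would argue the contrapositive.  Suppose $A$ is \gls*{wdd} but not \gls*{wcdd}, and let $T$ be the set of indices $i$ that are not \gls*{sdd} and from which no walk in the adjacency graph ever reaches an \gls*{sdd} vertex; by hypothesis $T \neq \emptyset$.  The key structural observation is that $T$ is forward-closed: if $i \in T$ and $a_{ij} \neq 0$ with $j \neq i$, then $j \in T$, for otherwise prepending the edge $i \to j$ to a chain from $j$ to an \gls*{sdd} vertex (or taking $j$ itself, if $j$ is \gls*{sdd}) would contradict $i \in T$.  After reordering so that indices in $T$ come first, $A$ is block lower triangular, $A = \left(\begin{smallmatrix} A_{TT} & 0 \\ \ast & A_{T^c T^c} \end{smallmatrix}\right)$.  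Because entries in $T$-rows vanish outside $T$, and no row in $T$ is \gls*{sdd}, the Z-matrix and nonnegative-diagonal hypotheses give, for each $i \in T$,
\[
    \sum_{j \in T} a_{ij}
    = a_{ii} - \sum_{j \in T,\, j \neq i} |a_{ij}|
    = a_{ii} - \sum_{j \neq i} |a_{ij}|
    = 0,
\]
so $A_{TT}\mathbf{1} = 0$.  Hence $\det A = \det A_{TT} \cdot \det A_{T^c T^c} = 0$ and $A$ is singular.

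For $\text{(iii)} \Rightarrow \text{(i)}$, I would prove nonsingularity and monotonicity separately.  For nonsingularity, suppose $Av = 0$ and choose $i_1$ with $|v_{i_1}| = \max_j |v_j|$.  If $i_1$ is \gls*{sdd}, the estimate $|a_{i_1 i_1}||v_{i_1}| = |\sum_{j \neq i_1} a_{i_1 j} v_j| \leq \sum_{j \neq i_1} |a_{i_1 j}||v_{i_1}| < |a_{i_1 i_1}||v_{i_1}|$ forces $v_{i_1} = 0$ and hence $v = 0$.  If $i_1$ is not \gls*{sdd}, equality must hold throughout, whence $|v_j| = |v_{i_1}|$ for every $j$ with $a_{i_1 j} \neq 0$.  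Walking along a \gls*{wcdd} chain $i_1 \to \cdots \to i_k$ to an \gls*{sdd} vertex $i_k$ and iterating, we still have $|v_{i_k}| = |v_{i_1}|$; applying the \gls*{sdd} argument at $i_k$ then yields $v_{i_1} = 0$, so again $v = 0$.

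For monotonicity, given $b \geq 0$ set $y = A^{-1} b$ and suppose, for contradiction, that $T' := \{ i : y_i = \min_j y_j < 0\}$ is nonempty.  For $i \in T'$, rearranging $\sum_j a_{ij} y_j = b_i$ and using $a_{ij} \leq 0$ for $j \neq i$ together with $y_j - y_i \geq 0$ gives
\[
    0 \leq b_i
    = a_{ii} y_i + \sum_{j \neq i} a_{ij} y_j
    \leq y_i\Bigl( a_{ii} - \sum_{j \neq i}|a_{ij}| \Bigr),
\]
and the \gls*{wdd} hypothesis together with $y_i < 0$ forces $b_i = 0$, $i$ not \gls*{sdd}, and $y_j = y_i$ (so $j \in T'$) for every $j \neq i$ with $a_{ij} \neq 0$.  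Thus $T'$ is forward-closed in the adjacency graph and consists entirely of non-\gls*{sdd} vertices, contradicting \gls*{wcdd}.  Hence $y \geq 0$, so $A^{-1} \geq 0$ and $A$ is a (nonsingular) M-matrix.

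The main obstacle is the monotonicity half of $\text{(iii)} \Rightarrow \text{(i)}$: one must squeeze the weak-maximum-principle estimate hard enough to extract the pointwise equalities that propagate $T'$ along the edges of the adjacency graph, and then match this propagation against the \gls*{wcdd} chain condition.  Once that bookkeeping is handled, the rest of the proof is a careful but routine assembly.
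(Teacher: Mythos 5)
Your proof is correct. Your treatment of (i)~$\Rightarrow$~(ii) and (ii)~$\Rightarrow$~(iii) is essentially the paper's: both arguments argue the contrapositive of (ii)~$\Rightarrow$~(iii) by isolating the set of rows that are not \gls*{sdd} and have no walk to an \gls*{sdd} row (your $T$, the paper's $R$), observe it is forward-closed, reorder to expose a block-triangular structure, and note that the diagonal block indexed by $T$ has zero row sums. The only cosmetic difference is that you read off $\det A = \det A_{TT} \cdot \det A_{T^cT^c} = 0$ directly, whereas the paper solves $Dx = -C\vec{e}$ (using that the complementary block is \gls*{wcdd} and hence nonsingular) and explicitly exhibits the null vector $(\vec{e}, x)^\intercal$; the two are equivalent. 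The genuine departure is in (iii)~$\Rightarrow$~(i): the paper outsources this implication to a citation of Bramble--Hubbard's result on ``matrices of positive type,'' while you give a self-contained proof, obtaining nonsingularity by a discrete maximum principle propagated along a \gls*{wcdd} chain and monotonicity by an analogous minimum principle on the components of $A^{-1}b$. Your route is longer but removes the external dependence; the paper's is shorter but relies on the reader accepting the cited result. Both are sound, and the maximum-principle machinery you use is exactly what underlies the cited reference, so the approaches are consistent in spirit even where they differ in packaging.

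Two minor points worth tightening if you write this up formally. First, in the nonsingularity half of (iii)~$\Rightarrow$~(i), you should state explicitly that you may assume $|v_{i_1}| > 0$ (otherwise $v = 0$ immediately), since the strict inequality at the \gls*{sdd} endpoint of the chain only bites when the max is positive. Second, the \gls*{wcdd} hypothesis silently rules out zero rows (a zero row is not \gls*{sdd} and has no outgoing edges, so no chain exists), which is what guarantees each non-\gls*{sdd} $i_1$ in your chain actually has an outgoing edge along which to propagate; it is worth flagging this since it is the only place where a degenerate row could otherwise derail the argument.
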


\begin{proof}
    \cref{enum:matrix_characterization_1} implies \cref{enum:matrix_characterization_2} is true since M-matrices are monotone by definition and monotone matrices are nonsingular by definition.
    \cref{enum:matrix_characterization_3} implies \cref{enum:matrix_characterization_1} is proven in \cite{MR0162367}.\footnote{\cite{MR0162367} refers to \gls*{wcdd} Z-matrices with nonnegative diagonals as \emph{matrices of positive type}.}
    Therefore, it remains to prove that \cref{enum:matrix_characterization_2} implies \cref{enum:matrix_characterization_3}.

    To this end, let $A$ be an $(M+1) \times (M+1)$ \gls*{wdd} Z-matrix with nonnegative diagonals that is not \gls*{wcdd}
    We will prove that $A$ is singular, thereby establishing \cref{enum:matrix_characterization_2} implies \cref{enum:matrix_characterization_3} by contrapositive.
    Let $J$ be the set of all \gls*{sdd} rows (of $A$).
    Let $W$ be the set of all non-\gls*{sdd} rows $i_1$ satisfying \cref{def:matrix_wcdd}.
    Specifically,
    \[
        W = \left \{
            i_1 \notin J \colon
            \text{there exists a walk }
            i_1 \rightarrow \cdots \rightarrow i_k
            \text{ such that }
            i_k \in J
        \right\}
    \]
    Let $R = \{ 0, \ldots, M \} \setminus (J \cup W)$.
    Since $A$ is not \gls*{wcdd}, $R$ is nonempty.
    Without loss of generality, we may assume $R = \{0, \ldots, r\}$ where $0 \leq r \leq M$ since otherwise, we can relabel the rows and columns of $A$ (this corresponds to replacing $A$ by $PAP^\intercal$ where $P$ is an appropriately chosen permutation matrix).

    If $r = M$, it follows that $J$ is empty, and hence every row of $A$ is not \gls*{sdd}
    In this case, since $A$ is a \gls*{wdd} Z-matrix with nonnegative diagonals, we have $a_{ii} = -\sum_{j \neq i} a_{ij}$ for each row $i$.
    In other words, the row sums of $A$ are zero so that $A \vec{e} = 0$ and hence $A$ is singular.

    If $r < M$, the adjacency graph of $A$ has the structure shown in \cref{fig:matrix_proof_adjacency_graph}.
    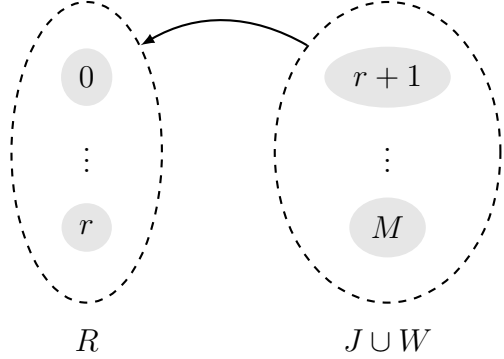
\begin{figure}
        \centering
        \begin{tikzpicture}
            \node [ellipse, fill=gray!20] (0) {$0$};
            \node [below of=0] (Rdots) {$\vdots$};
            \node [ellipse, fill=gray!20, below of=Rdots] (r) {$r$};
            \node [ellipse, fill=gray!20] at (4, 0) (r1) {$r+1$};
            \node [below of=r1] (SWdots) {$\vdots$};
            \node [ellipse, fill=gray!20, below of=SWdots] (M) {$M$};
            \node [ellipse, thick, dashed, draw, minimum width=2cm, minimum height=4cm] at (Rdots) (R) {};
            \node [ellipse, thick, dashed, draw, minimum width=3cm, minimum height=4cm] at (SWdots) (SW) {};
            \path [thick, ->] (SW.north west) edge [bend right] (R.north east);
            \node [below of=R, yshift=-1.5cm] {$R$};
            \node [below of=SW, yshift=-1.5cm] {$J \cup W$};
        \end{tikzpicture}
        \caption[``Block'' adjacency graph of a w.c.d.d. matrix]{``Block'' adjacency graph of the w.c.d.d. matrix in the proof of \cref{thm:matrix_characterization}}
        \label{fig:matrix_proof_adjacency_graph}
    \end{figure}
    In particular, there are no edges from a vertex $i \in R$ to a vertex $j \in J \cup W$ since if there were, $i$ would not be a member of $R$ by definition.
    This implies that $A$ has the block structure
    \[
        A = \left( \begin{array}{c|c}
            B & 0 \\
            \hline
            C & D
        \end{array} \right)
        \text{ where } B \in \mathbb{R}^{(r + 1) \times (r + 1)}.
    \]
    where the zero in the top right quadrant corresponds to the absence of edges from vertices in $R$ to vertices in $J \cup W$.
    Moreover, the partition above guarantees that $D$ is \gls*{wcdd} since $i \notin R$ for all $i > r$.
    Since \gls*{wcdd} matrices are nonsingular \cite{MR0332820}, the linear system $Dx = -C \vec{e}$ has a unique solution $x$.
    Similarly to the case of $r = M$, since every row of $B$ is not \gls*{sdd}, the row sums of $B$ are zero so that $B \vec{e} = 0$.
    It follows that
    \[
        A \begin{pmatrix} \vec{e} \\ x \end{pmatrix}
        = \begin{pmatrix} B \vec{e} \\ C \vec{e} + D x \end{pmatrix}
        = 0,
    \]
    and hence $A$ is singular.
\end{proof}

The result above was first proven in our own work \cite{MR3493959,azimzadeh2017fast} and immediately strengthens some existing results from linear algebra.
For example, due to the above, well-known bounds on the minimum eigenvalue and infinity-norm of the inverse of an arbitrary \gls*{wcdd} M-matrix \cite{MR1384509,MR2350685,MR2433738,MR2535528,MR2577710} apply to \gls*{wdd} M-matrices.

%%%%%%%%%%%%%%%%%%%%%%%%%%%%%%%%%%%
\section{Application to the HJBQVI}
\label{sec:matrix_hjbqvi}

In this section, we revisit the direct control and penalty schemes of \cref{chap:schemes}.
Our goal is to see if given the solution $V^{n-1}$ of one of these schemes at the $(n - 1)$-th timestep, we can use policy iteration to compute the solution $V^n$ at the $n$-th timestep.

Recall the discretized control sets $W^{\gls*{meshing_parameter}}$ and $Z^{\gls*{meshing_parameter}}(t, x)$ introduced in \cref{chap:schemes}.
For the remainder of this section, we let
\[
    \mathcal{P}_i
    = W^{\gls*{meshing_parameter}}
    \times Z^{\gls*{meshing_parameter}}(\tau^n, x_i)
    \times \left \{ 0, 1 \right \}.
\]
Intuitively, each $P_i = (\textcolor{ctrlcolor}{w_i}, \textcolor{ctrlcolor}{z_i}, \textcolor{ctrlcolor}{d_i}) \in \mathcal{P}_i$ is a tuple that contains control information for \emph{some} grid point $x_i$.
Therefore, letting $\mathcal{P} = \mathcal{P}_0 \times \cdots \times \mathcal{P}_M$, each $P = (\textcolor{ctrlcolor}{w_i}, \textcolor{ctrlcolor}{z_i}, \textcolor{ctrlcolor}{d_i})_{i=0}^M \in \mathcal{P}$ is a tuple that contains control information for \emph{all} grid points $x_i$.
Since $W^{\gls*{meshing_parameter}}$ and $Z^{\gls*{meshing_parameter}}(\tau^n, x_i)$ are finite sets by definition, $\mathcal{P}$ is also finite and hence trivially compact.
%\[
    %\mathcal{P} = \left\{
        %(\textcolor{ctrlcolor}{w_i}, \textcolor{ctrlcolor}{z_i}, \textcolor{ctrlcolor}{d_i})_{i=0}^M
        %\colon
        %\textcolor{ctrlcolor}{w_i} \in W^{\gls*{meshing_parameter}} \text{, }
        %\textcolor{ctrlcolor}{z_i} \in Z^{\gls*{meshing_parameter}}(\tau^n, x_i) \text{, and }
        %\textcolor{ctrlcolor}{d_i} \in \{0, 1\}
    %\right\}.
%\]

\subsection{The penalty scheme}
\label{subsec:matrix_penalty}

We begin with the penalty scheme \cref{eqn:schemes_penalty}.
To write the penalty scheme in the form of \cref{eqn:matrix_bellman}, we define the matrix-valued function $A$ and vector-valued function $y$ such that for each vector $U = (U_0, \ldots, U_M)^\intercal$ and integer $i$ satisfying $0 \leq i \leq M$, %\matrixBoundaryFootnote
\begin{align}
    [A(P) U]_i
    & = \frac{U_i}{\Delta \tau}
    - \frac{1}{2} b_i^n(\textcolor{ctrlcolor}{w_i})^2 (\mathcal{D}_2 U)_i
    - a_i^n(\textcolor{ctrlcolor}{w_i}) (\mathcal{D} U)_i
    + \frac{1}{\epsilon} \textcolor{ctrlcolor}{d_i} \left(
        U_i
        - \gls*{interp}(
            U,
            %x_i +
            \Gamma(\tau^n, x_i, \textcolor{ctrlcolor}{z_i})
        )
    \right)
    \nonumber \\
    [y(P)]_i
    & = \frac{V_i^{n - 1}}{\Delta \tau}
    + f_i^n(\textcolor{ctrlcolor}{w_i})
    + \frac{1}{\epsilon} \textcolor{ctrlcolor}{d_i} K(\tau^n, x_i, \textcolor{ctrlcolor}{z_i}).
    \label{eqn:matrix_penalty_parameters}
\end{align}
With the above choices of $A$ and $y$, $V^n$ is a solution of \cref{eqn:matrix_bellman}.
This is seen immediately by noting that
\begin{multline}
    [ - A(P) U + y(P) ]_i
    = \frac{V_i^{n - 1} - U_i}{\Delta \tau}
    + \frac{1}{2} b_i^n(\textcolor{ctrlcolor}{w_i})^2 (\mathcal{D}_2 U)_i
    + a_i^n(\textcolor{ctrlcolor}{w_i}) (\mathcal{D} U)_i
    + f_i^n(\textcolor{ctrlcolor}{w_i}) \\
    + \frac{1}{\epsilon} \textcolor{ctrlcolor}{d_i} \left( % Penalty
        \gls*{interp}(
            U,
            %x_i +
            \Gamma(\tau^n, x_i, \textcolor{ctrlcolor}{z_i})
        )
        + K(\tau^n, x_i, \textcolor{ctrlcolor}{z_i})
        - U_i
    \right)
    \label{eqn:matrix_penalty_parameters_combined}
\end{multline}
(compare with \cref{eqn:schemes_penalty}).
The lemma below ensures that in this case, the conditions of \cref{prop:matrix_policy_iteration} are satisfied, so that we may compute $V^n$ by {\sc Policy-Iteration}.

\begin{lemma}
    $A(P)$ given by \cref{eqn:matrix_penalty_parameters} is an M-matrix.
    \label{lem:matrix_penalty_matrix}
\end{lemma}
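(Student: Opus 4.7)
The plan is to apply \cref{thm:matrix_characterization} by showing that $A(P)$ is a \gls*{wdd} Z-matrix with nonnegative diagonals that is in fact strictly diagonally dominant (whence trivially \gls*{wcdd} and thus an M-matrix).

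First, I would examine the off-diagonal entries of $A(P)$ contributed by each term in \cref{eqn:matrix_penalty_parameters}. The term $U_i / \Delta \tau$ contributes only to the diagonal. The operator $-\tfrac{1}{2} b_i^n(\textcolor{ctrlcolor}{w_i})^2 \mathcal{D}_2$ has nonpositive off-diagonals (and a positive diagonal contribution) directly from \cref{eqn:schemes_second_derivative}. The upwinded first-derivative term $-a_i^n(\textcolor{ctrlcolor}{w_i}) (\mathcal{D} U)_i$ defined in \cref{eqn:schemes_first_derivative} produces a nonpositive off-diagonal and a nonnegative diagonal entry regardless of the sign of $a_i^n(\textcolor{ctrlcolor}{w_i})$ — this is precisely the point of the upwind choice. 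Finally, the impulse term $\tfrac{1}{\epsilon} \textcolor{ctrlcolor}{d_i}(U_i - \gls*{interp}(U, \Gamma(\tau^n, x_i, \textcolor{ctrlcolor}{z_i})))$ contributes $\textcolor{ctrlcolor}{d_i}/\epsilon$ to the diagonal and, from the interpolation weights $\alpha$, $1-\alpha$ in \cref{eqn:schemes_interpolation}, nonpositive contributions to at most two off-diagonal entries (and a nonpositive contribution to the diagonal itself if $\Gamma$ lands on $x_i$, which is harmlessly dominated by $\textcolor{ctrlcolor}{d_i}/\epsilon$). Combining these observations gives $A(P)$ is a Z-matrix with strictly positive diagonals.

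The key observation is then that every operator appearing in $A(P)$ other than $U_i/\Delta\tau$ has zero row sums: $\mathcal{D}_2$ and $\mathcal{D}_{\pm}$ are conservative by inspection of \cref{eqn:schemes_second_derivative,eqn:schemes_first_derivative_pm}, and the interpolation weights satisfy $\alpha + (1-\alpha) = 1$, so the impulse term contributes $\textcolor{ctrlcolor}{d_i}/\epsilon - \textcolor{ctrlcolor}{d_i}/\epsilon = 0$ to the row sum. Therefore the $i$-th row sum of $A(P)$ equals exactly $1/\Delta \tau > 0$. Because the diagonal is positive and the off-diagonals are nonpositive, the row sum coincides with $a_{ii} - \sum_{j \neq i} |a_{ij}|$, so every row is \gls*{sdd} The boundary rows $i=0$ and $i=M$ are handled identically, since the derivative operators were defined to vanish there, leaving only the (strictly dominant) $1/\Delta\tau$ and impulse contributions.

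Thus $A(P)$ is an \gls*{sdd} Z-matrix with positive diagonals; since \gls*{sdd} implies \gls*{wcdd} trivially, \cref{thm:matrix_characterization} yields that $A(P)$ is an M-matrix. I do not foresee any serious obstacle here — the only subtle point is verifying the boundary-row behaviour and confirming that the potentially negative diagonal contribution from the interpolation term is absorbed by the explicit $\textcolor{ctrlcolor}{d_i}/\epsilon$, both of which are routine once the row-sum identity is made explicit.
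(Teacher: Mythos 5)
Your proposal is correct and follows essentially the same route as the paper: establish that $A(P)$ is a Z-matrix with nonpositive off-diagonals (via upwinding and the nonnegative interpolation weights), compute the row sum to be exactly $1/\Delta\tau>0$, conclude strict diagonal dominance with positive diagonals, and invoke \cref{thm:matrix_characterization}. The only cosmetic difference is that the paper writes out the matrix entries explicitly on a uniform grid and then sets $U=\vec{e}$ to obtain the row-sum identity, while you observe directly that $\mathcal{D}_2$, $\mathcal{D}_\pm$, and the affine interpolant all have zero row sums; this is the same computation, and your careful note about the interpolant potentially hitting the diagonal (absorbed by the explicit $\textcolor{ctrlcolor}{d_i}/\epsilon$ term) is exactly the subtle point worth flagging.
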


\begin{proof}
    We will prove that $A(P)$ is an \gls*{sdd} Z-matrix with positive diagonals so that by \cref{thm:matrix_characterization}, it is an M-matrix.

    To simplify notation, we will assume the grid points $\{ x_0, \ldots, x_M \}$ are uniformly spaced (i.e., $x_{i+1} - x_i = \Delta x$) (the same arguments can also be applied in the case of a nonuniform grid).
    Letting $U$ be an arbitrary vector, \cref{eqn:schemes_second_derivative,eqn:schemes_first_derivative,eqn:matrix_penalty_parameters} imply
    \begin{multline}
        [A(P) U]_i
        = \left(
            \frac{1}{\Delta \tau}
            + \frac{b_i^n(\textcolor{ctrlcolor}{w_i})^2}{(\Delta x)^2}
            + \frac{\left| a_i^n(\textcolor{ctrlcolor}{w_i}) \right|}{\Delta x}
            + \frac{1}{\epsilon} \textcolor{ctrlcolor}{d_i}
        \right) U_i
        \\
        - \left(
            \frac{1}{2} \frac{b_i^n(\textcolor{ctrlcolor}{w_i})^2}{(\Delta x)^2}
            + \frac{\left| a_i^n(\textcolor{ctrlcolor}{w_i}) \right|}{\Delta x}
            \boldsymbol{1}_{ \{ a_i^n(\textcolor{ctrlcolor}{w_i}) > 0 \} }
        \right) U_{i+1}
        - \left(
            \frac{1}{2} \frac{b_i^n(\textcolor{ctrlcolor}{w_i})^2}{(\Delta x)^2}
            + \frac{\left| a_i^n(\textcolor{ctrlcolor}{w_i}) \right|}{\Delta x}
            \boldsymbol{1}_{ \{ a_i^n(\textcolor{ctrlcolor}{w_i}) < 0 \} }
        \right) U_{i-1}
        \\
        - \frac{1}{\epsilon} \textcolor{ctrlcolor}{d_i} \alpha U_{k+1}
        - \frac{1}{\epsilon} \textcolor{ctrlcolor}{d_i} \left( 1 - \alpha \right) U_k
        \textspace \text{if } 0 < i < M
        \label{eqn:matrix_penalty_matrix_1}
    \end{multline}
    where $k$ and $\alpha$ depend on $i$ and satisfy $0 \leq k < M$ and $0 \leq \alpha \leq 1$ (recall \cref{eqn:schemes_interpolation}).
    The indicator functions $\boldsymbol{1}_{ \{ a_i^n(\textcolor{ctrlcolor}{w_i}) > 0 \} }$ and $\boldsymbol{1}_{ \{ a_i^n(\textcolor{ctrlcolor}{w_i}) < 0 \} }$ are used to capture the fact that due to the upwind discretization, the choice of stencil depends on the sign of the coefficient of the first derivative.
    Similarly,
    \begin{multline}
        [A(P) U]_0
        = \left(
            \frac{1}{\Delta \tau}
            + \frac{\left| a_0^n(\textcolor{ctrlcolor}{w_0}) \right|}{\Delta x}
            \boldsymbol{1}_{ \{ a_0^n(\textcolor{ctrlcolor}{w_0}) > 0 \} }
            + \frac{1}{\epsilon} \textcolor{ctrlcolor}{d_0}
        \right) U_0
        \\
        - \left(
            \frac{\left| a_0^n(\textcolor{ctrlcolor}{w_0}) \right|}{\Delta x}
            \boldsymbol{1}_{ \{ a_0^n(\textcolor{ctrlcolor}{w_0}) > 0 \} }
        \right) U_{1}
        - \frac{1}{\epsilon} \textcolor{ctrlcolor}{d_0} \alpha U_{k+1}
        - \frac{1}{\epsilon} \textcolor{ctrlcolor}{d_0} \left( 1 - \alpha \right) U_k
        \textspace
        \label{eqn:matrix_penalty_matrix_1_left}
    \end{multline}
    and
    \begin{multline}
        [A(P) U]_M
        = \left(
            \frac{1}{\Delta \tau}
            + \frac{\left| a_M^n(\textcolor{ctrlcolor}{w_M}) \right|}{\Delta x}
            \boldsymbol{1}_{ \{ a_M^n(\textcolor{ctrlcolor}{w_M}) < 0 \} }
            + \frac{1}{\epsilon} \textcolor{ctrlcolor}{d_M}
        \right) U_M
        \\
        - \left(
            \frac{\left| a_M^n(\textcolor{ctrlcolor}{w_M}) \right|}{\Delta x}
            \boldsymbol{1}_{ \{ a_M^n(\textcolor{ctrlcolor}{w_M}) < 0 \} }
        \right) U_{M-1}
        - \frac{1}{\epsilon} \textcolor{ctrlcolor}{d_M} \alpha U_{k+1}
        - \frac{1}{\epsilon} \textcolor{ctrlcolor}{d_M} \left( 1 - \alpha \right) U_k.
        \label{eqn:matrix_penalty_matrix_1_right}
    \end{multline}
    By \cref{eqn:matrix_penalty_matrix_1,eqn:matrix_penalty_matrix_1_left,eqn:matrix_penalty_matrix_1_right}, the matrix $A(P)$ has nonpositive off-diagonals and as such, is a Z-matrix.
    Setting $U = \vec{e}$ in \cref{eqn:matrix_penalty_matrix_1,eqn:matrix_penalty_matrix_1_left,eqn:matrix_penalty_matrix_1_right}, we find that for all $i$,
    \begin{equation}
        \sum_j [A(P)]_{ij}
        = [A(P) \vec{e} \, ]_i
        = \frac{1}{\Delta \tau}
        > 0,
        \label{eqn:matrix_penalty_matrix_2}
    \end{equation}
    from which it follows that $A(P)$ is \gls*{sdd} with positive diagonals since
    \[
        [A(P)]_{ii}
        > -\sum_{j \neq i} [A(P)]_{ij}
        = \sum_{j \neq i} \left| [A(P)]_{ij} \right|
        \geq 0. \qedhere
    \]
\end{proof}

\subsection{The direct control scheme}
\label{subsec:matrix_direct_control}

We now consider the direct control scheme \cref{eqn:schemes_direct_control}.
To write the direct control scheme in the form of \cref{eqn:matrix_bellman}, we define the matrix-valued function $A$ and the vector-valued function $y$ such that for each vector $U = (U_0, \ldots, U_M)^\intercal$ and integer $i$ satisfying $0 \leq i \leq M$, %\matrixBoundaryFootnote
\begin{align}
    [A(P) U]_i
    & = \textcolor{ctrlcolor}{\overline{d_i}} \left(
        \frac{U_i}{\Delta \tau}
        - \frac{1}{2} b_i^n(\textcolor{ctrlcolor}{w_i})^2 (\mathcal{D}_2 U)_i
        - a_i^n(\textcolor{ctrlcolor}{w_i}) (\mathcal{D} U)_i
    \right)
    + \textcolor{ctrlcolor}{d_i} \left(
        U_i
        - \gls*{interp}(
            U,
            %x_i +
            \Gamma(\tau^n, x_i, \textcolor{ctrlcolor}{z_i})
        )
    \right)
    \nonumber \\
    [y(P)]_i
    & = \textcolor{ctrlcolor}{\overline{d_i}} \left(
        \frac{V_i^{n - 1}}{\Delta \tau}
        + f_i^n(\textcolor{ctrlcolor}{w_i})
    \right)
    + \textcolor{ctrlcolor}{d_i} K(\tau^n, x_i, \textcolor{ctrlcolor}{z_i}).
    \label{eqn:matrix_direct_control_parameters}
\end{align}
With the above choices of $A$ and $y$, $V^n$ is a solution of \cref{eqn:matrix_bellman}.
This is seen immediately by noting that
\begin{multline}
    [ -A(P) U + y(P) ]_i
    = \textcolor{ctrlcolor}{\overline{d_i}} \left(
        \frac{V_i^{n-1} - U_i}{\Delta \tau}
        + \frac{1}{2} b_i^n(\textcolor{ctrlcolor}{w_i})^2 (\mathcal{D}_2 U)_i
        + a_i^n(\textcolor{ctrlcolor}{w_i}) (\mathcal{D} U)_i
        + f_i^n(\textcolor{ctrlcolor}{w_i})
    \right) \\
    + \textcolor{ctrlcolor}{d_i} \left(
        \gls*{interp}(
            U,
            %x_i +
            \Gamma(\tau^n, x_i, \textcolor{ctrlcolor}{z_i})
        )
        + K(\tau^n, x_i, \textcolor{ctrlcolor}{z_i})
        - U_i
    \right)
    \label{eqn:matrix_direct_control_parameters_combined}
\end{multline}
(compare with \cref{eqn:schemes_direct_control}).

\begin{remark}
    \label{rem:matrix_scaling}
    Note that the quantities
    \[
        \frac{V_i^{n-1} - U_i}{\Delta \tau}
        + \frac{1}{2} b_i^n(\textcolor{ctrlcolor}{w_i})^2 (\mathcal{D}_2 U)_i
        + a_i^n(\textcolor{ctrlcolor}{w_i}) (\mathcal{D} U)_i
        + f_i^n(\textcolor{ctrlcolor}{w_i})
    \]
    and
    \[
        \gls*{interp}(
            U,
            %x_i +
            \Gamma(\tau^n, x_i, \textcolor{ctrlcolor}{z_i})
        )
        + K(\tau^n, x_i, \textcolor{ctrlcolor}{z_i})
        - U_i
    \]
    appearing in \cref{eqn:matrix_direct_control_parameters_combined} have different units: the former is a rate of change while the latter is a unit amount.
    A similar situation is encountered in \cite{MR3102414}, in which the authors %demonstrate, through both numerical examples and a careful floating point analysis, that this phenomenon leads to poor performance in policy iteration as $\Delta \tau \rightarrow 0$.
    find that policy iteration performs better after scaling the problem to normalize the units.
    In the context of \cref{lem:matrix_scaling}, this corresponds to the scaling factor $s(P)$ with entries given by
    \[
        [s(P)]_i = \textcolor{ctrlcolor}{\overline{d_i}} + \textcolor{ctrlcolor}{d_i} \frac{1}{\delta \Delta \tau}
    \]
    where $\delta$ is some positive constant.
    By \cref{lem:matrix_scaling}, the unscaled problem \cref{eqn:matrix_bellman} has the same set of solutions as the scaled problem \cref{eqn:matrix_scaling_2}, for which the units are all rates of change:
    \begin{multline*}
        [ \gls*{diag}(s(P))(-A(P) U + y(P)) ]_i
        \\
        = \textcolor{ctrlcolor}{\overline{d_i}} \left(
            \frac{V_i^{n-1} - U_i}{\Delta \tau}
            + \frac{1}{2} b_i^n(\textcolor{ctrlcolor}{w_i})^2 (\mathcal{D}_2 U)_i
            + a_i^n(\textcolor{ctrlcolor}{w_i}) (\mathcal{D} U)_i
            + f_i^n(\textcolor{ctrlcolor}{w_i})
        \right) \\
        + \frac{1}{\delta} \textcolor{ctrlcolor}{d_i} \left(
            \frac{
                \gls*{interp}(
                    U,
                    %x_i +
                    \Gamma(\tau^n, x_i, \textcolor{ctrlcolor}{z_i})
                )
                + K(\tau^n, x_i, \textcolor{ctrlcolor}{z_i})
                - U_i
            }{\Delta \tau}
        \right).
    \end{multline*}
    While a similar issue occurs for the penalty scheme (see \cref{eqn:matrix_penalty_parameters_combined}), taking $\epsilon$ to be a multiple of $\Delta \tau$ immediately normalizes the units.
\end{remark}

Next, we give an analogue of \cref{lem:matrix_penalty_matrix} for the direct control setting.

\begin{lemma}
    $A(P)$ given by \cref{eqn:matrix_direct_control_parameters} is a \gls*{wdd} Z-matrix with nonnegative diagonals.
    \label{lem:matrix_direct_control_matrix}
\end{lemma}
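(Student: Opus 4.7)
The plan is to analyze the structure of $A(P)$ row by row, splitting into cases according to the value of the auxiliary control $\textcolor{ctrlcolor}{d_i}\in\{0,1\}$. Observe from \cref{eqn:matrix_direct_control_parameters} that the $i$-th row of $A(P)$ depends only on $P_i$, and moreover switches between two distinct forms: one in which only the differential part is present (when $\textcolor{ctrlcolor}{d_i}=0$) and one in which only the intervention part is present (when $\textcolor{ctrlcolor}{d_i}=1$). So it suffices to verify the Z-matrix, w.d.d., and nonnegative-diagonal properties row-by-row in each of these two cases.

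First I would dispose of the case $\textcolor{ctrlcolor}{d_i}=0$. Here the row reduces to $U_i/\Delta\tau - \tfrac12 b_i^n(\textcolor{ctrlcolor}{w_i})^2(\mathcal{D}_2U)_i - a_i^n(\textcolor{ctrlcolor}{w_i})(\mathcal{D}U)_i$, which is precisely the differential part appearing in the penalty-scheme matrix \cref{eqn:matrix_penalty_parameters} (without the $\tfrac1\epsilon$ penalty term). The same expansion used in the proof of \cref{lem:matrix_penalty_matrix} (via \cref{eqn:schemes_second_derivative,eqn:schemes_first_derivative,eqn:schemes_interpolation}, together with the boundary cases $i=0$ and $i=M$) shows that the off-diagonals in this row are nonpositive, the diagonal is positive, and evaluating at $U=\vec e$ yields row sum $1/\Delta\tau>0$. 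In particular such rows are s.d.d.\ (hence certainly w.d.d.) with positive diagonal.

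Next I would treat the case $\textcolor{ctrlcolor}{d_i}=1$. Here the row reduces to $U_i-\gls*{interp}(U,\Gamma(\tau^n,x_i,\textcolor{ctrlcolor}{z_i}))$. Using \cref{eqn:schemes_interpolation}, the interpolation contributes $-\alpha U_{k+1}-(1-\alpha)U_k$ with $0\leq\alpha\leq 1$ (and a similar analysis for $\Gamma(\tau^n,x_i,\textcolor{ctrlcolor}{z_i})$ outside the grid, which yields a single coefficient $-1$ on $U_0$ or $U_M$). The main point is that both interpolation weights lie in $[0,1]$, so all off-diagonal entries produced are nonpositive, establishing the Z-matrix property. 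For the diagonal and w.d.d.\ property I need to be slightly careful, since one of the interpolation indices $k$, $k+1$ might coincide with $i$; in every sub-case the diagonal entry of the row is $1$ minus at most one interpolation weight, hence lies in $[0,1]$, and the sum of the absolute values of the off-diagonals equals $1$ minus that same diagonal entry. Evaluating the row at $U=\vec e$ thus gives row sum $0$, which shows the row is w.d.d.\ with nonnegative diagonal.

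Combining the two cases gives that every row of $A(P)$ has nonpositive off-diagonals, nonnegative diagonal, and satisfies $|a_{ii}|\geq\sum_{j\neq i}|a_{ij}|$, which is the claim. The only real subtlety, and the step I expect to require the most bookkeeping, is the $\textcolor{ctrlcolor}{d_i}=1$ case when the interpolation stencil overlaps index $i$ (i.e.\ $k=i$ or $k+1=i$), or when $\Gamma(\tau^n,x_i,\textcolor{ctrlcolor}{z_i})$ falls outside $[x_0,x_M]$; a short case split handles both and in all cases yields diagonal in $[0,1]$ and zero row sum. Notice that this is exactly where the analysis diverges from the penalty scheme: the rows with $\textcolor{ctrlcolor}{d_i}=1$ have zero row sum rather than $1/\Delta\tau$, so $A(P)$ is only w.d.d.\ (not s.d.d.), explaining why the additional theory of w.c.d.d.\ matrices built up earlier in the chapter, rather than a direct appeal to \cref{prop:matrix_policy_iteration}, is needed in the sequel.
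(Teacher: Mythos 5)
Your proof is correct and, modulo presentation, is the same argument as the paper's. The paper keeps both the $\overline{d_i}$ and $d_i$ parts in a single explicit row formula (see \cref{eqn:matrix_direct_control_matrix_1,eqn:matrix_direct_control_matrix_1_left,eqn:matrix_direct_control_matrix_1_right}), reads off that off-diagonals are nonpositive, and then substitutes $U = \vec e$ to obtain the row sum $\overline{d_i}/\Delta\tau \geq 0$; your case split on $\textcolor{ctrlcolor}{d_i}\in\{0,1\}$ simply treats the two summands separately, arriving at the same row-sum conclusion (positive when $\textcolor{ctrlcolor}{d_i}=0$, zero when $\textcolor{ctrlcolor}{d_i}=1$), and your careful accounting of the edge cases where the interpolation stencil overlaps index $i$ is consistent with what \cref{lem:matrix_sdd_iff} later records, namely $[A(P)]_{ii}\leq 1$ when $\textcolor{ctrlcolor}{d_i}=1$.
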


\begin{proof}
    As in the proof of \cref{lem:matrix_penalty_matrix}, we have
    \begin{multline}
        [A(P) U]_i
        = \left(
            \textcolor{ctrlcolor}{\overline{d_i}} \left(
                \frac{1}{\Delta \tau}
                + \frac{b_i^n(\textcolor{ctrlcolor}{w_i})^2}{(\Delta x)^2}
                + \frac{\left| a_i^n(\textcolor{ctrlcolor}{w_i}) \right|}{\Delta x}
            \right)
            + \textcolor{ctrlcolor}{d_i}
        \right) U_i
        \\
        - \textcolor{ctrlcolor}{\overline{d_i}} \left(
            \frac{1}{2} \frac{b_i^n(\textcolor{ctrlcolor}{w_i})^2}{(\Delta x)^2}
            + \frac{\left| a_i^n(\textcolor{ctrlcolor}{w_i}) \right|}{\Delta x}
            \boldsymbol{1}_{ \{ a_i^n(\textcolor{ctrlcolor}{w_i}) > 0 \} }
        \right) U_{i+1}
        - \textcolor{ctrlcolor}{\overline{d_i}} \left(
            \frac{1}{2} \frac{b_i^n(\textcolor{ctrlcolor}{w_i})^2}{(\Delta x)^2}
            + \frac{\left| a_i^n(\textcolor{ctrlcolor}{w_i}) \right|}{\Delta x}
            \boldsymbol{1}_{ \{ a_i^n(\textcolor{ctrlcolor}{w_i}) < 0 \} }
        \right) U_{i-1}
        \\
        - \textcolor{ctrlcolor}{d_i} \alpha U_{k+1}
        - \textcolor{ctrlcolor}{d_i} \left( 1 - \alpha \right) U_k
        \textspace \text{if } 0 < i < M
        \label{eqn:matrix_direct_control_matrix_1}
    \end{multline}
    along with
    \begin{multline}
        [A(P) U]_0
        = \left(
            \textcolor{ctrlcolor}{\overline{d_0}} \left(
                \frac{1}{\Delta \tau}
                + \frac{\left| a_0^n(\textcolor{ctrlcolor}{w_0}) \right|}{\Delta x}
                \boldsymbol{1}_{ \{ a_0^n(\textcolor{ctrlcolor}{w_0}) > 0 \} }
            \right)
            + \textcolor{ctrlcolor}{d_0}
        \right) U_0
        \\
        - \textcolor{ctrlcolor}{\overline{d_0}} \left(
            \frac{\left| a_0^n(\textcolor{ctrlcolor}{w_0}) \right|}{\Delta x}
            \boldsymbol{1}_{ \{ a_0^n(\textcolor{ctrlcolor}{w_0}) > 0 \} }
        \right) U_1
        - \textcolor{ctrlcolor}{d_0} \alpha U_{k+1}
        - \textcolor{ctrlcolor}{d_0} \left( 1 - \alpha \right) U_k
        \label{eqn:matrix_direct_control_matrix_1_left}
    \end{multline}
    and
    \begin{multline}
        [A(P) U]_M
        = \left(
            \textcolor{ctrlcolor}{\overline{d_M}} \left(
                \frac{1}{\Delta \tau}
                + \frac{\left| a_M^n(\textcolor{ctrlcolor}{w_M}) \right|}{\Delta x}
                \boldsymbol{1}_{ \{ a_M^n(\textcolor{ctrlcolor}{w_M}) < 0 \} }
            \right)
            + \textcolor{ctrlcolor}{d_M}
        \right) U_M
        \\
        - \textcolor{ctrlcolor}{\overline{d_M}} \left(
            \frac{\left| a_M^n(\textcolor{ctrlcolor}{w_M}) \right|}{\Delta x}
            \boldsymbol{1}_{ \{ a_M^n(\textcolor{ctrlcolor}{w_M}) < 0 \} }
        \right) U_{M-1}
        - \textcolor{ctrlcolor}{d_M} \alpha U_{k+1}
        - \textcolor{ctrlcolor}{d_M} \left( 1 - \alpha \right) U_k
        \label{eqn:matrix_direct_control_matrix_1_right}
    \end{multline}
    from which one sees that $A(P)$ has nonpositive off-diagonals and as such, is a Z-matrix.
    Setting $U = \vec{e}$ in \cref{eqn:matrix_direct_control_matrix_1,eqn:matrix_direct_control_matrix_1_left,eqn:matrix_direct_control_matrix_1_right}, we find that for all $i$,
    \begin{equation}
        \sum_j [A(P)]_{ij} = [A(P) \vec{e} \, ]_i
        = \textcolor{ctrlcolor}{\overline{d_i}} \frac{1}{\Delta \tau}
        = \left( 1 - \textcolor{ctrlcolor}{d_i} \right) \frac{1}{\Delta \tau}
        \geq 0,
        \label{eqn:matrix_direct_control_matrix_2}
    \end{equation}
    from which it follows that $A(P)$ is \gls*{wdd} with nonnegative diagonals since
    \[
        [A(P)]_{ii}
        \geq -\sum_{j \neq i} [A(P)]_{ij}
        = \sum_{j \neq i} \left| [A(P)]_{ij} \right|
        \geq 0. \qedhere
    \]
\end{proof}

Unfortunately, \cref{lem:matrix_direct_control_matrix} does not guarantee the nonsingularity of $A(P)$.
For example, if $\textcolor{ctrlcolor}{d_i} = 1$ for all $i$, \cref{eqn:matrix_direct_control_matrix_2} implies $A(P) \vec{e} = 0$ and hence $A(P)$ is singular.
This is a serious issue for {\sc Policy-Iteration} since if any of the matrices in the set $\{ A(P) \}_{P \in \mathcal{P}}$ are singular, the iterates $U^\ell$ may not even be well-defined.
To see why, consider the linear system on \cref{line:matrix_solve} of the pseudocode.
If $A(P^\ell)$ is singular, the vector $y(P^\ell)$ is not guaranteed to lie in the range of $A(P^\ell)$ and hence the linear system may not have any solutions.
We will resolve this issue in the next section.

We close this subsection by stating a result that we will use in the sequel.

% TODO: (DS) Give more intuition for the result below. Perhaps talk about how d_i = 1 means that the particular vertex has an impulse.

\begin{lemma}
    \label{lem:matrix_sdd_iff}
    Let $A(P)$ be given by \cref{eqn:matrix_direct_control_parameters}.
    Then, row $i$ of $A(P)$ is not \gls*{sdd} if and only if $\textcolor{ctrlcolor}{d_i} = 1$.
    Moreover, $[A(P)]_{ii} \leq 1$ whenever $\textcolor{ctrlcolor}{d_i} = 1$.
\end{lemma}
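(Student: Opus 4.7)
The plan is to read off both claims directly from the row-sum identity established in the proof of \cref{lem:matrix_direct_control_matrix}, together with the explicit coefficient formulas in \cref{eqn:matrix_direct_control_matrix_1,eqn:matrix_direct_control_matrix_1_left,eqn:matrix_direct_control_matrix_1_right}. The key fact to recall is \cref{eqn:matrix_direct_control_matrix_2}, namely
\[
    [A(P)\vec{e}\,]_i = \sum_j [A(P)]_{ij} = \textcolor{ctrlcolor}{\overline{d_i}} / \Delta\tau.
\]

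For the first claim, I will argue by rewriting the \gls*{sdd} condition using the sign structure of $A(P)$. Since \cref{lem:matrix_direct_control_matrix} asserts that $A(P)$ is a \gls*{wdd} Z-matrix with nonnegative diagonals, we have $[A(P)]_{ii} \geq 0$ and $[A(P)]_{ij} \leq 0$ for $j \neq i$, so
\[
    \sum_{j \neq i} \left| [A(P)]_{ij} \right|
    = - \sum_{j \neq i} [A(P)]_{ij}
    = [A(P)]_{ii} - [A(P) \vec{e}\,]_i
    = [A(P)]_{ii} - \textcolor{ctrlcolor}{\overline{d_i}} / \Delta\tau.
\]
Thus row $i$ is \gls*{sdd} if and only if $[A(P)]_{ii} > [A(P)]_{ii} - \textcolor{ctrlcolor}{\overline{d_i}}/\Delta\tau$, equivalently $\textcolor{ctrlcolor}{\overline{d_i}} > 0$, i.e., $\textcolor{ctrlcolor}{d_i} = 0$. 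Negating gives the stated equivalence.

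For the second claim, I will simply inspect the coefficient of $U_i$ in \cref{eqn:matrix_direct_control_matrix_1} (and the analogous boundary expressions \cref{eqn:matrix_direct_control_matrix_1_left,eqn:matrix_direct_control_matrix_1_right}) when $\textcolor{ctrlcolor}{d_i} = 1$. The $\textcolor{ctrlcolor}{\overline{d_i}}(\cdots)$ terms vanish, leaving a contribution of $\textcolor{ctrlcolor}{d_i} = 1$ from the first bracket; additionally, if the interpolation index $k$ for the point $\Gamma(\tau^n, x_i, \textcolor{ctrlcolor}{z_i})$ satisfies $k = i$ or $k+1 = i$, then one of the terms $-\textcolor{ctrlcolor}{d_i}\alpha U_{k+1}$ or $-\textcolor{ctrlcolor}{d_i}(1-\alpha) U_k$ also contributes to the diagonal, subtracting $\alpha$ or $1-\alpha$. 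Since $\alpha \in [0,1]$, this yields $[A(P)]_{ii} \in \{1, 1-\alpha, \alpha\} \subseteq [0,1]$, so in particular $[A(P)]_{ii} \leq 1$.

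There is no substantial obstacle: both parts are direct bookkeeping from formulas already in hand. The only mild subtlety is remembering that the interpolation stencil on the right-hand side of \cref{eqn:matrix_direct_control_matrix_1} can land on the diagonal (the $k = i$ or $k+1 = i$ case), which is why the diagonal is generally strictly less than $1$ rather than equal to $1$ when $\textcolor{ctrlcolor}{d_i} = 1$; handling this uniformly via $\alpha \in [0,1]$ keeps the bound at $[A(P)]_{ii} \leq 1$ in all cases.
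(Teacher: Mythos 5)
Your proof is correct and uses essentially the same approach as the paper's. For the first claim you manipulate the same row-sum identity $[A(P)\vec{e}\,]_i = \overline{d_i}/\Delta\tau$ together with the Z-matrix sign structure to conclude that row $i$ is s.d.d.\ iff $\overline{d_i} > 0$; the paper arrives at the identical conclusion by the equivalent rearrangement $[A(P)]_{ii} = \sum_{j\neq i}|[A(P)]_{ij}| + \overline{d_i}/\Delta\tau$. For the second claim you read off the diagonal coefficient from the explicit row formula when $\textcolor{ctrlcolor}{d_i}=1$, observing that the interpolation stencil may subtract $\alpha$ or $1-\alpha$ when it lands on index $i$, which is precisely the paper's computation $[A(P)]_{ii} = 1 - \alpha\delta_{i,k+1} - (1-\alpha)\delta_{ik} \leq 1$.
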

\begin{proof}
    The first claim is made immediately obvious by rewriting \cref{eqn:matrix_direct_control_matrix_2} as
    \[
        \left| [A(P)]_{ii} \right|
        = [A(P)]_{ii}
        = - \sum_{j \neq i} [A(P)]_{ij}
        + \left( 1 - \textcolor{ctrlcolor}{d_i} \right) \frac{1}{\Delta \tau}
        = \sum_{j \neq i} \left| [A(P)]_{ij} \right|
        + \left( 1 - \textcolor{ctrlcolor}{d_i} \right) \frac{1}{\Delta \tau}.
    \]

    Now, suppose that $\textcolor{ctrlcolor}{d_i} = 1$ for some fixed row $i$.
    Then by \cref{eqn:matrix_direct_control_parameters}, we have
    \[
        [A(P) U]_i
        = U_i - \gls*{interp}(
            U,
            %x_i +
            \Gamma(\tau^n, x_i, \textcolor{ctrlcolor}{z_i})
        )
        = U_i - \alpha U_{k+1} - \left( 1 - \alpha \right) U_k
    \]
    where $0 \leq \alpha \leq 1$ and $0 \leq k < M$.
    Letting $\vec{x}$ be the $i$-th standard basis vector (i.e., the vector with entries $\vec{x}_j = \delta_{ij}$ where $\delta_{ij}$ is the Kronecker delta), we have
    \[
        [A(P)]_{ii}
        = [A(P) \vec{x} \,]_i
        = \delta_{ii} - \alpha \delta_{i,k+1} - \left( 1 - \alpha \right) \delta_{ik}
        = 1 - \alpha \delta_{i,k+1} - \left( 1 - \alpha \right) \delta_{ik}
        \leq 1,
    \]
    establishing the second claim in the lemma statement.
\end{proof}

%%%%%%%%%%%%%%%%%%%%%%%%%%%%%%%%%%%%%%%%%%%%
\section{Singularity in the Bellman problem}
\label{sec:matrix_singularity}

In the previous section, we saw that the direct control scheme may involve singular (and hence nonmonotone) matrices $A(P)$.
Motivated by this issue, we now revisit the Bellman problem \cref{eqn:matrix_bellman}, relaxing the assumption that the matrices $A(P)$ are monotone.
Throughout this section, we call upon the assumption below.

\begin{enumerate}[label=(H\arabic*),start=3]
    \item
        \label{enum:matrix_standard_assumption}
        %$A(P)$ is a \gls*{wdd} Z-matrix with nonnegative diagonals for each $P \in \mathcal{P}$.
        For each $P \in \mathcal{P}$, $A(P)$ is a \gls*{wdd} Z-matrix with nonnegative diagonals satisfying $[A(P)]_{ii} \leq 1$ whenever its $i$-th row %row $i$ of $A(P)$
        is not \gls*{sdd}
        %\[
            %[A(P)]_{ii} \leq 1
            %\text{ whenever row } i \text{ of } A(P) \text{ is not \gls*{sdd}}
        %\]
\end{enumerate}

We mention that \cref{enum:matrix_standard_assumption} is satisfied by all problems of interest in this chapter (see, e.g., \cref{lem:matrix_direct_control_matrix,lem:matrix_sdd_iff}), including the \glspl*{MDP} studied in the sequel.

\subsection{Uniqueness}

We now establish that solutions of \cref{eqn:matrix_bellman} are unique independent of the nonsingularity of $A(P)$.
The idea behind our technique is to restrict attention to rows of $A(P)$ that are not \gls*{sdd}, as these are the ``problem'' rows that can cause $A(P)$ to be singular.

In light of this, it is convenient to define
\[
    [\hat{y}(P)]_i = \begin{cases}
        [y(P)]_i & \text{if row } i \text{ of } A(P) \text{ is not \gls*{sdd}} \\
        -\infty & \text{otherwise}
    \end{cases}
\]
and the (nonlinear) operator $\mathbb{A}$ by
\[
    \mathbb{A}U = \sup_{P \in \mathcal{P}} \left\{
        - A(P) U + \hat{y}(P)
    \right\}
\]
(compare with \cref{eqn:matrix_bellman}).
We also define $\mathbb{M} = I + \mathbb{A}$.
It is understood that if $[\mathbb{A} U]_i = - \infty$, then $[\mathbb{M} U]_i = [(I + \mathbb{A}) U]_i = U_i + [\mathbb{A} U]_i = -\infty$.
We use superscripts to indicate composition.
For example, $\mathbb{M}^0 = I$ and $\mathbb{M}^k = (\mathbb{M}^{k - 1}) \circ \mathbb{M}$ for any positive integer $k$.

\begin{example}
    Suppose $A$ and $y$ are given by \cref{eqn:matrix_direct_control_parameters}, corresponding to the direct control scheme.
    By \cref{lem:matrix_sdd_iff}, row $i$ of $A(P)$ is not \gls*{sdd} if and only if $\textcolor{ctrlcolor}{d_i} = 1$.
    Therefore,
    \[
        [\hat{y}(P)]_i = \begin{cases}
            K(\tau^n, x_i, \textcolor{ctrlcolor}{z_i})
            & \text{if } \textcolor{ctrlcolor}{d_i} = 1 \\
            -\infty
            & \text{otherwise}
        \end{cases}
    \]
    and hence
    \begin{align*}
        [\mathbb{M} U]_i
        & = U_i + [\mathbb{A} U]_i \\
        & = U_i + \sup_{
            \textcolor{ctrlcolor}{z_i} \in Z^{\gls*{meshing_parameter}}(\tau^n, x_i)
        } \left \{
            \gls*{interp}(
                U,
                %x_i +
                \Gamma(\tau^n, x_i, \textcolor{ctrlcolor}{z_i})
            )
            + K(\tau^n, x_i, \textcolor{ctrlcolor}{z_i})
            - U_i
        \right \} \\
        & = (\mathcal{M}_n U)_i
    \end{align*}
    where $\mathcal{M}_n$ is defined in \cref{eqn:schemes_discretized_intervention}.
    More succinctly, we write $\mathbb{M} = \mathcal{M}_n$.
    \label{exa:matrix_intervention}
\end{example}

The following assumption, justified momentarily, is used in the proof of uniqueness:

\begin{enumerate}[label=(H\arabic*),start=4]
    \item
        \label{enum:matrix_eventually_suboptimal}
        %For each vector $U$ and index $i$, $[\mathbb{M}^2 U]_i < [\mathbb{M} U]_i$ or $[\mathbb{M} U]_i = -\infty$.
        For each %solution $U$ of \cref{eqn:matrix_bellman}
        $U \in \mathbb{R}^{M+1}$ and integer $i$ satisfying $0 \leq i \leq M$, there exist integers $m_1$ and $m_2$ satisfying $0 \leq m_1 < m_2$ and $[\mathbb{M}^{m_1} U]_i > [\mathbb{M}^{m_2} U]_i$.
\end{enumerate}

\begin{example}
    %Let $Z(t, x) = [-R, R]$, $\Gamma(t, x, \textcolor{ctrlcolor}{z}) = \textcolor{ctrlcolor}{z}$, and $K(t, x, \textcolor{ctrlcolor}{z}) = - \kappa | \textcolor{ctrlcolor}{z} - x | - c$ where $R$, $\kappa$, and $c$ are positive constants.
    %In this case, the intervention operator \cref{eqn:introduction_intervention} becomes
    %\begin{equation}
        %\mathcal{M} V(t, x)
        %= \sup_{\textcolor{ctrlcolor}{z} \in [-R, R]} \left\{
            %V(t, \textcolor{ctrlcolor}{z})
            %- \kappa \left| \textcolor{ctrlcolor}{z} - x \right|
            %- c
        %\right\}.
        %\label{eqn:matrix_transaction_costs_1}
    %\end{equation}
    %This form of intervention operator, referred to as intervention with fixed cost $c$ and proportional cost $\kappa$ is common in impulse control.
    %We will encounter it again in \cref{chap:results}.
    %
    %Now, let $\{ x_0, \ldots, x_M \}$ be a partition\partitionFootnote~of the interval $[-R, R]$.
    %Choosing $Z^{\gls*{meshing_parameter}}(t, x) = \{ x_0, \ldots, x_M \}$ in \cref{eqn:schemes_discretized_intervention}, we obtain the following discretization of the operator \cref{eqn:matrix_transaction_costs_1}:
    %\begin{align*}
        %(\mathcal{M}_n U)_i
        %& = \sup_{\textcolor{ctrlcolor}{z_i} \in \{ x_0, \ldots, x_M \}} \left\{
            %\gls*{interp}(U, \textcolor{ctrlcolor}{z_i})
            %- \kappa \left| \textcolor{ctrlcolor}{z_i} - x_i \right|
            %- c
        %\right\} \\
        %& = \sup_{j \in \{ 0, \ldots, M \}} \left\{
            %\gls*{interp}(U, x_j)
            %- \kappa \left| x_j - x_i \right|
            %- c
        %\right\}
        %= \sup_{j \in \{0, \ldots, M\}} \left\{
            %U_j
            %- \kappa \left| x_j - x_i \right|
            %- c
        %\right\}.
    %\end{align*}
    %Therefore,
    The discretized intervention operator $\mathcal{M}_n$ of \cref{exa:schemes_intervention} satisfies
    \begin{multline*}
        (\mathcal{M}_n^2 U)_i
        = (\mathcal{M}_n (\mathcal{M}_n U))_i
        = \max_{0 \leq j \leq M} \left\{
            (\mathcal{M}_n U)_j
            - e^{-\beta \tau^n} \left(
                \kappa \left| x_j - x_i \right|
                + c
            \right)
        \right\} \\
        = \max_{0 \leq j, k \leq M} \left\{
            U_k
            - e^{-\beta \tau^n} \left(
                \kappa \left| x_j - x_i \right|
                + \kappa \left| x_k - x_j \right|
                + 2c
            \right)
        \right\}.
    \end{multline*}
    Therefore, an application of the triangle inequality yields
    \[
        (\mathcal{M}_n^2 U)_i
        \leq \max_{0 \leq j \leq M} \left\{
            U_j
            - e^{-\beta \tau^n} \left(
                \kappa \left| x_j - x_i \right|
                + 2c
            \right)
        \right\}
        = (\mathcal{M}_n U)_i - e^{-\beta \tau^n} c
        < (\mathcal{M}_n U)_i.
        %\label{eqn:matrix_eventually_suboptimal_1}
    \]
    Since $\mathbb{M} = \mathcal{M}_n$ by \cref{exa:matrix_intervention}, \cref{enum:matrix_eventually_suboptimal} is satisfied (with $m_1 = 1$ and $m_2 = 2$).
    \label{exa:matrix_transaction_costs}
\end{example}

The above example highlights the intuition behind \cref{enum:matrix_eventually_suboptimal}, which attempts to capture the suboptimality of consecutive applications of the operator $\mathbb{M}$.
In the example, $\mathcal{M}_n^2$ represents the controller performing two consecutive impulses: the first changes the state from $x_i$ to $x_j$ while the second changes it from $x_j$ to $x_k$.
This is suboptimal, since the controller can directly change the state from $x_i$ to $x_k$, paying only the fixed cost of $c$ instead of $2c$.

We can now state our uniqueness result, which is independent of any assumptions regarding nonsingularity.

\begin{theorem}
    \label{thm:matrix_unique}
    Suppose \cref{enum:matrix_bounded_inverse}, \cref{enum:matrix_standard_assumption}, and \cref{enum:matrix_eventually_suboptimal}.
    Then, solutions of \cref{eqn:matrix_bellman} are unique.%\footnote{Assumption \cref{enum:matrix_bounded_inverse} can actually be weakened in certain special cases. We refer to our article \cite{MR3493959} for details.}
\end{theorem}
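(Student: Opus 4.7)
The plan is to argue by contradiction. Assume $U$ and $\hat{U}$ are two distinct solutions of \cref{eqn:matrix_bellman}. Set $\delta := U - \hat{U}$ and, without loss of generality, $\delta^{*} := \max_i \delta_i > 0$, with maximizing set $I := \{ i \colon \delta_i = \delta^{*} \}$.

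First I would record two structural properties of $\mathbb{M}$. By \cref{enum:matrix_standard_assumption}, every row $i$ of $A(P)$ that is not \gls*{sdd} has $[A(P)]_{ii} \geq 0$, nonpositive off-diagonals with $[A(P)]_{ii} = \sum_{j \neq i} |[A(P)]_{ij}|$, and $[A(P)]_{ii} \leq 1$; in particular $[A(P) \vec{e}\,]_i = 0$ on non-\gls*{sdd} rows. Consequently $\mathbb{A}$ is translation invariant and $\mathbb{M}$ is translation equivariant, meaning $\mathbb{M}(V + \alpha \vec{e}\,) = \mathbb{M} V + \alpha \vec{e}$. A direct computation using $[A(P)]_{ii} \leq 1$ then shows that $\mathbb{M}$ is monotone. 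Since $\hat{y}(P) \leq y(P)$ componentwise, any solution $V$ of \cref{eqn:matrix_bellman} also satisfies $\mathbb{A} V \leq 0$ and hence $\mathbb{M} V \leq V$.

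The crux is to show $[\mathbb{M} U]_i = U_i$ for every $i \in I$. If not, the supremum in \cref{eqn:matrix_bellman} for $U$ at row $i$ is approximated exclusively by controls whose row $i$ is \gls*{sdd}: for arbitrarily small $\epsilon > 0$, pick such a $P^\epsilon$ with $[-A(P^\epsilon) U + y(P^\epsilon)]_i > -\epsilon$ and subtract the inequality $[-A(P^\epsilon) \hat{U} + y(P^\epsilon)]_i \leq 0$ to get $[A(P^\epsilon) \delta]_i < \epsilon$. The Z-matrix property together with the maximality of $\delta$ at $i$ gives
\[
    [A(P^\epsilon) \delta]_i \geq \bigl( [A(P^\epsilon)]_{ii} - \textstyle\sum_{j \neq i} |[A(P^\epsilon)]_{ij}| \bigr) \delta^{*},
\]
whose leading factor is a strictly positive \gls*{sdd} gap. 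Exploiting the row-decoupled convention I would replace the rows $j \neq i$ of $P^\epsilon$ with a fixed base control whose rows are \gls*{sdd}; the assembled matrix is then globally \gls*{sdd}, hence by \cref{thm:matrix_characterization} an M-matrix, so \cref{enum:matrix_bounded_inverse} applies and the standard inverse--row-sum relation yields a uniform positive lower bound $c > 0$ on the \gls*{sdd} gap at row $i$. This forces $c\,\delta^{*} < \epsilon$ for every $\epsilon > 0$, a contradiction.

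With $[\mathbb{M} U]_i = U_i$ established on $I$, I would iterate and invoke \cref{enum:matrix_eventually_suboptimal}. Picking non-\gls*{sdd} $P^\epsilon$ at $i \in I$ approximating the sup, the same expansion rewritten as $[A(P^\epsilon)]_{ii} \sum_{j\neq i} \alpha_j (\delta^{*} - \delta_j) < \epsilon$, with $\alpha_j := |[A(P^\epsilon)]_{ij}|/[A(P^\epsilon)]_{ii}$ summing to $1$, forces every ``substantial'' off-diagonal entry to point inside $I$ once $\epsilon$ is below a threshold depending on $\min_{j \notin I}(\delta^{*} - \delta_j) > 0$. Together with the translation invariance of $\mathbb{A}$, an induction on $k$ then shows $[\mathbb{M}^k U]_i = U_i$ for every $k \geq 0$ and every $i \in I$: the $P^\epsilon$ selected above also realizes the sup for $\mathbb{M}^{k-1} U$ at row $i$ up to $O(\epsilon)$, because by the inductive hypothesis $\mathbb{M}^{k-1} U$ agrees with $U$ on $I$ and the relevant off-diagonals only see $I$-entries. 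This directly contradicts \cref{enum:matrix_eventually_suboptimal}, which supplies $m_1 < m_2$ with $[\mathbb{M}^{m_1} U]_i > [\mathbb{M}^{m_2} U]_i$. The main obstacle is keeping the $\epsilon$-approximations compatible across this induction, since the sup need not be attained and the sup-approximating controls may vary with $k$; a subsidiary technical point is the reconstitution of a nonsingular $A(P)$ in the \gls*{sdd} case needed to invoke \cref{enum:matrix_bounded_inverse}.
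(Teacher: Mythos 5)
There is a genuine gap in the crux step, the claim that $[\mathbb{M}U]_i = U_i$ for every $i\in I$. You derive $[A(P^\epsilon)\delta]_i < \epsilon$ and lower-bound it by $(\text{SDD gap at row }i)\cdot\delta^*$, which is correct; but concluding a contradiction requires that the SDD gap at row $i$ be bounded away from zero uniformly in $\epsilon$, and no such bound follows from \cref{enum:matrix_bounded_inverse}. The ``standard inverse--row-sum relation'' is one-directional: for an \gls*{sdd} M-matrix $A$, Varah's inequality gives $\|A^{-1}\|_\infty \leq (\min_i d_i)^{-1}$ where $d_i$ is the SDD gap in row $i$, so a \emph{lower} bound on the gaps yields an \emph{upper} bound on the inverse, not conversely. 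A $2\times 2$ example kills the inference: $A_\epsilon = \begin{pmatrix} 1 & -(1-\epsilon) \\ -1/2 & 1 \end{pmatrix}$ satisfies $\|A_\epsilon^{-1}\|_\infty \leq 4$ for all $\epsilon\in(0,1]$, yet the row-1 SDD gap is $\epsilon\to 0$. Moreover, to even place yourself in the domain of \cref{enum:matrix_bounded_inverse} you posit a ``fixed base control whose rows are \gls*{sdd},'' but nothing in \cref{enum:matrix_bounded_inverse}, \cref{enum:matrix_standard_assumption}, \cref{enum:matrix_eventually_suboptimal} guarantees that $\mathcal{P}$ contains a control with all rows \gls*{sdd}.

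The underlying difficulty is that the level set $I := \{i : \delta_i = \delta^*\}$ has no structural property that closes the induction needed to contradict \cref{enum:matrix_eventually_suboptimal} --- you flag this yourself at the end as the main obstacle, and indeed the $\epsilon$-approximating controls and their off-diagonal masses drift with $k$ in a way $I$ cannot control. The paper sidesteps all of this: \cref{lem:matrix_eventually_wcdd} shows that, for any solution $U$, the supremum is always approximated by controls whose matrices are \gls*{wcdd}. Its proof extracts (by pigeonhole, in the infinite case) a subsequence of controls sharing the same adjacency graph and SDD-row partition $J$/$W$/$R$, and the set $R$ has the decisive \emph{graph-theoretic} property that $[A(P)]_{rj}=0$ for $r\in R$, $j\notin R$; that structural fact is what makes $[\mathbb{M}^k U]_r = U_r$ propagate. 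Uniqueness then follows from \cref{lem:matrix_one_side} (the place where \cref{enum:matrix_bounded_inverse} is legitimately used, simply to send $A(P^\ell)^{-1}\delta^\ell \to 0$), applied symmetrically. Your observations that $\mathbb{M}$ is translation equivariant and monotone and that $\mathbb{M}V\leq V$ for solutions are correct and do appear in the paper's argument, but swapping the graph-theoretic set $R$ for the analytic maximizing set $I$ is the wrong abstraction.
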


To prove the result, we require a lemma.

\begin{lemma}
    %Suppose \cref{enum:matrix_standard_assumption,enum:matrix_eventually_suboptimal}.
    Let $U$ be a solution of \cref{eqn:matrix_bellman}.
    Then, we can find a sequence $(P^\ell)_\ell$ such that
    \begin{equation}
        -A(P^\ell) U + y(P^\ell) \rightarrow 0
        \label{eqn:matrix_eventually_wcdd_1}
    \end{equation}
    and $A(P^\ell)$ is an M-matrix for each $\ell$.
    \label{lem:matrix_eventually_wcdd}
\end{lemma}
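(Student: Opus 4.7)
The plan is to use the row-decoupled structure of the Bellman problem together with assumptions \ref{enum:matrix_standard_assumption} and \ref{enum:matrix_eventually_suboptimal} to construct, for each $\ell$, a control $P^\ell$ that is row-wise near-optimal and whose associated matrix $A(P^\ell)$ is weakly chained diagonally dominant, hence an M-matrix by \cref{thm:matrix_characterization}. Since $U$ solves \cref{eqn:matrix_bellman} and the $i$-th row of $-A(P)U + y(P)$ depends only on $P_i \in \mathcal{P}_i$, the supremum factors row by row, so $\sup_{P_i}[-A(P)U + y(P)]_i = 0$ for every $i$. Splitting this supremum according to whether the $i$-th row of $A(P)$ is s.d.d. partitions the indices into those admitting an asymptotically optimal s.d.d. action and those for which every asymptotically optimal action is non-s.d.d.

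Next I would study the iterated behaviour of $\mathbb{M} = I + \mathbb{A}$. Assumption \ref{enum:matrix_standard_assumption} makes $I - A(P)$ entrywise nonnegative on any non-s.d.d. row, so $\mathbb{M}$ is monotone. Since $[\mathbb{A} U]_i \leq 0$, I get $\mathbb{M} U \leq U$ pointwise, and monotonicity propagates this to a componentwise non-increasing sequence $(\mathbb{M}^k U)_{k \geq 0}$. Assumption \ref{enum:matrix_eventually_suboptimal} then forces, for every $i$, some finite $K_i$ with $[\mathbb{M}^{K_i} U]_i < U_i$; in particular $\mathbb{M}^k U \equiv U$ is excluded, so the set of rows at which the s.d.d. contribution achieves the supremum $0$ is nonempty.

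With these pieces in hand, for each $\ell$ I would build $P^\ell$ row by row. At every row $i$ where some s.d.d. action attains $[-A(P)U + y(P)]_i$ within $\epsilon^\ell$ of $0$, pick such an action; this makes row $i$ of $A(P^\ell)$ s.d.d. At the remaining rows the supremum is approached only by non-s.d.d. actions, and I would select them by unfolding the achievers of the nested suprema defining $[\mathbb{M}^{K_i} U]_i$: these achievers trace a walk in the adjacency graph of length at most $K_i$ out of the ``forced non-s.d.d.'' rows and into a row at which an s.d.d. action is optimal, yielding the w.c.d.d. chaining condition of \cref{def:matrix_wcdd}. The resulting $A(P^\ell)$ is a w.d.d. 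Z-matrix with nonnegative diagonals that is also w.c.d.d., hence an M-matrix by \cref{thm:matrix_characterization}, while row-by-row $\epsilon^\ell$-optimality gives $-A(P^\ell)U + y(P^\ell) \to 0$.

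The main obstacle is the unfolding step: the inequality $[\mathbb{M}^{K_i} U]_i < U_i$ is a statement about suprema of compositions, and it must be converted into a single concrete assignment of row controls whose adjacency graph actually contains the required walk. The delicate point is that the $\epsilon$-achievers at different depths of $\mathbb{M}$ need not glue into one consistent control, so a careful inductive selection argument is required, relying on the row-decoupling of $\mathcal{P} = \mathcal{P}_0 \times \cdots \times \mathcal{P}_M$ to ensure that the choice made at row $i$ does not conflict with choices at other rows along the walk.
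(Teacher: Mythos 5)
Your plan is genuinely different from the paper's: you attempt a direct \emph{construction} of a near-optimal w.c.d.d.~control by choosing row controls one index at a time, whereas the paper argues by \emph{contradiction} that any near-optimal control is automatically w.c.d.d.~(up to passing to a subsequence and dropping finitely many terms). The paper's argument takes a near-optimal sequence $(P^\ell)_\ell$, supposes infinitely many of the $A(P^\ell)$ are not w.c.d.d., uses the pigeonhole principle to pass to a subsequence with a common adjacency-graph structure $(G,J,W)$, and then shows that the residual set $R$ of ``bad'' rows satisfies $[\mathbb{M}^k U]_r = U_r$ for all $k$ and all $r \in R$, contradicting \cref{enum:matrix_eventually_suboptimal}. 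The key structural fact enabling this is that, by definition of $R$, the matrix $A(P^\ell)$ has a zero block from $R$ into $J \cup W$, so iterating $\mathbb{M}$ starting from a row in $R$ only ever ``sees'' rows in $R$, and equality propagates.

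The gap in your proposal is exactly the one you flag, and it is not a mere technicality---it is the content of the lemma. Your unfolding step assumes that the walk traced by the nested $\epsilon$-achievers of $[\mathbb{M}^{K_i}U]_i$ ``escapes'' the forced non-s.d.d.~rows within $K_i$ steps and terminates at a row where an s.d.d.~action is near-optimal. But nothing you write justifies this escape: if the achievers stayed inside the forced non-s.d.d.~set, you would be exactly in the situation the paper rules out by contradiction, so proving the escape \emph{is} proving the lemma. Separately, even granting the escape, the achiever at depth $k$ of $\mathbb{M}^{K_i}U$ at row $j$ depends on $k$; if $j$ is visited at two depths, the two achievers may differ, and row-decoupling lets you pick one action per row, not one per visit. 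You would need an argument that a single row-consistent choice still yields an adjacency graph containing all the required walks simultaneously, and that this choice is also $\epsilon^\ell$-optimal at every row. The paper avoids both difficulties by never attempting a construction: it lets the near-optimal control be whatever it is and shows that failure of w.c.d.d.~is impossible.

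There is also a smaller conceptual slip: you phrase the partition of rows in terms of which \emph{actions} (at a single row) are s.d.d.~and near-optimal, but the w.c.d.d.~chaining condition is a property of a fixed matrix $A(P)$ and its adjacency graph. Whether row $i$ has a walk to an s.d.d.~row depends on the off-diagonal pattern at \emph{every} row along the walk, i.e.~on the global choice $P$, not on the set of locally available actions at row $i$. This is the coupling that makes the row-by-row construction hard, and again the paper's by-contradiction route does not have to contend with it.
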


\begin{proof}
    Throughout the proof, we call upon the following facts:
    \begin{enumerate}[label=(\roman*)]
        \item
            By \cref{enum:matrix_standard_assumption,thm:matrix_characterization}, $A(P)$ being an M-matrix is equivalent to it being \gls*{wcdd}
        \item
            Since $U$ is a solution of \cref{eqn:matrix_bellman},
            \begin{equation}
                \mathbb{A} U
                = \sup_{P \in \mathcal{P}} \left \{
                    - A(P) U + \hat{y}(P)
                \right \}
                \leq \sup_{P \in \mathcal{P}} \left \{
                    - A(P) U + y(P)
                \right \}
                = 0
                \label{eqn:matrix_eventually_wcdd_global_1}
            \end{equation}
            and hence $\mathbb{M} U = (I + \mathbb{A}) U = U + \mathbb{A} U \leq U$.
        \item
            $\mathbb{M}$ is a monotone operator (i.e., if $X \leq Y$, then $\mathbb{M} X \leq \mathbb{M} Y$).
            To see this, note that
            \begin{align*}
                [\mathbb{M} X]_i
                & = [(I + \mathbb{A}) X]_i \\
                & = X_i + \textstyle{ \sup_{P \in \mathcal{P}} } [
                    - A(P) X + \hat{y}(P)
                ]_i \\
                & = \textstyle{ \sup_{P \in \mathcal{P}} } \{
                    \left( 1 - [A(P)]_{ii} \right) X_i
                    - \sum_{j \neq i} [A(P)]_{ij} X_j
                    + [\hat{y}(P)]_i
                \} \\
                & \leq \textstyle{ \sup_{P \in \mathcal{P}} } \{
                    \left( 1 - [A(P)]_{ii} \right) Y_i
                    - \sum_{j \neq i} [A(P)]_{ij} Y_j
                    + [\hat{y}(P)]_i
                \} \\
                & = [\mathbb{M} Y]_i
            \end{align*}
            where the inequality above follows from the fact that if $[\hat{y}(P)]_i \neq -\infty$, then $1 - [A(P)]_{ii} \geq 0$ and $[A(P)]_{ij} \leq 0$ for $j \neq i$ by \cref{enum:matrix_standard_assumption}.
    \end{enumerate}

    First, we consider the case where $\mathcal{P}$ is a finite set.
    In this case, the supremum in \cref{eqn:matrix_bellman} is attained at a point $P$:
    \begin{equation}
        - A(P) U + y(P) = 0.
        \label{eqn:matrix_eventually_wcdd_finite_1}
    \end{equation}
    We seek to show that $A(P)$ is \gls*{wcdd} since in this case, the constant sequence $(P^\ell)_\ell$ defined by $P^\ell = P$ satisfies \cref{eqn:matrix_eventually_wcdd_1}.
    Let $G$ be the (directed) adjacency graph of $A(P)$, $J$ be the set of all \gls*{sdd} rows of $A(P)$, and $R = \{ 0, \ldots, M \} \setminus ( J \cup W )$ where
    \[
        W = \left \{
            i_1 \notin J \colon
            \text{there exists a walk }
            i_1 \rightarrow \cdots \rightarrow i_k
            \text{ in }
            G
            \text{ such that }
            i_k \in J
        \right\}
    \]
    (compare with $J$, $R$, and $W$ defined in the proof of \cref{thm:matrix_characterization}).
    Suppose $A(P)$ is not \gls*{wcdd}
    By \cref{def:matrix_wcdd}, it follows that $R$ is nonempty.
    Since $R \cap J = \emptyset$ and $[\hat{y}(P)]_j = [y(P)]_j$ for any $j \notin J$, \cref{eqn:matrix_eventually_wcdd_finite_1} implies
    \begin{equation}
        [\mathbb{A} U]_r
        \geq [- A(P) U + \hat{y}(P)]_r
        = [- A(P) U + y(P)]_r
        = 0
        \textspace \text{for } r \in R.
        \label{eqn:matrix_eventually_wcdd_finite_2}
    \end{equation}
    By \cref{eqn:matrix_eventually_wcdd_global_1}, $\mathbb{A} U \leq 0$, and hence the inequality in \cref{eqn:matrix_eventually_wcdd_finite_2} must hold with equality.
    In particular, we can conclude
    \[
        %\mathbb{A} U \leq 0
        %\textspace \text{and} \textspace
        [\mathbb{A} U]_r = 0
        \textspace \text{for } r \in R.
    \]
    Equivalently, in terms of the operator $\mathbb{M} = I + \mathbb{A}$,
    \begin{equation}
        %\mathbb{M} U \leq U
        %\textspace \text{and} \textspace
        [\mathbb{M} U]_r = U_r
        \textspace \text{for } r \in R.
        \label{eqn:matrix_eventually_wcdd_finite_3}
    \end{equation}
    Now, note that
    \begin{align}
        [\mathbb{M}^2 U]_r
        & = [\mathbb{M} (\mathbb{M} U)]_r
        \nonumber \\
        & = [(I + \mathbb{A}) (\mathbb{M} U)]_r
        \nonumber \\
        & = [\mathbb{M} U]_r + [\mathbb{A} (\mathbb{M} U)]_r
        \nonumber \\
        & \geq [\mathbb{M} U]_r + [-A(P) (\mathbb{M} U) + y(P)]_r
        \nonumber \\
        & = [\mathbb{M} U]_r
        - { \sum_{j \in R} [A(P)]_{rj} } [\mathbb{M} U]_j
        + [y(P)]_r
        & \text{for } r \in R
        \label{eqn:matrix_eventually_wcdd_finite_4}
    \end{align}
    where the last equality follows since $[A(P)]_{rj} = 0$ if $j \notin R$ (recall, as in \cref{fig:matrix_proof_adjacency_graph}, that there are no edges from vertices in $R$ to vertices in $J \cup W$).
    Applying \cref{eqn:matrix_eventually_wcdd_finite_3} to \cref{eqn:matrix_eventually_wcdd_finite_4} and using the fact that \cref{eqn:matrix_eventually_wcdd_finite_2} holds with equality yields
    \begin{align*}
        [\mathbb{M}^2 U]_r
        & \geq U_r - { \sum_{j \in R} [A(P)]_{rj} } U_j + [y(P)]_r \\
        & = U_r + [-A(P) U + y(P)]_r \\
        & = U_r + [\mathbb{A} U]_r \\
        & = [(I + \mathbb{A}) U]_r \\
        & = [\mathbb{M} U]_r
        & \text{for } r \in R.
    \end{align*}
    By the monotonicity of $\mathbb{M}$ and the inequality $\mathbb{M} U \leq U$ (recall \cref{eqn:matrix_eventually_wcdd_global_1}), $\mathbb{M}^2 U \leq \mathbb{M} U$, and hence $[\mathbb{M}^2 U]_r = [\mathbb{M} U]_r$ for $r \in R$.
    Continuing this procedure, we establish
    \[
        U_r = [\mathbb{M} U]_r = [\mathbb{M}^2 U]_r = [\mathbb{M}^3 U]_r = \cdots
        \textspace \text{for } r \in R,
    \]
    contradicting \cref{enum:matrix_eventually_suboptimal}.

    If $\mathcal{P}$ is infinite, the supremum in \cref{eqn:matrix_bellman}, while not necessarily attained at a point, can be approximated by some sequence $(P^\ell)_\ell$:
    \begin{equation}
        \lim_{\ell \rightarrow \infty} \left \{
            - A(P^\ell) U + y(P^\ell)
        \right \} = 0.
        \label{eqn:matrix_eventually_wcdd_infinite_1}
    \end{equation}
    It is sufficient to show that for $\ell$ sufficiently large, $A(P^\ell)$ is \gls*{wcdd} since we can always drop the first finitely many terms from a sequence without changing its limit.
    In order to arrive at a contradiction, suppose that we can find a subsequence of $(P^\ell)_\ell$ indexed by $(\ell_s)_s$ such that for each $s$, $A(P^{\ell_s})$ is not \gls*{wcdd}
    Let $G^s$ be the directed adjacency graph of $A(P^{\ell_s})$, $J^s$ be the set of \gls*{sdd} rows of $A(P^{\ell_s})$, and $R^s = \{0, \ldots, M\} \setminus (J^s \cup W^s)$ where
    \[
        W^s = \left \{
            i_1 \notin J^s \colon
            \text{there exists a walk }
            i_1 \rightarrow \cdots \rightarrow i_k
            \text{ in }
            G^s
            \text{ such that }
            i_k \in J^s
        \right\}.
    \]
    Since the sequence of triplets $( (G^s, J^s, W^s) )_s$ takes on at most finitely many values, we can use the pigeonhole principle to find a constant subsequence of $( (G^s, J^s, W^s) )_s$ indexed by $(s_t)_t$.
    By constant, we mean that
    \[
        G^{s_1} = G^{s_2} = \cdots
        , \textspace
        J^{s_1} = J^{s_2} = \cdots
        , \textspace \text{and} \textspace
        W^{s_1} = W^{s_2} = \cdots
    \]
    In light of this, it is convenient to define $G = G^{s_1}$, $J = J^{s_1}$, $W = W^{s_1}$, and $R = R^{s_1}$.
    For brevity, we also define $A^{(t)} = A(P^{\ell_{s_t}})$, $y^{(t)} = y(P^{\ell_{s_t}})$, and $\hat{y}^{(t)} = \hat{y}(P^{\ell_{s_t}})$.
    Since a subsequence of a convergent sequence has the same limit, \cref{eqn:matrix_eventually_wcdd_infinite_1} implies
    \begin{equation}
        \lim_{t \rightarrow \infty} \left \{
            - A^{(t)} U + y^{(t)}
        \right \} = 0.
        \label{eqn:matrix_eventually_wcdd_infinite_2}
    \end{equation}
    Since $R \cap J = \emptyset$ and $[\hat{y}(P)]_j = [y(P)]_j$ for any $j \notin J$, \cref{eqn:matrix_eventually_wcdd_infinite_2} implies
    \[
        [\mathbb{A} U]_r
        \geq \lim_{t \rightarrow \infty} [- A^{(t)} U + \hat{y}^{(t)}]_r
        = \lim_{t \rightarrow \infty} [- A^{(t)} U + y^{(t)}]_r
        = 0
        \textspace \text{for } r \in R.
    \]
    Again, by \cref{eqn:matrix_eventually_wcdd_global_1}, $\mathbb{A} U \leq 0$ and hence it must be the case that the above holds with equality.
    As in the previous paragraph, this implies \cref{eqn:matrix_eventually_wcdd_finite_3}.
    Similarly to \cref{eqn:matrix_eventually_wcdd_finite_4},
    \begin{align*}
        [\mathbb{M}^2 U]_r
        & \geq [\mathbb{M} U]_r
        + \lim_{t \rightarrow \infty} [
            -A^{(t)} (\mathbb{M} U) + y^{(t)}
        ]_r \\
        & \geq [\mathbb{M} U]_r
        + \lim_{t \rightarrow \infty}
            - \sum_{j \in R} [-A^{(t)}]_{rj} [\mathbb{M} U]_j
            + [y^{(t)}]_r
        & \text{for } r \in R.
    \end{align*}
    Applying \cref{eqn:matrix_eventually_wcdd_finite_3} to the above
    \begin{align*}
        [\mathbb{M}^2 U]_r
        & \geq U_r + \lim_{t \rightarrow \infty}
            - { \sum_{j \in R} [A^{(t)}]_{rj} } U_j
            + [y^{(t)}]_r \\
        & = U_r + \lim_{t \rightarrow \infty} [-A^{(t)} U + y^{(t)}]_r \\
        & = U_r + [\mathbb{A} U]_r \\
        & = [\mathbb{M} U]_r
        & \text{for } r \in R.
    \end{align*}
    Again, by the monotonicity of $\mathbb{M}$ and the inequality $\mathbb{M} U \leq U$ (recall \cref{eqn:matrix_eventually_wcdd_global_1}), $\mathbb{M}^2 U \leq \mathbb{M} U$, and hence $[\mathbb{M}^2 U]_r = [\mathbb{M} U]_r$ for $r \in R$.
    Continuing this procedure, we establish
    \[
        U_r = [\mathbb{M} U]_r = [\mathbb{M}^2 U]_r = [\mathbb{M}^3 U]_r = \cdots
        \textspace \text{for } r \in R,
    \]
    contradicting \cref{enum:matrix_eventually_suboptimal}.
\end{proof}

We can now prove \cref{thm:matrix_unique}:

\begin{proof}[Proof of \cref{thm:matrix_unique}]
    Let $U$ and $\hat{U}$ be solutions of \cref{eqn:matrix_bellman}.
    By \cref{lem:matrix_eventually_wcdd}, it follows that we can find a sequence $(P^\ell)_\ell$ such that $A(P^\ell)$ is an M-matrix for each $\ell$ and
    \[
        -A(P^\ell) \hat{U} + y(P^\ell) \rightarrow 0.
    \]
    Therefore, by \cref{lem:matrix_one_side}, $U - \hat{U} \geq 0$.
    Switching the roles of $U$ and $\hat{U}$ in the above gives $\hat{U} - U \geq 0$.
    Hence $U = \hat{U}$.
\end{proof}

\subsection{Policy iteration on a subset of controls}
\label{subsec:matrix_restriction}

Recall that policy iteration may fail upon encountering a singular matrix iterate $A(P^\ell)$ (see the discussion at the end of \cref{subsec:matrix_direct_control}).
To avoid singular matrix iterates, we perform policy iteration on a subset $\mathcal{P}^\prime$ of $\mathcal{P}$ obtained by removing certain controls $P$ for which $A(P)$ is singular (\cref{fig:matrix_restriction}), employing \cref{thm:matrix_unique} to ensure uniqueness of the solution.
Below, we state this idea precisely.

\begin{figure}
    \centering
    \begin{tikzpicture}
        \draw [thick] ellipse (2 and 2/\gratio);
        \filldraw [thick, fill=black!20] ellipse (1 and 1/\gratio);
        \draw [thick, xshift=8cm] ellipse (3 and 3/\gratio);
        \draw [thick, xshift=8cm] ellipse (2 and 2/\gratio);
        \filldraw [thick, xshift=8cm, fill=black!20] ellipse (1 and 1/\gratio);
        \node at (0, 0) (subset) {$\mathcal{P}^\prime$};
        \node at (0, -1.5/\gratio) {$\mathcal{P}$};
        \node at (8, 0) (image) {$A(\mathcal{P}^\prime)$};
        \node at (8, -1.4/\gratio) {\small nonsingular};
        \node at (8, -2.4/\gratio) {\small $\mathbb{R}^{(M+1) \times (M+1)}$};
        \path [thick, ->, bend left] (subset) edge node [anchor=center, above] {$A$} (8, 0.25);
    \end{tikzpicture}
    \caption{Restriction to a subset of controls that preclude singularity}
    \label{fig:matrix_restriction}
\end{figure}
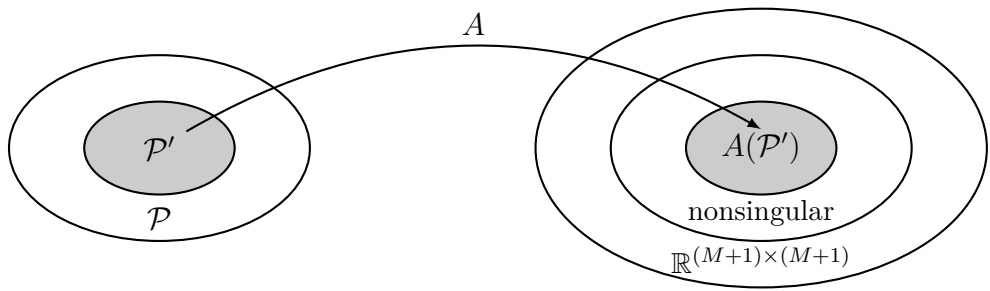

\begin{theorem}
    For each $i$, let $\mathcal{P}_i^\prime$ be a nonempty subset of $\mathcal{P}_i$ and define $\mathcal{P}^\prime = \mathcal{P}_0^\prime \times \cdots \times \mathcal{P}_M^\prime$.
    Suppose \cref{enum:matrix_bounded_inverse,enum:matrix_bounded,enum:matrix_standard_assumption,enum:matrix_eventually_suboptimal}, $A(P)$ is nonsingular for each $P \in \mathcal{P}^\prime$, and for all $U \in \mathbb{R}^{M+1}$,
    \begin{equation}
        H(U; \mathcal{P}) \leq 0
        \textspace \text{whenever} \textspace
        H(U; \mathcal{P}^\prime) = 0
        %\textspace \text{for all } U \in \mathbb{R}^{M + 1}.
        \label{eqn:matrix_invariance}
    \end{equation}
    where $H(U; \mathcal{P}) = \sup_{P \in \mathcal{P}} \{ -A(P) U + y(P) \}$.
    Then, the sequence $(U^\ell)_\ell$ defined by \Call{$\epsilon$-Policy-Iteration}{$\mathcal{P}^\prime$} converges to the unique solution of \cref{eqn:matrix_bellman}.
    \label{thm:matrix_restriction}
\end{theorem}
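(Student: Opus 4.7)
The strategy is to reduce the full Bellman problem on $\mathcal{P}$ to its restriction on $\mathcal{P}^\prime$, on which all hypotheses of the clean convergence result \cref{thm:matrix_policy_iteration} already hold, and then use the invariance condition \cref{eqn:matrix_invariance} to bridge the two.

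First, I would argue that \textsc{$\epsilon$-Policy-Iteration}$(\mathcal{P}^\prime)$ is well-defined and converges. By \cref{enum:matrix_standard_assumption}, each $A(P)$ with $P \in \mathcal{P}^\prime$ is a w.d.d.\ Z-matrix with nonnegative diagonals; combined with the nonsingularity assumption on $\mathcal{P}^\prime$, \cref{thm:matrix_characterization} (the equivalence \cref{enum:matrix_characterization_2} $\Leftrightarrow$ \cref{enum:matrix_characterization_1}) yields that $A(P)$ is a nonsingular M-matrix, hence monotone, for every $P \in \mathcal{P}^\prime$. Since $\mathcal{P}^\prime \subseteq \mathcal{P}$, the boundedness hypotheses \cref{enum:matrix_bounded_inverse,enum:matrix_bounded} immediately descend to the restrictions $A|_{\mathcal{P}^\prime}$ and $y|_{\mathcal{P}^\prime}$. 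Applying \cref{thm:matrix_policy_iteration} with $\mathcal{P}^\prime$ in place of $\mathcal{P}$, the iterates $(U^\ell)_\ell$ converge to the unique $U^\star \in \mathbb{R}^{M+1}$ satisfying $H(U^\star; \mathcal{P}^\prime) = 0$.

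Next, I would show that this $U^\star$ solves the original Bellman problem \cref{eqn:matrix_bellman}. Because $\mathcal{P}^\prime \subseteq \mathcal{P}$, enlarging the set over which one takes the supremum can only raise it, so
\[
    H(U^\star; \mathcal{P}) \;\geq\; H(U^\star; \mathcal{P}^\prime) \;=\; 0.
\]
The reverse inequality $H(U^\star; \mathcal{P}) \leq 0$ is exactly what the invariance hypothesis \cref{eqn:matrix_invariance} delivers (using $U = U^\star$). Hence $H(U^\star; \mathcal{P}) = 0$, i.e.\ $U^\star$ is a solution of \cref{eqn:matrix_bellman}.

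Finally, uniqueness of the solution of \cref{eqn:matrix_bellman} among all of $\mathbb{R}^{M+1}$ (not merely among limits of the iterates) follows from \cref{thm:matrix_unique}, whose hypotheses \cref{enum:matrix_bounded_inverse}, \cref{enum:matrix_standard_assumption}, and \cref{enum:matrix_eventually_suboptimal} are assumed here. The substantive content has already been absorbed into the preceding results: \cref{thm:matrix_characterization} removes the need to directly verify monotonicity of $A(P)$ by replacing it with the nonsingularity assumption plus the structural hypothesis \cref{enum:matrix_standard_assumption}, while \cref{eqn:matrix_invariance} is precisely the compatibility condition ensuring that trimming $\mathcal{P}$ to $\mathcal{P}^\prime$ does not introduce spurious solutions. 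I do not anticipate a significant technical obstacle beyond correctly orchestrating \cref{thm:matrix_policy_iteration,thm:matrix_characterization,thm:matrix_unique}; the main conceptual subtlety is recognising that \cref{eqn:matrix_invariance} is only needed in one direction, since the other direction is free from $\mathcal{P}^\prime \subseteq \mathcal{P}$.
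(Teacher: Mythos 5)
Your proposal is correct and follows essentially the same line of argument as the paper: use \cref{enum:matrix_standard_assumption} together with \cref{thm:matrix_characterization} to upgrade nonsingularity on $\mathcal{P}^\prime$ to the M-matrix (monotonicity) property, invoke \cref{thm:matrix_policy_iteration} on $\mathcal{P}^\prime$, bridge to $\mathcal{P}$ via $\mathcal{P}^\prime \subseteq \mathcal{P}$ plus \cref{eqn:matrix_invariance}, and close with \cref{thm:matrix_unique}. The additional remark about \cref{enum:matrix_bounded_inverse,enum:matrix_bounded} descending to the restriction is a correct detail the paper leaves implicit.
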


\begin{proof}
    By \cref{enum:matrix_standard_assumption,thm:matrix_characterization}, $A(P)$ is an M-matrix for each $P \in \mathcal{P}^\prime$.
    Therefore, by \cref{thm:matrix_policy_iteration}, $(U^\ell)_\ell$ converges to a vector $U$ satisfying $H(U; \mathcal{P}^\prime) = 0$.
    Since $\mathcal{P}^\prime \subset \mathcal{P}$, it follows that $H(U; \mathcal{P}) \geq H(U; \mathcal{P}^\prime) = 0$.
    Therefore, by \cref{eqn:matrix_invariance}, $H(U; \mathcal{P}) = 0$
    and hence $U$ is a solution of \cref{eqn:matrix_bellman}.
    Moreover, by \cref{thm:matrix_unique}, it is the unique solution.
\end{proof}

\begin{remark}
    If, in addition to the requirements of \cref{thm:matrix_restriction}, $\mathcal{P}^\prime$ is a compact topological space and the restrictions of the functions $A$ and $y$ to the set $\mathcal{P}^\prime$ are continuous, we can use \Call{Policy-Iteration}{$\mathcal{P}^\prime$} (instead of \Call{$\epsilon$-Policy-Iteration}{$\mathcal{P}^\prime$}) to compute the unique solution of \cref{eqn:matrix_bellman}.
\end{remark}

The drawback of the approach suggested by \cref{thm:matrix_restriction} is that it is not clear how to select the subset $\mathcal{P}^\prime$ in general.
This set must be chosen on a problem-by-problem basis (an example is given below).
In this sense, the penalty scheme, which requires no additional effort to ensure convergence, is more robust than the direct control scheme.
This begs the following natural question: if both schemes converge to the same solution, is there an advantage to using the direct control scheme? (e.g., could it, at least empirically, exhibit faster convergence?)
This question is answered in \cref{chap:results}.

\begin{example}
    \label{exa:matrix_direct_control}
    Consider the HJBQVI \cref{eqn:results_fex_hjbqvi} of \cref{exa:introduction_fex}.
    To simplify notation, assume the target \gls*{FEX} rate $m$ is zero.

    Let $\{ x_0, \ldots, x_M \}$ be a uniform partition of the interval $[-R, R]$ (i.e., $x_i = (i - M/2) \Delta x$ where $\Delta x = 2R / M$) and $Z^{\gls*{meshing_parameter}}(t, x) = \{ x_0 - x, \ldots, x_M - x \} $ be chosen as per \cref{exa:schemes_intervention}.
    To simplify notation, assume $M$ is even so that $x_{M/2} = m = 0$.

    The direct control scheme applied to the HJBQVI \cref{eqn:results_fex_hjbqvi} is given by the matrix-valued function $A$ and vector-valued function $y$ specified by (compare with \cref{eqn:matrix_direct_control_parameters})
    \begin{align*}
        [A(P) U]_i
        & = \textcolor{ctrlcolor}{\overline{d_i}} \left(
            \frac{U_i}{\Delta \tau}
            - \frac{1}{2} \sigma^2 (\mathcal{D}_2 U)_i
            + \mu \textcolor{ctrlcolor}{w_i} (\mathcal{D}_- U)_i
        \right)
        + \textcolor{ctrlcolor}{d_i} \left(
            U_i
            - \gls*{interp}(
                U,
                x_i + \textcolor{ctrlcolor}{z_i}
            )
        \right)
        \\
        [y(P)]_i
        & = \textcolor{ctrlcolor}{\overline{d_i}} \left(
            \frac{V_i^{n - 1}}{\Delta \tau}
            - e^{-\beta \tau^n} \left( \left(x_i - m\right)^2 + \gamma \textcolor{ctrlcolor}{w_i}^2 \right)
        \right)
        - \textcolor{ctrlcolor}{d_i} e^{-\beta \tau^n} \left( \kappa \left| \textcolor{ctrlcolor}{z_i} \right| + c \right).
    \end{align*}

    As discussed in \cref{subsec:matrix_direct_control}, we cannot apply policy iteration to the above parameterization of the Bellman problem since we have no guarantees on the nonsingularity of the matrices $A(P)$.
    In order to surmount this issue, we seek to construct a subset $\mathcal{P}^\prime$ on which to apply policy iteration.
    In particular, we choose $\mathcal{P}^\prime$ to be the subset of controls $P = (\textcolor{ctrlcolor}{w_i}, \textcolor{ctrlcolor}{z_i}, \textcolor{ctrlcolor}{d_i})_{i=0}^M \in \mathcal{P}$ satisfying
    \begin{equation}
        \textcolor{ctrlcolor}{d_{M/2}} = 0
        \text{,} \textspace
        \textcolor{ctrlcolor}{z_{i}} > 0 \text{ if } x_i < m
        \text{,} \textspace \text{and} \textspace
        \textcolor{ctrlcolor}{z_{i}} < 0 \text{ if } x_i > m.
        \label{eqn:matrix_direct_control_filter}
    \end{equation}
    In \cref{app:matrix_direct_control}, we verify that $\mathcal{P^\prime}$ satisfies the conditions of \cref{thm:matrix_restriction}, so that we may apply \Call{Policy-Iteration}{$\mathcal{P}^\prime$} to compute the solution of the direct control scheme at the $n$-th timestep.
\end{example}

\section{Application to Markov decision processes}

\emph{Undiscounted} infinite horizon Markov decision processes (\glspl*{MDP}) were first considered in \cite{MR0236983,MR0173536,MR0194243} and more recently, for example, in \cite{MR3420429,MR3285895}.
Unlike their \emph{discounted} counterparts, computing solutions of these \glspl*{MDP} is known to be a difficult task (see \cref{rem:matrix_undiscounted_difficult}), often amounting to solving a nonconvex optimization problem (with no known efficient general purpose solver).
In this section, we use our results regarding singularity in the Bellman problem to show that we can often apply policy iteration to solve these \glspl*{MDP}.

We repeat here the standard setting of an infinite horizon MDP from \cite{MR2182753}.
A \emph{controlled Markov chain} $(X^k)_{k \geq 0}$ is a sequence of random variables taking values in the finite state space $\{0, \ldots, M\}$ determined by transition probabilities
\begin{equation}
    \mathbb{P}(X^{k+1} = j \mid X^k = i) = T(i, j, \theta^k(i)) % P^k
    \label{eqn:matrix_transition_probabilities}
\end{equation}
where $\theta = (\theta^k)_{k \geq 0}$, the \emph{control}, is a sequence of functions such that $\theta^k(i) \in \mathcal{P}_i$.
To simplify matters, we assume that the \emph{control set} $\mathcal{P}_i$ is a countable set.\footnote{The assumption that $\mathcal{P}_i$ is countable is required to avoid any measure theoretic issues. More generally, $\mathcal{P}_i$ can be an arbitrary Polish space \cite[Section 1.5]{MR2183196}.}
The controller's objective is to choose a control to maximize the total reward
\begin{equation}
    J(i; \theta) = \mathbb{E} \left[
        \sum_{k \geq 0} R(X^k, \theta^k(X^k)) \prod_{\ell = 0}^{k - 1} D(X^\ell, \theta^\ell(X^\ell))
        \middle | X_0 = i
    \right].
    \label{eqn:matrix_mdp_functional}
\end{equation}
We denote by $V_i = \sup_\theta J(i; \theta)$ the maximum total reward.
The function $R$ determines the reward obtained at each ``step'' $k$, while the function $D$ -- which takes values in $[0,1]$ -- determines the discount factor.
\cite[Lemma 5]{MR2302036} establishes that if the entries of the vector $V = (V_0, \ldots, V_M)^\intercal$ are finite (i.e., $|V_i| < \infty$), then $V$ is a solution of the Bellman problem \cref{eqn:matrix_bellman} with
\[
    [A(P)]_{ij} = \delta_{ij} - T(i, j, P_i) D(i, P_i)
    \textspace \text{ and } \textspace
    [y(P)]_i = R(i, P_i)
    %\label{eqn:matrix_mdp_to_bellman}
\]
where $\delta_{ij}$ is the Kronecker delta and as usual, $P = (P_0, \ldots, P_M) \in \mathcal{P}_0 \times \cdots \times \mathcal{P}_M$.

\begin{remark}
    \cref{eqn:matrix_transition_probabilities} and the laws of probability imply that $T \geq 0$ and $\sum_j T(\cdot, j, \cdot) = 1$.
    Since $D \geq 0$, it follows that for $i \neq j$, $[A(P)]_{ij} = -T(i, j, P_i) D(i, P_i) \leq 0$, and hence $A(P)$ is a Z-matrix.
    Using the fact that $D \leq 1$,
    \[
        \sum_j [A(P)]_{ij}
        = \sum_j \delta_{ij} - D(i, P_i) \sum_j T(i, j, P_i)
        = 1 - D(i, P_i)
        \geq 0,
    \]
    and hence $A(P)$ is \gls*{wdd}
    In particular, row $i$ of $A(P)$ is not \gls*{sdd} if and only if $D(i, P_i) = 1$.
    \label{rem:matrix_mdp_standard_assumption}
\end{remark}

\begin{remark}
    It has long been known that in the undiscounted setting, $V$ cannot be obtained by a na\"{i}ve application of policy iteration \cite[Page 149]{MR2182753}.
    However, if one assumes the nonnegativity of the reward function $R$, $V$ can instead be obtained by solving a related linear program \cite[Page 150]{MR2182753}.
    A similar result holds if the reward function is assumed to be negative (corresponding to costs), though the resulting program is not linear (or even convex) and hence hard to solve in general \cite[Page 150]{MR2182753}.
    Using the ideas developed in this chapter, we are able to consider the much more general case in which no assumptions are made about the sign of the reward function $R$.
    \label{rem:matrix_undiscounted_difficult}
\end{remark}

\begin{remark}
    We are careful not to refer to the index $k$ in \cref{eqn:matrix_mdp_functional} as ``time'' since this analogy only works in the discounted setting.
    In particular, if $D(X^k, \theta^k(X^k)) < 1$, the indices $k$ and $k + 1$ can be interpreted as times $t$ and $t + \Delta t$, since a reward obtained at index $k + 1$ is worth less than the same reward obtained at index $k$.
    Conversely, if $D(X^k, \theta^k(X^k)) = 1$, the indices $k$ and $k + 1$ should be interpreted as referring to the same time $t$, since no discounting is applied.
    This is analogous to impulse control, for which the left hand limit $t-$ and the right hand limit $t$ are used to refer to the instant before and after an impulse occurs (see, e.g., \cref{eqn:introduction_controlled_sde_across}).
    In this sense, undiscounted \glspl*{MDP} can be interpreted as impulse control problems where the time horizon and state space are discrete.
    %This is not particularly surprising if we recall that discounted \glspl*{MDP} can be interpreted as discrete approximations of stochastic control problems where both the time horizon and state space are continuous \cref{MR1064717}.
    \label{rem:matrix_time}
\end{remark}

\section{Summary}

Motivated by the direct control scheme, this chapter resolved the issue of singularity in the Bellman problem.
In doing so, some new results for \glspl*{MDP} and \gls*{wcdd} matrices were obtained.
We saw that unlike the penalty scheme, policy iteration applied to the direct control scheme requires additional effort to ensure convergence.

\setcounter{chapter}{3}
\chapter{Convergence of schemes for weakly nonlocal second order equations}
\chaptermark{Convergence of schemes for nonlocal equations}
\label{chap:convergence}

Consider the fully nonlinear second-order \emph{local} PDE
\begin{equation}
    %G(D^2 V(x), D V(x), V(x), x) = 0
    G(x, V(x), D V(x), D^2 V(x)) = 0
    \textspace \text{for } x \in \overline{\Omega}
    \label{eqn:convergence_local_pde}
\end{equation}
where $\Omega$ is an open\footnote{The assumption that $\Omega$ is open is used to simplify presentation, being stronger than what is generally required for the theory of viscosity solutions (see, e.g., \cite[Theorem 3.2]{MR1118699}).} subset of $\mathbb{R}^d$,
%$G : \mathcal{S}^d \times \mathbb{R}^d \times \mathbb{R} \times \overline{\Omega} \rightarrow \mathbb{R}$
$G : \mathbb{R} \times \mathbb{R} \times \mathbb{R}^d \times \mathcal{S}^d \rightarrow \mathbb{R}$
is a locally bounded function, and $\mathcal{S}^d$ is the set of $d \times d$ symmetric matrices. %(in the time-dependent case, $x$ is replaced by $t,x$).
In \cref{eqn:convergence_local_pde}, $D V$ and $D^2 V$ are the (formal) gradient vector and Hessian matrix of $V$.
It is established in \cite{MR1115933} that if a numerical scheme for the PDE \cref{eqn:convergence_local_pde} is \emph{monotone}, \emph{stable}, and \emph{consistent}, then it converges to a viscosity solution of \cref{eqn:convergence_local_pde} provided that a \emph{comparison principle}, guaranteeing the uniqueness of solutions, holds. %\footnote{Though we have not explicitly made any assumptions on the set $\Omega$, the tools used to establish comparison principles generally require local compactness of the domain \cite[Theorem 3.2]{MR1118699}.}
This result is referred to as the Barles-Souganidis framework, named after its authors.

The term ``local'' above is used to capture the fact that to determine if a function $V$ satisfies the PDE \cref{eqn:convergence_local_pde} at a particular point $x$ in the domain, it is necessary only to examine the values of $V$ and its partial derivatives evaluated at that point $x$ (at points where $V$ is not smooth, derivatives of particular test functions are considered instead).
However, the HJBQVI \cref{eqn:introduction_hjbqvi} is not a local PDE due to the intervention operator $\mathcal{M}$.

As such, we must broaden the class of PDEs which we consider.
In particular, we consider in this chapter PDEs having the form
\begin{equation}
    F(x, V(x), D V(x), D^2 V(x), [\mathcal{I} V](x)) = 0
    %F(D^2 V(x), D V(x), V(x), [\mathcal{I} V](x), x) = 0
    \textspace \text{for } x \in \overline{\Omega}
    \label{eqn:convergence_pde}
\end{equation}
where
%$F : \mathcal{S}^d \times \mathbb{R}^d \times \mathbb{R} \times \mathbb{R} \times \overline{\Omega} \rightarrow \mathbb{R}$
$F : \mathbb{R} \times \mathbb{R} \times \mathbb{R}^d \times \mathcal{S}^d \times \mathbb{R} \rightarrow \mathbb{R}$
is a locally bounded function and $\mathcal{I}$ is an operator mapping from a space of functions to itself.
The purpose of $\mathcal{I}$ is to capture the nonlocal features of a PDE (e.g., in the case of the HJBQVI, $\mathcal{I} = \mathcal{M}$).
Since this nonlocality does not extend to the derivatives $D^i V$, we refer to equations of the form \cref{eqn:convergence_pde} as \emph{weakly nonlocal}.

It is well-known that for certain nonlocal operators $\mathcal{I}$ (e.g., integro-differential operators), the Barles-Souganidis framework can be applied to \cref{eqn:convergence_pde} without running into any major issues \cite{MR2182141}.
However, if $\mathcal{I} = \mathcal{M}$ is an intervention operator, this approach fails (see \cref{app:barles} for a detailed discussion of why it fails).
To overcome this issue, we extend the Barles-Souganidis framework to nonlocal PDEs of the form \cref{eqn:convergence_pde} in a very general manner.
This results in a consistency requirement that is stronger than what is typically used in the local case.
We use these results to prove the convergence of the schemes from \cref{chap:schemes} to the unique viscosity solution of the HJBQVI \cref{eqn:introduction_hjbqvi}.

Our contributions in this chapter are:
\begin{itemize}
    \item Extending the Barles-Souganidis framework to nonlocal PDEs in a general manner.
    \item Proving the convergence of the direct control, penalty, and explicit-impulse schemes to the viscosity solution of the HJBQVI.
\end{itemize}

The results of this chapter appear in our articles

\fullcite{azimzadeh2017convergence}

\fullcite{azimzadeh2017zero}

%%%%%%%%%%%%%%%%%%%%%%%%%%%%%
\section{Viscosity solutions}
\label{sec:convergence_viscosity_solution}

Recall that in the classical (i.e., differentiable) setting, establishing a \emph{maximum principle} is the usual technique for obtaining uniqueness of solutions to second order PDEs \cite[Chapter 2]{MR0181836}.
However, for many interesting (e.g., nonlinear) PDEs, it is unreasonable to expect to find differentiable solutions.

Viscosity solutions generalize the classical notion of a solution to a PDE.
In particular, viscosity solutions are ``weak enough'' to be applicable to functions that are only semicontinuous (i.e., neither differentiable nor even a priori continuous) but ``strong enough'' to allow the user to derive maximum principles which, in the viscosity setting, are referred to as \emph{comparison principles} \cite{MR1118699}.

Moreover, viscosity solutions are well-known to be the relevant notion of solution for optimal control problems, making them the relevant solution for problems arising from finance.
While being out of the scope of this thesis, we provide the reader with some references to various modern arguments establishing this fact.
One such argument is \emph{weak} dynamic programming \cite{MR2806570,MR2976505} which, unlike \emph{strong} dynamic programming \cite{MR709164}, does not require sophisticated tools from measure theory (e.g., measurable selection).
Another argument is stochastic Perron's method \cite{MR2929032,MR3124891,MR3162260}, which is arguably even more flexible as it does not rely on any dynamic programming style arguments.

In this section, we introduce the appropriate definition of viscosity solution for the PDEs in this thesis.
In order to do so, we first recall the notion of semicontinuity:

\begin{definition}
    %For a locally bounded real-valued function $u$ mapping from a metric space, we define its upper and lower semicontinuous envelopes $u^*$ and $u_*$ by
    Let $Y$ be a metric space and $u \colon Y \rightarrow \mathbb{R}$ be a locally bounded function.
    We define the upper and lower semicontinuous envelopes $u^*$ and $u_*$ of $u$ by
    \[
        u^*(x) = \limsup_{y \rightarrow x} u(y)
        \textspace \text{and} \textspace
        u_*(x) = \liminf_{y \rightarrow x} u(y).
    \]
    The function $u$ is said to be upper (resp. lower) semicontinuous if $u = u^*$ (resp. $u = u_*$) pointwise (\cref{fig:convergence_semicontinuous}).
\end{definition}

%\begin{figure}
    %\subfloat[An upper semicontinuous function]{
        %\includegraphics[height=1.5in]{figures/upper}
    %}
    %\hfill{}
    %\subfloat[A lower semicontinuous function]{
        %\includegraphics[height=1.5in]{figures/lower}
    %}
    %\caption{Examples of semicontinuous functions}
    %\label{fig:convergence_semicontinuous}
%\end{figure}

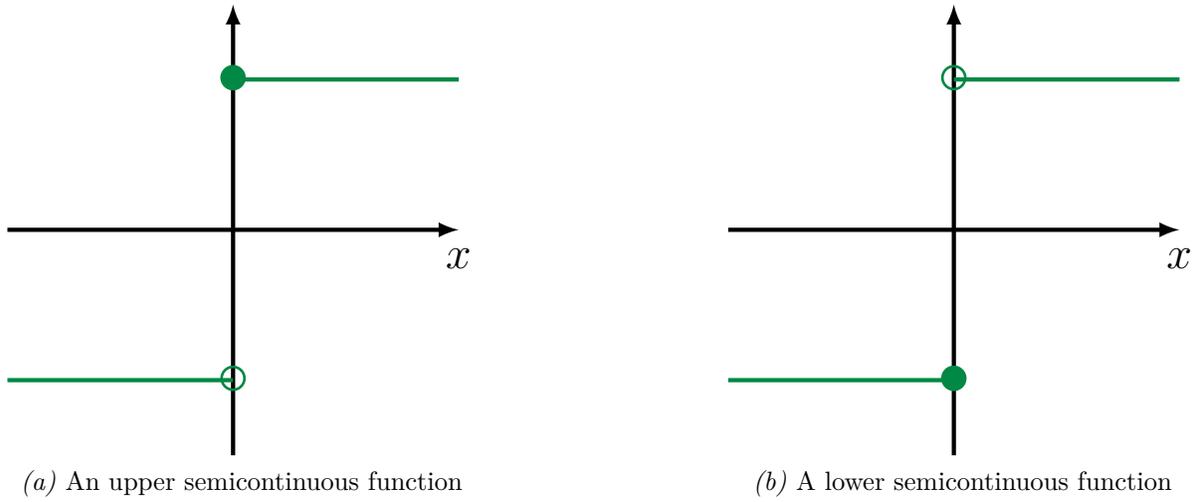
\begin{figure}
    \subfloat[An upper semicontinuous function]{
        \begin{tikzpicture}
            \draw [->, ultra thick] (-3, 0) -- (3, 0);
            \draw [->, ultra thick] (0, -3) -- (0, 3);
            \draw [colB, ultra thick]
                (-3, -2) -- (0, -2)
                (0, 2) -- (3, 2)
            ;
            \node [colB] at (0, -2) {\huge $\circ$};
            \node [colB] at (0, 2) {\huge \textbullet};
            \node [yshift=-0.1cm, below] at (3, 0) {\Large $x$};
        \end{tikzpicture}
    }
    \hfill{}
    \subfloat[A lower semicontinuous function]{
        \begin{tikzpicture}
            \draw [->, ultra thick] (-3, 0) -- (3, 0);
            \draw [->, ultra thick] (0, -3) -- (0, 3);
            \draw [colB, ultra thick]
                (-3, -2) -- (0, -2)
                (0, 2) -- (3, 2)
            ;
            \node [colB] at (0, -2) {\huge \textbullet};
            \node [colB] at (0, 2) {\huge $\circ$};
            \node [yshift=-0.1cm, below] at (3, 0) {\Large $x$};
        \end{tikzpicture}
    }
    \caption{Examples of semicontinuous functions}
    \label{fig:convergence_semicontinuous}
\end{figure}

We are now ready to give the definition of viscosity solution for \cref{eqn:convergence_pde}.
%In the definition below, we write $\varphi \in C^k(\overline{\Omega})$ to mean that $\varphi = \psi |_{\overline{\Omega}}$ for some function $\psi$ that is $k$ times differentiable in an (open) neighbourhood of $\overline{\Omega}$.

\begin{definition}
    An upper (resp. lower) semicontinuous function $V \colon \overline{\Omega} \rightarrow \mathbb{R}$ is a viscosity subsolution (resp. supersolution) of \cref{eqn:convergence_pde} if for all $\varphi \in C^2(\overline{\Omega})$ and $x \in \overline{\Omega}$ such that $V - \varphi$ has a local maximum (resp. minimum) at $x$ (\cref{fig:convergence_extrema}), we have
    \begin{align*}
        %F_*(D^2 \varphi(x), D \varphi(x), V(x), [\mathcal{I}V](x), x) & \leq 0
        F_*(x, V(x), D \varphi(x), D^2 \varphi(x), [\mathcal{I}V](x)) & \leq 0
        \\
        \text{ (resp. }
        %F^*(D^2 \varphi(x), D \varphi(x), V(x), [\mathcal{I}V](x), x) & \geq 0
        F^*(x, V(x), D \varphi(x), D^2 \varphi(x), [\mathcal{I}V](x)) & \geq 0
        \text{)}.
    \end{align*}
    We say $V$ is a viscosity solution if it is both a subsolution and a supersolution.
    Since viscosity solutions are the only solution concept used in this chapter, we will often omit the prefix ``viscosity'' and simply write ``subsolution'', ``supersolution'', or ``solution''.
    \label{def:convergence_viscosity_solution}
\end{definition}

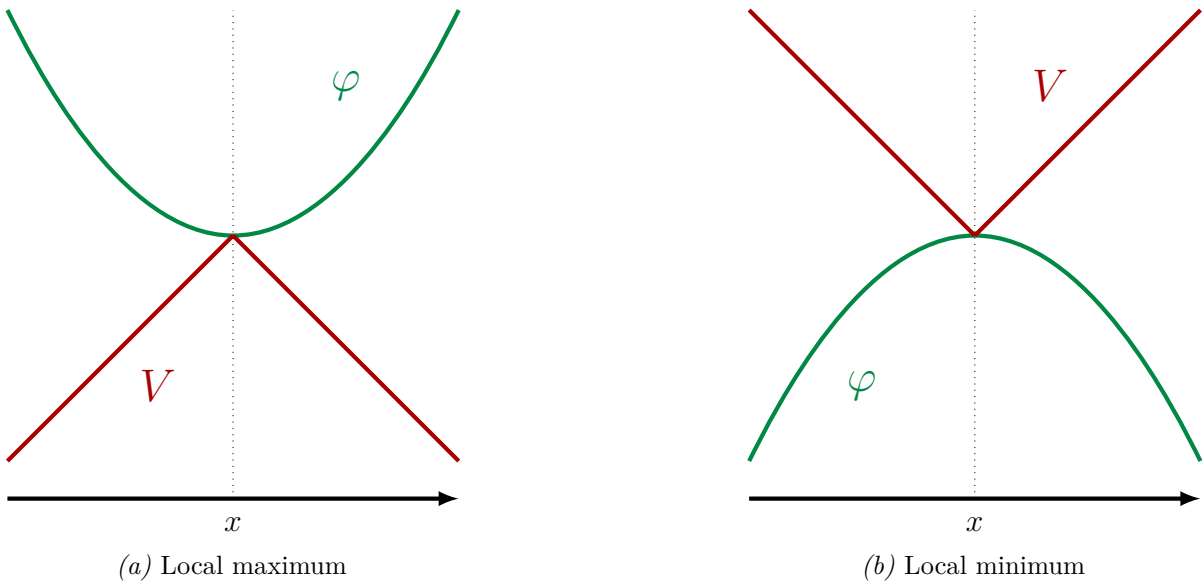
\begin{figure}
    \subfloat[Local maximum]{ %{Local maximum of $V - \varphi$ at $x$}
        \begin{tikzpicture}
            \draw [ultra thick, colB]
                (0,0) parabola (3,3)
                (0,0) parabola (-3,3)
            ;
            \draw [ultra thick, colC]
                (0,0) -- (3,-3)
                (0,0) -- (-3,-3)
            ;
            \node [colB] at (1.5, 2) {\Large $\varphi$};
            \node [colC] at (-1, -2) {\Large $V$};
            \draw [dotted] (0, -3.5) -- (0, 3);
            \node [yshift=-0.1cm, below] at (0, -3.5) {$x$};
            \draw [->, ultra thick] (-3, -3.5) -- (3, -3.5);
        \end{tikzpicture}
    }
    \hfill{}
    \subfloat[Local minimum]{ %{Local minimum of $V - \varphi$ at $x$}
        \begin{tikzpicture}
            \draw [ultra thick, colB]
                (0,0) parabola (3,-3)
                (0,0) parabola (-3,-3)
            ;
            \draw [ultra thick, colC]
                (0,0) -- (3,3)
                (0,0) -- (-3,3)
            ;
            \node [colB] at (-1.5,-2) {\Large $\varphi$};
            \node [colC] at (1, 2) {\Large $V$};
            \draw [dotted] (0, -3.5) -- (0, 3);
            \node [yshift=-0.1cm, below] at (0, -3.5) {$x$};
            \draw [->, ultra thick] (-3, -3.5) -- (3, -3.5);
        \end{tikzpicture}
    }
    \caption{Role of test functions in the definition of viscosity solution}
    \label{fig:convergence_extrema}
\end{figure}

%Note that any viscosity solution is continuous since, by definition, it is both upper and lower semicontinuous.

If $F$ is continuous, the definition above agrees with the common definition for nonlocal problems given in \cite{MR2422079}.
We have, however, allowed $F$ to be discontinuous so that we may write both the partial differential equation and its boundary conditions as a single expression as in \cite{MR1115933}.
This is demonstrated in the example below.

\begin{example}
    \label{exa:convergence_dirichlet}
    Consider the Dirichlet problem
    \begin{align*}
        %V & = 0 \textspace \text{on } [0, x_d]
        V(x_d) & = y_d
        \\
        - \frac{1}{2} \sigma^2 x^2 V_{xx}
        + \lambda \kappa x V_x
        + \lambda V
        - \lambda \int_{ x_d / x }^{ x_u / x } V(\eta x) \psi(\eta) d \eta
        - \lambda f
        %- \lambda \int_0^\infty V(\eta x) \psi(\eta) d\eta
        %- \lambda \mathbb{E} \left[
            %y_d \boldsymbol{1}_{ \{ J x \leq x_d \} }
            %+ V(J x) \boldsymbol{1}_{ \{ x_d < J x < x_u \} }
            %+ y_u \boldsymbol{1}_{ \{ J x \geq x_d \} }
        %\right]
        & = 0 \textspace \text{on } (x_d, x_u)
        \\
        %V & = 1 \textspace \text{on } [x_u, \infty)
        V(x_u) & = y_u
    \end{align*}
    where $0 < x_d < x_u$, $\sigma$ and $\lambda$ are positive constants, $\psi$ is a lognormal probability density, $\kappa = \int_0^\infty \eta \psi(\eta) d\eta - 1 \geq 0$, and
    \[
        f(x)
        = y_d \int_0^{ x_d / x } \psi(\eta) d \eta
        + y_u \int_{ x_u / x }^\infty \psi(\eta) d \eta.
    \]
    In a market where the riskless rate of return is zero, this Dirichlet problem corresponds to the price of an infinite horizon barrier option which pays off $y_d$ if the price of the asset, which follows a geometric Brownian motion with lognormally distributed jumps, drops below the barrier $x_d$ and pays off $y_u$ if it goes above the barrier $x_u$ \cite{MR2053597}.

    We can rewrite this problem in the form \eqref{eqn:convergence_pde} by defining $\Omega = (x_d, x_u)$,
    \begin{equation}
        [\mathcal{I} V](x)
        = \int_{x_d / x}^{x_u / x} V(\eta x) \psi(\eta) d \eta,
        \label{eqn:convergence_jump}
    \end{equation}
    and
    \[
        %F(A, p, r, \wideEll, x)
        F(x, r, p, A, \wideEll)
        = \begin{cases}
            r - y_d & \text{if } x = x_d
            \\
            %H(A, p, r, \wideEll, x)
            H(x, r, p, A, \wideEll)
            & \text{if } x \in \Omega
            \\
            r - y_u & \text{if } x = x_u
        \end{cases}
    \]
    where $
        %H(A, p, r, \wideEll, x)
        H(x, r, p, A, \wideEll)
        = - \frac{1}{2} \sigma^2 x^2 A + \lambda \kappa x p + \lambda r - \lambda \wideEll - \lambda f(x)
    $.
    In view of \cref{def:convergence_viscosity_solution}, note that
    \begin{multline*}
        %F_*(\varphi_{xx}(x), \varphi_x(x), V(x), [\mathcal{I}V](x), x)
        F_*(x, V(x), \varphi^\prime(x), \varphi^{\prime\prime}(x), [\mathcal{I}V](x))
        \\
        = \begin{cases}
            \min \{
                %H(\varphi_{xx}(x), \varphi_x(x), V(x), [\mathcal{I} V](x), x)
                H(x, V(x), \varphi^\prime(x), \varphi^{\prime\prime}(x), [\mathcal{I}V](x))
                ,
                V(x) - y_d
            \}
            & \text{if } x = x_d
            \\
            \phantom{ \min \{ }
                %H(\varphi_{xx}(x), \varphi_x(x), V(x), [\mathcal{I} V](x), x)
                H(x, V(x), \varphi^\prime(x), \varphi^{\prime\prime}(x), [\mathcal{I}V](x))
            \phantom{ \} }
            & \text{if } x \in \Omega
            \\
            \min \{
                %H(\varphi_{xx}(x), \varphi_x(x), V(x), [\mathcal{I} V](x), x)
                H(x, V(x), \varphi^\prime(x), \varphi^{\prime\prime}(x), [\mathcal{I}V](x))
                ,
                V(x) - y_u
            \}
            & \text{if } x = x_u
        \end{cases}
    \end{multline*}
    and similarly,
    \begin{multline*}
        %F^*(\varphi_{xx}(x), \varphi_x(x), V(x), [\mathcal{I}V](x), x)
        F^*(x, V(x), \varphi^\prime(x), \varphi^{\prime\prime}(x), [\mathcal{I}V](x))
        \\
        = \begin{cases}
            \max \{
                %H(\varphi_{xx}(x), \varphi_x(x), V(x), [\mathcal{I} V](x), x)
                H(x, V(x), \varphi^\prime(x), \varphi^{\prime\prime}(x), [\mathcal{I}V](x))
                ,
                V(x) - y_d
            \}
            & \text{if } x = x_d
            \\
            \phantom{ \max \{ }
                %H(\varphi_{xx}(x), \varphi_x(x), V(x), [\mathcal{I} V](x), x)
                H(x, V(x), \varphi^\prime(x), \varphi^{\prime\prime}(x), [\mathcal{I}V](x))
            \phantom{ \} }
            & \text{if } x \in \Omega
            \\
            \max \{
                %H(\varphi_{xx}(x), \varphi_x(x), V(x), [\mathcal{I} V](x), x)
                H(x, V(x), \varphi^\prime(x), \varphi^{\prime\prime}(x), [\mathcal{I}V](x))
                ,
                V(x) - y_u
            \}
            & \text{if } x = x_u.
        \end{cases}
    \end{multline*}
    In the above, we see a ``mixing'' of the boundary conditions and the PDE operator $H(\cdot)$ at $x = x_d$ and $x = x_u$.
    This phenomenon, which we will encounter again in this chapter, is referred to as a boundary layer \cite[Remark 3.2]{MR2857450}.
\end{example}

%%%%%%%%%%%%%%%%%%%%%%%%%%%
\section{Numerical schemes}
\label{sec:convergence_schemes}

In \cite{MR1115933}, the authors find sufficient conditions under which one can approximate the solution $V$ of the local PDE \cref{eqn:convergence_local_pde} using an ``approximate solution'' $V^{\gls*{meshing_parameter}}$ that converges to $V$ as $\gls*{meshing_parameter}$ goes to zero.
Intuitively, $V^{\gls*{meshing_parameter}}$ is obtained by a numerical method (e.g., finite differences) and $\gls*{meshing_parameter}$ is a parameter that controls, roughly speaking, the accuracy of the method (e.g., a smaller value of $h$ corresponds to a finer grid).

We follow a similar approach, approximating a solution $V$ of the nonlocal PDE \cref{eqn:convergence_pde} using an approximate solution $V^{\gls*{meshing_parameter}} \colon \overline{\Omega} \rightarrow \mathbb{R}$.
Precisely, $V^{\gls*{meshing_parameter}}$ is a function
%whose range is a finite set\footnote{The assumption of finite range (i.e., $| \operatorname{range}(V^{\gls*{meshing_parameter}}) \, | < \infty$) can be removed using standard ideas (see, e.g., our preprint \cite{azimzadeh2017convergence}), but doing so complicates the notation. In the case of finite difference schemes, this assumption is trivially satisfied (see \cref{rem:convergence_grid_points}).} and
which satisfies
\begin{equation}
    S(
        \gls*{meshing_parameter},
        x,
        V^{\gls*{meshing_parameter}},
        [\mathcal{I}^{\gls*{meshing_parameter}} V^{\gls*{meshing_parameter}}](x)
    ) = 0 \textspace \text{for } x \in \overline{\Omega}
    \label{eqn:convergence_scheme}
\end{equation}
(in the time-dependent case, $x$ is replaced by $t,x$).
Together, the pair $(S, \mathcal{I}^{\gls*{meshing_parameter}})$ are meant be an abstract representation of a numerical scheme with $\mathcal{I}^{\gls*{meshing_parameter}}$ being an approximation to the operator $\mathcal{I}$.
Precisely, letting $\gls*{bdd}$ denote the set of bounded real-valued functions mapping from $\overline{\Omega}$, $S$ is a real-valued function mapping from $(0,\infty) \times \overline{\Omega} \times \gls*{bdd} \times \mathbb{R}$ and the operator $\mathcal{I}^{\gls*{meshing_parameter}}$ is a map from $\gls*{bdd}$ to itself.
The idea behind the abstract representation of a numerical scheme \cref{eqn:convergence_scheme} is made clear in the example below.

\begin{example}
    Consider the Dirichlet problem described in \cref{exa:convergence_dirichlet}.
    Let $\{ x_0, \ldots, x_M \}$ be a partition %\partitionFootnote~
    of the interval $[x_d, x_u]$ and, for convenience, define $x_{-1} = x_0$ and $x_{M+1} = x_M$.
    The change of variables $\zeta = \log(\eta x)$ in the integral in \cref{eqn:convergence_jump} yields
    \[
        [\mathcal{I} V](x)
        = \int_{\log x_d}^{\log x_u}
            (e^\zeta / x) V(e^\zeta) \psi(e^\zeta / x)
        d \zeta
        = \sum_{0 \leq j \leq M}
            \int_{ (\log x_{j-1} + \log x_j)/2 }^{ (\log x_j + \log x_{j+1})/2 }
                (e^\zeta / x) V(e^\zeta) \psi(e^\zeta / x)
            d \zeta
        .
    \]
    The above suggests the numerical scheme $(S, \mathcal{I}^{\gls*{meshing_parameter}})$ given by
    \[
        [\mathcal{I}^{\gls*{meshing_parameter}} V](x_i)
        = \sum_{0 \leq j \leq M}
            V(x_j)
            \int_{ (\log x_{j-1} + \log x_j)/2 }^{ (\log x_j + \log x_{j+1})/2 }
                (e^\zeta / x_i) \psi(e^\zeta / x_i)
            d \zeta
    \]
    and, using the shorthand $V_i = V(x_i)$ and $V = (V_0, \ldots, V_M)^\intercal$,
    \[
        S(\gls*{meshing_parameter}, x_i, V, \wideEll)
        = \begin{cases}
            V_i - y_d & \text{if } i = 0 \\
            %- \frac{1}{2} \sigma^2 x_i^2 (\mathcal{D}_2 V)_i
            %+ \lambda \kappa x_i (\mathcal{D}_- V)_i
            %+ \lambda V_i
            %- \lambda \ell
            %- \lambda f(x_i)
            H(
                x_i, \,
                V_i , \,
                (\mathcal{D}_- V)_i , \,
                (\mathcal{D}_2 V)_i , \,
                \wideEll
            )
            & \text{if } 0 < i < M \\
            V_i - y_u & \text{if } i = M \\
        \end{cases}
    \]
    where $H$ is defined in \cref{exa:convergence_dirichlet} and $\mathcal{D}_2$ and $\mathcal{D}_-$ are defined in \cref{eqn:schemes_second_derivative,eqn:schemes_first_derivative_pm}.
\end{example}

\begin{remark}
    \label{rem:convergence_grid_points}
    For fixed $\gls*{meshing_parameter} > 0$, the scheme in the above example only defines the numerical solution $V^{\gls*{meshing_parameter}}$ at grid points $x_i$.
    Following \cite[Example 3]{MR1115933}, we extend the numerical solution to points not on the grid by assuming that it is piecewise constant:
    \[
        V^{\gls*{meshing_parameter}}(x) = V^{\gls*{meshing_parameter}}(x_i)
        \textspace \text{if } x \in
        \left[
            \frac{x_{i-1} + x_i}{2},
            \frac{x_i + x_{i+1}}{2}
        \right)
    \]
    where we have used the convention $x_{-1} = x_0$ and $x_{M+1} = x_M$.
	%(note, in particular, that $V^{\gls*{meshing_parameter}}$ has finite range).
    %In the above example, we have defined $S$ and $\mathcal{I}^{\gls*{meshing_parameter}}$ only at grid points $x_i$ since we can extend their definitions to points not on the grid by assuming that they are piecewise constant \cite[Example 3]{MR1115933}.
    This extension to points not on the grid can be enforced ``indirectly'' by defining $[\mathcal{I}^{\gls*{meshing_parameter}} \, \cdot](x)$ and $S(\gls*{meshing_parameter}, x, \cdot, \cdot)$ for $x \neq x_i$ appropriately. %simply requiring $S$ and $\mathcal{I}^{\gls*{meshing_parameter}}$ to satisfy
    %\[
        %[\mathcal{I}^{\gls*{meshing_parameter}} \, \cdot](x)
        %= [\mathcal{I}^{\gls*{meshing_parameter}} \, \cdot](x_i)
        %\textspace \text{and} \textspace
        %S(\gls*{meshing_parameter} \, , x \, , \cdot \, , \cdot)
        %= S(\gls*{meshing_parameter} \, , x_i \, , \cdot \, , \cdot)
        %\textspace \text{if } x \in
        %\left[
            %\frac{x_{i-1} + x_i}{2},
            %\frac{x_i + x_{i+1}}{2}
        %\right).
    %\]
\end{remark}

As a technical condition, we will assume that there exists a nonnegative function $f$ mapping from $(0, \infty) \times \overline{\Omega}$ such that
$
	\lim_{\substack{
		\gls*{meshing_parameter} \rightarrow 0 \\
		y \rightarrow x
	}}
	f(y, \gls*{meshing_parameter}) = 0
$
and
\begin{equation}
	\left |
	S(
		\gls*{meshing_parameter},
		x,
		\psi,
		\cdot
	)
	-
	S(
		\gls*{meshing_parameter},
		x,
		\varphi,
		\cdot
	)
	\right |
	\leq f(x, \gls*{meshing_parameter})
	\label{eq:convergence_technical_assumption}
\end{equation}
for all $\gls*{meshing_parameter}$, $x$, $\varphi$, and $\psi$ defined by $\psi(y) = \varphi(y) \pm e^{-1 / \gls*{meshing_parameter}} \boldsymbol{1}_{\{x\}}(y)$.
The reader can verify that this is trivially satisfied for all schemes considered in this thesis (including the one above).

%%%%%%%%%%%%%%%%%%%%%%%%%%%%%%%%%%%
\section{Application to the HJBQVI}
\label{sec:convergence_hjbqvi}

In this section, we revisit the schemes of \cref{chap:schemes}.
Our goal is to show that the solutions of these schemes converge to the viscosity solution of the HJBQVI \cref{eqn:introduction_hjbqvi}.

For the purposes of demonstration, we take the domain $\gls*{domain}$ in \cref{eqn:introduction_hjbqvi} to be the real line $\mathbb{R}$.
To simplify notation, we assume that the grid points $\{x_0, \ldots, x_M\}$ are uniformly spaced.
In particular, we take $x_i = (i - M/2) \Delta x$ so that the approximate solution is computed on the ``numerical domain'' $[0, T] \times [-(M/2) \Delta x, (M/2) \Delta x]$.
To ensure that the numerical grid ``approximates'' the original domain $[0, T] \times \mathbb{R}$ as $\gls*{meshing_parameter} \rightarrow 0$, we assume
\begin{equation}
    \Delta \tau = \gls*{const} \gls*{meshing_parameter},
    \textspace
    \Delta x = \gls*{const} \gls*{meshing_parameter},
    \textspace
    \text{and}
    \textspace
    M \Delta x \rightarrow \infty \text{ as } \gls*{meshing_parameter} \rightarrow 0.
    \label{eqn:convergence_grid}
\end{equation}

%\begin{remark}
    While the assumption $M \Delta x \rightarrow \infty$ simplifies the presentation, it is computationally intractable to implement in practice as it requires ``enlarging'' the numerical domain whenever the spatial grid is refined (e.g., by taking $M = \gls*{const} \lceil \gls*{meshing_parameter}^{- 1 - \alpha} \rceil$ for some $\alpha > 0$ where $\lceil \cdot \rceil$ is the ceiling function).
    Fortunately, for elliptic PDEs, truncating the domain to a bounded region and introducing ``\artificial{}'' boundary conditions is expected to lead to small errors on the interior \cite[Section 5]{MR1470506}.
    In \cref{app:truncated}, we truncate the domain of the HJBQVI to a bounded set, introducing \artificial{} boundary conditions and modifying our schemes accordingly.
    Therein, we also give the necessary modifications required for our convergence proofs to work in the truncated setting.
    Since these arguments are somewhat standard, we confine them to the appendix so that we may, in this chapter, focus on the main difficulties arising from discretizing the HJBQVI.
%\end{remark}

To write the HJBQVI \cref{eqn:introduction_hjbqvi} in the form \cref{eqn:convergence_pde}, we take $\Omega = [0, T) \times \mathbb{R}$,\footnote{Though $\Omega = [0, T) \times \mathbb{R}$ is not an open set, this poses no issues in our analyses since the coefficient of the time derivative $q = V_t(t, x)$ in the HJBQVI is negative (cf. \cite[Remark 7.1]{MR2976505}).} $\mathcal{I} = \mathcal{M}$, and
\begin{multline}
    %F(A, (q, p), r, \wideEll, (t, x))
    F((t, x), r, (q, p), A, \wideEll)
    \\
    = \begin{cases}
        \min \left\{
            - q - \sup_{\textcolor{ctrlcolor}{w} \in W} \left\{
                %\frac{1}{2} b(\tau, x, \textcolor{ctrlcolor}{w})^2 A
                %+ a(\tau, x, \textcolor{ctrlcolor}{w}) p
                %+ f(\tau, x, \textcolor{ctrlcolor}{w})
                \frac{1}{2} b(t, x, \textcolor{ctrlcolor}{w})^2 A
                + a(t, x, \textcolor{ctrlcolor}{w}) p
                + f(t, x, \textcolor{ctrlcolor}{w})
            \right\}, r - \wideEll
        \right\}
        %& \text{if } \tau > 0 \\
        & \text{if } t < T \\
        \min \left\{
            r - g(x),
            r - \wideEll
        \right\}
        %& \text{if } \tau = 0
        & \text{if } t = T
    \end{cases}
    \label{eqn:convergence_hjbqvi}
\end{multline}
where we have used the notation $(q, p)$ to distinguish between the time and spatial derivatives (i.e., $q = V_t(t, x)$ and $p = V_x(t, x)$) and, with a slight abuse of notation, $A$ to refer only to the second spatial derivative (i.e., $A = V_{xx}(t, x)$) since no second time derivatives appear.
Throughout this section, when we refer to ``the HJBQVI'', it is understood that we mean the above.
%Note that we have used the ``backwards in time'' variable $\tau = T - t$ in the above (recall \cref{subsec:schemes_terminal_condition}).

In order to perform our analysis, we must first rewrite the direct control scheme \cref{eqn:schemes_direct_control}, the penalty scheme \cref{eqn:schemes_penalty_rearranged}, and the explicit-impulse scheme \cref{eqn:schemes_explicit_impulse_rearranged} in the form \cref{eqn:convergence_scheme} by defining $S$ and $\mathcal{I}^{\gls*{meshing_parameter}}$ appropriately.
We do so below, using the usual shorthand  $V_i^n = V(\tau^n, x_i)$ and $V^n = (V_0^n, \ldots, V_M^n)^\intercal$.

For the direct control scheme, we define $\mathcal{I}^{\gls*{meshing_parameter}}$ by
\begin{equation}
    [\mathcal{I}^{\gls*{meshing_parameter}} V](\tau^n, x_i)
    = (\mathcal{M}_n V^n)_i
    \label{eqn:convergence_direct_control_nonlocal}
\end{equation}
where $\mathcal{M}_n$ is defined in \cref{eqn:schemes_discretized_intervention} and $S$ by
\begin{equation}
    S(h, (\tau^n, x_i), V, \wideEll)
    %= \min \{ V_i^n - g(x_i), V_i^n - \mathcal{M}_n V(\tau^n, x_i) \}
    = V_i^n - g(x_i)
    \textspace \text{if } n = 0 \text{ and } 0 \leq i \leq M
    \label{eqn:convergence_direct_control_boundary}
\end{equation}
and
\begin{multline}
    S(h, (\tau^n, x_i), V, \wideEll) \\
    = - \sup_{\substack{
        \textcolor{ctrlcolor}{d_i} \in \{0,1\} \\
        \textcolor{ctrlcolor}{w_i} \in W^{\gls*{meshing_parameter}}
    }} \left\{
        \textcolor{ctrlcolor}{\overline{d_i}} \left(
            \frac{V_i^{n-1} - V_i^n}{\Delta \tau}
            + \frac{1}{2} b_i^n(\textcolor{ctrlcolor}{w_i})^2 (\mathcal{D}_2 V^n)_i
            + a_i^n(\textcolor{ctrlcolor}{w_i}) (\mathcal{D} V^n)_i
            + f_i^n(\textcolor{ctrlcolor}{w_i})
        \right)
        + \textcolor{ctrlcolor}{d_i} \left( \wideEll - V_i^n \right)
    \right\} \\
    \text{if } n > 0 \text{ and } 0 \leq i \leq M.
    \label{eqn:convergence_direct_control_interior}
\end{multline}

For the penalty scheme, we define $\mathcal{I}^{\gls*{meshing_parameter}}$ by \cref{eqn:convergence_direct_control_nonlocal} and $S$ by \cref{eqn:convergence_direct_control_boundary} and
\begin{multline}
    S(h, (\tau^n, x_i), V, \wideEll) = -
        \sup_{\substack{
            \textcolor{ctrlcolor}{d_i} \in \{0,1\} \\
            \textcolor{ctrlcolor}{w_i} \in W^{\gls*{meshing_parameter}}
        }} \left\{
            \textcolor{ctrlcolor}{\overline{d_i}} \gamma_i^n(V, \textcolor{ctrlcolor}{w_i})
            + \textcolor{ctrlcolor}{d_i} \left( \wideEll - V_i^n + \epsilon \gamma_i^n(V, \textcolor{ctrlcolor}{w_i}) \right)
        \right\} \\
        \text{if } n > 0 \text{ and } 0 \leq i \leq M
    \label{eqn:convergence_penalty_interior}
\end{multline}
where $\gamma_i^n(V, \textcolor{ctrlcolor}{w_i})$ is defined in \cref{eqn:schemes_penalty_gamma}.
It is understood that the parameter $\epsilon > 0$ in the above vanishes as the grid is made finer.
Precisely, we mean that
\begin{equation}
    \epsilon \rightarrow 0
    \textspace \text{as} \textspace
    h \rightarrow 0.
    \label{eqn:convergence_penalty_vanishes}
\end{equation}

Lastly, for the explicit-impulse scheme, we define $\mathcal{I}^{\gls*{meshing_parameter}}$ by
\begin{equation}
    [\mathcal{I}^{\gls*{meshing_parameter}} V](\tau^n, x_i)
    = (\mathcal{M}_n V^{n-1})_i
    \label{eqn:convergence_explicit_impulse_nonlocal}
\end{equation}
(compare with \cref{eqn:convergence_direct_control_nonlocal}) and $S$ by \cref{eqn:convergence_direct_control_boundary} and
\begin{multline}
    S(h, (\tau^n, x_i), V, \wideEll) \\
    = -\sup_{\substack{
        \textcolor{ctrlcolor}{d_i} \in \{0,1\} \\
        \textcolor{ctrlcolor}{w_i} \in W^{\gls*{meshing_parameter}}
    }}
    \biggl\{
        \textcolor{ctrlcolor}{\overline{d_i}}
        \left(
            \displaystyle{
                \frac{\gls*{interp}(
                    V^{n-1},
                    x_i + a_i^n(\textcolor{ctrlcolor}{w_i}) \Delta \tau
                ) - V_i^n}{\Delta \tau}
            }
            + \frac{1}{2} (b_i^n)^2 (\mathcal{D}_2 V^n)_i
            + f_i^n(\textcolor{ctrlcolor}{w_i})
        \right) \\
        + \textcolor{ctrlcolor}{d_i} \left(
            \wideEll - V_i^n
            + \frac{1}{2} (b_i^n)^2 (\mathcal{D}_2 V^n)_i \Delta \tau
        \right)
    \biggr\} \textspace \text{if } n > 0  \text{ and } 0 \leq i \leq M.
    \label{eqn:convergence_explicit_impulse_interior}
\end{multline}

In order to prove convergence, we make the following assumptions about the quantities appearing in the HJBQVI and intervention operator $\mathcal{M}$:

\begin{enumerate}[label=(H\arabic*)]
    \item
        \label{enum:convergence_continuity}
        \label{enum:convergence_bounded}
        \label{enum:convergence_suboptimality}
        $f$, $g$, $a$, $b$, $\Gamma$, and $K$ are continuous functions with $f$ and $g$ bounded and $\mathcal{M} g \leq g$.
    \item
        $W$ is a nonempty compact metric space and there exists a metric space $Y$ such that $Z(t,x)$ is a nonempty compact subset of $Y$ for each $(t,x)$.
    \item
        \label{enum:convergence_negative_cost}
        $\sup_{t, x, z} K(t, x, z) < 0$.
\end{enumerate}

%\begin{remark}
    The assumption \cref{enum:convergence_negative_cost}, to be used in the stability proofs, %\footnote{A careful examination of the proofs of stability for the penalty and explicit impulse scheme (\cref{lem:convergence_penalty_stable,lem:convergence_explicit_impulse_stable}) reveals that for these schemes, \cref{enum:convergence_negative_cost} can be relaxed to weak inequality (i.e., $\sup_{t, x, z} K(t, x, z) \leq 0$).}
    can be interpreted as the controller paying a cost for the right to perform an impulse.
    \cref{rem:convergence_relax_cost} discusses how this assumption can be relaxed.
    %This assumption (or at least, the relaxed version of this assumption detailed in \cref{rem:convergence_relax_cost}) is satisfied by most reasonable impulse control problems, with the only exception that we are aware of being the impulse control formulation of guaranteed minimum withdrawal benefits (\gls*{GMWB}) in variable annuities \cite{MR2407322}.
%\end{remark}

\subsection{Monotonicity}

In this subsection, we prove the monotonicity of our schemes.
A scheme $(S, \mathcal{I}^{\gls*{meshing_parameter}})$ is \emph{monotone} (cf. \cite[Eq. (2.2)]{MR1115933}) if for all %$(h, x, \wideEll) \in (0,\infty) \times \overline{\Omega} \times \mathbb{R}$
$h \in (0,\infty)$, $x \in \overline{\Omega}$, $\wideEll \in \mathbb{R}$, and $V, \hat{V} \in \gls*{bdd}$,
\[
    S(h, x, V, \wideEll) \leq S(h, x, \hat{V}, \wideEll)
    %\text{for all }
    %x \in \overline{\Omega}
    %\text{ and } V, \hat{V} \in \gls*{bdd} \text{ satisfying }
    \textspace \text{whenever} \textspace
    V \geq \hat{V} \text{ and } V(x) = \hat{V}(x).
\]

%\begin{remark}
    Note that the monotonicity requirement above does not involve the operator $\mathcal{I}^{\gls*{meshing_parameter}}$ and as such, the reader may be tempted to guess that high order discretizations of the nonlocal operator $\mathcal{I}$ are possible. %(e.g., quadratic instead of linear interpolation in the discretization \cref{eqn:schemes_discretized_intervention}).
    Unfortunately, this is not the case: high order discretizations are generally precluded by the stronger consistency requirement of our framework (see \cref{rem:proofs_higher_order} for further explanation).
    %\label{rem:convergence_higher_order}
%\end{remark}

Due to our choice of upwind discretization \cref{eqn:schemes_first_derivative} and first order interpolation in \cref{eqn:schemes_lagrangian_derivative}, all the schemes of \cref{chap:schemes} are monotone by construction.
We establish this below in a series of lemmas.

\begin{lemma}
    The direct control scheme is monotone.
    \label{lem:convergence_direct_control_monotone}
\end{lemma}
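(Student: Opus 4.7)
The plan is to verify the monotonicity inequality $S(h,x,V,\wideEll)\leq S(h,x,\hat V,\wideEll)$ directly from the definitions \cref{eqn:convergence_direct_control_boundary,eqn:convergence_direct_control_interior}, splitting into the terminal case $n=0$ and the interior case $n>0$. The $n=0$ case is essentially trivial: by \cref{eqn:convergence_direct_control_boundary}, $S(h,(\tau^0,x_i),V,\wideEll)=V(\tau^0,x_i)-g(x_i)$ depends on $V$ only through its value at the point $x$ itself, so the assumption $V(x)=\hat V(x)$ already yields equality.

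For the interior case, I would argue at the level of the integrand of the supremum. Writing
\[
\Phi(\textcolor{ctrlcolor}{d_i},\textcolor{ctrlcolor}{w_i};V)
=\textcolor{ctrlcolor}{\overline{d_i}}\left(\frac{V_i^{n-1}-V_i^n}{\Delta\tau}+\frac{1}{2}b_i^n(\textcolor{ctrlcolor}{w_i})^2(\mathcal{D}_2V^n)_i+a_i^n(\textcolor{ctrlcolor}{w_i})(\mathcal{D}V^n)_i+f_i^n(\textcolor{ctrlcolor}{w_i})\right)+\textcolor{ctrlcolor}{d_i}(\wideEll-V_i^n),
\]
so that $S(h,(\tau^n,x_i),V,\wideEll)=-\sup_{\textcolor{ctrlcolor}{d_i},\textcolor{ctrlcolor}{w_i}}\Phi(\textcolor{ctrlcolor}{d_i},\textcolor{ctrlcolor}{w_i};V)$, the goal reduces to showing $\Phi(\textcolor{ctrlcolor}{d_i},\textcolor{ctrlcolor}{w_i};V)\geq\Phi(\textcolor{ctrlcolor}{d_i},\textcolor{ctrlcolor}{w_i};\hat V)$ for every admissible pair $(\textcolor{ctrlcolor}{d_i},\textcolor{ctrlcolor}{w_i})$; taking the supremum over controls and negating then delivers the inequality on $S$. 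Since $V_i^n=\hat V_i^n$ is fixed by hypothesis, the only variable grid values remaining in $\Phi$ are $V_i^{n-1}$ and, for $0<i<M$, the spatial neighbours $V_{i\pm1}^n$. I would then verify that each of these enters $\Phi$ with a non-negative coefficient: $V_i^{n-1}$ carries coefficient $\textcolor{ctrlcolor}{\overline{d_i}}/\Delta\tau\geq0$; inspection of \cref{eqn:schemes_second_derivative} shows that $(\mathcal{D}_2 V^n)_i$ places non-negative weights on $V_{i\pm 1}^n$; and the boundary indices $i\in\{0,M\}$ cause no further trouble because both $\mathcal{D}_2$ and $\mathcal{D}_\pm$ are defined to vanish there, reducing $\Phi$ to terms manifestly monotone in the grid values.

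The substantive step, and the place where the design choices of \cref{chap:schemes} are actually used, is the coefficient analysis for the first-order term $a_i^n(\textcolor{ctrlcolor}{w_i})(\mathcal{D}V^n)_i$: a centred or ``wrong-way'' difference would put a strictly negative weight on one of $V_{i\pm1}^n$ and destroy monotonicity. The upwind rule in \cref{eqn:schemes_first_derivative} is engineered precisely so that the neighbour appearing in the stencil is always the ``downstream'' one, multiplied by a factor of $\lvert a_i^n(\textcolor{ctrlcolor}{w_i})\rvert/(x_{i+1}-x_i)$ or $\lvert a_i^n(\textcolor{ctrlcolor}{w_i})\rvert/(x_i-x_{i-1})$, which is non-negative regardless of the sign of the drift. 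This is the only non-trivial verification in the lemma; once it is in hand, the rest of the argument is bookkeeping that passes through the supremum and the sign flip without further difficulty.
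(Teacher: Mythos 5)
Your proposal is correct and is essentially the same argument the paper gives; the only difference is in bookkeeping. You fix a control pair $(\textcolor{ctrlcolor}{d_i},\textcolor{ctrlcolor}{w_i})$, show the supremand $\Phi$ is nondecreasing in every relevant grid value (because the time term, the three-point second-difference stencil, and the upwind first-difference stencil all put nonnegative weights on $V_i^{n-1}$ and $V_{i\pm1}^n$, while the coefficient of $V_i^n$ is immaterial since $V_i^n=\hat V_i^n$), and then pass through the supremum and the sign flip. The paper instead bounds $S(h,x,V,\wideEll)-S(h,x,\hat V,\wideEll)$ by a single $\sup$ of differences, introduces $U=\hat V^n-V^n\leq 0$ with $U_i=0$, and reads off $(\mathcal{D}_2U)_i\leq0$, $(\mathcal{D}_+U)_i\leq0$, $(\mathcal{D}_-U)_i\geq0$; this is the same sign analysis expressed in terms of $U$ rather than coefficient-by-coefficient. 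One small point you leave implicit: the paper first invokes the remark that it suffices to check monotonicity at grid points (since the numerical solution is extended piecewise-constantly off the grid); your case split on $n$ presupposes this, so it is worth a sentence.
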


\begin{proof}
    %As in \cite{MR2336272},
    It is sufficient to check the monotonicity condition at grid points (see \cref{rem:convergence_grid_points}).
    In light of this, let $(\tau^n, x_i)$ be an arbitrary grid point and let $V$ and $\hat{V}$ be two functions satisfying $V \geq \hat{V}$ and $V_i^n = \hat{V}_i^n$ where we have, as usual, employed the shorthand $V_i^n = V(\tau^n, x_i)$ and $\hat{V}_i^n = \hat{V}(\tau^n, x_i)$.
    By \cref{eqn:convergence_direct_control_boundary}, if $n = 0$,
    \[
        S(h, (\tau^n, x_i), V, \wideEll) - S(h, (\tau^n, x_i), \hat{V}, \wideEll)
        = V_i^n - \hat{V}_i^n
        = 0.
    \]
    By \cref{eqn:convergence_direct_control_interior}, if $n > 0$,
    \begin{multline}
        S(h, (\tau^n, x_i), V, \wideEll) - S(h, (\tau^n, x_i), \hat{V}, \wideEll)
        \\
        \leq \sup_{\substack{
            \textcolor{ctrlcolor}{d_i} \in \{0,1\} \\
            \textcolor{ctrlcolor}{w_i} \in W^{\gls*{meshing_parameter}}
        }} \biggl\{
            \textcolor{ctrlcolor}{\overline{d_i}} \biggl(
                \frac{\hat{V}_i^{n-1} - V_i^{n-1}}{\Delta \tau}
                + \frac{1}{2} b_i^n(\textcolor{ctrlcolor}{w_i})^2 (\mathcal{D}_2 U)_i
                + \left| a_i^n(\textcolor{ctrlcolor}{w_i}) \right| (\mathcal{D}_+ U)_i
                \boldsymbol{1}_{ \{ a_i^n(\textcolor{ctrlcolor}{w_i}) > 0 \} }
                \\
                - \left| a_i^n(\textcolor{ctrlcolor}{w_i}) \right| (\mathcal{D}_- U)_i
                \boldsymbol{1}_{ \{ a_i^n(\textcolor{ctrlcolor}{w_i}) < 0 \} }
            \biggr)
        \biggr\}
        \label{eqn:convergence_direct_control_monotone_1}
    \end{multline}
    where, in the above, we have defined the vector $U = \hat{V}^n - V^n = (\hat{V}^n_0 - V^n_0, \ldots, \hat{V}^n_M - V^n_M)^\intercal$ and treated $\mathcal{D}_2$ and $\mathcal{D}_\pm$ (defined in \cref{eqn:schemes_first_derivative_pm,eqn:schemes_second_derivative}) as matrices.
    The indicator functions $\boldsymbol{1}_{ \{ a_i^n(\textcolor{ctrlcolor}{w_i}) > 0 \} }$ and $\boldsymbol{1}_{ \{ a_i^n(\textcolor{ctrlcolor}{w_i}) < 0 \} }$ are used to capture the fact that due to the upwind discretization, the choice of stencil ($\mathcal{D}_+$ or $\mathcal{D}_-$) depends on the sign of the coefficient of the first derivative.
    Using the fact that the vector $U$ has nonpositive entries and $U_i = V_i^n - \hat{V}_i^n = 0$, the inequalities $(\mathcal{D}_2 U)_i \leq 0$, $(\mathcal{D}_+ U)_i \leq 0$, and $(\mathcal{D}_- U)_i \geq 0$ follow immediately from \cref{eqn:schemes_first_derivative_pm,eqn:schemes_second_derivative}.
    Along with the fact that $\hat{V}_i^{n-1} - V_i^{n-1} \leq 0$, this implies that the right hand side of \cref{eqn:convergence_direct_control_monotone_1} is nonpositive, as desired.
\end{proof}

\begin{lemma}
    The penalty scheme is monotone.
\end{lemma}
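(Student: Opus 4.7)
The proof plan is to mirror the structure used for the direct control scheme in \cref{lem:convergence_direct_control_monotone}, with the modification needed to account for the extra $\epsilon \gamma_i^n$ term appearing in the ``impulse branch'' of the penalty scheme. As before, it suffices to check monotonicity at grid points (by the extension described in \cref{rem:convergence_grid_points}), so I fix an arbitrary grid point $(\tau^n, x_i)$ and functions $V, \hat{V} \in \gls*{bdd}$ with $V \geq \hat{V}$ and $V_i^n = \hat{V}_i^n$. The boundary case $n=0$ is trivial from \cref{eqn:convergence_direct_control_boundary} since the difference reduces to $V_i^n - \hat{V}_i^n = 0$.

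For $n > 0$, I would use the inequality $\sup_a g(a) - \sup_a f(a) \leq \sup_a (g(a) - f(a))$ on the expression \cref{eqn:convergence_penalty_interior}. Since $S$ carries a leading minus sign, this yields
\[
    S(h, (\tau^n, x_i), V, \wideEll) - S(h, (\tau^n, x_i), \hat{V}, \wideEll)
    \leq \sup_{\substack{
        \textcolor{ctrlcolor}{d_i} \in \{0,1\} \\
        \textcolor{ctrlcolor}{w_i} \in W^{\gls*{meshing_parameter}}
    }} \left\{
        \textcolor{ctrlcolor}{\overline{d_i}} \Delta\gamma + \textcolor{ctrlcolor}{d_i}
        \left(
            \hat{V}_i^n - V_i^n + \epsilon\, \Delta\gamma
        \right)
    \right\}
\]
where $\Delta\gamma = \gamma_i^n(\hat{V}, \textcolor{ctrlcolor}{w_i}) - \gamma_i^n(V, \textcolor{ctrlcolor}{w_i})$. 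By assumption $\hat{V}_i^n - V_i^n = 0$, so the whole task reduces to showing $\Delta\gamma \leq 0$ uniformly in $\textcolor{ctrlcolor}{w_i}$.

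Unpacking \cref{eqn:schemes_penalty_gamma} and introducing $U = \hat{V}^n - V^n$, I can write
\[
    \Delta\gamma = \frac{\hat{V}_i^{n-1} - V_i^{n-1}}{\Delta \tau}
    + \frac{1}{2} b_i^n(\textcolor{ctrlcolor}{w_i})^2 (\mathcal{D}_2 U)_i
    + \left| a_i^n(\textcolor{ctrlcolor}{w_i}) \right| (\mathcal{D}_+ U)_i \boldsymbol{1}_{\{a_i^n(\textcolor{ctrlcolor}{w_i}) > 0\}}
    - \left| a_i^n(\textcolor{ctrlcolor}{w_i}) \right| (\mathcal{D}_- U)_i \boldsymbol{1}_{\{a_i^n(\textcolor{ctrlcolor}{w_i}) < 0\}}.
\]
All four terms are nonpositive: $\hat{V}_i^{n-1} - V_i^{n-1} \leq 0$ from $V \geq \hat{V}$; while the discrete derivative terms reproduce exactly the argument from \cref{lem:convergence_direct_control_monotone}, using that $U$ has nonpositive entries, $U_i = 0$, and the definitions \cref{eqn:schemes_second_derivative,eqn:schemes_first_derivative_pm} of the upwind stencil to conclude $(\mathcal{D}_2 U)_i \leq 0$, $(\mathcal{D}_+ U)_i \leq 0$, and $(\mathcal{D}_- U)_i \geq 0$.

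With $\Delta\gamma \leq 0$ established, both branches of the supremum are nonpositive: the $\textcolor{ctrlcolor}{\overline{d_i}} = 1$ branch is just $\Delta\gamma$, and the $\textcolor{ctrlcolor}{d_i} = 1$ branch is $\epsilon \Delta \gamma$, which is nonpositive since $\epsilon > 0$. Hence $S(h, (\tau^n, x_i), V, \wideEll) \leq S(h, (\tau^n, x_i), \hat{V}, \wideEll)$, giving monotonicity. I do not anticipate any genuine obstacle; the only place requiring care is making sure the $\epsilon$-penalty term preserves the sign, which is automatic because the penalty parameter is positive and the same $\Delta\gamma$ factor appears in both branches.
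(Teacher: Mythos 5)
Your proof is correct and follows essentially the same route as the paper: both reduce the claim, via the inequality $\sup g - \sup f \leq \sup(g-f)$, to showing $\gamma_i^n(\hat V, \textcolor{ctrlcolor}{w_i}) - \gamma_i^n(V, \textcolor{ctrlcolor}{w_i}) \leq 0$, which then reuses the stencil-sign argument from \cref{lem:convergence_direct_control_monotone} verbatim. (There is a harmless sign slip — the $\textcolor{ctrlcolor}{d_i}$ branch should read $V_i^n - \hat V_i^n$, not $\hat V_i^n - V_i^n$ — but it is immaterial since this term vanishes by the assumption $V_i^n = \hat V_i^n$.)
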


\begin{proof}
    The proof is identical to that of \cref{lem:convergence_direct_control_monotone} except that instead of \cref{eqn:convergence_direct_control_monotone_1}, we have, by \cref{eqn:convergence_penalty_interior},
    \begin{multline*}
        S(h, (\tau^n, x_i), V, \wideEll) - S(h, (\tau^n, x_i), \hat{V}, \wideEll)
        \leq \sup_{\substack{
            \textcolor{ctrlcolor}{d_i} \in \{0,1\} \\
            \textcolor{ctrlcolor}{w_i} \in W^{\gls*{meshing_parameter}}
        }} \left\{
            \left(
                \textcolor{ctrlcolor}{\overline{d_i}}
                + \epsilon \textcolor{ctrlcolor}{d_i}
            \right) \left(
                \gamma_i^n(\hat{V}, \textcolor{ctrlcolor}{w_i})
                - \gamma_i^n(V, \textcolor{ctrlcolor}{w_i})
            \right)
        \right\} \\
        = \sup_{\substack{
            \textcolor{ctrlcolor}{d_i} \in \{0,1\} \\
            \textcolor{ctrlcolor}{w_i} \in W^{\gls*{meshing_parameter}}
        }} \biggl\{
            \left(
                \textcolor{ctrlcolor}{\overline{d_i}}
                + \epsilon \textcolor{ctrlcolor}{d_i}
            \right)
            \biggl(
                \frac{\hat{V}_i^{n-1} - V_i^{n-1}}{\Delta \tau}
                + \frac{1}{2} b_i^n(\textcolor{ctrlcolor}{w_i})^2 (\mathcal{D}_2 U)_i
                + \left| a_i^n(\textcolor{ctrlcolor}{w_i}) \right| (\mathcal{D}_+ U)_i
                \boldsymbol{1}_{ \{ a_i^n(\textcolor{ctrlcolor}{w_i}) > 0 \} }
                \\
                - \left| a_i^n(\textcolor{ctrlcolor}{w_i}) \right| (\mathcal{D}_- U)_i
                \boldsymbol{1}_{ \{ a_i^n(\textcolor{ctrlcolor}{w_i}) < 0 \} }
            \biggr)
        \biggr\} \leq 0. \qedhere
    \end{multline*}
\end{proof}

\begin{lemma}
    The explicit-impulse scheme is monotone.
\end{lemma}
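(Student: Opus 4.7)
The plan is to follow the same template as the proof of \cref{lem:convergence_direct_control_monotone}, verifying the monotonicity inequality pointwise on the grid and then appealing to \cref{rem:convergence_grid_points} to extend to arbitrary points. Let $(\tau^n, x_i)$ be an arbitrary grid point and let $V, \hat{V} \in \gls*{bdd}$ satisfy $V \geq \hat{V}$ and $V_i^n = \hat{V}_i^n$. The case $n = 0$ is immediate from \cref{eqn:convergence_direct_control_boundary}, so I will focus on $n > 0$.

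For the interior case, I would invoke the standard inequality $-\sup A + \sup B \leq \sup (B - A)$ applied to \cref{eqn:convergence_explicit_impulse_interior} to bound
\[
    S(h, (\tau^n, x_i), V, \wideEll) - S(h, (\tau^n, x_i), \hat{V}, \wideEll)
\]
by a single supremum over $(\textcolor{ctrlcolor}{d_i}, \textcolor{ctrlcolor}{w_i}) \in \{0,1\} \times W^{\gls*{meshing_parameter}}$ whose summand I then examine in the two regimes $\textcolor{ctrlcolor}{d_i} = 0$ and $\textcolor{ctrlcolor}{d_i} = 1$. Setting $U = \hat{V}^n - V^n$, I note that $U \leq 0$ componentwise with $U_i = 0$, so \cref{eqn:schemes_second_derivative} immediately gives $(\mathcal{D}_2 U)_i \leq 0$ (this is the only fact needed about $\mathcal{D}_2$, and it is precisely the reason $\mathcal{D}_2$ was chosen as it is). The cancellations $V_i^n = \hat{V}_i^n$ dispose of the $-V_i^n$ terms from both the $\textcolor{ctrlcolor}{d_i} = 0$ and $\textcolor{ctrlcolor}{d_i} = 1$ branches.

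The only genuinely new ingredient relative to the direct control proof is the appearance of the interpolated term $\gls*{interp}(V^{n-1}, x_i + a_i^n(\textcolor{ctrlcolor}{w_i}) \Delta \tau)$ in place of the expression $V_i^{n-1}/\Delta\tau$. Here I would use the fact that linear interpolation is monotone in its first argument, which is transparent from \cref{eqn:schemes_interpolation} since the weights $\alpha$ and $1-\alpha$ are nonnegative and do not depend on the values being interpolated; hence $V^{n-1} \geq \hat{V}^{n-1}$ implies $\gls*{interp}(V^{n-1}, y) \geq \gls*{interp}(\hat{V}^{n-1}, y)$ for every $y$. This yields nonpositivity of the interpolation-difference term in the $\textcolor{ctrlcolor}{d_i} = 0$ branch. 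For the $\textcolor{ctrlcolor}{d_i} = 1$ branch, the extra $\frac{1}{2}(b_i^n)^2 (\mathcal{D}_2 V^n)_i \Delta\tau$ term contributes $\frac{1}{2}(b_i^n)^2 (\mathcal{D}_2 U)_i \Delta\tau \leq 0$ by the same $\mathcal{D}_2$ computation; crucially, this term's sign is what makes the addition of the $O(\Delta\tau)$ perturbation in the derivation of the scheme compatible with monotonicity, and is arguably the only ``nonobvious'' point of the argument.

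Summing the analyses, each summand under the supremum is nonpositive, hence so is the supremum, giving the desired inequality. I do not expect any real obstacle: the scheme was constructed exactly so that upwinding is avoided (the first-derivative term is absorbed into a semi-Lagrangian interpolation), monotone linear interpolation handles the convective part, and the standard three-point stencil $\mathcal{D}_2$ handles the diffusive part, both in the ``free'' $\textcolor{ctrlcolor}{d_i}=0$ branch and in the ``impulse'' $\textcolor{ctrlcolor}{d_i}=1$ branch augmented by the artificial diffusion term.
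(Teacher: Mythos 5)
Your proposal is correct and matches the paper's argument: the paper likewise reduces to the direct-control template, isolates the artificial diffusion term $(\overline{\textcolor{ctrlcolor}{d_i}} + \Delta\tau\,\textcolor{ctrlcolor}{d_i})\tfrac{1}{2}(b_i^n)^2(\mathcal{D}_2 U)_i$, and uses the nonnegativity of the interpolation weights in \cref{eqn:schemes_interpolation} to dispose of the semi-Lagrangian term. The only cosmetic difference is that you treat the $\textcolor{ctrlcolor}{d_i}=0$ and $\textcolor{ctrlcolor}{d_i}=1$ branches separately while the paper folds them into a single coefficient on $(\mathcal{D}_2 U)_i$; the content is identical.
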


\begin{proof}
    The proof is identical to that of \cref{lem:convergence_direct_control_monotone} except that instead of \cref{eqn:convergence_direct_control_monotone_1}, we have, by \cref{eqn:convergence_explicit_impulse_interior},
    \begin{multline}
        S(h, (\tau^n, x_i), V, \wideEll) - S(h, (\tau^n, x_i), \hat{V}, \wideEll)
        \leq \sup_{\substack{
            \textcolor{ctrlcolor}{d_i} \in \{0,1\} \\
            \textcolor{ctrlcolor}{w_i} \in W^{\gls*{meshing_parameter}}
        }} \biggl\{
            \left(
                \textcolor{ctrlcolor}{\overline{d_i}} + \Delta \tau \textcolor{ctrlcolor}{d_i}
            \right)
            \frac{1}{2} (b_i^n)^2 (\mathcal{D}_2 U)_i \\
            + \textcolor{ctrlcolor}{\overline{d_i}} \left(
                \frac{
                    \gls*{interp}(
                        \hat{V}^{n-1},
                        x_i + a_i^n(\textcolor{ctrlcolor}{w_i}) \Delta \tau
                    )
                    - \gls*{interp}(
                        V^{n-1},
                        x_i + a_i^n(\textcolor{ctrlcolor}{w_i}) \Delta \tau
                    )
                }{\Delta \tau}
            \right)
        \biggr\}.
        \label{eqn:convergence_explicit_impulse_monotone_1}
    \end{multline}
    By \cref{eqn:schemes_interpolation},
    \begin{multline*}
        \gls*{interp}(
            \hat{V}^{n-1},
            x_i + a_i^n(\textcolor{ctrlcolor}{w_i}) \Delta \tau
        )
        - \gls*{interp}(
            V^{n-1},
            x_i + a_i^n(\textcolor{ctrlcolor}{w_i}) \Delta \tau
        )
        \\
        %= \gls*{interp}(
            %\hat{V}^{n-1} - V^{n-1},
            %x_i + a_i^n(\textcolor{ctrlcolor}{w_i}) \Delta \tau
        %)
        = \alpha \left(
            \hat{V}_{k+1}^{n-1} - V_{k+1}^{n-1}
        \right)
        + \left(1 - \alpha\right) \left(
            \hat{V}_k^{n-1} - V_k^{n-1}
        \right)
        \leq 0
    \end{multline*}
    where $0 \leq \alpha \leq 1$ and $0 \leq k < M$.
    Therefore, the right hand side of \cref{eqn:convergence_explicit_impulse_monotone_1} is nonpositive, as desired.
\end{proof}

\subsection{Stability}

In this subsection, we prove the stability of our schemes.
A scheme $(S, \mathcal{I}^{\gls*{meshing_parameter}})$ is \emph{stable} (cf. \cite[Eq. (2.3)]{MR1115933}) if
there exists a constant $C$ such that for each solution $V^{\gls*{meshing_parameter}}$ of \cref{eqn:convergence_scheme}, $\Vert V^{\gls*{meshing_parameter}} \Vert_\infty \leq C$.
%\[
    %\text{there exists a constant } C
    %\text{ such that for each solution } V^{\gls*{meshing_parameter}}
    %\text{ of } \cref{eqn:convergence_scheme} \text{, }
    %\Vert V^{\gls*{meshing_parameter}} \Vert_\infty \leq C.
%\]

To establish stability, we prove that the solutions of each scheme are bounded by $C = \Vert f \Vert_\infty T + \Vert g \Vert_\infty$.
Recalling the original optimal control problem defined by \cref{eqn:introduction_functional}, this bound has the intuitive interpretation of being the sum of the maximum possible continuous reward and the maximum possible reward obtained at the final time:
\begin{equation}
    \int_t^T f(u, X_u, \textcolor{ctrlcolor}{w_u}) du + g(X_T) \leq \Vert f \Vert_\infty T + \Vert g \Vert_\infty.
    \label{eqn:convergence_stability_intuition}
\end{equation}
We establish this below in a series of lemmas.

\begin{lemma}
    The direct control scheme is stable.
    \label{lem:convergence_direct_control_stable}
\end{lemma}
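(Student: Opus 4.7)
My plan is to prove by induction on $n$ that $|V_i^n| \leq n\Delta\tau\|f\|_\infty + \|g\|_\infty$ for all grid points; taking $n = N$ and recalling $N\Delta\tau = T$ then yields $\|V^{\gls*{meshing_parameter}}\|_\infty \leq \|f\|_\infty T + \|g\|_\infty$, a bound independent of $\gls*{meshing_parameter}$ (this constant has the intuitive meaning \cref{eqn:convergence_stability_intuition}). The base case $n = 0$ is immediate from the terminal condition $V_i^0 = g(x_i)$ built into \cref{eqn:convergence_direct_control_boundary}, so everything hinges on the inductive step, which I would split into an upper bound and a lower bound.

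For the upper bound at step $n$, let $M^n = \max_i V_i^n$ be attained at some $i^*$. The standard finite difference facts at an interior maximum, together with the boundary prescription that $\mathcal{D}_2$ and $\mathcal{D}_\pm$ vanish at $i \in \{0,M\}$, give $(\mathcal{D}_2 V^n)_{i^*} \leq 0$; the upwinding in \cref{eqn:schemes_first_derivative} further ensures $a_{i^*}^n(w)(\mathcal{D} V^n)_{i^*} \leq 0$ for every $w$. Because $\{0,1\} \times W^{\gls*{meshing_parameter}}$ is finite, the supremum in \cref{eqn:convergence_direct_control_interior} is attained at some $(d^*, w^*)$. I would first rule out $d^* = 1$: this branch would force $M^n = (\mathcal{M}_n V^n)_{i^*}$, but the linear interpolant is bounded above by $M^n$ and $K \leq \sup K < 0$ by \cref{enum:convergence_negative_cost}, giving $M^n \leq M^n + \sup K < M^n$, a contradiction. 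Hence $d^* = 0$, and plugging this choice into \cref{eqn:convergence_direct_control_interior} together with the two sign inequalities above yields
\[
    M^n \leq V_{i^*}^{n-1} + \Delta\tau \, f_{i^*}^n(w^*) \leq V_{i^*}^{n-1} + \Delta\tau\|f\|_\infty,
\]
so the inductive hypothesis delivers $M^n \leq n\Delta\tau\|f\|_\infty + \|g\|_\infty$.

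For the lower bound, let $m^n = \min_i V_i^n$ attained at $i_*$. Here I do not need to identify the maximizer: since the supremum in \cref{eqn:convergence_direct_control_interior} equals zero, \emph{every} admissible $(d,w)$ produces a nonpositive term. Restricting to $d = 0$ with arbitrary $w \in W^{\gls*{meshing_parameter}}$ and using the analogous min-point inequalities $(\mathcal{D}_2 V^n)_{i_*} \geq 0$ and $a_{i_*}^n(w)(\mathcal{D} V^n)_{i_*} \geq 0$, I obtain $m^n \geq V_{i_*}^{n-1} + \Delta\tau \, f_{i_*}^n(w) \geq V_{i_*}^{n-1} - \Delta\tau\|f\|_\infty$, and the inductive hypothesis yields $m^n \geq -(n\Delta\tau\|f\|_\infty + \|g\|_\infty)$.

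The main obstacle is clearly the intervention branch in the upper-bound step: this is the only place where the scheme's nonlocality could in principle destroy the stability estimate, and the assumption $\sup K < 0$ from \cref{enum:convergence_negative_cost} is exactly what prevents the attainment of the sup at $d^* = 1$ at a maximum point. Without this strict negativity (or a similar suboptimality-of-repeated-impulses condition, analogous in spirit to \cref{enum:matrix_eventually_suboptimal}), one would have to replace the direct maximum-point argument with a more delicate supersolution comparison, possibly invoking $\mathcal{M} g \leq g$ from \cref{enum:convergence_continuity} in a discrete form; the technical condition \cref{eq:convergence_technical_assumption} plays no role here, so the routine calculations reduce to the two one-line sign checks indicated above.
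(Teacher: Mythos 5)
Your proof is correct and follows essentially the same route as the paper: inductive $\ell^\infty$-bound $\Vert f\Vert_\infty T + \Vert g\Vert_\infty$, a max-point argument for the upper bound in which the intervention branch is ruled out via the strict negativity of $K$ from \cref{enum:convergence_negative_cost}, and a min-point argument for the lower bound obtained by discarding the intervention branch (restricting to $d=0$). The only cosmetic difference is that you treat the upper bound before the lower bound and phrase the dichotomy via an attained maximizer $(d^*, w^*)$ rather than "at least one of two equalities holds," which is logically equivalent.
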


\begin{proof}
    Let $V = V^{\gls*{meshing_parameter}}$ be a solution of the direct control scheme.
    %We will show that $\Vert V \Vert_\infty \leq \Vert f \Vert_\infty T + \Vert g \Vert_\infty$.
    First, note that by \cref{eqn:convergence_direct_control_boundary},
    \[%\begin{equation}
        0
        = S(h, (\tau^0, x_i), V, \cdot)
        = V_i^0 - g(x_i)
        %= \min \left \{
            %V_i^0 - g(x_i),
            %V_i^0 - \mathcal{M} V(\tau^0, x_i)
        %\right \}
        %\label{eqn:convergence_direct_control_stability_-2}
    \]%\end{equation}
    from which it follows that
    %\[
        %0 \leq V_i^0 - g(x_i) \leq V_i^0 + \Vert g \Vert_\infty.
    %\]
    %Now, pick $j$ such that $V_j^0 = \min_i V_i^0$.
    %By \cref{enum:convergence_negative_cost},
    %\begin{multline}
        %\mathcal{M} V(\tau^0, x_j)
        %= \sup_{\textcolor{ctrlcolor}{z} \in Z(\tau^0, x_j)} \left\{
            %V(\tau^0, x_j + \Gamma(\tau^0, x_j, \textcolor{ctrlcolor}{z}))
            %+ K(\tau^0, x_j, \textcolor{ctrlcolor}{z})
        %\right\} \\
        %< \sup_{\textcolor{ctrlcolor}{z} \in Z(\tau^0, x_j)} \left\{
            %V(\tau^0, x_j + \Gamma(\tau^0, x_j, \textcolor{ctrlcolor}{z}))
        %\right\}
        %\leq V_j^0
        %\label{eqn:convergence_direct_control_stability_-1}
    %\end{multline}
    %(recall that $V$ is extended to points not on the grid by \cref{eqn:convergence_grid_points}).
    %Applying \cref{eqn:convergence_direct_control_stability_-1} to \cref{eqn:convergence_direct_control_stability_-2}, it follows that
    %\[
        %0 = V_j^0 - g(x_j) \geq V_i^0 - \Vert g \Vert_\infty
    %\]
    %Therefore, we conclude that
    \begin{equation}
        - \Vert g \Vert_\infty \leq V_i^0 \leq \Vert g \Vert_\infty.
        \label{eqn:convergence_direct_control_stability_0}
    \end{equation}

    Letting $n > 0$, \cref{eqn:convergence_direct_control_interior} implies
    \begin{multline}
        0
        = -S(h, (\tau^n, x_i), V, \cdot) \\
        \geq \sup_{
            \textcolor{ctrlcolor}{w_i} \in W^{\gls*{meshing_parameter}}
        } \left\{
            \frac{V_i^{n-1} - V_i^n}{\Delta \tau}
            + \frac{1}{2} b_i^n(\textcolor{ctrlcolor}{w_i})^2 (\mathcal{D}_2 V^n)_i
            + a_i^n(\textcolor{ctrlcolor}{w_i}) (\mathcal{D} V^n)_i
            + f_i^n(\textcolor{ctrlcolor}{w_i})
        \right\}.
        \label{eqn:convergence_direct_control_stability_1}
    \end{multline}
    Now, pick $j$ such that $V_j^n = \min_i V_i^n$.
    It follows that $V_{j \pm 1}^n - V_j^n \geq 0$ so that by \cref{eqn:schemes_second_derivative},
    \[
        (\mathcal{D}_2 V^n)_j
        = \frac{V_{j+1}^n - 2 V_j^n + V_{j-1}^n}{(\Delta x)^2}
        \boldsymbol{1}_{ \{ 0 < j < M \} }
        = \frac{V_{j+1}^n - V_j^n + V_{j-1}^n - V_j^n}{(\Delta x)^2}
        \boldsymbol{1}_{ \{ 0 < j < M \} }
        \geq 0.
    \]
    Similarly, by \cref{eqn:schemes_first_derivative_pm},
    \[
        (\mathcal{D}_+ V^n)_j
        = \frac{V_{j+1}^n - V_j^n}{\Delta x} \boldsymbol{1}_{ \{ j < M \} }
        \geq 0
        \textspace \text{and} \textspace
        (\mathcal{D}_- V^n)_j
        = \frac{V_j^n - V_{j-1}^n}{\Delta x} \boldsymbol{1}_{ \{ j > 0 \} }
        \leq 0.
    \]
    Therefore, by \cref{eqn:schemes_first_derivative},
    \[
        a_j^n(\textcolor{ctrlcolor}{w_j}) (\mathcal{D} V^n)_j
        = |a_j^n(\textcolor{ctrlcolor}{w_j})|
        (\mathcal{D}_+ V^n)_j
        \boldsymbol{1}_{ \{ a_j^n(\textcolor{ctrlcolor}{w_j}) > 0 \} }
        - |a_j^n(\textcolor{ctrlcolor}{w_j})|
        (\mathcal{D}_- V^n)_j
        \boldsymbol{1}_{ \{ a_j^n(\textcolor{ctrlcolor}{w_j}) < 0 \} }
        \geq 0.
    \]
    We summarize the above findings by writing
    \begin{equation}
        (\mathcal{D}_2 V^n)_j \geq 0
        \textspace \text{and} \textspace
        a_j^n(\textcolor{ctrlcolor}{w_j}) (\mathcal{D} V^n)_j \geq 0.
        \label{eqn:convergence_direct_control_stability_2}
    \end{equation}
    Setting $i=j$ in \cref{eqn:convergence_direct_control_stability_1} and applying \cref{eqn:convergence_direct_control_stability_2},
    \[
        0 \geq \frac{V_j^{n-1} - V_j^n}{\Delta \tau} - \Vert f \Vert_\infty
    \]
    and hence
    \begin{equation}
        V_j^n \geq \min_i V_i^{n-1} - \Vert f \Vert_\infty \Delta \tau.
        \label{eqn:convergence_direct_control_stability_3}
    \end{equation}
    By \cref{eqn:convergence_direct_control_stability_0,eqn:convergence_direct_control_stability_3}, it follows by induction on $n$ that
    \[
        \min_i V_i^n
        \geq - \Vert f \Vert_\infty n \Delta \tau - \Vert g \Vert_\infty
        \geq - \Vert f \Vert_\infty T - \Vert g \Vert_\infty
        \textspace \text{for all } n,
    \]
    establishing a lower bound on the solution.

    It remains to establish an upper bound.
    Let $n > 0$ and pick $j$ such that $V_j^n = \max_i V_i^n$.
    By \cref{eqn:convergence_direct_control_nonlocal,eqn:convergence_direct_control_interior},
    \begin{multline*}
        0 = -S(h, (\tau^n, x_j), V, [\mathcal{I}^{\gls*{meshing_parameter}} V](\tau^n, x_j)) \\
        = \sup_{\substack{
            \textcolor{ctrlcolor}{d_j} \in \{0,1\} \\
            \textcolor{ctrlcolor}{w_j} \in W^{\gls*{meshing_parameter}}
        }} \biggl\{
            \textcolor{ctrlcolor}{\overline{d_j}} \left(
                \frac{V_j^{n-1} - V_j^n}{\Delta \tau}
                + \frac{1}{2} b_j^n(\textcolor{ctrlcolor}{w_j})^2 (\mathcal{D}_2 V^n)_j
                + a_j^n(\textcolor{ctrlcolor}{w_j}) (\mathcal{D} V^n)_j
                + f_j^n(\textcolor{ctrlcolor}{w_j})
            \right) \\
            + \textcolor{ctrlcolor}{d_j} \left(
                (\mathcal{M}_n V^n)_j - V_j^n
            \right)
        \biggr\}
    \end{multline*}
    and hence at least one of
    \begin{equation}
        0 = \sup_{
            \textcolor{ctrlcolor}{w_j} \in W^{\gls*{meshing_parameter}}
        } \left\{
            \frac{V_j^{n-1} - V_j^n}{\Delta \tau}
            + \frac{1}{2} b_j^n(\textcolor{ctrlcolor}{w_j})^2 (\mathcal{D}_2 V^n)_j
            + a_j^n(\textcolor{ctrlcolor}{w_j}) (\mathcal{D} V^n)_j
            + f_j^n(\textcolor{ctrlcolor}{w_j})
        \right\}
        \label{eqn:convergence_direct_control_stability_4}
    \end{equation}
    or
    \begin{equation}
        0 = (\mathcal{M}_n V^n)_j - V_j^n
        \label{eqn:convergence_direct_control_stability_5}
    \end{equation}
    has to hold.
    Note that \cref{eqn:convergence_direct_control_stability_5} cannot be true since by \cref{enum:convergence_negative_cost},
    \begin{multline}
        (\mathcal{M}_n V^n)_j
        = \sup_{\textcolor{ctrlcolor}{z_j} \in Z^{\gls*{meshing_parameter}}(\tau^n, x_j)}
        \left\{
            \gls*{interp}(
                V^n,
                %x_j +
                \Gamma(\tau^n, x_j, \textcolor{ctrlcolor}{z_j})
            ) + K(\tau^n, x_j, \textcolor{ctrlcolor}{z_j})
        \right\} \\
        < \sup_{\textcolor{ctrlcolor}{z_j} \in Z^{\gls*{meshing_parameter}}(\tau^n, x_j)}
        \gls*{interp}(
            V^n,
            %x_j +
            \Gamma(\tau^n, x_j, \textcolor{ctrlcolor}{z_j})
        )
        \leq V_j^n.
        \label{eqn:convergence_direct_control_stability_6}
    \end{multline}
    Our choice of $j$ implies $V_{j \pm 1}^n - V_j^n \leq 0$ which, similarly to \cref{eqn:convergence_direct_control_stability_2}, we can use to establish
    \begin{equation}
        (\mathcal{D}_2 V^n)_j \leq 0
        \textspace \text{and} \textspace
        a_j^n(\textcolor{ctrlcolor}{w_j}) (\mathcal{D} V^n)_j \leq 0.
        \label{eqn:convergence_direct_control_stability_7}
    \end{equation}
    Applying \cref{eqn:convergence_direct_control_stability_7} to \cref{eqn:convergence_direct_control_stability_4},
    \begin{equation}
        0 \leq \frac{V_j^{n-1} - V_j^n}{\Delta \tau} + \Vert f \Vert_\infty
        \label{eqn:convergence_direct_control_stability_8}
    \end{equation}
    and hence
    \begin{equation}
        V_j^n \leq \max_i V_i^{n-1} + \Vert f \Vert_\infty \Delta \tau.
        \label{eqn:convergence_direct_control_stability_9}
    \end{equation}
    By \cref{eqn:convergence_direct_control_stability_0,eqn:convergence_direct_control_stability_9}, it follows by induction on $n$ that
    \[
        \max_i V_i^n
        \leq \Vert f \Vert_\infty n \Delta \tau + \Vert g \Vert_\infty
        \leq \Vert f \Vert_\infty T + \Vert g \Vert_\infty
        \textspace \text{for all } n,
    \]
    establishing an upper bound on the solution.
\end{proof}

\begin{lemma}
    The penalty scheme is stable.
    \label{lem:convergence_penalty_stable}
\end{lemma}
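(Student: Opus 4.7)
The plan is to mirror the structure of the direct control stability proof (\cref{lem:convergence_direct_control_stable}), establishing the same bound $\Vert V^{\gls*{meshing_parameter}} \Vert_\infty \leq \Vert f \Vert_\infty T + \Vert g \Vert_\infty$ by induction on $n$. The base case $n = 0$ is handled identically using \cref{eqn:convergence_direct_control_boundary}, which forces $V_i^0 = g(x_i)$ and hence $|V_i^0| \leq \Vert g \Vert_\infty$.

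For the inductive step, I would exploit the following observation: since the penalty scheme \cref{eqn:convergence_penalty_interior} imposes that the joint supremum over $\textcolor{ctrlcolor}{d_i} \in \{0,1\}$ and $\textcolor{ctrlcolor}{w_i} \in W^{\gls*{meshing_parameter}}$ equals $0$, restricting to $\textcolor{ctrlcolor}{d_i} = 0$ already yields the \emph{pointwise} inequality $\sup_{\textcolor{ctrlcolor}{w_i}} \gamma_i^n(V, \textcolor{ctrlcolor}{w_i}) \leq 0$ at every grid point. This alone suffices for the lower bound: choosing $j$ with $V_j^n = \min_i V_i^n$, the inequalities $(\mathcal{D}_2 V^n)_j \geq 0$ and $a_j^n(\textcolor{ctrlcolor}{w_j})(\mathcal{D} V^n)_j \geq 0$ (obtained exactly as in \cref{lem:convergence_direct_control_stable}) reduce $\gamma_j^n(V, \textcolor{ctrlcolor}{w_j}) \leq 0$ to $\frac{V_j^{n-1} - V_j^n}{\Delta \tau} \leq \Vert f \Vert_\infty$, and I conclude $V_j^n \geq \min_i V_i^{n-1} - \Vert f \Vert_\infty \Delta \tau$.

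The upper bound requires a little more care. At $j$ with $V_j^n = \max_i V_i^n$, I would rule out the $\textcolor{ctrlcolor}{d_j} = 1$ branch of the supremum as follows: if that branch realized the value $0$, then since $(\mathcal{M}_n V^n)_j - V_j^n$ is independent of $\textcolor{ctrlcolor}{w_j}$ we would have
\[
    0 = (\mathcal{M}_n V^n)_j - V_j^n + \epsilon \sup_{\textcolor{ctrlcolor}{w_j}} \gamma_j^n(V, \textcolor{ctrlcolor}{w_j}).
\]
By \cref{enum:convergence_negative_cost} and the argument of \cref{eqn:convergence_direct_control_stability_6}, the first term is strictly negative, while the second is nonpositive by the preceding paragraph and $\epsilon > 0$, contradicting equality with $0$. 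Hence the $\textcolor{ctrlcolor}{d_j} = 0$ branch must realize the supremum, giving $\sup_{\textcolor{ctrlcolor}{w_j}} \gamma_j^n(V, \textcolor{ctrlcolor}{w_j}) = 0$. The same bookkeeping as in the direct control case (using $(\mathcal{D}_2 V^n)_j \leq 0$ and $a_j^n(\textcolor{ctrlcolor}{w_j})(\mathcal{D} V^n)_j \leq 0$ at the maximum) then delivers $V_j^n \leq \max_i V_i^{n-1} + \Vert f \Vert_\infty \Delta \tau$, and induction closes the argument.

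The main obstacle, relative to the direct control proof, is the extra $\epsilon \gamma_j^n$ term appearing in the impulse branch: without the observation that $\sup_{\textcolor{ctrlcolor}{w_j}} \gamma_j^n \leq 0$ is automatic from the coupled supremum, one could not control the sign of $\epsilon \sup_{\textcolor{ctrlcolor}{w_j}} \gamma_j^n$ and the contradiction that eliminates the impulse branch would fail. Once that is in hand the remaining work is routine, and the bound is independent of the vanishing parameter $\epsilon$ as required by \cref{eqn:convergence_penalty_vanishes}.
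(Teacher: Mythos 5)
Your argument is correct and mirrors the structure of the paper's proof (induction on $n$, lower bound at a minimizing index, upper bound at a maximizing index, identical target bound). The paper handles the upper bound a bit more directly by rewriting the scheme in the form $\sup_{d_j, w_j}\{\gamma_j^n + \tfrac{1}{\epsilon}d_j((\mathcal{M}_n V^n)_j - V_j^n)\} = 0$ (\cref{eqn:convergence_penalty_stability_1}), from which the needed inequality $0 \leq \sup_{w_j}\gamma_j^n$ falls out immediately once $(\mathcal{M}_n V^n)_j - V_j^n \leq 0$ at the maximum, with no contradiction argument and no appeal to $\sup_{w_i}\gamma_i^n \leq 0$ at all $i$; your version reaches the same conclusion via a branch-elimination argument that additionally uses the strict inequality in \cref{enum:convergence_negative_cost}.
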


\begin{proof}
    Let $V = V^{\gls*{meshing_parameter}}$ be a solution of the penalty scheme.
    Since this proof is very similar to that of \cref{lem:convergence_direct_control_stable}, we point out only parts in which it differs.

    Letting $n > 0$, \cref{eqn:convergence_direct_control_nonlocal,eqn:convergence_penalty_interior} imply
    \begin{multline*}
        0
        = -S(h, (\tau^n, x_i), V, [\mathcal{I}^{\gls*{meshing_parameter}} V](\tau^n, x_i))
        \\
        = \sup_{\substack{
            \textcolor{ctrlcolor}{d_i} \in \{0,1\} \\
            \textcolor{ctrlcolor}{w_i} \in W^{\gls*{meshing_parameter}}
        }} \left\{
            \textcolor{ctrlcolor}{\overline{d_i}} \gamma_i^n(V, \textcolor{ctrlcolor}{w_i})
            + \textcolor{ctrlcolor}{d_i} \left(
                (\mathcal{M}_n V^n)_i - V_i^n + \epsilon \gamma_i^n(V, \textcolor{ctrlcolor}{w_i})
            \right)
        \right\}.
    \end{multline*}
    Performing some algebra and substituting in the definition of $\gamma_i^n(V, \textcolor{ctrlcolor}{w_i})$ from \cref{eqn:schemes_penalty_gamma}, the above equality becomes
    \begin{multline}
        0
        = \sup_{\substack{
            \textcolor{ctrlcolor}{d_i} \in \{0,1\} \\
            \textcolor{ctrlcolor}{w_i} \in W^{\gls*{meshing_parameter}}
        }} \left\{
            \gamma_i^n(V, \textcolor{ctrlcolor}{w_i})
            + \frac{1}{\epsilon} \textcolor{ctrlcolor}{d_i} \left(
                (\mathcal{M}_n V^n)_i - V_i^n
            \right)
        \right\} \\
        = \sup_{\substack{
            \textcolor{ctrlcolor}{d_i} \in \{0,1\} \\
            \textcolor{ctrlcolor}{w_i} \in W^{\gls*{meshing_parameter}}
        }} \biggl \{
            \frac{V_i^{n-1} - V_i^n}{\Delta \tau}
            + \frac{1}{2} b_i^n(\textcolor{ctrlcolor}{w_i})^2 (\mathcal{D}_2 V^n)_i
            + a_i^n(\textcolor{ctrlcolor}{w_i}) (\mathcal{D} V^n)_i
            + f_i^n(\textcolor{ctrlcolor}{w_i})
            + \frac{1}{\epsilon} \textcolor{ctrlcolor}{d_i} \left(
                (\mathcal{M}_n V^n)_i - V_i^n
            \right)
        \biggr \}.
        \label{eqn:convergence_penalty_stability_1}
    \end{multline}
    By \cref{eqn:convergence_penalty_stability_1},
    \begin{equation}
        0 \geq \sup_{
            \textcolor{ctrlcolor}{w_i} \in W^{\gls*{meshing_parameter}}
        } \biggl \{
            \frac{V_i^{n-1} - V_i^n}{\Delta \tau} \\
            + \frac{1}{2} b_i^n(\textcolor{ctrlcolor}{w_i})^2 (\mathcal{D}_2 V^n)_i
            + a_i^n(\textcolor{ctrlcolor}{w_i}) (\mathcal{D} V^n)_i
            + f_i^n(\textcolor{ctrlcolor}{w_i})
        \biggr \}.
        \label{eqn:convergence_penalty_stability_2}
    \end{equation}
    Noting that the inequalities \cref{eqn:convergence_direct_control_stability_1,eqn:convergence_penalty_stability_2} are identical, we can proceed as in the proof of \cref{lem:convergence_direct_control_stable} to obtain a lower bound on $V$.

    To obtain the upper bound, pick $j$ such that $V_j^n = \max_i V_i^n$.
    By \cref{eqn:convergence_direct_control_stability_6}, $(\mathcal{M}_n V^n)_j - V_j \leq 0$.
    Applying this fact to \cref{eqn:convergence_penalty_stability_1},
    \begin{equation}
        0 \leq \sup_{
            \textcolor{ctrlcolor}{w_j} \in W^{\gls*{meshing_parameter}}
        } \left\{
            \frac{V_j^{n-1} - V_j^n}{\Delta \tau}
            + \frac{1}{2} b_j^n(\textcolor{ctrlcolor}{w_j})^2 (\mathcal{D}_2 V^n)_j
            + a_j^n(\textcolor{ctrlcolor}{w_j}) (\mathcal{D} V^n)_j
            + f_j^n(\textcolor{ctrlcolor}{w_j})
        \right\}.
        \label{eqn:convergence_penalty_stability_3}
    \end{equation}
    As in the proof of \cref{lem:convergence_direct_control_stable} (see, in particular, \cref{eqn:convergence_direct_control_stability_7}), our choice of $j$ implies $(\mathcal{D}_2 V^n)_j \leq 0$ and $a_j^n(\textcolor{ctrlcolor}{w_j}) (\mathcal{D} V^n)_j \leq 0$.
    Applying these facts to \cref{eqn:convergence_penalty_stability_3}, we get
    \begin{equation}
        0 \leq \frac{V_j^{n-1} - V_j^n}{\Delta \tau} + \Vert f \Vert_\infty.
        \label{eqn:convergence_penalty_stability_4}
    \end{equation}
    Noting that the inequalities \cref{eqn:convergence_direct_control_stability_8,eqn:convergence_penalty_stability_4} are identical, we can proceed as in the proof of \cref{lem:convergence_direct_control_stable} to obtain an upper bound on $V$.
\end{proof}

\begin{lemma}
    The explicit-impulse scheme is stable.
    \label{lem:convergence_explicit_impulse_stable}
\end{lemma}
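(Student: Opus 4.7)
The plan is to mimic the structure of the proofs of \cref{lem:convergence_direct_control_stable,lem:convergence_penalty_stable}, bounding any solution $V = V^{\gls*{meshing_parameter}}$ of the explicit-impulse scheme by $\|f\|_\infty T + \|g\|_\infty$ via an inductive argument on the timestep index $n$, with the terminal inequality $-\|g\|_\infty \leq V_i^0 \leq \|g\|_\infty$ coming directly from \cref{eqn:convergence_direct_control_boundary}. The two features of the explicit-impulse scheme that require care are (i) the intervention operator acts on $V^{n-1}$ rather than on $V^n$ (see \cref{eqn:convergence_explicit_impulse_nonlocal}), and (ii) the impulse branch carries the extra $O(\Delta \tau)$ term $\frac{1}{2}(b_i^n)^2 (\mathcal{D}_2 V^n)_i \Delta \tau$ introduced in \cref{eqn:schemes_explicit_impulse_rearranged}. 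Both work in our favor at the correct extremum.

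For the lower bound at the $n$-th timestep, I will pick $j$ attaining $V_j^n = \min_i V_i^n$, which as in \cref{eqn:convergence_direct_control_stability_2} gives $(\mathcal{D}_2 V^n)_j \geq 0$. Specializing $S(h,(\tau^n,x_j),V,[\mathcal{I}^{\gls*{meshing_parameter}}V](\tau^n,x_j))=0$ to the choice $\textcolor{ctrlcolor}{d_j}=0$ and using $\gls*{interp}(V^{n-1},\cdot) \geq \min_i V_i^{n-1}$ yields
\[
    V_j^n \geq \gls*{interp}(V^{n-1}, x_j + a_j^n(\textcolor{ctrlcolor}{w_j}) \Delta \tau) + f_j^n(\textcolor{ctrlcolor}{w_j}) \Delta\tau \geq \min_i V_i^{n-1} - \|f\|_\infty \Delta \tau,
\]
from which induction produces $\min_i V_i^n \geq -\|f\|_\infty T - \|g\|_\infty$.

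For the upper bound, I will pick $j$ attaining $V_j^n = \max_i V_i^n$, so that as in \cref{eqn:convergence_direct_control_stability_7} we have $(\mathcal{D}_2 V^n)_j \leq 0$. Since the finite supremum defining $S$ is attained, I will split into two cases depending on whether the optimizer is the diffusion branch ($\textcolor{ctrlcolor}{d_j}=0$) or the impulse branch ($\textcolor{ctrlcolor}{d_j}=1$). In the diffusion case, discarding the nonpositive $(\mathcal{D}_2 V^n)_j$ term gives $V_j^n \leq \gls*{interp}(V^{n-1},\cdot) + f_j^n(\textcolor{ctrlcolor}{w_j}) \Delta\tau \leq \max_i V_i^{n-1} + \|f\|_\infty \Delta \tau$. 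In the impulse case, discarding the nonpositive extra term $\frac{1}{2}(b_j^n)^2 (\mathcal{D}_2 V^n)_j \Delta \tau$ gives $V_j^n \leq (\mathcal{M}_n V^{n-1})_j$, and then the same estimate as in \cref{eqn:convergence_direct_control_stability_6} (using \cref{enum:convergence_negative_cost}) yields $V_j^n < \max_i V_i^{n-1}$. Either way, $V_j^n \leq \max_i V_i^{n-1} + \|f\|_\infty \Delta \tau$, and induction completes the bound $\max_i V_i^n \leq \|f\|_\infty T + \|g\|_\infty$.

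No step is really an obstacle here; the only subtlety is noticing that the extra $\frac{1}{2}(b_j^n)^2 (\mathcal{D}_2 V^n)_j \Delta \tau$ term appearing in the impulse branch of the explicit-impulse scheme has the right sign at a spatial maximum, so that discarding it preserves the inequality. Everything else is a direct translation of the direct control stability proof, with $V^{n-1}$ playing the role previously played by $V^n$ inside the intervention operator $\mathcal{M}_n$.
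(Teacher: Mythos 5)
Your proposal is correct and follows essentially the same route as the paper: establish $-\Vert g\Vert_\infty \leq V_i^0 \leq \Vert g\Vert_\infty$ at $n=0$, then propagate bounds inductively by evaluating the scheme at the spatial extremum $j$, using $(\mathcal{D}_2 V^n)_j \geq 0$ (resp. $\leq 0$) at a minimum (resp. maximum), the bound $\operatorname{interp}(V^{n-1},\cdot)$ stays between $\min_k V_k^{n-1}$ and $\max_k V_k^{n-1}$, and assumption \cref{enum:convergence_negative_cost} to get $(\mathcal{M}_n V^{n-1})_j < \max_k V_k^{n-1}$. The only cosmetic difference is that you dispatch the upper bound by splitting on which branch $\textcolor{ctrlcolor}{d_j}\in\{0,1\}$ attains the supremum, whereas the paper keeps both branches inside the supremum and simplifies the combined expression; the two are equivalent.
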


\begin{proof}
    Let $V = V^{\gls*{meshing_parameter}}$ be a solution of the explicit-impulse scheme.
    As usual, we proceed by establishing a lower bound and upper bound separately.

    Letting $n > 0$, \cref{eqn:convergence_explicit_impulse_nonlocal,eqn:convergence_explicit_impulse_interior} imply
    \begin{multline}
        0
        = -S(h, (\tau^n, x_i), V, [\mathcal{I} V](\tau^n, x_i)) \\
        = \sup_{\substack{
            \textcolor{ctrlcolor}{d_i} \in \{0,1\} \\
            \textcolor{ctrlcolor}{w_i} \in W^{\gls*{meshing_parameter}}
        }} \biggl\{
            \textcolor{ctrlcolor}{\overline{d_i}}
            \left(
                \displaystyle{
                    \frac{\gls*{interp}(
                        V^{n-1},
                        x_i + a_i^n(\textcolor{ctrlcolor}{w_i}) \Delta \tau
                    ) - V_i^n}{\Delta \tau}
                }
                + \frac{1}{2} (b_i^n)^2 (\mathcal{D}_2 V^n)_i
                + f_i^n(\textcolor{ctrlcolor}{w_i})
            \right) \\
            + \textcolor{ctrlcolor}{d_i} \left(
                (\mathcal{M}_n V^{n-1})_i - V_i^n
                + \frac{1}{2} (b_i^n)^2 (\mathcal{D}_2 V^n)_i \Delta \tau
            \right)
        \biggr\}.
        \label{eqn:convergence_explicit_impulse_stable_0}
    \end{multline}
    By \cref{eqn:convergence_explicit_impulse_stable_0},
    \begin{multline}
        0
        \geq \sup_{
            \textcolor{ctrlcolor}{w_i} \in W^{\gls*{meshing_parameter}}
        } \left \{
            \displaystyle{
                \frac{\gls*{interp}(
                    V^{n-1},
                    x_i + a_i^n(\textcolor{ctrlcolor}{w_i}) \Delta \tau
                ) - V_i^n}{\Delta \tau}
            }
            + \frac{1}{2} (b_i^n)^2 (\mathcal{D}_2 V^n)_i
            + f_i^n(\textcolor{ctrlcolor}{w_i})
        \right \} \\
        \geq \frac{\min_k V_k^{n-1} - V_i^n}{\Delta \tau}
        + \frac{1}{2} (b_i^n)^2 (\mathcal{D}_2 V^n)_i
        - \Vert f \Vert_\infty.
        \label{eqn:convergence_explicit_impulse_stable_1}
    \end{multline}
    Now, pick $j$ such that $V_j^n = \min V_i^n$.
    As in the proof of \cref{lem:convergence_direct_control_stable} (see, in particular \cref{eqn:convergence_direct_control_stability_2}), this choice of $j$ implies $(\mathcal{D}_2 V^n)_j \geq 0$.
    Setting $i = j$ and substituting this inequality into \cref{eqn:convergence_explicit_impulse_stable_1}, we obtain, after some simplification,
    \begin{equation}
        V_j^n \geq \min_k V_k^{n-1} - \Vert f \Vert_\infty \Delta \tau.
        \label{eqn:convergence_explicit_impulse_stable_2}
    \end{equation}
    Noting that the inequalities \cref{eqn:convergence_direct_control_stability_3,eqn:convergence_explicit_impulse_stable_2} are identical, we can proceed as in the proof of \cref{lem:convergence_direct_control_stable} to obtain a lower bound on $V$.

    It remains to establish an upper bound.
    \cref{enum:convergence_negative_cost} implies that
    \begin{multline}
        (\mathcal{M}_n V^{n-1})_i
        = \sup_{\textcolor{ctrlcolor}{z_i} \in Z^{\gls*{meshing_parameter}}(\tau^n, x_i)}
        \left\{
            \gls*{interp}(
                V^{n-1},
                %x_i +
                \Gamma(\tau^n, x_i, \textcolor{ctrlcolor}{z_i})
            ) + K(\tau^n, x_i, \textcolor{ctrlcolor}{z_i})
        \right\} \\
        < \sup_{\textcolor{ctrlcolor}{z_i} \in Z^{\gls*{meshing_parameter}}(\tau^n, x_i)}
        \gls*{interp}(
            V^{n-1},
            %x_i +
            \Gamma(\tau^n, x_i, \textcolor{ctrlcolor}{z_i})
        )
        \leq \max_k V_k^{n-1}.
        \label{eqn:convergence_explicit_impulse_stable_3}
    \end{multline}
    Now, pick $j$ such that $V_j^n = \max V_i^n$.
    As in the proof of \cref{lem:convergence_direct_control_stable} (see, in particular, \cref{eqn:convergence_direct_control_stability_7}), this choice of $j$ implies $(\mathcal{D}_2 V^n)_j \leq 0$.
    Applying this fact and \cref{eqn:convergence_explicit_impulse_stable_3} to equation \cref{eqn:convergence_explicit_impulse_stable_0} with $i = j$ yields
    \[
        0
        \leq \sup_{
            \textcolor{ctrlcolor}{d_j} \in \{0,1\}
        } \left \{
            \textcolor{ctrlcolor}{\overline{d_j}}
            \left(
                \displaystyle{
                    \frac{\max_k V_k^{n-1} - V_j^n}{\Delta \tau}
                }
                + \Vert f \Vert_\infty
            \right) \\
            + \textcolor{ctrlcolor}{d_j}
            \left(
                \max_k V_k^{n-1} - V_j^n
            \right)
        \right \}.
    \]
    Equivalently,
    \begin{multline}
        0
        \leq \sup_{
            \textcolor{ctrlcolor}{d_j} \in \{0,1\}
        } \left \{
            \textcolor{ctrlcolor}{\overline{d_j}}
            \left(
                \max_k V_k^{n-1} - V_j^n
                + \Vert f \Vert_\infty \Delta \tau
            \right)
            + \textcolor{ctrlcolor}{d_j}
            \left(
                \max_k V_k^{n-1} - V_j^n
            \right)
        \right \} \\
        = \max_k V_k^{n-1} - V_j^n + \Vert f \Vert_\infty \Delta \tau.
        \label{eqn:convergence_explicit_impulse_stable_4}
    \end{multline}
    Noting that the inequalities \cref{eqn:convergence_direct_control_stability_9,eqn:convergence_explicit_impulse_stable_4} are equivalent, we can proceed as in the proof of \cref{lem:convergence_direct_control_stable} to obtain an upper bound on $V$.
\end{proof}

\begin{remark}
    \label{rem:convergence_relax_cost}
    A close examination of the proof of direct control stability reveals that if we redefine $\mathcal{M}_n$ by
    \[
        (\mathcal{M}_n U)_i =
        \sup_{\textcolor{ctrlcolor}{z_i} \in Z^{\gls*{meshing_parameter}}(\tau^n, x_i)}
        \left\{
            \gls*{interp}(
                U,
                %x_i +
                \Gamma(\tau^n, x_i, \textcolor{ctrlcolor}{z_i})
            )
            + K(\tau^n, x_i, \textcolor{ctrlcolor}{z_i})
        \right\}
        - \nu
    \]
    where $\nu > 0$ and $\nu \rightarrow 0$ as $\gls*{meshing_parameter} \rightarrow 0$, we can relax the requirement \cref{enum:convergence_negative_cost} from strict inequality to weak inequality (i.e., $\sup_{t,x,\textcolor{ctrlcolor}{z}} K(t, x, \textcolor{ctrlcolor}{z}) \leq 0$) without losing the stability of the direct control scheme.
    Since $\nu$ vanishes as the grid is made finer, this has no bearing on the consistency proofs of the next section.

    Similarly, a close examination of the proofs of penalty and explicit-impulse stability reveals that, as in the previous paragraph, we can relax the requirement \cref{enum:convergence_negative_cost} from strict inequality to weak inequality.
    In this case, it is not necessary even to redefine $\mathcal{M}_n$.
\end{remark}

\subsection{Nonlocal consistency}
\label{subsec:convergence_nonlocal_consistency}

As mentioned at the beginning of this chapter, our notion of consistency differs from the usual notion defined in \cite[Eq. (2.4)]{MR1115933}.
Before we can introduce our notion of consistency, we review half-relaxed limits, which appear frequently in the theory of viscosity solutions.

\begin{definition}[\cite{MR921827}]
    \label{def:convergence_half_relaxed_limits}
    For a family $(u^{\gls*{meshing_parameter}})_{\gls*{meshing_parameter} > 0}$ of real-valued maps from a metric space such that $(\gls*{meshing_parameter}, x) \mapsto u^{\gls*{meshing_parameter}}(x)$ is locally bounded, we define the upper and lower half-relaxed limits $\overline{u}$ and $\underline{u}$ by
    \[
        \overline{u}(x) = \limsup_{\substack{
            \gls*{meshing_parameter} \rightarrow 0 \\
            y \rightarrow x
        }} u^{\gls*{meshing_parameter}}(y)
        \textspace \text{and} \textspace
        \underline{u}(x) = \liminf_{\substack{
            \gls*{meshing_parameter} \rightarrow 0 \\
            y \rightarrow x
        }} u^{\gls*{meshing_parameter}}(y).
    \]
\end{definition}

Half-relaxed limits are a generalization of semicontinuous envelopes.
To see this, note that if we fix a function $u$ and take $u^h = u$ for all $h$, $\overline{u} = u^*$ and $\underline{u} = u_*$.
See also \cref{fig:convergence_half_relaxed} for an example of a family of functions along with its upper half-relaxed limit.

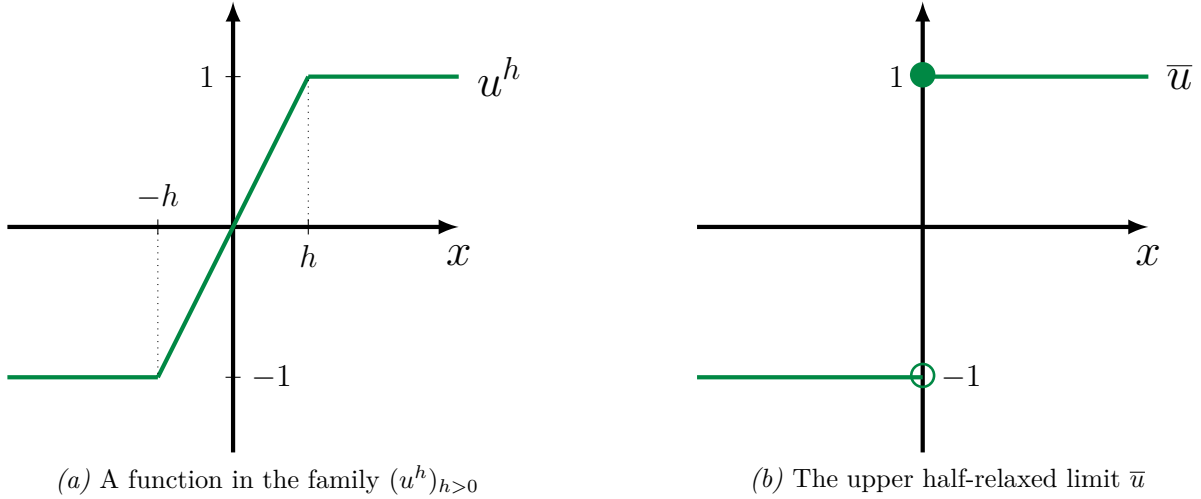
\begin{figure}
    \subfloat[A function in the family $(u^h)_{h > 0}$]{
        \begin{tikzpicture}
            \draw [->, ultra thick] (-3, 0) -- (3, 0);
            \draw [->, ultra thick] (0, -3) -- (0, 3);
            \draw [colB, ultra thick]
                (-1, -2) -- (1, 2)
                (-3, -2) -- (-1, -2)
                (1, 2) -- (3, 2)
            ;
            \draw [dotted]
                (-1, 0) -- (-1, -2)
                (1, 0) -- (1, 2)
            ;
            \draw (-1, -0.1) -- (-1, 0.1);
            \node [yshift=0.1cm, above] at (-1, 0) {$-h$};
            \draw (1, -0.1) -- (1, 0.1);
            \node [yshift=-0.1cm, below] at (1, 0) {$h$};
            \draw (-0.1, -2) -- (0.1, -2);
            \node [xshift=0.1cm, right] at (0, -2) {$-1$};
            \draw (-0.1, 2) -- (0.1, 2);
            \node [xshift=-0.1cm, left] at (0, 2) {$1$};
            \node [xshift=0.1cm, right] at (3, 2) {\Large $u^h$};
            \node [yshift=-0.1cm, below] at (3, 0) {\Large $x$};
        \end{tikzpicture}
    }
    \hfill{}
    \subfloat[The upper half-relaxed limit $\overline{u}$]{
        \begin{tikzpicture}
            \draw [->, ultra thick] (-3, 0) -- (3, 0);
            \draw [->, ultra thick] (0, -3) -- (0, 3);
            \draw [colB, ultra thick]
                (-3, -2) -- (0, -2)
                (0, 2) -- (3, 2)
            ;
            \node [xshift=0.1cm, right] at (3, 2) {\Large $\overline{u}$};
            \node [xshift=0.1cm, right] at (0, -2) {$-1$};
            \node [xshift=-0.1cm, left] at (0, 2) {$1$};
            \node [colB] at (0, -2) {\huge $\circ$};
            \node [colB] at (0, 2) {\huge \textbullet};
            \node [yshift=-0.1cm, below] at (3, 0) {\Large $x$};
        \end{tikzpicture}
    }
    \caption{An example of a half-relaxed limit}
    \label{fig:convergence_half_relaxed}
\end{figure}

We are now ready to define nonlocal consistency.
A scheme $(S, \mathcal{I}^{\gls*{meshing_parameter}})$ is \emph{nonlocally consistent} if for each family $(u^{\gls*{meshing_parameter}})_{\gls*{meshing_parameter} > 0}$ of uniformly bounded real-valued maps from $\overline{\Omega}$, $\varphi \in C^2(\overline{\Omega})$, and $x \in \overline{\Omega}$, we have
\begin{equation}
    \liminf_{\substack{
        \gls*{meshing_parameter} \rightarrow 0 \\
        y \rightarrow x \\
        \xi \rightarrow 0
    }} S(
        \gls*{meshing_parameter},
        y,
        \varphi + \xi,
        %- e^{-1/\gls*{meshing_parameter}} \boldsymbol{1}_{\overline{\Omega} \setminus \{y\}},
        [\mathcal{I}^{\gls*{meshing_parameter}} u^{\gls*{meshing_parameter}}](y)
    )
    \geq
    %F_*(D^2 \varphi(x), D \varphi(x), \varphi(x), [\mathcal{I} \overline{u}](x), x)
    F_*(x, \varphi(x), D \varphi(x), D^2 \varphi(x), [\mathcal{I} \overline{u}](x))
    \label{eqn:convergence_subconsistency}
\end{equation}
and
\begin{equation}
    \limsup_{\substack{
        \gls*{meshing_parameter} \rightarrow 0 \\
        y \rightarrow x \\
        \xi \rightarrow 0
    }} S(
        \gls*{meshing_parameter},
        y,
        \varphi + \xi,
        %+ e^{-1/\gls*{meshing_parameter}} \boldsymbol{1}_{\overline{\Omega} \setminus \{y\}},
        [\mathcal{I}^{\gls*{meshing_parameter}} u^{\gls*{meshing_parameter}}](y)
    )
    \leq
    %F^*(D^2 \varphi(x), D \varphi(x), \varphi(x), [\mathcal{I} \underline{u}](x), x)
    F^*(x, \varphi(x), D \varphi(x), D^2 \varphi(x), [\mathcal{I} \overline{u}](x))
    \label{eqn:convergence_superconsistency}
\end{equation}
where $\overline{u}$ and $\underline{u}$ are the half-relaxed limits from \cref{def:convergence_half_relaxed_limits}.

In the absence of the nonlocal operator (i.e., $\mathcal{I} = \mathcal{I}^{\gls*{meshing_parameter}} = 0$), nonlocal consistency is equivalent to the usual notion of consistency \cite[Eq (2.4)]{MR1115933}.
In other words, the only difference between nonlocal consistency and the usual notion of consistency is the explicit handling of the nonlocal operator by half-relaxed limits.

In \cref{chap:schemes}, we replaced the (possibly infinite) control sets $W$ by a finite subset $W^{\gls*{meshing_parameter}}$.
A similar exercise was performed to obtain $Z^{\gls*{meshing_parameter}}(t, x)$.
To prove the nonlocal consistency of our schemes, we will need to ensure that these finite subsets converge, in some sense, to the original sets which they approximate.
The appropriate notion of convergence, in this case, is with respect to the Hausdorff metric.
We recall below the definition of the Hausdorff metric.

\begin{definition}[{\cite[Pg. 281]{MR0464128}}]
    The Hausdorff metric $\gls*{hausdorff}$ between two nonempty compact subsets $X$ and $Y$ of a metric space $(M, d)$ is given by
    \[
        \gls*{hausdorff}(X, Y)
        = \max \left \{
            \adjustlimits \sup_{x \in X} \inf_{y \in Y} d(x, y),
            \adjustlimits \sup_{y \in Y} \inf_{x \in X} d(x, y)
        \right \}.
    \]
    If $X \subset Y$ as in \cref{fig:convergence_hausdorff}, then $\sup_{x \in X} \inf_{y \in Y} d(x, y) = 0$ so that the above simplifies to
    \[
        \gls*{hausdorff}(X, Y)
        = \adjustlimits \sup_{y \in Y} \inf_{x \in X} d(x, y).
    \]
\end{definition}

\begin{figure}
    \centering
    \includegraphics[height=2.5in]{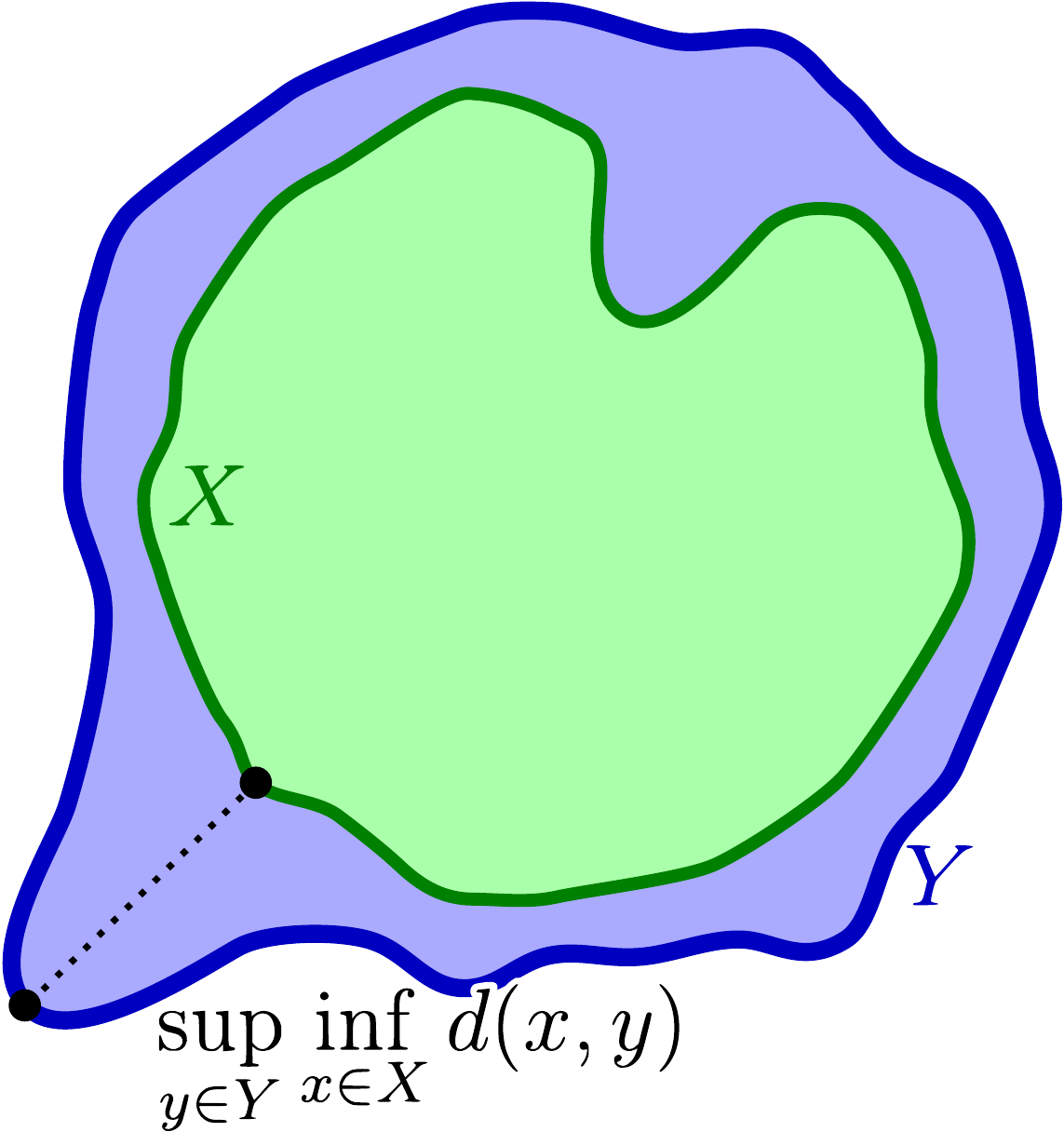}
    \caption[Hausdorff distance between a set and another set containing it]{Hausdorff distance between a set and another set containing it (modified from \url{https://commons.wikimedia.org/wiki/File:Hausdorff_distance_sample.svg})}
    \label{fig:convergence_hausdorff}
\end{figure}

A short example illustrating convergence in the Hausdorff metric is given below.

\begin{example}
    Consider the partition $X_m = \{ 0, \frac{1}{m}, \frac{2}{m}, \ldots, 1 \}$ of the unit interval $Y = [0, 1]$.
    Since $X_m \subset Y$,
    \[
        d_H( X_m, Y )
        = \adjustlimits \sup_{y \in Y} \inf_{x \in X_m} \left| x - y \right|
        = \frac{1}{2m}
    \]
    and hence $d_H( X_m, Y ) \rightarrow 0$ as $m \rightarrow \infty$.
    In other words, the partition $X_m$ converges (in the Hausdorff metric) to the unit interval $Y$.
\end{example}

%An example illustrating convergence in the Hausdorff metric is
%\[
    %\{ 0, h, 2h, \ldots, 1 \} \xrightarrow{\gls*{hausdorff}} [0, 1]
    %\textspace \text{as} \textspace
    %h \rightarrow 0,
%\]
%in which a partition of points converges to the unit interval as the space between the points becomes smaller.
In the context of discretizations of controlled PDEs, we can think of the unit interval in the above example as the control set (e.g., $W$) and the partition as an approximation of that control set (e.g., $W^{\gls*{meshing_parameter}}$).

We are now able to state the last assumption in this chapter, concerning the convergence of the approximate control sets $W^{\gls*{meshing_parameter}}$ and $Z^{\gls*{meshing_parameter}}(t, x)$.

\begin{enumerate}[label=(H\arabic*),start=4]
    \item
        \label{enum:convergence_stochastic_control_hausdorff}
        \label{enum:convergence_impulse_control_hausdorff}
        As $h \rightarrow 0$, $W^{\gls*{meshing_parameter}}$ converges (in the Hausdorff metric) to $W$ and $Z^{\gls*{meshing_parameter}}$ converges locally uniformly (in the Hausdorff metric) to $Z$.
        Moreover, $(t, x) \mapsto Z^{\gls*{meshing_parameter}}(t, x)$ is continuous (in the Hausdorff metric) for each $\gls*{meshing_parameter}$.
\end{enumerate}

\begin{remark}
    By the uniform limit theorem (which states that a uniformly convergent sequence of continuous functions has a continuous limit \cite[Theorem 21.6]{MR0464128}) and \cref{enum:convergence_impulse_control_hausdorff}, the function $(t, x) \mapsto Z(t, x)$ is also continuous (in the Hausdorff metric).
\end{remark}

Using \cref{enum:convergence_impulse_control_hausdorff}, we will establish the nonlocal consistency of our schemes.
Before we can do so, we require a few lemmas.

\begin{lemma}
    \label{lem:proofs_convergence_direct_control_consistency_1}
    Let $(a_m)_m$, $(b_m)_m$, and $(c_m)_m$ be real sequences with $c_m \geq \min \{ a_m, b_m \}$ (resp. $c_m \leq \min \{ a_m, b_m \}$) for each $m$.
    Then,
    \begin{align}
        \liminf_{m \rightarrow \infty} c_m & \geq \min \{ \liminf_{m \rightarrow \infty} a_m, \liminf_{m \rightarrow \infty} b_m \}
        \label{eqn:proofs_convergence_direct_control_consistency_1_1} \\
        \text{(resp. }
            \limsup_{m \rightarrow \infty} c_m & \leq \min \{ \limsup_{m \rightarrow \infty} a_m, \limsup_{m \rightarrow \infty} b_m \}
        \text{)}.
        \label{eqn:proofs_convergence_direct_control_consistency_1_2}
    \end{align}
\end{lemma}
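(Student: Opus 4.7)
The plan is to treat the two inequalities separately, as they have very different characters despite the symmetric statement.

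For \cref{eqn:proofs_convergence_direct_control_consistency_1_2}, the argument is essentially trivial and I would handle it first. Since $c_m \leq \min\{a_m, b_m\} \leq a_m$ and similarly $c_m \leq b_m$, the monotonicity of $\limsup$ gives $\limsup c_m \leq \limsup a_m$ and $\limsup c_m \leq \limsup b_m$ separately, whence $\limsup c_m \leq \min\{\limsup a_m, \limsup b_m\}$. No interaction between the two sequences is needed.

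For \cref{eqn:proofs_convergence_direct_control_consistency_1_1}, the situation is more delicate: one cannot just pick $c_m \leq a_m$ or $c_m \leq b_m$ uniformly in $m$, since the minimum may be attained by different sequences at different indices. The key step will be to prove the auxiliary inequality
\[
    \liminf_{m \to \infty} \min\{a_m, b_m\} \geq \min\{ \liminf_{m \to \infty} a_m, \liminf_{m \to \infty} b_m \},
\]
from which the result follows by monotonicity of $\liminf$ applied to $c_m \geq \min\{a_m, b_m\}$. To establish the auxiliary inequality, I would use the pointwise bounds $a_m \geq \inf_{k \geq m} a_k$ and $b_m \geq \inf_{k \geq m} b_k$, which together give
\[
    \min\{a_m, b_m\} \geq \min\left\{ \inf_{k \geq m} a_k, \inf_{k \geq m} b_k \right\}.
\]
Taking $\inf_{k \geq m}$ on the left side preserves this bound, and then letting $m \to \infty$ the right side tends to $\min\{\liminf a_m, \liminf b_m\}$ by continuity of $\min$.

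I do not anticipate any serious obstacle; the only subtlety is recognising that one must work with the tail infima rather than attempting to split cases on which of $a_m$ or $b_m$ realises the minimum, since such a case split would not commute with taking limits. The proof is short enough that it can be presented as a single compact paragraph for each inequality.
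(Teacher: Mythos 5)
Your proposal is correct. For the $\liminf$ inequality you follow essentially the same route as the paper: reduce to the tail infima and pass to the limit using continuity of $\min$. The only cosmetic difference is that the paper states the exact identity $\inf_{k \geq m}\min\{a_k, b_k\} = \min\{\inf_{k\geq m} a_k, \inf_{k\geq m} b_k\}$ while you establish only the $\geq$ direction, which is all that is needed.

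For the $\limsup$ inequality your argument is a genuine (if small) simplification. You observe that $c_m \leq a_m$ and $c_m \leq b_m$ hold separately for every $m$, so monotonicity of $\limsup$ gives the two bounds $\limsup c_m \leq \limsup a_m$ and $\limsup c_m \leq \limsup b_m$ independently, and the conclusion follows by taking the minimum of the two real numbers. The paper instead mirrors the $\liminf$ argument symmetrically, passing through tail suprema and the (one-sided) inequality $\sup_{k\geq m}\min\{a_k,b_k\} \leq \min\{\sup_{k\geq m}a_k, \sup_{k\geq m}b_k\}$. Your observation that the $\limsup$ direction decouples while the $\liminf$ direction does not — precisely because $c_m \geq \min\{a_m,b_m\}$ does not imply $c_m \geq a_m$ or $c_m \geq b_m$ uniformly in $m$ — is exactly the right structural distinction, and your treatment of the easy case is slightly cleaner for making it explicit. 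Both approaches are correct; yours buys a shorter argument for the $\limsup$ case at the cost of losing the aesthetic symmetry of the paper's presentation.
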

\begin{proof}
    If $c_m \geq \min \{ a_m, b_m \}$ for each $m$,
    \[
        \inf_{k \geq m} c_k
        \geq \inf_{k \geq m} \min \{ a_k, b_k \}
        = \min \{ \inf_{k \geq m} a_k, \inf_{k \geq m} b_k \}.
    \]
    Taking limits in the above yields
    \[
        \lim_{m \rightarrow \infty} \inf_{k \geq m} c_k
        \geq \lim_{m \rightarrow \infty} \min \{ \inf_{k \geq m} a_k, \inf_{k \geq m} b_k \}
        = \min \{ \lim_{m \rightarrow \infty} \inf_{k \geq m} a_k, \lim_{m \rightarrow \infty} \inf_{k \geq m} b_k \}
    \]
    where the last equality follows from the continuity of the function $(x, y) \mapsto \min\{ x, y \}$.
    Since $\lim_{m \rightarrow \infty} \inf_{k \geq m} x_k = \liminf_{m \rightarrow \infty} x_m$ by definition, this establishes \cref{eqn:proofs_convergence_direct_control_consistency_1_1}.

    Similarly, if $c_m \leq \min \{ a_m, b_m \}$ for each $m$,
    \[
        \sup_{k \geq m} c_k
        \leq \sup_{k \geq m} \min \{ a_k, b_k \}
        \leq \min \{ \sup_{k \geq m} a_k, \sup_{k \geq m} b_k \}.
    \]
    As in the previous paragraph, taking limits establishes \cref{eqn:proofs_convergence_direct_control_consistency_1_2}.
\end{proof}

The next lemma ensures us that we can approximate the term $\sup_{\textcolor{ctrlcolor}{w} \in W}\{\cdot\}$ appearing in \eqref{eqn:convergence_hjbqvi} using the approximate control set $W^{\gls*{meshing_parameter}}$.

\begin{lemma}
    \label{lem:proofs_convergence_direct_control_consistency_2}
    Let $Y$ be a compact metric space and $\rho \colon Y \times W \rightarrow \mathbb{R}$ be a continuous function.
    Let $(\gls*{meshing_parameter}_m)_m$ be a sequence of positive real numbers converging to zero and $(y_m)_m$ be a sequence in $Y$ converging to some $\hat{y} \in Y$.
    Then,
    \[
        \sup_{\textcolor{ctrlcolor}{w} \in W^{\gls*{meshing_parameter}_m}}
        \rho(y_m, \textcolor{ctrlcolor}{w})
        \rightarrow \sup_{\textcolor{ctrlcolor}{w} \in W}
        \rho(\hat{y}, \textcolor{ctrlcolor}{w}).
    \]
\end{lemma}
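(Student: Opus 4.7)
The plan is to establish the claimed convergence by proving the two one-sided inequalities
\[
    \limsup_{m} \sup_{w \in W^{h_m}} \rho(y_m, w) \leq \sup_{w \in W} \rho(\hat{y}, w)
    \textspace \text{and} \textspace
    \liminf_{m} \sup_{w \in W^{h_m}} \rho(y_m, w) \geq \sup_{w \in W} \rho(\hat{y}, w).
\]
I will exploit three basic ingredients: (i) compactness of $Y \times W$ so that $\rho$ is uniformly continuous, (ii) the fact (noted in \cref{chap:schemes}) that $W^h$ is, by construction, a nonempty subset of $W$, and (iii) the Hausdorff convergence $d_H(W^{h_m}, W) \to 0$ from \cref{enum:convergence_stochastic_control_hausdorff}.

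For the upper bound, since $W^{h_m} \subset W$, we immediately have
\[
    \sup_{w \in W^{h_m}} \rho(y_m, w) \leq \sup_{w \in W} \rho(y_m, w).
\]
The map $y \mapsto \sup_{w \in W} \rho(y, w)$ is continuous on $Y$ (it is the infimum-attaining maximum of a continuous function over a fixed compact set; equivalently, use uniform continuity of $\rho$ on $Y \times W$), so $\sup_{w \in W} \rho(y_m, w) \to \sup_{w \in W} \rho(\hat{y}, w)$ and passing to the limsup yields the upper inequality.

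For the lower bound, I would pick $\hat{w} \in W$ attaining $\sup_{w \in W} \rho(\hat{y}, w)$ (which exists by compactness of $W$ and continuity of $\rho(\hat{y}, \cdot)$). By Hausdorff convergence,
\[
    \inf_{v \in W^{h_m}} d(\hat{w}, v) \leq d_H(W^{h_m}, W) \longrightarrow 0,
\]
so for each $m$ I can choose $w_m \in W^{h_m}$ with $d(w_m, \hat{w}) \to 0$, hence $w_m \to \hat{w}$. Then $(y_m, w_m) \to (\hat{y}, \hat{w})$, and continuity of $\rho$ gives $\rho(y_m, w_m) \to \rho(\hat{y}, \hat{w}) = \sup_{w \in W} \rho(\hat{y}, w)$. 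Since $\sup_{w \in W^{h_m}} \rho(y_m, w) \geq \rho(y_m, w_m)$, taking liminf yields the second inequality.

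The two inequalities together force the liminf and the limsup to coincide with $\sup_{w \in W} \rho(\hat{y}, w)$, which is exactly the desired convergence. No step here is particularly delicate; the only subtlety worth flagging is making sure that the ``finite subset'' interpretation of $W^h$ is actually used (giving the upper bound essentially for free), while the Hausdorff hypothesis is genuinely needed only to produce the approximating sequence $w_m \to \hat{w}$ for the lower bound.
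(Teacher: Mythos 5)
Your proof is correct and takes essentially the same approach as the paper's: the upper bound follows from $W^{h_m} \subset W$ together with the continuity of $y \mapsto \sup_{w \in W} \rho(y, w)$ (established via uniform continuity of $\rho$ on the compact $Y \times W$), and the lower bound follows by using Hausdorff convergence to produce a sequence $w_m \in W^{h_m}$ converging to a maximizer $\hat{w} \in W$. The only cosmetic difference is that the paper first observes boundedness and argues on an arbitrary convergent subsequence, whereas you work directly with $\limsup$ and $\liminf$; these framings are interchangeable.
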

\begin{proof}
    Let $\rho_m = \sup_{\textcolor{ctrlcolor}{w} \in W^{\gls*{meshing_parameter}_m}} \rho(y_m, \textcolor{ctrlcolor}{w})$.
    Since $\Vert \rho \Vert_{\infty} < \infty$ ($Y \times W$ is compact and $\rho$ is continuous), the sequence $(\rho_m)_m$ is bounded.
    Therefore, it is sufficient to show that every convergent subsequence of $(\rho_m)_m$ converges to $\sup_{\textcolor{ctrlcolor}{w} \in W} \rho(\hat{y}, \textcolor{ctrlcolor}{w})$.

    In light of this, consider an arbitrary convergent subsequence of $(\rho_m)_m$, and relabel it, with a slight abuse of notation, $(\rho_m)_m$.
    Since $W$ is compact, we can find $\textcolor{ctrlcolor}{\hat{w}} \in W$ such that
    \[
        \rho(\hat{y}, \textcolor{ctrlcolor}{\hat{w}})
        = \sup_{\textcolor{ctrlcolor}{w} \in W} \rho(\hat{y}, \textcolor{ctrlcolor}{w}).
    \]
    Moreover, since $W^{\gls*{meshing_parameter}_m} \rightarrow W$ by \cref{enum:convergence_stochastic_control_hausdorff}, we can find a sequence $(\textcolor{ctrlcolor}{w_m})_m$ such that $\textcolor{ctrlcolor}{w_m} \in W^{\gls*{meshing_parameter}_m}$ for each $m$ and $\textcolor{ctrlcolor}{w_m} \rightarrow \textcolor{ctrlcolor}{\hat{w}} \in W$.
    Therefore,
    \begin{equation}
        \lim_{m \rightarrow \infty} \rho_m
        = \adjustlimits \lim_{m \rightarrow \infty} \sup_{\textcolor{ctrlcolor}{w} \in W^{\gls*{meshing_parameter}_m}} \rho(y_m, \textcolor{ctrlcolor}{w})
        \geq \lim_{m\rightarrow\infty} \rho(y_m, \textcolor{ctrlcolor}{w_m})
        = \rho(\hat{y}, \textcolor{ctrlcolor}{\hat{w}})
        = \sup_{\textcolor{ctrlcolor}{w} \in W}\rho(\hat{y}, \textcolor{ctrlcolor}{w}).
        \label{eqn:proofs_convergence_direct_control_consistency_2_1}
    \end{equation}
    Furthermore, since $W^{\gls*{meshing_parameter}_m} \subset W$ by definition,
    \begin{equation}
        \lim_{m \rightarrow \infty} \rho_m
        = \lim_{m \rightarrow \infty} \sup_{\textcolor{ctrlcolor}{w} \in W^{\gls*{meshing_parameter}_m}} \rho(y_m, \textcolor{ctrlcolor}{w})
        \leq \adjustlimits \lim_{m\rightarrow\infty} \sup_{\textcolor{ctrlcolor}{w} \in W} \rho(y_m, \textcolor{ctrlcolor}{w})
        = \sup_{\textcolor{ctrlcolor}{w} \in W} \rho(\lim_{m\rightarrow\infty} y_m, \textcolor{ctrlcolor}{w})
        = \sup_{\textcolor{ctrlcolor}{w} \in W} \rho(\hat{y}, \textcolor{ctrlcolor}{w})
        \label{eqn:proofs_convergence_direct_control_consistency_2_2}
    \end{equation}
    where we have obtained the second last equality in \cref{eqn:proofs_convergence_direct_control_consistency_2_2} by using the continuity of $y \mapsto \sup_{\textcolor{ctrlcolor}{w} \in W} \rho(y, \textcolor{ctrlcolor}{w})$.
    Combining \cref{eqn:proofs_convergence_direct_control_consistency_2_1,eqn:proofs_convergence_direct_control_consistency_2_2}, we obtain
    \[
        \lim_{m \rightarrow \infty} \rho_m
        = \sup_{\textcolor{ctrlcolor}{w} \in W} \rho(\hat{y}, \textcolor{ctrlcolor}{w}),
    \]
    as desired.

    In the previous paragraph, we claimed that $y \mapsto \sup_{\textcolor{ctrlcolor}{w} \in W} \rho(y, \textcolor{ctrlcolor}{w})$ is continuous.
    To see that this is indeed true, let $y_0 \in Y$ and $\epsilon > 0$ be arbitrary.
    Since $\rho$ is defined on a compact set and thereby uniformly continuous, we can pick $\delta > 0$ such that for all $\textcolor{ctrlcolor}{w} \in W$ and $y_1 \in Y$ with $d(y_0, y_1) < \delta$, $|\rho(y_0, \textcolor{ctrlcolor}{w}) - \rho(y_1, \textcolor{ctrlcolor}{w})| < \epsilon$ and hence
    \[
        \left|
            \sup_{\textcolor{ctrlcolor}{w} \in W} \rho(y_0, \textcolor{ctrlcolor}{w})
            - \sup_{\textcolor{ctrlcolor}{w} \in W} \rho(y_1, \textcolor{ctrlcolor}{w})
        \right|
        \leq \sup_{\textcolor{ctrlcolor}{w} \in W} \left|
            \rho(y_0, \textcolor{ctrlcolor}{w}) - \rho(y_1, \textcolor{ctrlcolor}{w})
        \right|
        < \epsilon. \qedhere
    \]
\end{proof}

The next lemma ensures us that we can approximate the intervention operator $\mathcal{M}$ using its discretization $\mathcal{M}_n$ given in \cref{eqn:schemes_discretized_intervention}.

\begin{lemma}
    \label{lem:proofs_convergence_direct_control_consistency_3}
    Let $(u^{\gls*{meshing_parameter}})_{\gls*{meshing_parameter} > 0}$ be a family of uniformly bounded real-valued maps from $[0,T] \times \mathbb{R}$ with half-relaxed limits $\overline{u}$ and $\underline{u}$.
    For brevity, let $u^{n, \gls*{meshing_parameter}} = (u^{\gls*{meshing_parameter}}(\tau^n, x_0), \ldots, u^{\gls*{meshing_parameter}}(\tau^n, x_M))^\intercal$ be a vector whose components are obtained by evaluating the function $u^{\gls*{meshing_parameter}}(\tau^n, \cdot)$ on the spatial grid.
    Then, for any $(t, x) \in [0, T] \times \mathbb{R}$,
    \[
        \mathcal{M} \underline{u}(t, x) %(\tau, x)
        \leq \liminf_{\substack{
            \gls*{meshing_parameter} \rightarrow 0 \\
            (\tau^n, x_i) \rightarrow (t, x) %(\tau, x)
        }}
        (\mathcal{M}_n u^{n, \gls*{meshing_parameter}})_i
        \leq \limsup_{\substack{
            \gls*{meshing_parameter} \rightarrow 0 \\
            (\tau^n, x_i) \rightarrow (t, x) %(\tau, x)
        }}
        (\mathcal{M}_n u^{n, \gls*{meshing_parameter}})_i
        \leq \mathcal{M} \overline{u} (t, x). %(\tau, x).
    \]
    %In the above, the notation $(\tau^n, x_i) \rightarrow (t, x)$ (borrowed from \cite{chen2008numerical})
    %is interpreted as approximating the point $(t, x)$ by grid points $(\tau^n, x_i)$.
\end{lemma}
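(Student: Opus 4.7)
The plan is to prove the two outer inequalities separately; the middle one is immediate. Throughout, fix a sequence $h_m \downarrow 0$ and grid points $(\tau^{n_m}, x_{i_m}) \to (t, x)$.

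For the upper bound $\limsup \leq \mathcal{M}\overline{u}(t,x)$, I would proceed by picking near-maximizers. For each $m$, by compactness of $Z^{h_m}(\tau^{n_m}, x_{i_m})$, there exists $z_m \in Z^{h_m}(\tau^{n_m}, x_{i_m})$ (or a near-maximizer up to $1/m$) attaining (or approaching) $(\mathcal{M}_{n_m} u^{n_m, h_m})_{i_m}$. I then need to extract a convergent subsequence; since $Z^{h_m}(\tau^{n_m}, x_{i_m}) \to Z(t,x)$ in Hausdorff distance (using locally uniform Hausdorff convergence of $Z^h$ to $Z$ together with continuity of $Z$ in $(t,x)$), the $z_m$ eventually lie in a neighborhood of the compact set $Z(t,x)$, so a subsequence converges to some $\hat{z} \in Z(t,x)$. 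By continuity of $\Gamma$ and $K$, $y_m := \Gamma(\tau^{n_m}, x_{i_m}, z_m) \to \hat{y} := \Gamma(t, x, \hat{z})$ and $K(\tau^{n_m}, x_{i_m}, z_m) \to K(t, x, \hat{z})$. The key technical step is showing
\[
    \limsup_{m} \gls*{interp}(u^{n_m, h_m}, y_m) \leq \overline{u}(t, \hat{y}).
\]
This follows because $\gls*{interp}(u^{n_m, h_m}, y_m)$ is a convex combination $\alpha_m u^{h_m}(\tau^{n_m}, x_{k_m+1}) + (1-\alpha_m) u^{h_m}(\tau^{n_m}, x_{k_m})$ with $x_{k_m}, x_{k_m+1} \to \hat{y}$ as $h_m \to 0$ (by \cref{eqn:convergence_grid}, $\Delta x \to 0$), and each summand is bounded above in the $\limsup$ by $\overline{u}(t, \hat{y})$ by definition of the upper half-relaxed limit. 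Combining gives the bound $\overline{u}(t, \hat{y}) + K(t,x,\hat{z}) \leq \mathcal{M}\overline{u}(t,x)$.

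For the lower bound $\mathcal{M}\underline{u}(t,x) \leq \liminf$, I fix $\epsilon > 0$ and pick $\hat{z} \in Z(t,x)$ with $\underline{u}(t, \Gamma(t,x,\hat{z})) + K(t, x, \hat{z}) \geq \mathcal{M}\underline{u}(t,x) - \epsilon$ (an $\epsilon$-maximizer exists even though $\underline{u}$ is only lower semicontinuous). By Hausdorff convergence of $Z^{h_m}(\tau^{n_m}, x_{i_m})$ to $Z(t, x)$, I can choose $z_m \in Z^{h_m}(\tau^{n_m}, x_{i_m})$ with $z_m \to \hat{z}$. Then $y_m := \Gamma(\tau^{n_m}, x_{i_m}, z_m) \to \Gamma(t, x, \hat{z}) =: \hat{y}$. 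The dual fact $\liminf_m \gls*{interp}(u^{n_m, h_m}, y_m) \geq \underline{u}(t, \hat{y})$ follows by the same convex-combination argument applied to the lower half-relaxed limit. Since $(\mathcal{M}_{n_m} u^{n_m, h_m})_{i_m} \geq \gls*{interp}(u^{n_m, h_m}, y_m) + K(\tau^{n_m}, x_{i_m}, z_m)$ by definition of the supremum, passing to $\liminf$ gives $\liminf \geq \underline{u}(t, \hat{y}) + K(t, x, \hat{z}) \geq \mathcal{M}\underline{u}(t,x) - \epsilon$. Letting $\epsilon \downarrow 0$ concludes.

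The main obstacle is the asymmetry between the two directions. For the upper bound one must \emph{extract} a convergent subsequence of near-maximizers of the discrete problem, requiring eventual containment of the $Z^{h_m}(\tau^{n_m}, x_{i_m})$ in a common compact set; this is where locally uniform Hausdorff convergence (rather than mere pointwise) is essential, combined with continuity of $(t,x) \mapsto Z(t,x)$ to handle the moving basepoints. For the lower bound the situation is easier since one \emph{constructs} an approximating sequence from a fixed target $\hat{z}$. A secondary subtlety is that the interpolation argument requires care at the boundary of the spatial grid, where $\gls*{interp}$ takes the boundary value rather than interpolating — but since $\Delta x \to 0$ while the numerical grid exhausts $\mathbb{R}$ (by \cref{eqn:convergence_grid}), the relevant stencil points $x_{k_m}, x_{k_m+1}$ eventually bracket $\hat{y}$ properly.
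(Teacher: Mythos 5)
Your proposal is correct and follows essentially the same approach as the paper's proof: for the lower bound you construct an approximating sequence $z_m \in Z^{h_m}(\tau^{n_m}, x_{i_m})$ converging to an $\epsilon$-maximizer $\hat{z}$ of $\mathcal{M}\underline{u}(t,x)$ via Hausdorff convergence, and for the upper bound you extract a convergent subsequence of maximizers (the paper notes that the supremum over the finite set $Z^{h_m}$ is actually attained, so ``near-maximizers up to $1/m$'' is not needed) and verify the limit lies in $Z(t,x)$. The paper routes the interpolation estimate through an explicit lemma stating that $\limsup (\alpha_m a_m + (1-\alpha_m) b_m) \leq \max\{\limsup a_m, \limsup b_m\}$ (and the min analogue), while you gesture at the same fact informally; the substance is identical. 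Your observation about boundary stencil points being eventually interior because $M\Delta x \to \infty$ also matches the paper's implicit reliance on that grid assumption.
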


\begin{proof}
    We first prove the leftmost inequality.
    Let $(\gls*{meshing_parameter}_m, s_m, y_m)_m$ be a sequence satisfying
    \[
        h_m \rightarrow 0
        \textspace \text{and} \textspace
        (s_m, y_m) \rightarrow (t, x)
        \textspace \text{as} \textspace
        m \rightarrow \infty
    \]
    %$\gls*{meshing_parameter}_m \rightarrow 0$, $s_m \rightarrow t$, and $y_m \rightarrow x$ where
    chosen such that $s_m = \tau^{n_m} = T - n_m \Delta \tau$ and $y_m = x_{i_m}$ are grid points
    (though not explicit in the notation, both $\Delta \tau = \gls*{const} h_m$ and $\Delta x = \gls*{const} h_m$ depend on $m$ through $h_m$).
    Now, let $\delta > 0$ and choose $\textcolor{ctrlcolor}{z^{\delta}} \in Z(t, x)$ such that
    \[
        \mathcal{M} \underline{u}(t, x)
        = \sup_{\textcolor{ctrlcolor}{z} \in Z(t, x)} \left \{
            \underline{u}(
                t,
                %x +
                \Gamma(t, x, \textcolor{ctrlcolor}{z})
            )
            + K(t, x, \textcolor{ctrlcolor}{z})
        \right \}
        \leq \underline{u}(
            t,
            %x +
            \Gamma(t, x, \textcolor{ctrlcolor}{z^{\delta}})
        )
        + K(t, x, \textcolor{ctrlcolor}{z^{\delta}}) + \delta.
    \]
    By \cref{enum:convergence_impulse_control_hausdorff}, we can pick a sequence $(\textcolor{ctrlcolor}{z_m})_m$ such that $\textcolor{ctrlcolor}{z_m} \rightarrow \textcolor{ctrlcolor}{z^{\delta}}$ and $\textcolor{ctrlcolor}{z_m} \in Z^{\gls*{meshing_parameter}_m}(s_m, y_m)$ for each $m$.
    For brevity, we write $u_i^m$ for the quantity $u^{\gls*{meshing_parameter}_m}(s_m, x_i)$.
    Note that since $s_m = \tau^{n_m}$, $u_i^m$ is the $i$-th entry of the vector $u^{n_m, \gls*{meshing_parameter}_m}$ defined in the lemma statement.
    Using this notation,
    \begin{align*}
        (\mathcal{M}_{n_m} u^{{n_m}, \gls*{meshing_parameter}_m})_{i_m}
        & = \sup_{\textcolor{ctrlcolor}{z} \in Z^{\gls*{meshing_parameter}_m}(s_m, y_m)} \left \{
            \gls*{interp}(
                u^{n_m, \gls*{meshing_parameter}_m},
                %y_m +
                \Gamma(s_m, y_m, \textcolor{ctrlcolor}{z})
            ) + K(s_m, y_m, \textcolor{ctrlcolor}{z})
        \right \} \\
        & \geq \gls*{interp}(
            u^{n_m, \gls*{meshing_parameter}_m},
            %y_m +
            \Gamma(s_m, y_m, \textcolor{ctrlcolor}{z_m})
        ) + K(s_m, y_m, \textcolor{ctrlcolor}{z_m}) \\
        & = \alpha_m u_{k_m + 1}^m
        + \left( 1 - \alpha_m \right) u_{k_m}^m
        + K(s_m, y_m, \textcolor{ctrlcolor}{z_m})
    \end{align*}
    where $0 \leq \alpha_m \leq 1$ and $k_m$ satisfies
    \begin{equation}
        %0 \leq \alpha_m \leq 1
        %\textspace \text{and} \textspace
        x_{k_m} \rightarrow %x +
        \Gamma(t, x, \textcolor{ctrlcolor}{z^{\delta}})
        \textspace \text{as} \textspace m \rightarrow \infty
        \label{eqn:proofs_convergence_direct_control_consistency_3_1}
    \end{equation}
    (recall \cref{eqn:schemes_interpolation}).
    Therefore, by \cref{lem:proofs_convergence_direct_control_consistency_1},
    \begin{align}
        \liminf_{m \rightarrow \infty} (\mathcal{M}_{n_m} u^{{n_m}, \gls*{meshing_parameter}_m})_{i_m}
        & \geq \liminf_{m \rightarrow \infty} \{
            \min \{ u_{k_m+1}^m, u_{k_m}^m \} + K(s_m, y_m, \textcolor{ctrlcolor}{z_m})
        \}
        \nonumber \\
        & \geq \min \{
            \liminf_{m \rightarrow \infty} u_{k_m+1}^m,
            \liminf_{m \rightarrow \infty} u_{k_m}^m
        \} + K(t, x, \textcolor{ctrlcolor}{z^{\delta}}).
        \label{eqn:proofs_convergence_direct_control_consistency_3_2}
    \end{align}
    Now, by \cref{eqn:proofs_convergence_direct_control_consistency_3_1}, $(s_m, x_{k_m}) \rightarrow (t, %x +
    \Gamma(t, x, \textcolor{ctrlcolor}{z^\delta}))$ as $m \rightarrow \infty$.
    Therefore, by the definition of the half-relaxed limit $\underline{u}$,
    \begin{equation}
        \liminf_{m \rightarrow \infty} u_{k_m}^m
        = \liminf_{m \rightarrow \infty} u^{\gls*{meshing_parameter}_m}(s_m, x_{k_m})
        \geq \liminf_{\substack{
            \gls*{meshing_parameter} \rightarrow 0 \\
            (s, y) \rightarrow (t, %x +
            \Gamma(t, x, \textcolor{ctrlcolor}{z^\delta}))
        }} u^{\gls*{meshing_parameter}}(s, y)
        = \underline{u}(t, %x +
        \Gamma(t, x, \textcolor{ctrlcolor}{z^\delta})).
        \label{eqn:proofs_convergence_direct_control_consistency_3_3}
    \end{equation}
    Similarly, since $x_{k_m+1} - x_{k_m} = \Delta x = \gls*{const} \gls*{meshing_parameter}_m$,
    \begin{equation}
        \liminf_{m \rightarrow \infty} u_{k_m+1}^m
        \geq \underline{u}(t, %x +
        \Gamma(t, x, \textcolor{ctrlcolor}{z^\delta})).
        \label{eqn:proofs_convergence_direct_control_consistency_3_4}
    \end{equation}
    Applying \cref{eqn:proofs_convergence_direct_control_consistency_3_3,eqn:proofs_convergence_direct_control_consistency_3_4} to \cref{eqn:proofs_convergence_direct_control_consistency_3_2},
    \begin{equation}
        \liminf_{m \rightarrow \infty} (\mathcal{M}_{n_m} u^{{n_m}, \gls*{meshing_parameter}_m})_{i_m}
        \geq \underline{u}(t, %x +
        \Gamma(t, x, \textcolor{ctrlcolor}{z^{\delta}}))
        + K(t, x, \textcolor{ctrlcolor}{z^{\delta}})
        \geq \mathcal{M}\underline{u}(t, x) - \delta.
        \label{eqn:proofs_convergence_direct_control_consistency_3_5}
    \end{equation}
    Since $\delta$ is arbitrary, the desired result follows.

    We now handle the rightmost inequality.
    Let $(\gls*{meshing_parameter}_m, s_m, y_m)_m$ be a sequence as in the previous paragraph.
    Since $Z^{\gls*{meshing_parameter}_m}(s_m, y_m)$ is by definition a finite set (and hence compact), for each $m$, there exists a $\textcolor{ctrlcolor}{z_m} \in Z^{\gls*{meshing_parameter}_m}(s_m, y_m)$ such that
    \begin{align*}
        (\mathcal{M}_{n_m} u^{{n_m}, \gls*{meshing_parameter}_m})_{i_m}
        & = \sup_{\textcolor{ctrlcolor}{z} \in Z^{\gls*{meshing_parameter}_m}(s_m, y_m)} \left \{
            \gls*{interp}(
                u^{n_m, \gls*{meshing_parameter}_m},
                %y_m +
                \Gamma(s_m, y_m, \textcolor{ctrlcolor}{z})
            ) + K(s_m, y_m, \textcolor{ctrlcolor}{z})
        \right \} \\
        & = \gls*{interp}(
            u^{n_m, \gls*{meshing_parameter}_m},
            %y_m +
            \Gamma(s_m, y_m, \textcolor{ctrlcolor}{z_m})
        ) + K(s_m, y_m, \textcolor{ctrlcolor}{z_m}) \\
        & = \alpha_m u_{k_m+1}^m + \left( 1 - \alpha_m \right) u_{k_m}^m
        + K(s_m, y_m, \textcolor{ctrlcolor}{z_m})
    \end{align*}
    where $0 \leq \alpha_m \leq 1$ and $k_m$ satisfies \cref{eqn:proofs_convergence_direct_control_consistency_3_1}.
    By \cref{enum:convergence_impulse_control_hausdorff}, $(\textcolor{ctrlcolor}{z_m})_m$ is contained in a compact set and hence has a convergent subsequence $(\textcolor{ctrlcolor}{z_{m_j}})_j$ with limit $\textcolor{ctrlcolor}{\hat{z}}$.
    By another application of \cref{enum:convergence_impulse_control_hausdorff}, the right hand side of the inequality
    \[
        d(\textcolor{ctrlcolor}{\hat{z}}, Z(t, x))
        \leq d(\textcolor{ctrlcolor}{\hat{z}}, \textcolor{ctrlcolor}{z_{m_j}})
        + d(\textcolor{ctrlcolor}{z_{m_j}}, Z(t, x))
    \]
    approaches zero as $j \rightarrow \infty$ and hence $\textcolor{ctrlcolor}{\hat{z}} \in Z(t, x)$.
    Therefore, similarly to how we established \cref{eqn:proofs_convergence_direct_control_consistency_3_2,eqn:proofs_convergence_direct_control_consistency_3_5} in the previous paragraph,
    \begin{align*}
        \limsup_{m \rightarrow \infty} (\mathcal{M}_{n_m} u^{{n_m}, \gls*{meshing_parameter}_m})_{i_m}
        & \leq \limsup_{m \rightarrow \infty} \{
            \max \{ u_{k_m+1}^m, u_{k_m}^m \}
            + K(s_m, y_m, \textcolor{ctrlcolor}{z_m})
        \} \\
        & \leq \max \{
            \limsup_{m \rightarrow \infty} u_{k_m+1}^m,
            \limsup_{m \rightarrow \infty} u_{k_m}^m
        \} + K(t, x, \textcolor{ctrlcolor}{\hat{z}}) \\
        & \leq \overline{u}(t, %x +
        \Gamma(t, x, \textcolor{ctrlcolor}{\hat{z}}))
        + K(t, x, \textcolor{ctrlcolor}{\hat{z}}) \\
        & \leq \mathcal{M} \overline{u}(t, x). \qedhere
    \end{align*}
\end{proof}

\begin{remark}
    \label{rem:proofs_higher_order}
    The proof above relies heavily on the fact that $\gls*{interp}$ is a linear interpolant and hence the ``interpolation weights'' $\alpha$ and $(1 - \alpha)$ in \cref{eqn:schemes_interpolation} are nonnegative.
    A quadratic interpolant, for example, takes the form
    \[
        \operatorname{quad-interp}(U, x) = \alpha U_{k-1} + (1 - \alpha - \beta) U_k + \beta U_{k+1}
    \]
    where it is not necessarily the case that the coefficients $\alpha$, $(1 - \alpha - \beta)$, and $\beta$ are nonnegative.
    This suggests that %even though the monotonicity of the operator $\mathcal{I}^{\gls*{meshing_parameter}}$ is not explicitly required (recall \cref{rem:convergence_higher_order}),
    higher order discretizations of $\mathcal{M}$ are generally precluded by the nonlocal consistency requirement.
\end{remark}

We are now ready to prove the nonlocal consistency of our schemes.

\begin{lemma}
    The direct control scheme is nonlocally consistent.
    \label{lem:convergence_direct_control_consistency}
\end{lemma}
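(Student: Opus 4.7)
The plan is to verify the relaxed-limit inequalities \eqref{eqn:convergence_subconsistency} and \eqref{eqn:convergence_superconsistency} at an arbitrary $(t,x) \in [0,T] \times \mathbb{R}$. By \cref{rem:convergence_grid_points}, I can restrict to grid-point sequences $(h_m, \xi_m, \tau^{n_m}, x_{i_m}) \to (0, 0, t, x)$. The key structural observation is that, by rewriting the outer $\sup_{d \in \{0,1\}}$ as a two-branch maximum, the direct control scheme \eqref{eqn:convergence_direct_control_interior} takes the form $S = \min\{-\sup_{w \in W^h} P_h(w;\varphi+\xi),\; (\varphi+\xi)(\tau^n, x_i) - \ell\}$ for $n > 0$, where $P_h$ collects the discretized PDE terms. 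This reduces both inequalities to standard relaxed-limit arithmetic on a minimum of two quantities, for which \cref{lem:proofs_convergence_direct_control_consistency_1} is tailor-made.

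First I would dispatch the interior case $t < T$, where eventually $n_m > 0$. A Taylor expansion identical to the one in \cref{sec:schemes_convergence_rates} shows that $P_{h_m}(w;\varphi+\xi_m) \to \varphi_t(t,x) + \tfrac{1}{2} b(t,x,w)^2 \varphi_{xx}(t,x) + a(t,x,w)\varphi_x(t,x) + f(t,x,w)$, uniformly in $w \in W$, using the grid scaling \eqref{eqn:convergence_grid} and the continuity hypotheses of \cref{enum:convergence_continuity}. \cref{lem:proofs_convergence_direct_control_consistency_2} then passes the supremum from $W^h$ to $W$, while \cref{lem:proofs_convergence_direct_control_consistency_3} pinches $\ell_m = [\mathcal{I}^{h_m} u^{h_m}](\tau^{n_m}, x_{i_m})$ between $\mathcal{M}\underline{u}(t,x)$ and $\mathcal{M}\overline{u}(t,x)$. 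Feeding these limits into $S = \min\{\cdot,\cdot\}$ and applying \cref{lem:proofs_convergence_direct_control_consistency_1} produces both required inequalities, since $F_* = F^* = F$ is continuous at interior points.

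The main obstacle is the boundary case $t = T$, where $F$ is genuinely discontinuous and a boundary layer appears (cf.~\cref{exa:convergence_dirichlet}). Along an arbitrary sequence with $\tau^{n_m} \to T$, I would split into the sub-sequence $\{m : n_m = 0\}$, on which \eqref{eqn:convergence_direct_control_boundary} forces $S = \psi_m(T, x_{i_m}) - g(x_{i_m}) \to \varphi(T, x) - g(x)$, and the sub-sequence $\{m : n_m > 0\}$, on which the interior analysis of the previous paragraph applies with $t$ replaced by $T$. Unpacking the envelopes gives
\[
    F_*(T, x, r, (q,p), A, \ell) = \min\bigl\{-q - \textstyle\sup_{w \in W}\{\tfrac{1}{2} b^2 A + a p + f\},\; r - g(x),\; r - \ell\bigr\},
\]
with $F^*(T, x, \cdot)$ the corresponding $\max$-of-$\min$, so one further application of \cref{lem:proofs_convergence_direct_control_consistency_1} collapses the two sub-limits into the required inequalities. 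Throughout, the monotonicity of $\mathcal{M}$ ensures that the pinching provided by \cref{lem:proofs_convergence_direct_control_consistency_3} is compatible with the direction of each inequality, and the assumption $\mathcal{M} g \leq g$ from \cref{enum:convergence_continuity} guarantees that the boundary branch is consistent with the intervention branch at $t = T$.
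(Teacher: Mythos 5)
Your interior case ($t < T$) and the subconsistency inequality at $t = T$ track the paper's proof closely and are correct. The genuine gap is in the superconsistency inequality at $t = T$. After splitting the sequence into $\{n_m = 0\}$ and $\{n_m > 0\}$, the $\limsup$ of $S$ along the whole sequence is the max of the sub-$\limsup$s, namely $\max\{\varphi(T,x) - g(x),\; \min\{-\varphi_t - \sup_w\{\cdot\}, \varphi - \mathcal{M}\underline{u}\}\}$, and you need this bounded above by $F^*(T,x,\cdot) = \max\{\min\{\varphi - g, \varphi - \mathcal{M}\underline{u}\},\; \min\{-\varphi_t - \sup_w\{\cdot\}, \varphi - \mathcal{M}\underline{u}\}\}$. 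If $\mathcal{M}\underline{u}(T,x) > g(x)$, then $\varphi - g$ is strictly larger than both mins in $F^*$, and the inequality fails. So "one further application of \cref{lem:proofs_convergence_direct_control_consistency_1}" does not collapse the two sub-limits; the naive max-of-subsequence-limsups is too large.

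The paper closes this gap with a specific device you do not mention: via \cref{rem:convergence_weaker_consistency}, one may assume without loss of generality that $u^{\gls*{meshing_parameter}} = V^{\gls*{meshing_parameter}}$ is a solution of the scheme, which forces $u_i^{0,\gls*{meshing_parameter}} = g(x_i)$. Then $\mathcal{M}g \leq g$ lets one rewrite $S_m^{(3)} = \varphi_{i_m}^{n_m} + \xi_m - g(y_m)$ as the minimum $\min\{\varphi_{i_m}^{n_m} + \xi_m - g(y_m),\, \varphi_{i_m}^{n_m} + \xi_m - \mathcal{M}g(y_m)\}$, and because the vector $(g(x_0),\ldots,g(x_M))^\intercal$ coincides with $u^{0,\gls*{meshing_parameter}_m}$, the second branch $\mathcal{M}g(y_m)$ is replaced (up to interpolation error $O((\Delta x)^2)$) by $(\mathcal{M}_{n_m} u^{n_m,\gls*{meshing_parameter}_m})_{i_m}$. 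Only then does \cref{lem:proofs_convergence_direct_control_consistency_3} give $\limsup S_m^{(3)} \leq \min\{\varphi - g, \varphi - \mathcal{M}\underline{u}\}$, which is the branch of $F^*$ that makes the inequality close. Your final sentence correctly flags that $\mathcal{M}g \leq g$ is where the argument hinges, but it does not supply the mechanism (assuming $u^h$ solves the scheme to tie $g$ to $u^{0,h}$), and without it the quantity $\mathcal{M}\underline{u}$ is uncontrolled and the superconsistency bound does not follow.
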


\begin{proof}
    Let $\Omega = [0,T) \times \mathbb{R}$ and $\varphi \in C^{1,2}(\overline{\Omega})$.
    Let $(u^{\gls*{meshing_parameter}})_{\gls*{meshing_parameter} > 0}$ be a family of uniformly bounded real-valued maps from $\overline{\Omega}$ with half-relaxed limits $\overline{u}$ and $\underline{u}$.
    Let $(\gls*{meshing_parameter}_m, s_m, y_m, \xi_m)_m$ be an arbitrary sequence satisfying
    \begin{equation}
        \gls*{meshing_parameter}_m \rightarrow 0
        \text{,} \textspace
        (s_m, y_m) \rightarrow (t, x)
        \text{,} \textspace \text{and} \textspace
        \xi_m \rightarrow 0
        \textspace \text{as} \textspace
        m \rightarrow \infty.
        \label{eqn:proofs_convergence_direct_control_consistency_0}
    \end{equation}
    Without loss of generality (see \cref{rem:convergence_grid_points}), we will assume that the sequence is chosen such that $s_m = \tau^{n_m} = T - n_m \Delta \tau$ and $y_m = x_{i_m}$ are grid points (as usual, both $\Delta \tau = \gls*{const} h_m$ and $\Delta x = \gls*{const} h_m$ depend on $m$ through $h_m$).
    For brevity, let $\varphi_i^n$ denote $\varphi(\tau^n, x_i)$ and $\varphi^n = (\varphi_0^n, \ldots, \varphi_M^n)^\intercal$.

    By \cref{eqn:convergence_direct_control_nonlocal,eqn:convergence_direct_control_boundary,eqn:convergence_direct_control_interior},
    \begin{equation}
        S(
            \gls*{meshing_parameter}_m,
            (s_m, y_m), %(\tau_m, x_m),
            \varphi + \xi_m,
            [\mathcal{I}^{\gls*{meshing_parameter}_m} u^{\gls*{meshing_parameter}_m}](s_m, y_m) %(\tau_m, x_m)
        )
        = \begin{cases}
            \min \{ S_m^{(1)}, S_m^{(2)} \} & \text{if } n_m > 0 \\
            S_m^{(3)} & \text{if } n_m = 0
        \end{cases}
        \label{eqn:proofs_convergence_direct_control_consistency_1}
    \end{equation}
    where
    \begin{align}
        S_m^{(1)}
        & = -\sup_{\textcolor{ctrlcolor}{w} \in W^{\gls*{meshing_parameter}_m}} \left \{
            \frac{\varphi_{i_m}^{n_m-1} - \varphi_{i_m}^{n_m}}{\Delta \tau}
            + \frac{1}{2} b_{i_m}^{n_m}(\textcolor{ctrlcolor}{w})^2 (\mathcal{D}_2 \varphi^{n_m})_{i_m}
            + a_{i_m}^{n_m}(\textcolor{ctrlcolor}{w}) (\mathcal{D} \varphi^{n_m})_{i_m}
            + f_{i_m}^{n_m}(\textcolor{ctrlcolor}{w})
        \right \}
        \nonumber \\
        S_m^{(2)}
        & = \varphi_{i_m}^{n_m} + \xi_m - (\mathcal{M}_{n_m}u^{n_m,\gls*{meshing_parameter}_m})_{i_m}
        \nonumber \\
        S_m^{(3)}
        & = \varphi_{i_m}^{n_m} + \xi_m - g(y_m).
        \label{eqn:proofs_convergence_direct_control_consistency_2}
    \end{align}
    Note that in the above, we have used the notation $u^{n,\gls*{meshing_parameter}}$ introduced in \cref{lem:proofs_convergence_direct_control_consistency_3}.
    Now, by \cref{lem:proofs_convergence_direct_control_consistency_2},
    \begin{multline}
        \lim_{m \rightarrow \infty} S_m^{(1)} \\
        = - \adjustlimits \lim_{m \rightarrow \infty}
        \sup_{\textcolor{ctrlcolor}{w} \in W^{\gls*{meshing_parameter}_m}} \left\{
            \varphi_t(t, x)
            + \frac{1}{2} b_{i_m}^{n_m}(\textcolor{ctrlcolor}{w})^2 \varphi_{xx}(t, x)
            + a_{i_m}^{n_m}(\textcolor{ctrlcolor}{w}) \varphi_x(t, x)
            + f_{i_m}^{n_m}(\textcolor{ctrlcolor}{w})
            + O(\gls*{meshing_parameter}_m)
        \right\} \\
        = - \varphi_t(t, x) - \sup_{\textcolor{ctrlcolor}{w} \in W} \left\{
            \frac{1}{2} b(t, x, \textcolor{ctrlcolor}{w})^2 \varphi_{xx}(t, x)
            + a(t, x, \textcolor{ctrlcolor}{w}) \varphi_x(t, x)
            + f(t, x, \textcolor{ctrlcolor}{w})
        \right\}.
        \label{eqn:proofs_convergence_direct_control_consistency_3}
    \end{multline}
    Moreover, by \cref{lem:proofs_convergence_direct_control_consistency_3},
    \begin{equation}
        \liminf_{m \rightarrow \infty} S_m^{(2)}
        \geq \varphi(t,x) - \limsup_{m\rightarrow\infty} (\mathcal{M}_{n_m} u^{n_m, \gls*{meshing_parameter}_m})_{i_m}
        \geq \varphi(t,x) - \mathcal{M}\overline{u}(t,x)
        \label{eqn:proofs_convergence_direct_control_consistency_4_sub}
    \end{equation}
    and
    \begin{equation}
        \limsup_{m \rightarrow \infty} S_m^{(2)}
        \leq \varphi(t,x) - \liminf_{m\rightarrow\infty} (\mathcal{M}_{n_m} u^{n_m, \gls*{meshing_parameter}_m})_{i_m}
        \leq \varphi(t,x) - \mathcal{M}\underline{u}(t,x).
        \label{eqn:proofs_convergence_direct_control_consistency_4_sup}
    \end{equation}

    Suppose now that $t < T$.
    Since $s_m \rightarrow T$, we may assume that $s_m < T$ (or, equivalently, $n_m > 0$) for each $m$.
    In this case, taking limit inferiors of both sides of \eqref{eqn:proofs_convergence_direct_control_consistency_1} and applying \cref{lem:proofs_convergence_direct_control_consistency_1} yields
    \begin{multline*}
        \liminf_{m\rightarrow\infty}
        S(
            \gls*{meshing_parameter}_m,
            (s_m, y_m), %(\tau_m, x_m),
            \varphi + \xi_m,
            [\mathcal{I}^{\gls*{meshing_parameter}_m} u^{\gls*{meshing_parameter}_m}](s_m, y_m) %(\tau_m, x_m)
        )
        \\
        = \liminf_{m \rightarrow \infty} \min \left\{
            S_m^{(1)},
            S_m^{(2)}
        \right\}
        \geq \min \left\{
            \liminf_{m \rightarrow \infty} S_m^{(1)},
            \liminf_{m \rightarrow \infty} S_m^{(2)}
        \right\}
    \end{multline*}
    Applying \cref{eqn:proofs_convergence_direct_control_consistency_3,eqn:proofs_convergence_direct_control_consistency_4_sub} to the above,
    \begin{multline}
        \liminf_{m\rightarrow\infty} S(
            \gls*{meshing_parameter}_m,
            (s_m, y_m), %(\tau_m, x_m),
            \varphi + \xi_m,
            [\mathcal{I}^{\gls*{meshing_parameter}_m} u^{\gls*{meshing_parameter}_m}](s_m, y_m) %(\tau_m, x_m)
        )
        \\
        \geq \min \biggl\{
            - \varphi_t(t, x) - \sup_{\textcolor{ctrlcolor}{w} \in W} \left\{
                \frac{1}{2} b(t, x, \textcolor{ctrlcolor}{w})^2 \varphi_{xx}(t, x)
                + a(t, x, \textcolor{ctrlcolor}{w}) \varphi_x(t, x)
                + f(t, x, \textcolor{ctrlcolor}{w})
            \right\}, \\
            \varphi(t,x) - \mathcal{M}\overline{u}(t,x)
        \biggr\}
        = F_*(
            (t, x),
            \varphi(t, x),
            D \varphi (t, x),
            D^2 \varphi (t, x),
            \mathcal{M} \overline{u}(t, x)
        )
        \label{eqn:proofs_convergence_direct_control_consistency_5}
    \end{multline}
    where $D^2 \varphi = \varphi_{xx}$, $D \varphi = (\varphi_t, \varphi_x)$, and $F$ is given by \cref{eqn:convergence_hjbqvi}.
    In establishing the last equality in the above, we have used the fact that $F = F_*$ since $F$ is continuous away from $t = T$.
    Now, since $(\gls*{meshing_parameter}_m, s_m, y_m, \xi_m)_m$ is an arbitrary sequence satisfying \cref{eqn:proofs_convergence_direct_control_consistency_0}, \cref{eqn:proofs_convergence_direct_control_consistency_5} implies
    \begin{equation}
        \liminf_{\substack{
            h \rightarrow 0 \\
            (s, y) \rightarrow (t, x) \\
            \xi \rightarrow 0
        }}
        S(
            h,
            (s, y),
            \varphi + \xi,
            [\mathcal{I}^{\gls*{meshing_parameter}} u^{\gls*{meshing_parameter}}](s, y)
        )
        \geq F_*(
            (t, x),
            \varphi(t, x),
            D \varphi (t, x),
            D^2 \varphi (t, x),
            \mathcal{M} \overline{u}(t, x)
        ),
        \label{eqn:proofs_convergence_direct_control_consistency_6_sub}
    \end{equation}
    which is exactly the nonlocal consistency inequality \cref{eqn:convergence_subconsistency} in the time-dependent case with $\mathcal{I} = \mathcal{M}$.
    Symmetrically, we can establish the inequality
    \begin{equation}
        \limsup_{\substack{
            h \rightarrow 0 \\
            (s, y) \rightarrow (t, x) \\
            \xi \rightarrow 0
        }}
        S(
            h,
            (s, y),
            \varphi + \xi,
            [\mathcal{I}^{\gls*{meshing_parameter}} u^{\gls*{meshing_parameter}}](s, y)
        )
        \leq F^*(
            (t, x),
            \varphi(t, x),
            D \varphi (t, x),
            D^2 \varphi (t, x),
            \mathcal{M} \overline{u}(t, x)
        ),
        \label{eqn:proofs_convergence_direct_control_consistency_6_sup}
    \end{equation}
    which corresponds to the nonlocal consistency inequality \cref{eqn:convergence_superconsistency}.

    Suppose now that $t = T$.
    Since $s_m \rightarrow t$, it is possible that $s_m = T$ (or, equivalently, $n_m = 0$) for one more indices $m$ in the sequence.
    Therefore, by \cref{eqn:proofs_convergence_direct_control_consistency_1},
    \begin{align}
        S(
            \gls*{meshing_parameter}_m,
            (s_m, y_m), %(\tau_m, x_m),
            \varphi + \xi_m,
            [\mathcal{I}^{\gls*{meshing_parameter}_m} u^{\gls*{meshing_parameter}_m}](s_m, y_m) %(\tau_m, x_m)
        )
        & \geq \min \left \{
            S_m^{(1)},
            S_m^{(2)},
            S_m^{(3)}
        \right\}
        \nonumber \\
        & = \min \left (
            \min \left \{
                S_m^{(1)},
                S_m^{(2)}
            \right \},
            S_m^{(3)}
        \right ).
        \label{eqn:proofs_convergence_direct_control_consistency_7}
    \end{align}
    An immediate consequence of the definition of $S_m^{(3)}$ in \cref{eqn:proofs_convergence_direct_control_consistency_2} is that
    \begin{equation}
        \lim_{m \rightarrow \infty} S_m^{(3)}
        = \varphi(t,x) - g(x)
        \geq \min \left \{
            \varphi(t,x) - g(x),
            \varphi(t,x) - \mathcal{M} \overline{u}(t,x)
        \right \}.
        \label{eqn:proofs_convergence_direct_control_consistency_8}
    \end{equation}
    Taking limit inferiors of both sides of \cref{eqn:proofs_convergence_direct_control_consistency_7} and applying \cref{lem:proofs_convergence_direct_control_consistency_1,eqn:proofs_convergence_direct_control_consistency_3,eqn:proofs_convergence_direct_control_consistency_4_sub,eqn:proofs_convergence_direct_control_consistency_8},
    \begin{multline*}
        \liminf_{m\rightarrow\infty}
        S(
            \gls*{meshing_parameter}_m,
            (s_m, y_m), %(\tau_m, x_m),
            \varphi + \xi_m,
            [\mathcal{I}^{\gls*{meshing_parameter}_m} u^{\gls*{meshing_parameter}_m}](s_m, y_m)%(\tau_m, x_m)
        ) \\
        \geq \min \biggl(
            \min \biggl\{
                -\varphi_t(t, x) - \sup_{\textcolor{ctrlcolor}{w} \in W} \left\{
                    \frac{1}{2} b(t, x, \textcolor{ctrlcolor}{w})^2 \varphi_{xx}(t, x)
                    + a(t, x, \textcolor{ctrlcolor}{w}) \varphi_x(t, x)
                    + f(t, x, \textcolor{ctrlcolor}{w})
                \right\}, \\
                \varphi(t,x) - \mathcal{M}\overline{u}(t,x)
            \biggr\}, \,
            \min \biggl\{
                \varphi(t,x) - g(x),
                \varphi(t,x) - \mathcal{M}\overline{u}(t,x)
            \biggr\}
        \biggr) \\
        = F_*(
            (t, x),
            \varphi(t, x),
            D \varphi (t, x),
            D^2 \varphi (t, x),
            \mathcal{M} \overline{u}(t, x)
        ).
    \end{multline*}
    As in the previous paragraph, the above implies the nonlocal consistency inequality \cref{eqn:proofs_convergence_direct_control_consistency_6_sub}.

    It remains to establish \cref{eqn:proofs_convergence_direct_control_consistency_6_sup} in the case of $t = T$.
    Since $\mathcal{M} g \leq g$ by assumption \cref{enum:convergence_suboptimality}, it follows that $g(y_m) = \max \{ g(y_m), \mathcal{M} g(y_m) \}$ for each $m$.
    Therefore,
    \begin{align*}
        S_m^{(3)}
        & = \varphi_{i_m}^{n_m} + \xi_m - \max \{ g(y_m), \mathcal{M} g(y_m) \} \\
        & = \min \left \{
            \varphi_{i_m}^{n_m} + \xi_m - g(y_m),
            \varphi_{i_m}^{n_m} + \xi_m - \mathcal{M} g(y_m)
        \right \} \\
        & = \min \left \{
            \varphi_{i_m}^{n_m} + \xi_m - g(y_m),
            \varphi_{i_m}^{n_m} + \xi_m - (\mathcal{M}_0 \vec{g} \,)_{i_m} + O( (\Delta x)^2 )
        \right \}
    \end{align*}
    where in the last equality, in which we have used $\vec{g}$ to denote the vector $\vec{g} = (g_0, \ldots, g_M)^\intercal$, we have employed the fact that there is $O( (\Delta x)^2 )$ error in approximating the intervention operator $\mathcal{M}$ by the discretized intervention operator $\mathcal{M}_0$ due to the linear interpolant.
    By \cref{rem:convergence_weaker_consistency}, we can, without loss of generality, assume that $u^{\gls*{meshing_parameter}}$ is a solution of the scheme so that $u_i^{0,\gls*{meshing_parameter}} = V^{\gls*{meshing_parameter}}(\tau^0, x_i) = g(x_i)$ for all $i$, corresponding to the terminal condition.
    It follows that, letting
    \[
        S_m^{(4)}
        = \min \left \{
            \varphi_{i_m}^{n_m} + \xi_m - g(y_m), %g(x_m),
            \varphi_{i_m}^{n_m} + \xi_m - (\mathcal{M}_{n_m} u^{n_m,{\gls*{meshing_parameter}}_m})_{i_m} + O( (\Delta x)^2 )
        \right \},
    \]
    we have $S_m^{(3)} = S_m^{(4)}$ whenever $n_m = 0$.
    Therefore, by \cref{eqn:proofs_convergence_direct_control_consistency_1}
    \begin{equation}
        S(
            \gls*{meshing_parameter}_m,
            (s_m, y_m), %(\tau_m, x_m),
            \varphi + \xi_m,
            [\mathcal{I}^{\gls*{meshing_parameter}_m} u^{\gls*{meshing_parameter}_m}](s_m, y_m) %(\tau_m, x_m)
        )
        \leq \max \left (
            \min \left \{
                S_m^{(1)},
                S_m^{(2)}
            \right \},
            S_m^{(4)}
        \right ).
        \label{eqn:proofs_convergence_direct_control_consistency_9}
    \end{equation}
    Moreover, by \cref{lem:proofs_convergence_direct_control_consistency_1,lem:proofs_convergence_direct_control_consistency_3}
    \begin{equation}
        \limsup_{m \rightarrow \infty} S_m^{(4)}
        \leq \min \left \{
            \varphi(t,x) - g(x),
            \varphi(t,x) - \mathcal{M}\underline{u}(t,x)
        \right \}.
        \label{eqn:proofs_convergence_direct_control_consistency_10}
    \end{equation}
    Taking limit superiors of both sides of \cref{eqn:proofs_convergence_direct_control_consistency_9} and applying \cref{lem:proofs_convergence_direct_control_consistency_1,eqn:proofs_convergence_direct_control_consistency_3,eqn:proofs_convergence_direct_control_consistency_4_sup,eqn:proofs_convergence_direct_control_consistency_10},
    \begin{multline*}
        \limsup_{m\rightarrow\infty}
        S(
            \gls*{meshing_parameter}_m,
            (s_m, y_m), %(\tau_m, x_m),
            \varphi + \xi_m,
            [\mathcal{I}^{\gls*{meshing_parameter}_m} u^{\gls*{meshing_parameter}_m}](s_m, y_m) %(\tau_m, x_m)
        ) \\
        \leq \max \biggl(
            \min \biggl\{
                - \varphi_t(t, x) - \sup_{\textcolor{ctrlcolor}{w} \in W} \left\{
                    \frac{1}{2} b(t, x, \textcolor{ctrlcolor}{w})^2 \varphi_{xx}(t, x)
                    + a(t, x, \textcolor{ctrlcolor}{w}) \varphi_x(t, x)
                    + f(t, x, \textcolor{ctrlcolor}{w})
                \right\}, \\
                \varphi(t,x) - \mathcal{M}\underline{u}(t,x)
            \biggr\}, \,
            \min \biggl\{
                \varphi(t,x) - g(x),
                \varphi(t,x) - \mathcal{M}\underline{u}(t,x)
            \biggr\}
        \biggr) \\
        = F^*(
            (t, x),
            \varphi(t, x),
            D \varphi (t, x),
            D^2 \varphi (t, x),
            \mathcal{M} \overline{u}(t, x)
        ),
    \end{multline*}
    which establishes \cref{eqn:proofs_convergence_direct_control_consistency_6_sup}, as desired.
\end{proof}

\begin{lemma}
    The penalty scheme is nonlocally consistent.
    \label{lem:convergence_penalty_consistency}
\end{lemma}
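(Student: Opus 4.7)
The plan is to mirror the structure of the proof of \cref{lem:convergence_direct_control_consistency}, since the penalty scheme \cref{eqn:convergence_penalty_interior} differs from the direct control scheme \cref{eqn:convergence_direct_control_interior} only in the perturbed impulse term $\wideEll - V_i^n + \epsilon \gamma_i^n(V, \textcolor{ctrlcolor}{w_i})$ replacing $\wideEll - V_i^n$. Fix $\varphi \in C^2(\overline{\Omega})$, a family $(u^{\gls*{meshing_parameter}})_{\gls*{meshing_parameter}>0}$ of uniformly bounded functions on $\overline{\Omega}$, and an arbitrary sequence $(\gls*{meshing_parameter}_m, s_m, y_m, \xi_m)_m$ satisfying \cref{eqn:proofs_convergence_direct_control_consistency_0} with $s_m = \tau^{n_m}$ and $y_m = x_{i_m}$ grid points.

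Using $-\sup\{A,B\} = \min\{-A,-B\}$ and separating the cases $\textcolor{ctrlcolor}{d_{i_m}} \in \{0,1\}$, I would decompose
\[
    S(\gls*{meshing_parameter}_m, (s_m, y_m), \varphi + \xi_m, [\mathcal{I}^{\gls*{meshing_parameter}_m} u^{\gls*{meshing_parameter}_m}](s_m, y_m))
    = \begin{cases}
        \min\{ \widetilde{S}_m^{(1)}, \widetilde{S}_m^{(2)} \} & \text{if } n_m > 0 \\
        S_m^{(3)} & \text{if } n_m = 0
    \end{cases}
\]
where $\widetilde{S}_m^{(1)} = S_m^{(1)}$ is exactly the PDE term from \cref{eqn:proofs_convergence_direct_control_consistency_2}, $S_m^{(3)}$ is the terminal term from \cref{eqn:proofs_convergence_direct_control_consistency_2}, and
\[
    \widetilde{S}_m^{(2)}
    = \varphi_{i_m}^{n_m} + \xi_m - (\mathcal{M}_{n_m} u^{n_m,\gls*{meshing_parameter}_m})_{i_m} - \epsilon \sup_{\textcolor{ctrlcolor}{w} \in W^{\gls*{meshing_parameter}_m}} \gamma_{i_m}^{n_m}(\varphi + \xi_m, \textcolor{ctrlcolor}{w}).
\]
That is, $\widetilde{S}_m^{(2)} = S_m^{(2)} - \epsilon \sup_{\textcolor{ctrlcolor}{w}} \gamma_{i_m}^{n_m}(\varphi + \xi_m, \textcolor{ctrlcolor}{w})$ with $S_m^{(2)}$ defined as in the direct control proof.

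The key new observation is that since $\varphi \in C^2(\overline{\Omega})$ with bounded derivatives in a neighborhood of $(t,x)$ and $a, b, f$ are bounded there, the Taylor expansions behind \cref{eqn:proofs_convergence_direct_control_consistency_3} imply that $\sup_{\textcolor{ctrlcolor}{w} \in W^{\gls*{meshing_parameter}_m}} \gamma_{i_m}^{n_m}(\varphi + \xi_m, \textcolor{ctrlcolor}{w})$ is bounded uniformly in $m$. Combined with \cref{eqn:convergence_penalty_vanishes} ($\epsilon \to 0$ as $h \to 0$), this gives $\epsilon \sup_{\textcolor{ctrlcolor}{w}} \gamma_{i_m}^{n_m}(\varphi+\xi_m,\textcolor{ctrlcolor}{w}) \to 0$, so $\widetilde{S}_m^{(2)}$ has the same $\liminf$ and $\limsup$ as $S_m^{(2)}$. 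Consequently, the limit analyses \cref{eqn:proofs_convergence_direct_control_consistency_4_sub,eqn:proofs_convergence_direct_control_consistency_4_sup} remain valid verbatim.

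From this point the argument proceeds exactly as in the direct control proof: in the case $t < T$, combine \cref{eqn:proofs_convergence_direct_control_consistency_3} with the (unchanged) bounds on $\widetilde{S}_m^{(2)}$ via \cref{lem:proofs_convergence_direct_control_consistency_1} to obtain both nonlocal consistency inequalities \cref{eqn:proofs_convergence_direct_control_consistency_6_sub,eqn:proofs_convergence_direct_control_consistency_6_sup}; in the case $t = T$, additionally handle $S_m^{(3)}$ using \cref{enum:convergence_suboptimality} and the rewriting of $g(y_m) = \max\{g(y_m), \mathcal{M} g(y_m)\}$ exactly as before. The main (and really only) obstacle is the correct handling of the $\epsilon \gamma_i^n$ perturbation; once it is shown to vanish in the limit via boundedness of $\gamma$ on smooth test functions, the remainder of the proof is a near-verbatim repetition of \cref{lem:convergence_direct_control_consistency}.
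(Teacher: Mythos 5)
Your proposal is correct and takes essentially the same approach as the paper: identify the penalty scheme's impulse branch as $S_m^{(2)} + \epsilon S_m^{(1)}$ (your $-\epsilon \sup_w \gamma_{i_m}^{n_m}(\varphi+\xi_m,w)$ is precisely $\epsilon S_m^{(1)}$), observe this perturbation vanishes since $\epsilon \to 0$ while the $\gamma$ term stays bounded for smooth $\varphi$, and conclude the remainder of the direct-control consistency proof carries over unchanged. The paper's proof differs only cosmetically, invoking the already-established convergence of $S_m^{(1)}$ rather than deriving boundedness of $\sup_w \gamma$ afresh.
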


\begin{proof}
    Recalling that the penalty scheme can be viewed as an approximation of the direct control scheme (see \cref{eqn:schemes_penalty_rearranged,eqn:schemes_penalty_gamma} and the text following these equations), it is no surprise that the proof of nonlocal consistency for the penalty scheme is nearly identical to the proof of \cref{lem:convergence_direct_control_consistency}.
    Namely, to obtain the proof for the penalty scheme, we need only replace the definition of $S_m^{(2)}$ in \cref{eqn:proofs_convergence_direct_control_consistency_2} by
    \begin{equation}
        \hat{S}_m^{(2)}
        = \varphi_{i_m}^{n_m}
        + \xi_m
        - (\mathcal{M}_{n_m}u^{n_m,\gls*{meshing_parameter}_m})_{i_m}
        + \epsilon S_m^{(1)}
        \label{eqn:convergence_penalty_consistency_1}
    \end{equation}
    where we have used the hat symbol $\hat{\cdot}$ to distinguish the new definition from the old.
    Since $\epsilon S_m^{(1)} \rightarrow 0$ as $m \rightarrow \infty$ by \cref{eqn:convergence_penalty_vanishes}, it follows that
    \[
        \liminf_{m \rightarrow \infty} \hat{S}_m^{(2)} = \liminf_{m \rightarrow \infty} S_m^{(2)}
        \textspace \text{and} \textspace
        \limsup_{m \rightarrow \infty} \hat{S}_m^{(2)} = \limsup_{m \rightarrow \infty} S_m^{(2)}
    \]
    so that no other changes to the proof of \cref{lem:convergence_direct_control_consistency} are necessary.
\end{proof}

Next, we establish the nonlocal consistency of the explicit-impulse scheme.
The arguments in the proof will exploit heavily the fact that $M \Delta x \rightarrow \infty$ (see \cref{eqn:convergence_grid}) in order to ensure that no overstepping error is made in the approximation \cref{eqn:schemes_lagrangian_derivative}.
In \cref{app:truncated}, we consider the explicit-impulse scheme on a truncated domain, in which case we cannot rely on arguments involving $M \Delta x \rightarrow \infty$.
We will see, in \cref{app:truncated}, that to establish nonlocal consistency in the truncated case requires us to modify the spatial grid so that the distance between the first and last two grid points ($x_1 - x_0$ and $x_M - x_{M-1}$) vanishes sublinearly with respect to $\gls*{meshing_parameter}$.
In practice, this is not a grave issue since we are not interested in obtaining high accuracy at the boundaries.

\begin{lemma}
    The explicit-impulse scheme is nonlocally consistent.
    \label{lem:convergence_explicit_impulse_consistency}
\end{lemma}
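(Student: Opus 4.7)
The plan is to mirror the structure of the proof of \cref{lem:convergence_direct_control_consistency}, replacing the three terms $S_m^{(1)}, S_m^{(2)}, S_m^{(3)}$ by their explicit-impulse analogues. Pick an arbitrary sequence $(\gls*{meshing_parameter}_m, s_m, y_m, \xi_m)_m$ satisfying \cref{eqn:proofs_convergence_direct_control_consistency_0}, and (as in \cref{rem:convergence_grid_points}) assume that $s_m = \tau^{n_m}$ and $y_m = x_{i_m}$ are grid points. Using \cref{eqn:convergence_explicit_impulse_nonlocal,eqn:convergence_explicit_impulse_interior,eqn:convergence_direct_control_boundary}, I will express the scheme value as a minimum of three quantities analogous to \cref{eqn:proofs_convergence_direct_control_consistency_1}, namely the supremum over $W^{\gls*{meshing_parameter}_m}$ of the Lagrangian-style approximation of the PDE (call it $\hat{S}_m^{(1)}$), the explicit intervention-branch term
\[
    \hat{S}_m^{(2)} = \varphi_{i_m}^{n_m} + \xi_m - (\mathcal{M}_{n_m} u^{n_m - 1, \gls*{meshing_parameter}_m})_{i_m} - \tfrac{1}{2}(b_{i_m}^{n_m})^2 (\mathcal{D}_2 \varphi^{n_m})_{i_m} \Delta \tau,
\]
and $\hat{S}_m^{(3)} = S_m^{(3)}$ as before.

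The main new work is the analysis of $\hat{S}_m^{(1)}$. By a Taylor expansion of $\varphi$ around $(\tau^{n_m}, x_{i_m})$, together with \cref{eqn:schemes_interpolation_taylor}, I will show that
\[
    \frac{\gls*{interp}(\varphi^{n_m-1}, x_{i_m} + a_{i_m}^{n_m}(\textcolor{ctrlcolor}{w})\Delta \tau) - \varphi_{i_m}^{n_m}}{\Delta \tau}
    = \varphi_t(\tau^{n_m}, x_{i_m}) + a_{i_m}^{n_m}(\textcolor{ctrlcolor}{w}) \varphi_x(\tau^{n_m}, x_{i_m}) + O\!\left(\Delta \tau + \tfrac{(\Delta x)^2}{\Delta \tau}\right).
\]
The assumptions \cref{eqn:convergence_grid} ensure that $\Delta \tau, (\Delta x)^2/\Delta \tau \to 0$, and the boundedness of $a$ together with $M \Delta x \to \infty$ guarantees that for all sufficiently large $m$ the shifted abscissa $x_{i_m} + a_{i_m}^{n_m}(\textcolor{ctrlcolor}{w})\Delta \tau$ lies strictly inside the spatial grid (uniformly in $\textcolor{ctrlcolor}{w} \in W$ by compactness of $W$ and continuity of $a$), so that no overstepping error is introduced by the linear interpolant. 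Applying \cref{lem:proofs_convergence_direct_control_consistency_2} to handle the supremum over $W^{\gls*{meshing_parameter}_m}$ then yields the identity
\[
    \lim_{m \rightarrow \infty} \hat{S}_m^{(1)} = \lim_{m \rightarrow \infty} S_m^{(1)},
\]
where the right-hand side is the limit already identified in \cref{eqn:proofs_convergence_direct_control_consistency_3}.

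For $\hat{S}_m^{(2)}$, the extra term $\tfrac{1}{2}(b_{i_m}^{n_m})^2 (\mathcal{D}_2 \varphi^{n_m})_{i_m} \Delta \tau$ is an $O(\Delta \tau)$ correction that vanishes in the limit since $\varphi \in C^2$. The only other distinction from the direct control case is that $u^{n_m - 1, \gls*{meshing_parameter}_m}$ appears in place of $u^{n_m, \gls*{meshing_parameter}_m}$; since $\tau^{n_m - 1} = \tau^{n_m} + \Delta \tau \to t$ along with $\tau^{n_m} \to t$, the same sequence-extraction argument used in \cref{lem:proofs_convergence_direct_control_consistency_3} applies verbatim to give
\[
    \mathcal{M}\underline{u}(t,x) \leq \liminf_{m \rightarrow \infty}(\mathcal{M}_{n_m} u^{n_m - 1, \gls*{meshing_parameter}_m})_{i_m} \leq \limsup_{m \rightarrow \infty}(\mathcal{M}_{n_m} u^{n_m - 1, \gls*{meshing_parameter}_m})_{i_m} \leq \mathcal{M}\overline{u}(t,x),
\]
so that $\hat{S}_m^{(2)}$ satisfies the same liminf/limsup bounds as $S_m^{(2)}$ in \cref{eqn:proofs_convergence_direct_control_consistency_4_sub,eqn:proofs_convergence_direct_control_consistency_4_sup}.

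With these two ingredients in hand, the remainder of the argument — splitting into the cases $t < T$ and $t = T$, invoking \cref{lem:proofs_convergence_direct_control_consistency_1} to move the liminf/limsup through the outer min, and using \cref{enum:convergence_suboptimality} ($\mathcal{M} g \leq g$) at the terminal layer as in the direct control proof — transfers without modification and yields both \cref{eqn:convergence_subconsistency} and \cref{eqn:convergence_superconsistency}. The principal obstacle in executing the plan is the Taylor analysis of the Lagrangian-style derivative together with the verification that the interpolation argument lies in the grid; everything else is bookkeeping essentially identical to the direct control case.
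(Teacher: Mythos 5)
Your proof is correct and follows essentially the same approach as the paper's: both proofs reuse the framework of \cref{lem:convergence_direct_control_consistency}, replacing $S_m^{(1)}$ by the semi-Lagrangian term (controlling it via a Taylor expansion and the grid condition $M\Delta x \to \infty$ to rule out overstepping), replacing $S_m^{(2)}$ by the $\hat{S}_m^{(2)}$ built from $u^{n_m-1,\gls*{meshing_parameter}_m}$, and noting that \cref{lem:proofs_convergence_direct_control_consistency_3} still applies because $\tau^{n_m-1} \to t$ along with $\tau^{n_m} \to t$. The only (cosmetic) difference is that you carry the $\tfrac{1}{2}(b_{i_m}^{n_m})^2(\mathcal{D}_2\varphi^{n_m})_{i_m}\Delta\tau$ term explicitly inside $\hat{S}_m^{(2)}$ and note it vanishes, whereas the paper drops it from the displayed definition — your bookkeeping is actually the more faithful reading of \cref{eqn:convergence_explicit_impulse_interior}.
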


\begin{proof}
    Let $\Omega = [0, T) \times \mathbb{R}$ and $\varphi \in C^{1,2}(\overline{\Omega})$.
    For brevity, let $\varphi_i^n$ denote $\varphi(\tau^n, x_i)$ and $\varphi^n = (\varphi_0^n, \ldots, \varphi_M^n)^\intercal$.
    Let $(\gls*{meshing_parameter}_m, s_m, y_m, \xi_m)_m$ be a sequence chosen as in the proof of \cref{lem:convergence_direct_control_consistency} so that $s_m = \tau^{n_m}$ and $y_m = x_{i_m}$ are grid points.
    Then,
    \begin{multline}
        \frac{
            \gls*{interp}(
                \varphi^{n_m-1} + \xi_m,
                y_m
                + a_{i_m}^{n_m}(\textcolor{ctrlcolor}{w})
                %+ a(s_m, y_m, \textcolor{ctrlcolor}{w})
                \Delta \tau
            )
            - \varphi(\tau^{n_m}, y_m) - \xi_m
        }{\Delta \tau} \\
        = \frac{
            \varphi(
                s_m + \Delta \tau,
                y_m
                + a_{i_m}^{n_m}(\textcolor{ctrlcolor}{w})
                %+ a(s_m, y_m, \textcolor{ctrlcolor}{w})
                \Delta \tau
            )
            - \varphi(s_m, y_m)
            + \xi_m - \xi_m
        }{\Delta \tau} + O\left( \frac{(\Delta x)^2}{\Delta \tau} \right) \\
        = \varphi_t(t, x)
        + a(t, x, \textcolor{ctrlcolor}{w}) \varphi_x(t, x)
        + O\left( \frac{(\Delta x)^2}{\Delta \tau} + \Delta \tau \right)
        = \varphi_t(t, x)
        + a(t, x, \textcolor{ctrlcolor}{w}) \varphi_x(t, x)
        + O\left( \gls*{meshing_parameter}_m \right)
        \label{eqn:proofs_convergence_explicit_impulse_consistency_1}
    \end{multline}
    by a Taylor expansion (compare with \cref{sec:schemes_convergence_rates}).
    As mentioned in the text preceding the lemma statement, we have exploited \cref{eqn:convergence_grid} to ensure that the point $y_m + a_{i_m}^{n_m}(\textcolor{ctrlcolor}{w}) \Delta \tau$ is contained in the interval $[-(M/2) \Delta x, (M/2) \Delta x]$ for $m$ sufficiently large (recall that $M \Delta x \rightarrow \infty$ as $h_m \rightarrow 0$).

    As with the penalty scheme, only a minor modification of the proof of \cref{lem:convergence_direct_control_consistency} is required to obtain the proof of nonlocal consistency of the explicit-impulse scheme.
    In particular, we need only replace the definitions of $S_m^{(1)}$ and $S_m^{(2)}$ in \cref{eqn:proofs_convergence_direct_control_consistency_2} by
    \begin{multline*}
        \hat{S}_m^{(1)}
        = -\sup_{\textcolor{ctrlcolor}{w} \in W^{\gls*{meshing_parameter}_m}} \biggl \{
            \frac{
                \gls*{interp}(
                    \varphi^{n_m-1} + \xi_m,
                    y_m
                    + a_{i_m}^{n_m}(\textcolor{ctrlcolor}{w})
                    \Delta \tau
                )
                - \varphi(\tau^{n_m}, y_m) - \xi_m
            }{\Delta \tau} \\
            + \frac{1}{2} b_{i_m}^{n_m}(\textcolor{ctrlcolor}{w})^2 (\mathcal{D}_2 \varphi^{n_m})_{i_m}
            + f_{i_m}^{n_m}(\textcolor{ctrlcolor}{w})
        \biggr \}
    \end{multline*}
    and
    \[
        \hat{S}_m^{(2)}
        = \varphi_{i_m}^{n_m}
        + \xi_m
        - (\mathcal{M}_{n_m}u^{n_m-1,\gls*{meshing_parameter}_m})_{i_m}.
    \]
    As usual, we have used the hat symbol $\hat{\cdot}$ to distinguish the new definitions from the old.
    Note that by \cref{eqn:proofs_convergence_explicit_impulse_consistency_1},
    \[
        \lim_{m \rightarrow \infty} \hat{S}_m^{(1)} = \lim_{m \rightarrow \infty} S_m^{(1)}.
    \]
    Moreover, by \cref{lem:proofs_convergence_direct_control_consistency_3},
    \[
        \liminf_{m \rightarrow \infty} \hat{S}_m^{(2)}
        \geq \varphi(t,x) - \limsup_{m\rightarrow\infty} (\mathcal{M}_{n_m} u^{n_m-1, \gls*{meshing_parameter}_m})_{i_m}
        \geq \varphi(t,x) - \mathcal{M}\overline{u}(t,x)
    \]
    and
    \[
        \limsup_{m \rightarrow \infty} \hat{S}_m^{(2)}
        \leq \varphi(t,x) - \liminf_{m\rightarrow\infty} (\mathcal{M}_{n_m} u^{n_m-1, \gls*{meshing_parameter}_m})_{i_m}
        \leq \varphi(t,x) - \mathcal{M}\underline{u}(t,x)
    \]
    (compare with \cref{eqn:proofs_convergence_direct_control_consistency_4_sub,eqn:proofs_convergence_direct_control_consistency_4_sup}) so that no other changes to the proof of \cref{lem:convergence_direct_control_consistency} are necessary.
\end{proof}

\subsection{Convergence result}

In this subsection, we give the convergence result for our schemes.
First, we recall what it means for \cref{eqn:convergence_pde} to satisfy a comparison principle.
In particular, we say that PDE \cref{eqn:convergence_pde} satisfies a comparison principle if the following condition is met:
\begin{equation}
    \text{if } U, V \in \gls*{bdd}
    \text{ are a subsolution and supersolution, respectively, of } \eqref{eqn:convergence_pde}
    \text{, then } U \leq V.
    \label{eqn:convergence_comparison}
\end{equation}
We can now state our convergence result.

\begin{theorem}
    Suppose \cref{enum:convergence_continuity}\textendash \cref{enum:convergence_impulse_control_hausdorff} and that the HJBQVI satisfies a comparison principle.
    Then, as $h \rightarrow 0$, solutions of the direct control, penalty, and explicit-impulse schemes converge locally uniformly to the unique bounded solution of the HJBQVI.
    \label{thm:convergence_hjbqvi_result}
\end{theorem}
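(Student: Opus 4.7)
The proof will follow the nonlocal extension of the Barles--Souganidis framework, combining the monotonicity, stability, and nonlocal consistency established in the preceding lemmas with the assumed comparison principle. Concretely, the plan is to let $(V^{h})_{h>0}$ be any family of solutions to the scheme, use stability to assert uniform boundedness, and then define the half-relaxed limits $\overline{V}$ and $\underline{V}$ as in \cref{def:convergence_half_relaxed_limits}. By construction $\underline{V}\leq\overline{V}$, so it suffices to show $\overline{V}\leq\underline{V}$; this will come from proving that $\overline{V}$ is a viscosity subsolution and $\underline{V}$ is a viscosity supersolution of the HJBQVI, then invoking \cref{eqn:convergence_comparison}. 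Equality of the half-relaxed limits then forces the limit to be continuous and implies local uniform convergence of $V^{h}$ to the unique bounded viscosity solution $V$ of \cref{eqn:introduction_hjbqvi}.

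The core step is the subsolution argument for $\overline{V}$; the supersolution argument is symmetric. Fix $\varphi\in C^{2}(\overline{\Omega})$ and $x\in\overline{\Omega}$ such that $\overline{V}-\varphi$ attains a local maximum at $x$. By the standard perturbation trick (replacing $\varphi$ by $\varphi(y)+|y-x|^{4}$) we may assume the maximum is strict, and then extract a sequence $(h_{m},x_{m})\to(0,x)$ such that $V^{h_{m}}-\varphi$ has a local maximum at $x_{m}$ with $V^{h_{m}}(x_{m})\to\overline{V}(x)=\varphi(x)$. Set $\xi_{m}=V^{h_{m}}(x_{m})-\varphi(x_{m})\to 0$. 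Then $\varphi+\xi_{m}\geq V^{h_{m}}$ everywhere (locally) with equality at $x_{m}$, so by monotonicity
\[
    S\bigl(h_{m},x_{m},\varphi+\xi_{m},[\mathcal{I}^{h_{m}}V^{h_{m}}](x_{m})\bigr)
    \leq
    S\bigl(h_{m},x_{m},V^{h_{m}},[\mathcal{I}^{h_{m}}V^{h_{m}}](x_{m})\bigr)
    =0.
\]
(Monotonicity is stated on all of $\overline{\Omega}$, but only local comparison is required since the discretization is \emph{weakly} nonlocal; rigorously, one first modifies $\varphi$ outside a small neighborhood without affecting its derivatives at $x$, as is standard.) Passing to the limit inferior and applying the nonlocal subconsistency inequality \cref{eqn:convergence_subconsistency} with the family $(V^{h})_{h}$ in place of $(u^{h})_{h}$ gives
\[
    F_{*}\bigl(x,\varphi(x),D\varphi(x),D^{2}\varphi(x),[\mathcal{I}\overline{V}](x)\bigr)
    \leq 0,
\]
which is exactly the subsolution condition since $\varphi(x)=\overline{V}(x)$. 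Running the mirror argument at a local minimum of $\underline{V}-\varphi$, using the superconsistency inequality \cref{eqn:convergence_superconsistency}, shows $\underline{V}$ is a supersolution.

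The final step combines these facts: comparison yields $\overline{V}\leq\underline{V}$, hence $\overline{V}=\underline{V}=:V$, so $V$ is continuous and is the unique bounded viscosity solution of the HJBQVI. The equality of half-relaxed limits of a locally bounded family is equivalent to locally uniform convergence to the common (continuous) value, completing the proof. The monotonicity, stability, and nonlocal consistency of each of the three schemes have already been verified in \cref{lem:convergence_direct_control_monotone,lem:convergence_penalty_stable,lem:convergence_explicit_impulse_consistency} and companions, so the argument above applies uniformly to all three.

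The main obstacle, and the reason the standard Barles--Souganidis argument does not transfer verbatim, is the handling of the nonlocal term in passing from the scheme to the limiting equation. Classically one replaces $V^{h_{m}}$ by $\varphi+\xi_{m}$ inside \emph{every} occurrence in $S$, but here the argument of $\mathcal{I}^{h_{m}}$ is the full numerical solution and not the local test function, and the intervention operator $\mathcal{M}$ depends on values of $V$ at points potentially far from $x$ where $\varphi$ has no relation to $V$. This is precisely the reason for the stronger consistency requirement introduced in \cref{subsec:convergence_nonlocal_consistency}: it demands that $[\mathcal{I}^{h_{m}}u^{h_{m}}](y_{m})$ be controlled from above and below by $[\mathcal{I}\overline{u}](x)$ and $[\mathcal{I}\underline{u}](x)$, which is what allows the nonlocal contribution to survive the passage to the limit inside $F_{*}$ and $F^{*}$. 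Verifying this condition for $\mathcal{M}_{n}$ was the technical heart of \cref{lem:proofs_convergence_direct_control_consistency_3}, and with it in hand the convergence proof above is essentially a bookkeeping exercise over the three schemes.
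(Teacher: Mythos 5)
Your overall strategy matches the paper's: the theorem is proved by showing the three schemes are monotone, stable, and nonlocally consistent (done in the preceding lemmas), invoking the general nonlocal Barles--Souganidis result (\cref{thm:convergence_result}), and then proving that result by showing the half-relaxed limits $\overline{V}$ and $\underline{V}$ are respectively a viscosity sub- and supersolution, concluding by comparison. The key insight you highlight---that the argument of $\mathcal{I}^{h}$ in the scheme must be the full numerical solution and is passed to the limit via \cref{lem:proofs_convergence_direct_control_consistency_3} and the half-relaxed limits inside the nonlocal consistency inequalities---is exactly the content of the paper's argument.

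There is, however, a genuine gap in how you extract the sequence of near-maximizers. You assert that one can ``extract a sequence $(h_m, x_m) \to (0, x)$ such that $V^{h_m} - \varphi$ has a local maximum at $x_m$.'' This is not justified: $V^{h}$ is an arbitrary member of $\gls*{bdd}$ and in particular need not be upper semicontinuous, so $V^{h_m} - \varphi$ need not attain its supremum on any compact set. Your definition $\xi_m = V^{h_m}(x_m) - \varphi(x_m)$ then does not yield $\varphi + \xi_m \geq V^{h_m}$, which is the hypothesis monotonicity requires. The paper instead picks $y_m \in U$ that is only an $e^{-1/h_m}$-approximate maximizer of $V^{h_m} - \varphi$, sets $\xi_m = V^{h_m}(y_m) - \varphi(y_m) + e^{-1/h_m}$ so that $\varphi + \xi_m \geq V^{h_m}$ holds globally, but then $\varphi(y_m) + \xi_m \ne V^{h_m}(y_m)$; equality (needed for monotonicity) is restored by replacing $\varphi$ with $\psi_m(y) = \varphi(y) - e^{-1/h_m}\boldsymbol{1}_{\{y_m\}}(y)$, and the discrepancy between $S(\cdot,\cdot,\psi_m+\xi_m,\cdot)$ and $S(\cdot,\cdot,\varphi+\xi_m,\cdot)$ is controlled by the technical assumption \cref{eq:convergence_technical_assumption}, which is introduced at the end of \cref{sec:convergence_schemes} precisely for this purpose. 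Your write-up omits this mechanism entirely. (A smaller point: your parenthetical ``only local comparison is required since the discretization is weakly nonlocal'' misattributes the reason; monotonicity as defined requires global domination on $\overline{\Omega}$, and the standard fix---also used by the paper---is to take $\varphi \geq \sup_h \Vert V^h \Vert_\infty$ outside the compact neighbourhood $U$, which the uniform stability bound makes possible. Weak nonlocality is what makes the monotonicity condition itself \emph{checkable} for the schemes; it is not what permits the localization.)
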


The above is an immediate corollary of a more general convergence result for the nonlocal PDE \cref{eqn:convergence_pde} given in the next section.

Note that \cref{thm:convergence_hjbqvi_result} depends on the HJBQVI satisfying a comparison principle.
Below, we establish that a comparison principle holds for the HJBQVI if the coefficients $a$ and $b$ do not depend on time and are Lipschitz in space.
This result appears in our article \cite{azimzadeh2017zero}.
The proof, being somewhat technical, is deferred to \cref{app:comparison}.

\begin{theorem}
    Suppose \cref{enum:convergence_continuity}\textendash \cref{enum:convergence_impulse_control_hausdorff} and that the functions $a$ and $b$ are independent of time (i.e., $a(t, x, \textcolor{ctrlcolor}{w}) = a(x, \textcolor{ctrlcolor}{w})$ and similarly for $b$) and satisfy the Lipschitz condition
    \[
        %\sup_{ \textcolor{ctrlcolor}{w} \in W } \left \{
            \left| a(x, \textcolor{ctrlcolor}{w}) - a(y, \textcolor{ctrlcolor}{w}) \right|
            + \left| b(x, \textcolor{ctrlcolor}{w}) - b(y, \textcolor{ctrlcolor}{w}) \right|
        %\right \}
        \leq \gls*{const} \left | x - y \right|
    \]
    where \gls*{const} does not depend on $\textcolor{ctrlcolor}{w}$.
    Then, the HJBQVI satisfies a comparison principle.
    \label{thm:convergence_comparison}
\end{theorem}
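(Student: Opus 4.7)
The plan is to follow the classical doubling-of-variables technique of Crandall-Ishii-Lions, adapted to handle the nonlocal intervention operator $\mathcal{M}$ as in the literature on QVIs. Assume for contradiction that $\theta := \sup_{\overline{\Omega}}(U - V) > 0$ where $U$ is a bounded USC subsolution and $V$ is a bounded LSC supersolution. To control the behavior at infinity (since $\overline{\Omega} = [0,T]\times\mathbb{R}$), I first replace $U$ by $U - \eta/(T - t) - \eta\langle x\rangle$ for small $\eta>0$ (with $\langle x\rangle = \sqrt{1+x^2}$), which remains a strict subsolution up to $O(\eta)$ and guarantees the perturbed maximum is attained on a compact set; this reduction is standard and gives a localization argument that I would spell out only schematically.

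Next I double the variables by setting
\[
\Phi_{\alpha,\beta}(t,x,s,y) = U(t,x) - V(s,y) - \tfrac{\alpha}{2}|x-y|^2 - \tfrac{\beta}{2}(t-s)^2,
\]
taking a maximum point $(t_\alpha, x_\alpha, s_\alpha, y_\alpha)$ and letting $\alpha,\beta\to\infty$. By the standard Lemma 3.1 of the Users' Guide \cite{MR1118699}, $\alpha|x_\alpha - y_\alpha|^2$ and $\beta(t_\alpha - s_\alpha)^2$ vanish, and along a subsequence $(t_\alpha,x_\alpha,s_\alpha,y_\alpha) \to (\hat t, \hat x, \hat t, \hat x)$ with $U(\hat t,\hat x) - V(\hat t,\hat x) > 0$. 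The bulk of the argument is then a dichotomy at this maximum point:

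\textbf{Case A (QVI branch).} If $U(\hat t, \hat x) \leq \mathcal{M}U(\hat t,\hat x)$, I exploit the finite cost of an impulse: by \cref{enum:convergence_negative_cost}, $\sup K < 0$, so iterating the intervention produces a sequence of points $(\hat t, \hat x^{(k)})$ with $U(\hat t,\hat x) \leq U(\hat t,\hat x^{(k)}) - k\cdot|\sup K|$, eventually contradicting the boundedness of $U$. Thus one may find some $(\hat t, \hat x^\star)$ at which $U(\hat t, \hat x^\star) > \mathcal{M} U(\hat t, \hat x^\star)$, reducing to Case B at that point (with the same test function shifted appropriately and the fact that $U - V$ also has a maximum there by monotonicity of impulse transitions). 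I would need to verify that $V(\hat t,\hat x^\star) \geq \mathcal{M} V(\hat t, \hat x^\star)$ from the supersolution inequality there, which is what allows the reduction.

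\textbf{Case B (PDE branch).} If $U(\hat t,\hat x) > \mathcal{M}U(\hat t,\hat x)$, then by upper semicontinuity of $F_*$ one has $U(t_\alpha, x_\alpha) > \mathcal{M}U(t_\alpha, x_\alpha)$ for $\alpha$ large, so the viscosity subsolution inequality forces the PDE-part to hold strictly. Apply Ishii's lemma \cite[Thm 8.3]{MR1118699} to obtain matrices $X_\alpha, Y_\alpha \in \mathcal{S}^1$ with
\[
\begin{pmatrix} X_\alpha & 0 \\ 0 & -Y_\alpha \end{pmatrix} \leq 3\alpha\begin{pmatrix} 1 & -1 \\ -1 & 1 \end{pmatrix},
\]
and use the Lipschitz hypothesis on $a,b$ exactly as in the standard Ishii argument: for each $\textcolor{ctrlcolor}{w}\in W$,
\[
\tfrac{1}{2}b(x_\alpha,\textcolor{ctrlcolor}{w})^2 X_\alpha - \tfrac{1}{2}b(y_\alpha,\textcolor{ctrlcolor}{w})^2 Y_\alpha \leq \tfrac{3}{2}\alpha |b(x_\alpha,\textcolor{ctrlcolor}{w}) - b(y_\alpha,\textcolor{ctrlcolor}{w})|^2 \leq \mathrm{const.}\, \alpha|x_\alpha - y_\alpha|^2,
\]
and similarly the drift terms are controlled by $\alpha|x_\alpha - y_\alpha|^2 \cdot \mathrm{const.}$ Subtracting the sub- and supersolution PDE inequalities, passing to the limit $\alpha\to\infty$ (where these quantities vanish) and then $\beta\to\infty$, and finally $\eta\to 0$, produces $0 < \eta \leq 0$, a contradiction. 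The terminal time $\hat t = T$ is handled by noting that the $-\eta/(T-t)$ perturbation forces the maximum into $\{t < T\}$, so only the interior case need be examined.

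The main obstacle will be Case A: making rigorous the "chain of impulses" reduction in the doubled-variables setting, because the impulsed points $\hat x^{(k)}$ may exit any fixed compact set and the doubling test function must be updated consistently. The standard remedy (e.g., Seydel \cite{MR2597608} and Barles-Imbert) is to show that the chain terminates in finitely many steps thanks to $\sup K < 0$ and the boundedness of $U$, and to choose $\hat x^\star$ as the last element of a maximizing chain so that $U(\hat t, \hat x^\star) > \mathcal{M} U(\hat t, \hat x^\star)$ necessarily holds. Compactness of $Z(t,x)$ guaranteed by \cref{enum:convergence_impulse_control_hausdorff} is essential for extracting maximizers of $\mathcal{M}$ along the chain.
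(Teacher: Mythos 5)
Your doubling-of-variables plan and PDE-branch treatment coincide with the paper's, but you resolve the impulse branch by a genuinely different route, and you have correctly identified where that route is incomplete. You propose a chain-of-impulses argument: starting from a doubled max point at which $U \leq \mathcal{M}U$, jump along optimal impulses, using $\sup K < 0$ to force a strict increase of $U$ along the chain, boundedness of $U$ to force termination, and the supersolution property $V \geq \mathcal{M}V$ at each link to preserve maximality of $U - V$, landing at a point $(\hat t, \hat x^\star)$ where the PDE branch applies. As you flag, the delicate step is that the doubled max points you already have converge to $(\hat t, \hat x)$, not to $(\hat t, \hat x^\star)$, so one must redo the doubling with a localizing penalty (e.g., a quartic term) that makes $(\hat t, \hat x^\star)$ a \emph{strict} local max of $U - V$ before re-invoking Crandall--Ishii; without this your reduction to Case~B is not yet a proof. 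The paper sidesteps the chain entirely by instead constructing a one-parameter family of \emph{strict supersolutions} $V_\lambda = (1-\lambda) V + \lambda c$ for a suitably large constant $c$. The key tool is the \emph{convexity} of the intervention operator $\mathcal{M}$ (\cref{lem:comparison_intervention_2}), which together with the negative-cost bound $-\sup K > 0$ yields a uniformly positive gap $V_\lambda - \mathcal{M} V_\lambda \geq \lambda \xi > 0$. Then at the doubled max point, the impulse branch for $U$ combined with this strict gap produces an immediate contradiction, in one step, with no chaining and no relocation of the max. To make the strict gap quantitative, the paper first performs the change of variables $V \mapsto e^{\beta t} V$ to inject a discount factor $\beta > 0$ into the equation; your $-\eta/(T-t)$ perturbation serves a similar strictifying purpose in the time channel, but does not produce the analogous uniform gap in the impulse channel, which is exactly what the convexity trick buys. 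Both routes are known in the QVI literature, but the paper's avoids the chain-termination delicacy you are worried about at the cost of requiring convexity of $\mathcal{M}$.
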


%%%%%%%%%%%%%%%%%%%%%%%%%%%%%%%%%%%%
\section{General convergence result}
\label{sec:convergence_extension}

In this section, we show that any monotone, stable, and nonlocally consistent scheme for a nonlocal PDE satisfying a comparison principle converges to the unique solution of that PDE.
As an immediate corollary, we obtain \cref{thm:convergence_hjbqvi_result} of the previous section.
No assumptions are made about the set $\Omega$, other than that it is a subset of $d$-dimensional Euclidean space (in this case, we understand $\varphi \in C^2(\overline{\Omega})$ to mean that $\varphi$ is differentiable in a neighbourhood of $\overline{\Omega}$, relative to $\mathbb{R}^d$).
%To simplify matters, the theorem is stated under the assumption that $\Omega$ is an open subset of $d$-dimensional Euclidean space (recall that for the HJBQVI, the domain is $\Omega = [0, T) \times \mathbb{R}$, which can be handled similarly).
%This is a general setting in which the theory of viscosity solutions is usually posed, and covers common domains in finance such as $\Omega = [0, T) \times \mathbb{R}$, $\Omega = [0, T) \times (-R, R)$, etc. %$\Omega = [0, T) \times [0, \infty)$, $\Omega = [0, T) \times [0, R)$, etc.

\begin{theorem}
    \label{thm:convergence_result}
    Suppose the PDE \cref{eqn:convergence_pde} satisfies a comparison principle.
    Let %$\overline{\Omega}$ be locally compact and
    $(S, \mathcal{I}^{\gls*{meshing_parameter}})$ be a monotone, stable, and nonlocally consistent scheme for \cref{eqn:convergence_pde}.
    Then, as $\gls*{meshing_parameter} \rightarrow 0$, $V^{\gls*{meshing_parameter}}$ (a solution of \cref{eqn:convergence_scheme}) converges locally uniformly to the unique bounded solution of \cref{eqn:convergence_pde}.
\end{theorem}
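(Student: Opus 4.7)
The plan is to adapt the classical Barles-Souganidis argument to the nonlocal setting, using the stronger nonlocal consistency as the replacement for pointwise consistency. By stability, the family $(V^{\gls*{meshing_parameter}})_{\gls*{meshing_parameter} > 0}$ is uniformly bounded, so the half-relaxed limits $\overline{V}$ and $\underline{V}$ of \cref{def:convergence_half_relaxed_limits} are well-defined bounded functions on $\overline{\Omega}$. By construction, $\overline{V}$ is upper semicontinuous, $\underline{V}$ is lower semicontinuous, and $\underline{V} \leq \overline{V}$ pointwise. The strategy is to show that $\overline{V}$ is a subsolution and $\underline{V}$ is a supersolution of \cref{eqn:convergence_pde}; then the comparison principle forces $\overline{V} \leq \underline{V}$, so $V := \overline{V} = \underline{V}$ is a continuous function. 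The equality $\overline{V} = \underline{V}$ together with the definition of the half-relaxed limits is equivalent to locally uniform convergence of $V^{\gls*{meshing_parameter}}$ to $V$.

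The central step is to establish the subsolution property for $\overline{V}$ (the supersolution property for $\underline{V}$ is symmetric). I would fix $\varphi \in C^2(\overline{\Omega})$ and $x \in \overline{\Omega}$ such that $\overline{V} - \varphi$ attains a \emph{strict} local maximum at $x$; after replacing $\varphi$ by $\varphi + \overline{V}(x) - \varphi(x)$, one may assume $\overline{V}(x) = \varphi(x)$. A standard argument using the definition of $\overline{V}$ then produces a sequence $\gls*{meshing_parameter}_m \downarrow 0$ and local maximisers $y_m$ of $V^{\gls*{meshing_parameter}_m} - \varphi$ such that $y_m \to x$ and $V^{\gls*{meshing_parameter}_m}(y_m) \to \overline{V}(x) = \varphi(x)$. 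Setting $\xi_m = V^{\gls*{meshing_parameter}_m}(y_m) - \varphi(y_m)$ gives $\xi_m \to 0$, and by construction $\varphi + \xi_m \geq V^{\gls*{meshing_parameter}_m}$ on a neighbourhood of $y_m$, with equality at $y_m$.

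Applying monotonicity (possibly after a harmless $e^{-1/\gls*{meshing_parameter}_m}$ perturbation at the single point $y_m$, controlled by the technical assumption \cref{eq:convergence_technical_assumption} so that the comparison of $V^{\gls*{meshing_parameter}_m}$ with $\varphi + \xi_m$ can be made global rather than merely local) yields
\[
    S\bigl(
        \gls*{meshing_parameter}_m,
        y_m,
        \varphi + \xi_m,
        [\mathcal{I}^{\gls*{meshing_parameter}_m} V^{\gls*{meshing_parameter}_m}](y_m)
    \bigr)
    \leq S\bigl(
        \gls*{meshing_parameter}_m,
        y_m,
        V^{\gls*{meshing_parameter}_m},
        [\mathcal{I}^{\gls*{meshing_parameter}_m} V^{\gls*{meshing_parameter}_m}](y_m)
    \bigr) = 0.
\]
Now I invoke nonlocal consistency \cref{eqn:convergence_subconsistency} applied to the family $u^{\gls*{meshing_parameter}} = V^{\gls*{meshing_parameter}}$ (whose upper half-relaxed limit is precisely $\overline{V}$) at the point $x$ with the test function $\varphi$: the left-hand side is $\geq F_*(x,\varphi(x), D\varphi(x), D^2\varphi(x), [\mathcal{I}\overline{V}](x))$, while the right-hand side is $\leq 0$ by the previous display. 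Since $\overline{V}(x) = \varphi(x)$, this is exactly the subsolution inequality from \cref{def:convergence_viscosity_solution}.

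The main obstacle is precisely this use of nonlocal consistency: in the classical local framework one needs only $\liminf S(\gls*{meshing_parameter}_m, y_m, \varphi + \xi_m, \cdot)$, but here the scheme's nonlocal slot is fed $[\mathcal{I}^{\gls*{meshing_parameter}_m} V^{\gls*{meshing_parameter}_m}](y_m)$ rather than $[\mathcal{I}^{\gls*{meshing_parameter}_m} \varphi](y_m)$, so we cannot freeze the nonlocal term along the limit and the test function cannot substitute for $V^{\gls*{meshing_parameter}_m}$ in $\mathcal{I}^{\gls*{meshing_parameter}_m}$. The nonlocal consistency condition \cref{eqn:convergence_subconsistency} is tailored to accommodate exactly this, passing the half-relaxed limit $\overline{V}$ through $\mathcal{I}$ while keeping the local derivatives pinned to $\varphi$. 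Once this subsolution/supersolution property is in hand, the comparison principle closes the argument, and the resulting equality $\overline{V} = \underline{V}$ is standard topology.
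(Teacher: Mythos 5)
Your proposal is correct and follows essentially the same route as the paper's proof: half-relaxed limits, establishing sub/supersolution properties by comparing $V^{h}$ against a perturbed test function via monotonicity and the technical assumption \cref{eq:convergence_technical_assumption}, feeding $[\mathcal{I}^{h}V^{h}]$ (rather than $[\mathcal{I}^{h}\varphi]$) into the nonlocal slot, and then invoking nonlocal consistency to pass $\overline{V}$ through $\mathcal{I}$. The only cosmetic discrepancy is that the paper does not assume exact maximizers $y_m$ exist but instead takes $e^{-1/h_m}$-approximate maximizers (building that tolerance into $\xi_m$ and introducing the auxiliary function $\psi_m$ to recover equality at $y_m$), a subtlety you gesture at with the phrase ``harmless $e^{-1/h}$ perturbation'' but do not spell out.
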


\begin{proof}[Proof of \cref{thm:convergence_result}]
    The proof follows closely that of \cite[Theorem 2.1]{MR1115933}, differing mainly in its use of the nonlocal consistency requirement.

    Let $\overline{V}$ and $\underline{V}$ denote the half-relaxed limits of the family $(V^{\gls*{meshing_parameter}})_{\gls*{meshing_parameter} > 0}$.
    We seek to show that $\overline{V}$ is a subsolution and $\underline{V}$ is a supersolution of \eqref{eqn:convergence_pde}.
    In this case, \eqref{eqn:convergence_comparison} yields $\overline{V} \leq \underline{V}$, while the reverse inequality is a trivial consequence of the definition of $\overline{V}$ and $\underline{V}$.
    Therefore, $V = \overline{V} = \underline{V}$ is a (continuous) solution of \eqref{eqn:convergence_pde}.
    It follows that
    \[
        \lim_{
            \substack{
                \gls*{meshing_parameter} \rightarrow 0 \\
                y \rightarrow x
            }
        } V^{\gls*{meshing_parameter}}(y) = V(x)
        \textspace \text{for } x \in \overline{\Omega},
    \]
    from which we obtain that convergence is locally uniform.

    Returning to our previous claim, we prove that $\overline{V}$ is a subsolution (that $\underline{V}$ is a supersolution is proved similarly).
    To this end, let $x \in \overline{\Omega}$ be a \emph{local} maximum point of $\overline{V} - \varphi$ where $\varphi\in C^2(\overline{\Omega})$.
    %By the definition of local maximum, we can find some positive number $r$ such that $x$ is a global maximum point on $U = \overline{\Omega} \cap \{ y \colon | x - y | < r \}$.
    By definition, we can find a neighbourhood (relative to $\overline{\Omega}$) $U$ of $x$ whose closure is compact and on which $x$ is a \emph{global} maximum point of $\overline{V} - \varphi$.
    Without loss of generality, we may assume that this maximum is strict, $\overline{V}(x) = \varphi(x)$, and $\varphi \geq \sup_{\gls*{meshing_parameter}} \Vert V^{\gls*{meshing_parameter}} \Vert_{\infty}$ outside $U$ (cf. proof of \cite[Theorem 2.1]{MR1115933}).
    By the definition of $\overline{V}$, we can find a sequence $(\gls*{meshing_parameter}_m,x_m)_m$ such that $\gls*{meshing_parameter}_m \rightarrow 0$, $x_m \rightarrow x$, and $V^{\gls*{meshing_parameter}_m}(x_m) \rightarrow \overline{V}(x)$.
    Now, for each $m$, pick $y_m \in U$ such that
    \[
        V^{\gls*{meshing_parameter}_m}(y_m) - \varphi(y_m) + e^{-1 / \gls*{meshing_parameter}_m}
        \geq \sup_{y \in U} \left\{
            V^{\gls*{meshing_parameter}_m}(y) - \varphi(y)
        \right\}.
    \]
    %That the supremum is attained is made possible by the assumption that $V^{\gls*{meshing_parameter}}$ has finite range (see \cref{sec:convergence_schemes}).
    Due to the compactness of $\overline{U}$, we can pick a subsequence of $(\gls*{meshing_parameter}_m,x_m,y_m)_m$ such that its last argument converges to some point $\hat{y} \in \overline{U}$.
    With a slight abuse of notation, relabel this subsequence $(\gls*{meshing_parameter}_m,x_m,y_m)_m$.
    It follows that
    \begin{align*}
        0 = \overline{V}(x) - \varphi(x) & = \lim_{m\rightarrow\infty} \left\{
            V^{\gls*{meshing_parameter}_m}(x_m) - \varphi(x_m)
        \right\} \\
        & \leq \limsup_{m \rightarrow \infty} \left\{
            V^{\gls*{meshing_parameter}_m}(y_m) - \varphi(y_m) + e^{-1 / \gls*{meshing_parameter}_m}
        \right\} \\
        & \leq \limsup_{
            \substack{
                \gls*{meshing_parameter} \rightarrow 0 \\
                y \rightarrow \hat{y}
            }
        } \left\{
            V^{\gls*{meshing_parameter}}(y) - \varphi(y)
			+ e^{-1 / \gls*{meshing_parameter}_m}
        \right\} \\
        & \leq\overline{V}(\hat{y})-\varphi(\hat{y}).
    \end{align*}
    Because $x$ was assumed to be a strict maximum point, the above inequality implies $\hat{y} = x$.
    Letting $\xi_m = V^{\gls*{meshing_parameter}_m}(y_m) - \varphi(y_m) + e^{-1 / \gls*{meshing_parameter}_m}$, we have $\xi_m \rightarrow 0$ and $V^{\gls*{meshing_parameter}_m} \leq \varphi + \xi_m$ for $m$ sufficiently large (recall that outside of $U$, $\varphi + \xi_m \geq \sup_{\gls*{meshing_parameter}} \Vert V^{\gls*{meshing_parameter}} \Vert_\infty + \xi_m$).
	Let $\psi_m(y) = \varphi(y) - e^{-1 / \gls*{meshing_parameter}_m} \boldsymbol{1}_{\{y_m\}}(y)$.
    Now, the definition of $V^{\gls*{meshing_parameter}}$ and the monotonicity of $S$ yield
    \[
        0 = S(
            \gls*{meshing_parameter}_m,
            y_m,
            V^{\gls*{meshing_parameter}_m},
            [\mathcal{I}^{\gls*{meshing_parameter}_m} V^{\gls*{meshing_parameter}_m}](y_m)
        )
        \geq S(
            \gls*{meshing_parameter}_m,
            y_m,
            \psi_m + \xi_m,
            [\mathcal{I}^{\gls*{meshing_parameter}_m} V^{\gls*{meshing_parameter}_m}](y_m)
        ).
    \]
	Our assumption \cref{eq:convergence_technical_assumption} implies
	\[
		0
		\geq S(
            \gls*{meshing_parameter}_m,
            y_m,
            \varphi + \xi_m,
            [\mathcal{I}^{\gls*{meshing_parameter}_m} V^{\gls*{meshing_parameter}_m}](y_m)
        ) - \epsilon_m
	\]
	where $\epsilon_m \rightarrow 0$ as $m \rightarrow \infty$.
    Taking limit inferiors and employing nonlocal consistency,
    \begin{align*}
        0 & \geq \liminf_{m \rightarrow \infty} S(
            \gls*{meshing_parameter}_m,
            y_m,
            \varphi + \xi_m,
            [\mathcal{I}^{\gls*{meshing_parameter}_m} V^{\gls*{meshing_parameter}_m}](y_m)
        ) \\
        & \geq \liminf_{
            \substack{\gls*{meshing_parameter} \rightarrow 0\\
            y \rightarrow x\\
            \xi \rightarrow 0
        }} S(
            \gls*{meshing_parameter},
            y,
            \varphi + \xi,
            [\mathcal{I}^{\gls*{meshing_parameter}} V^{\gls*{meshing_parameter}}](y)
        ) \\
        & \geq F_*(x, \varphi(x), D \varphi(x), D^{2} \varphi(x), [\mathcal{I} \overline{V}](x)),
    \end{align*}
    which is the desired inequality, since $\overline{V}(x) = \varphi(x)$.
\end{proof}

\begin{remark}
    Recall that the functions $(u^{\gls*{meshing_parameter}})_{\gls*{meshing_parameter} > 0}$ appearing in the nonlocal consistency inequalities \cref{eqn:convergence_subconsistency,eqn:convergence_superconsistency} are an arbitrary family of uniformly bounded real-valued maps.
    However, a close inspection of the above proof reveals that we only use these inequalities with $u^{\gls*{meshing_parameter}} = V^{\gls*{meshing_parameter}}$ where $V^{\gls*{meshing_parameter}}$ is a solution of the scheme \cref{eqn:convergence_scheme}.
    Therefore, in establishing nonlocal consistency we can, without loss of generality, assume that $u^{\gls*{meshing_parameter}}$ is a solution of the scheme.
    \label{rem:convergence_weaker_consistency}
\end{remark}

We close this section by mentioning an extension of \cref{thm:convergence_result} that allows us to approximate solutions that are not necessarily bounded (the proof is identical to that of \cref{thm:convergence_result}, save for minor modifications).
This is of practical importance, since many problems in finance do not have bounded solutions.
Let $B_d(\overline{\Omega})$ be the set of all functions $V \colon \overline{\Omega} \rightarrow \mathbb{R}$ satisfying the polynomial growth condition
\[
    \left| V(x) \right| \leq \gls*{const} (1 + \left| x \right|^d) %r(|x|)
    \text{ for all } x \in \overline{\Omega}.
\]
Now, relax (i) the stability condition to read
\[
    \text{there exists a unique solution } V^{\gls*{meshing_parameter}} \in B_d(\overline{\Omega})
    \text{ of } \cref{eqn:convergence_scheme} \text{ for each } \gls*{meshing_parameter} > 0,
\]
(ii) the consistency condition by requiring \cref{eqn:convergence_subconsistency,eqn:convergence_superconsistency} to hold more generally for families $(u^{\gls*{meshing_parameter}})_{\gls*{meshing_parameter} > 0} \subset B_d(\overline{\Omega})$ not necessarily uniformly bounded, and (iii) the comparison principle \cref{eqn:convergence_comparison} by replacing instances of $B(\overline{\Omega})$ by $B_d(\overline{\Omega})$.
Then, we obtain a relaxation of \cref{thm:convergence_result} which allows for solutions of polynomial growth.

%%%%%%%%%%%%%%%%%%%%
\section{Extensions}

\subsection{The infinite horizon (steady state) case}

Recall that in \cref{chap:schemes}, we gave versions of the direct control and penalty schemes for the infinite horizon HJBQVI \cref{eqn:schemes_infinite_horizon}.
The arguments for these infinite horizon schemes are nearly identical to those in \cref{sec:convergence_hjbqvi}, save that we obtain the stability bound $\Vert f \Vert_\infty \beta^{-1}$ instead of $\Vert f \Vert_\infty T + \Vert g \Vert_\infty$.
Similarly to \cref{eqn:convergence_stability_intuition}, this bound has the intuitive interpretation of being the maximum possible continuous reward in \cref{eqn:schemes_infinite_horizon_functional}:
\[
    \int_t^\infty e^{-\beta u} f(u, X_u, \textcolor{ctrlcolor}{w_u}) du
    \leq \Vert f \Vert_\infty \int_t^\infty e^{-\beta u} du
    \leq \Vert f \Vert_\infty \int_0^\infty e^{-\beta u} du
    = \Vert f \Vert_\infty \beta^{-1}.
\]

\subsection{Higher dimensions}
\label{subsec:convergence_high_dimension_hjbqvi}

The HJBQVI \cref{eqn:introduction_hjbqvi} is a special case of the following higher dimensional HJBQVI \cite[Chapter 8]{MR2109687}:
\begin{align}
    \min \left\{
        - V_t - \sup_{\textcolor{ctrlcolor}{w} \in W} \left\{
            %\begin{gathered}
                \mathcal{L}^{\textcolor{ctrlcolor}{w}} V(t, x)
                + f(\cdot, \textcolor{ctrlcolor}{w})
            %\end{gathered}
        \right\},
        V - \mathcal{M} V
    \right\} & = 0 \text{ on } [0,T) \times \gls*{domain} \nonumber \\
    \min \left \{
        V(T,\cdot) - g,
        V(T,\cdot) - \mathcal{M} V(T,\cdot)
    \right \} & = 0 \text{ on } \gls*{domain}
    \label{eqn:convergence_high_dimension_hjbqvi}
\end{align}
where
\[
    \mathcal{L}^{\textcolor{ctrlcolor}{w}} V(t, x)
    = \frac{1}{2} \gls*{trace}(
        b(\cdot, \textcolor{ctrlcolor}{w})
        b(\cdot, \textcolor{ctrlcolor}{w})^\intercal
        D^2 V
    )
    + \left\langle
        a(\cdot, \textcolor{ctrlcolor}{w}),
        DV
    \right\rangle.
\]
In the above, $DV$ and $D^2 V$ are the gradient vector and Hessian matrix of $V$ (with respect to the spatial coordinate $x = (x_1, \ldots, x_d)$) and $\gls*{domain} \subset \mathbb{R}^d$.
Note that in the above, $a$ and $b$ are no longer real-valued functions but rather $d \times 1$ vector and $d \times d$ matrix valued functions.

If no cross-derivatives appear in \cref{eqn:convergence_high_dimension_hjbqvi} (i.e., if $b$ is a diagonal matrix at each point in its domain), the direct control, penalty, and explicit-impulse schemes can be made to handle the higher-dimensional case by extending the finite difference approximations $\mathcal{D}_2$ and $\mathcal{D}$ to higher dimensions in the obvious way.
An analogous claim can be made for a higher dimensional relaxation of the infinite horizon HJBQVI \cref{eqn:schemes_infinite_horizon}.

However, in the general case involving cross-derivatives, special care must be taken to ensure that the schemes are monotone, either by employing wide-stencils \cite{MR2399429,MR3649430,chen2016monotone,chen2017multigrid} or by techniques involving interpolation \cite{MR3042570}.
We do not handle the cross-derivative case in this thesis, leaving it for future work.

%%%%%%%%%%%%%%%%%
\section{Summary}

In this chapter, we extended the Barles-Souganidis framework to nonlocal PDEs in a general manner.
We used our results to prove the convergence of the direct control, penalty, and explicit-impulse schemes to the viscosity solution of the HJBQVI.

\setcounter{chapter}{4}
\chapter{Numerical results}
\label{chap:results}

In this chapter, we apply the findings of the previous chapters to compute numerical solutions of the following impulse control problems from finance:
\begin{enumerate}[label=(\Alph*)]
    \item
        \label{enum:results_fex}
        Optimal control of the foreign exchange (\gls*{FEX}) rate.
    \item
        \label{enum:results_consumption}
        Optimal consumption with fixed and proportional transaction costs.
    \item
        \label{enum:results_gmwb}
        Guaranteed minimum withdrawal benefits (\glspl*{GMWB}) in variable annuities.
\end{enumerate}
Each of these three problems is a special case of the general impulse control problem involving \cref{eqn:introduction_functional}.
As a result of our numerical tests, we also gain a sense of the relative efficiency of the direct control, penalty, and explicit-impulse schemes.

Our work is the first to give a numerical implementation of problem \cref{enum:results_fex} in the finite horizon setting (i.e., $T < \infty$).
Previously, only ``semi-analytic'' solutions had been considered in the infinite horizon setting (i.e., $T = \infty$) in \cite[Section 5]{MR1705310} and \cite[Section 4]{MR1802595}.
We are also the first to give a numerical implementation of problem \cref{enum:results_consumption} in the finite horizon setting.
Though an infinite horizon implementation of this problem was given in \cite{MR1976512}, the technique used therein is iterated optimal stopping, which is well-known to have a high space complexity and prohibitively slow convergence rate when extended to the finite horizon setting \cite{MR3150265}.\footnote{We mention also the related works \cite{MR3654873,muhle2016primer}, which employ a penalty-like scheme for infinite horizon consumption problems.}

We have seen in \cref{chap:matrix} that for the direct control scheme, policy iteration must be performed on a subset $\mathcal{P}^\prime$ of the original control set $\mathcal{P}$.
Unfortunately, this subset has to be picked on a problem-by-problem basis, making the direct control scheme less robust than the penalty scheme, for which no additional effort is required to ensure convergence of the corresponding policy iteration.
As such, in this chapter, we seek to answer the following natural question: if both schemes converge to the same solution, is there an advantage to using the direct control scheme? (e.g., could it, at least empirically, exhibit faster convergence?)

Our contributions in this chapter are:
\begin{itemize}
    \item Applying the direct control, penalty, and explicit-impulse schemes to three classical impulse control problems from finance (as mentioned above, some of these problems had not been previously considered numerically).
    \item Comparing the performance of the three schemes.
    %\item Finding that the penalty scheme strictly dominates the direct control scheme.
\end{itemize}

The results of this chapter appear in our article 

\fullcite{MR3493959}

%%%%%%%%%%%%%%%%%%%%%%%%%%%%%%%%%%%%%%%%%%%%%%%%%%%%%%%%%%%%
\section{Optimal control of the foreign exchange (FEX) rate}
\label{sec:results_fex}

In this section, we study the problem described in \cref{exa:introduction_fex}, in which a government is interested in influencing the \gls*{FEX} rate of its currency.

\subsection{\Artificial{} boundary conditions}

As discussed in \cref{sec:convergence_hjbqvi}, it is computationally intractable to solve the HJBQVI \cref{eqn:results_fex_hjbqvi} of \cref{exa:introduction_fex} since it is posed on the unbounded domain $[0,T] \times \mathbb{R}$.
In light of this, we truncate the domain to a bounded set $[0,T] \times [-R,R]$.
To ensure that that impulses do not leave the truncated domain, we similarly truncate the control set $Z(t, x) = \mathbb{R}$ in \cref{eqn:results_fex_intervention} to $[-R - x, R - x]$.
Artificial boundary conditions are introduced as per \cref{app:truncated}.

\subsection{Implementation details}

%In the finite horizon case, the numerical solution of the HJBQVI \cref{eqn:introduction_hjbqvi} is obtained by looping through the timesteps $n$, as demonstrated in the pseudocode below.
%The numerical solution is obtained by looping through the timesteps $n$, as demonstrated in the pseudocode below.
%
%\begin{algorithmic}[1]
    %\Procedure{Timestepping}{scheme}
        %\State Let $V^0$ be the vector with entries $V_i^0 = g(x_i)$ \Comment{Recall \cref{eqn:schemes_initial_condition}}
        %\For{$n \gets 1, 2, \ldots, N$}
            %\If{scheme = \emph{``direct control''}}
                %\State $V^n \gets$ \Call{Policy-Iteration}{$\mathcal{P}^\prime$} \Comment{with $A(P)$ and $y(P)$ given by %\cref{eqn:matrix_direct_control_parameters}}
            %\ElsIf{scheme = \emph{``penalty''}}
                %\State $V^n \gets$ \Call{Policy-Iteration}{$\mathcal{P}$} \Comment{with $A(P)$ and $y(P)$ given by \cref{eqn:matrix_penalty_parameters}}
            %\ElsIf{scheme = \emph{``explicit-impulse''}}
                %\State Solve the linear system \cref{eqn:schemes_linear_system} to obtain $V^n$
            %\EndIf
        %\EndFor
    %\EndProcedure
%\end{algorithmic}

Unless otherwise specified, we use BiCGSTAB with an ILUT preconditioner \cite[Section 10.4]{MR1990645} to solve all linear systems.
{\sc Policy-Iteration}, used to obtain solutions of the direct control and penalty schemes, is terminated at the $\ell$-th iteration if the error tolerance
\begin{equation}
    \max_i \left\{
        \frac{
            | U_i^\ell - U_i^{\ell - 1} |
        }{
            \max \{ | U_i^\ell |, \text{scale} \}
        }
    \right\} < \text{tolerance}
    \label{eqn:results_convergence_criterion}
\end{equation}
is met.
The parameter $\text{scale}$ is used to ensure that unrealistic levels of accuracy are not imposed if the solution is close to zero.
We use $\text{scale} = 1$ and $\text{tolerance} = 10^{-6}$ for all experiments.
For the penalty scheme, we use the penalty parameter (recall \cref{eqn:convergence_penalty_vanishes}) $\epsilon = 10^{-2} \Delta \tau$.
For the direct control scheme, we use the scaling factor described in \cref{rem:matrix_scaling} with $\delta = 10^{-2}$.
The explicit-impulse scheme is optimized as per \cref{rem:schemes_factorization} whenever possible.
Since these implementation details are the same for all problems considered in this chapter, we do not repeat them.

\subsection{Convergence tests}

\begin{table}
    \centering \footnotesize
    \begin{tabular}{rrr}
        \toprule
        \multicolumn{2}{r}{Parameter} & Value \tabularnewline
        \midrule
        Maximum interest rate differential & $w^{\max}$ & 0.07 \tabularnewline
        %Maximum log FEX rate after intervention & $x^{\max}$ & 2 \tabularnewline
        Drift speed & $\mu$ & 0.25 \tabularnewline
        Volatility & $\sigma$ & 0.3 \tabularnewline
        Target & $m$ & 0 \tabularnewline
        Effect of interest rate differential & $\gamma$ & 3 \tabularnewline
        Proportional transaction cost & $\kappa$ & 1 \tabularnewline
        Fixed transaction cost & $c$ & 0.1 \tabularnewline
        Discount factor & $\beta$ & 0.02 \tabularnewline
        Horizon & $T$ & 10 \tabularnewline
        Truncated domain boundary & $R$ & 2 \tabularnewline
        \bottomrule
    \end{tabular}
    \caption{Parameters for FEX rate problem}
    \label{tab:results_fex_parameters}
\end{table}

\begin{table}
    \centering \footnotesize
    \begin{tabular}{ccccc}
        \toprule
        $\gls*{meshing_parameter}$ & Timesteps & $x$ points & $\textcolor{ctrlcolor}{w}$ points & $\textcolor{ctrlcolor}{z}$ points \tabularnewline
        \midrule
        1 & 16 & 32 & 8 & 16 \tabularnewline
        1/2 & 32 & 64 & 16 & 32 \tabularnewline
        $\vdots$ & $\vdots$ & $\vdots$ & $\vdots$ & $\vdots$ \tabularnewline
        \bottomrule
    \end{tabular}
    \caption{Numerical grid for FEX rate problem}
    \label{tab:results_fex_grid}
\end{table}

\begin{table}
    \centering \footnotesize
    \subfloat[Direct control scheme]{
        \begin{tabular}{cccccc}
            \toprule
            $\gls*{meshing_parameter}$ & Value $V^{\gls*{meshing_parameter}}(t=0, x=m)$ & Avg. policy its. & Avg. BiCGSTAB its. & Ratio & Norm. time \tabularnewline
            \midrule
            1    & -1.59470667276 & 2.50 & 1.65 &       & 1.24e+01\tabularnewline % 0.66
            1/2  & -1.60161214854 & 2.53 & 1.82 &       & 4.30e+01\tabularnewline % 0.72
            1/4  & -1.60009885637 & 2.33 & 2.80 & -4.56 & 2.53e+02\tabularnewline % 1.20
            1/8  & -1.59882094629 & 2.33 & 2.96 &  1.18 & 1.88e+03\tabularnewline % 1.27
            1/16 & -1.59796763572 & 2.36 & 2.97 &  1.50 & 1.73e+04\tabularnewline % 1.26
            1/32 & -1.59753341756 & 2.34 & 2.95 &  1.97 & 1.42e+05\tabularnewline % 1.26
            1/64 & -1.59730416122 & 2.34 & 2.90 &  1.89 & 1.04e+06\tabularnewline % 1.24
            \bottomrule
        \end{tabular}
    }

    \subfloat[Penalty scheme]{
        \begin{tabular}{cccccc}
            \toprule
            $\gls*{meshing_parameter}$ & Value $V^{\gls*{meshing_parameter}}(t=0, x=m)$ & Avg. policy its. & Avg. BiCGSTAB its. & Ratio & Norm. time \tabularnewline
            \midrule
            1    & -1.59542996288 & 2.56 & 1.95 &       & 1.23e+01 \tabularnewline %0.76
            1/2  & -1.60176266672 & 2.53 & 2.13 &       & 4.70e+01 \tabularnewline %0.84
            1/4  & -1.60012316809 & 2.34 & 2.08 & -3.86 & 2.87e+02 \tabularnewline %0.89
            1/8  & -1.59883787204 & 2.33 & 3.08 &  1.28 & 2.09e+03 \tabularnewline %1.32
            1/16 & -1.59796948734 & 2.36 & 3.19 &  1.48 & 1.83e+04 \tabularnewline %1.35
            1/32 & -1.59753376608 & 2.35 & 3.17 &  1.99 & 1.42e+05 \tabularnewline %1.35
            1/64 & -1.59730437362 & 2.34 & 3.18 &  1.90 & 1.04e+06 \tabularnewline %1.36
            \bottomrule
        \end{tabular}
    }

    \subfloat[Explicit-impulse scheme]{
        \begin{tabular}{cccc}
            \toprule
            $\gls*{meshing_parameter}$ & Value $V^{\gls*{meshing_parameter}}(t=0,x=m)$ & Ratio & Norm. time \tabularnewline
            \midrule
            1    & -1.21009825238 &      & 1.00e+00 \tabularnewline
            1/2  & -1.40343492151 &      & 7.14e+00 \tabularnewline
            1/4  & -1.50140778899 & 1.97 & 5.69e+01 \tabularnewline
            1/8  & -1.54909952448 & 2.05 & 4.44e+02 \tabularnewline
            1/16 & -1.57273173354 & 2.02 & 3.33e+03 \tabularnewline
            1/32 & -1.58474899304 & 1.97 & 2.84e+04 \tabularnewline
            1/64 & -1.59084952538 & 1.97 & 2.34e+05 \tabularnewline
            \bottomrule
        \end{tabular}
    }
    \caption{Convergence tests for FEX rate problem}
    \label{tab:results_fex_convergence}
\end{table}

\begin{figure}
    \subfloat[Optimal control at the initial time $t=0$ \label{fig:results_fex_control}]{
        \includegraphics[width=3.25in]{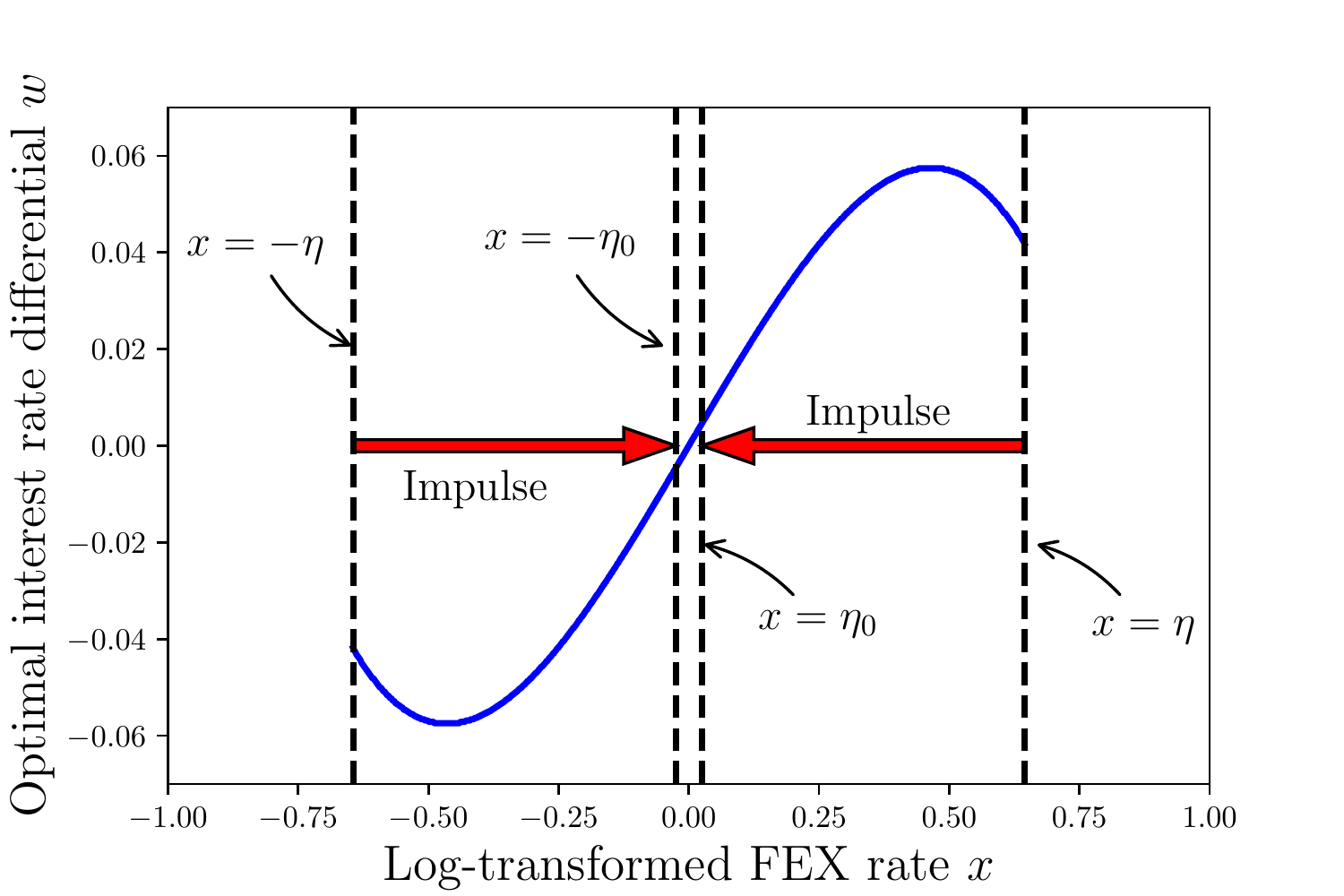}
    }
    \hfill{}
    \subfloat[Cost at the initial time $t=0$ \label{fig:results_fex_value}]{
        \includegraphics[width=3.25in]{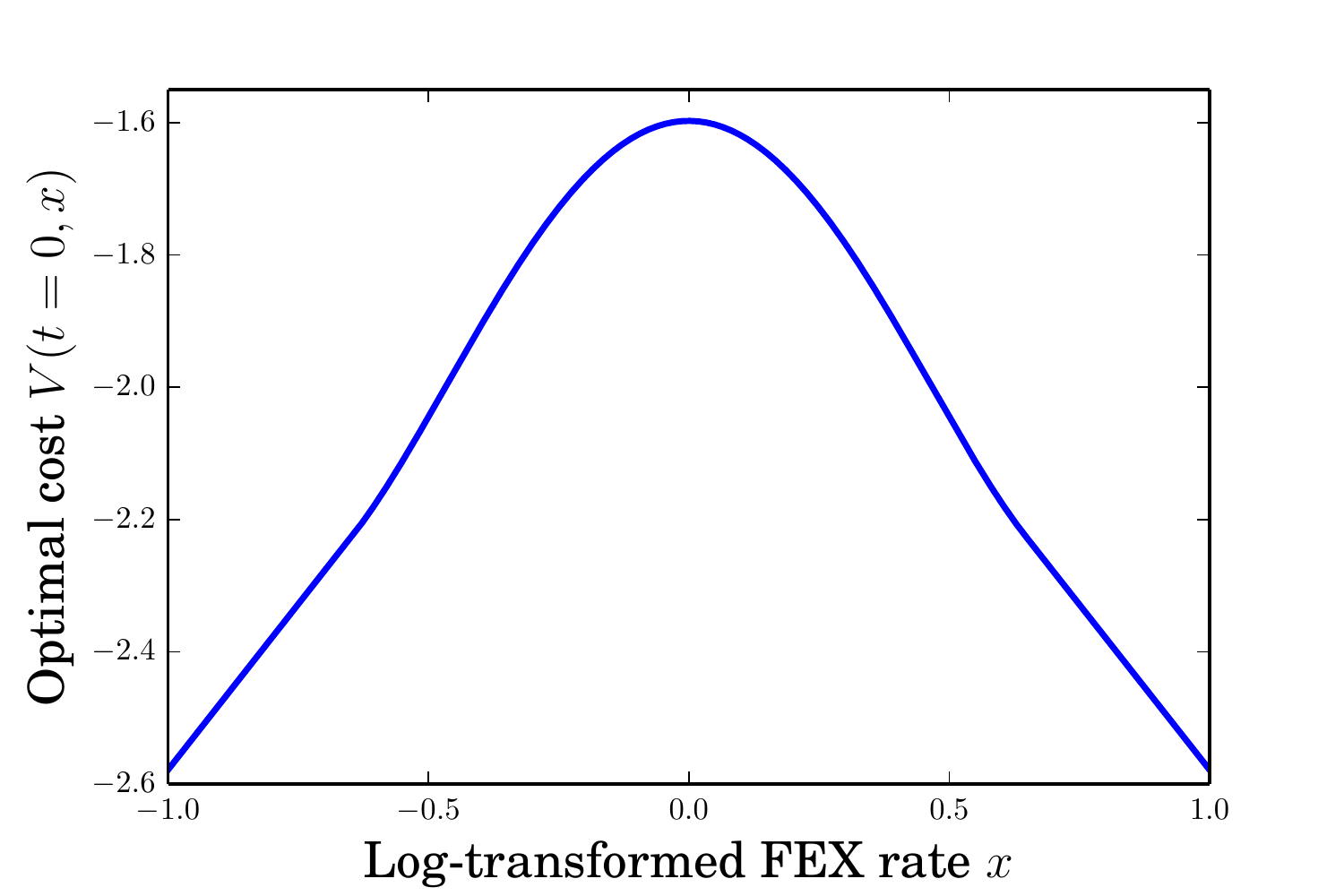}
    }
    \caption{Numerical solution of FEX rate problem}
\end{figure}

For the numerical tests in this section, we use the parameters in \cref{tab:results_fex_parameters}.
\cref{tab:results_fex_grid} reports the size of the numerical grid.
In particular, we report the number of timesteps $\tau^n$, spatial points $x_i$, stochastic control points $\textcolor{ctrlcolor}{w_i}$, and impulse control points $\textcolor{ctrlcolor}{z_i}$.
The value of $R$ is chosen large enough to ensure high accuracy within a region of interest (in this case, at the point $x = m$).
%We achieve this by choosing a candidate value $R_0$ and comparing the values computed with $R = R_0$ to those computed with $R = 2 R_0$.

Convergence tests are shown in \cref{tab:results_fex_convergence}.
Times are normalized to the fastest explicit-impulse solve.
The ratio of successive changes in the solution is reported.
The average number of policy iterations and BiCGSTAB iterations \emph{per timestep} are also reported.
For the explicit-impulse scheme applied to the exchange rate problem, we note that the linear system solved at each timestep $n$ involves a tridiagonal matrix $A$ (see \cref{eqn:schemes_tridiagonal} and \cref{rem:schemes_factorization}).
Therefore, in this special case, we use a direct solver.
As such, no iteration counts are reported for the explicit-impulse scheme.

Recall that the explicit-impulse requires only a single linear system solve per timestep.
The direct control and penalty schemes, on the other hand, require the use of policy iteration at each timestep.
Policy iteration can, in the worst case, require $|\mathcal{P}|$ linear system solves \cite{MR3449908}, where $\mathcal{P}$ is the set of controls (recall \cref{sec:matrix_hjbqvi}).
If the discretized control sets $W^{\gls*{meshing_parameter}}$ and $Z^{\gls*{meshing_parameter}}(t, x)$ are each of size $O(1/h)$, this corresponds to a total of $O(|\mathcal{P}|) = O(1/h^2 \cdot 1/(\Delta x)^d) = O(1/h^{d+2})$ iterations, where $d$ is the dimension of the problem (in the case of the \gls*{FEX} rate problem, $d = 1$).
However, this worst case bound is much more pessimistic than what is observed in practice since in \cref{tab:results_fex_convergence}, we see roughly a constant number of policy iterations per timestep.
%This is due to the superlinear convergence of policy iteration \cite{MR3449908}.

We close this subsection with a few more observations.
The direct control and penalty schemes are nearly identical in performance and accuracy.
Compared to the explicit-impulse scheme, these two schemes require more execution time but produce solutions that are more accurate even for large values of $\gls*{meshing_parameter}$.
As $\gls*{meshing_parameter}$ approaches zero, we see that all three schemes tend to linear convergence rates (i.e., $\text{Ratio} \approx 2$).

\subsection{Optimal control}

Consider the optimal control in \cref{fig:results_fex_control}.
If the \gls*{FEX} rate is sufficiently close to the target $m = 0$, the government does not intervene in the \gls*{FEX} market.
That is, no impulses occur in the region $(-\eta,\eta)$ for some $\eta$.
In this region, the government influences the \gls*{FEX} rate by choosing the interest rate differential $\textcolor{ctrlcolor}{w}$ (depicted by the smooth curve in \cref{fig:results_fex_control}).
However, if the \gls*{FEX} rate lies outside of this region, the government intervenes by selling large amounts of foreign currency to bring the \gls*{FEX} rate back to some acceptable level $\pm \eta_0$ satisfying $m < \eta_0 < \eta$.
For example, at the point $x = \eta$, the government performs the impulse $\textcolor{ctrlcolor}{z} = \eta_0 - \eta$ (depicted by the rightmost arrow in \cref{fig:results_fex_control}) in order to instantaneously change the \gls*{FEX} rate to $x + \textcolor{ctrlcolor}{z} = \eta + (\eta_0 - \eta) = \eta_0$.

The above suggests that the solution $V$ satisfies
\[
    V(t, x) =
        V(t, \eta_0(t))
        - \kappa \left( \left| x \right| - \eta_0(t) \right)
        - c
    \textspace \text{if } \left| x \right| \geq \eta(t)
\]
where, as reflected in the notation, $\eta$ and $\eta_0$ are allowed to depend on time $t$.
In other words, $V$ is asymptotically linear in space.
This is reflected in \cref{fig:results_fex_value}.

%%%%%%%%%%%%%%%%%%%%%%%%%%%%%%%%%%%%%%%%%%%%%%%%%%%%%%%%%%%%%%%%%%%%%%%%%%%
%\section[Optimal consumption with fixed and proportional transaction costs]{Optimal consumption with fixed and proportional transaction costs\sectionmark{Consumption with fixed and proportional costs}}
%\sectionmark{Consumption with fixed and proportional costs}
\section{Optimal consumption with fixed and proportional costs}
\label{results:consumption}

The problem of an investor consuming optimally was first studied in the celebrated article \cite{MR0456373}.
To incorporate both fixed and proportional transaction costs, the model was extended to the impulse control setting in \cite{MR1897195}.
%For the reader's convenience, we repeat the setting of \cite{MR1897195} below.
In that paper, the authors consider an investor that, at any point in time, has two investment opportunities: a risky investment and a safe bank account.
\begin{itemize}
    \item At all times, the investor picks the rate at which they consume capital from the bank account.
    \item The investor picks specific times at which to transfer money from the bank to the risky investment or vice versa.
\end{itemize}

Let $\textcolor{ctrlcolor}{w_t}$ denote the investor's rate of consumption at time $t$.
Let $\textcolor{ctrlcolor}{\xi_1} \leq \textcolor{ctrlcolor}{\xi_2} \leq \cdots \leq \infty$ be the times at which the investor transfers money between the accounts, with corresponding amounts $\textcolor{ctrlcolor}{z_1}, \textcolor{ctrlcolor}{z_2}, \ldots$
A positive value for $\textcolor{ctrlcolor}{z_\ell}$ indicates that money is moved from the bank account to the risky investment at time $\textcolor{ctrlcolor}{\xi_\ell}$, while a negative value indicates flow of money in the opposite direction.

Subject to the above, the risky investment $S$ and bank account $Q$ evolve according to
%\begin{subequations}
    \begin{align*}
        dS_t
        & = \mu S_t dt + \sigma S_t d\gls*{wiener}_t
        & \text{for } t \neq \xi_1, \xi_2, \ldots
        \nonumber \\
        dQ_t
        & = ( r Q_t - \textcolor{ctrlcolor}{w_t} \boldsymbol{1}_{ \{Q_t > 0\} } ) dt
        & \text{for } t \neq \xi_1, \xi_2, \ldots
        \nonumber \\
        S_{\textcolor{ctrlcolor}{\xi_\ell}}
        & = S_{\textcolor{ctrlcolor}{\xi_\ell}-} + \textcolor{ctrlcolor}{z_\ell}
        & \text{for } \ell = \phantom{\xi_1} \mathllap{1}, \phantom{\xi_2} \mathllap{2}, \ldots
        %\nonumber
        \\
        Q_{\textcolor{ctrlcolor}{\xi_\ell}}
        & =
        %\max \{
            Q_{\textcolor{ctrlcolor}{\xi_\ell}-} - \textcolor{ctrlcolor}{z_\ell} - \kappa \left| \textcolor{ctrlcolor}{z_\ell} \right| - c
            %,0
        %\}
        & \text{for } \ell = \phantom{\xi_1} \mathllap{1}, \phantom{\xi_2} \mathllap{2}, \ldots
        %\label{eqn:results_consumption_sde}%
    \end{align*}
%\end{subequations}
where $\mu$, $\sigma$, and $r$ are nonnegative constants representing the drift, volatility, and interest rate.
The term $r Q_t$ in the above corresponds to the riskless return from the bank account, which is reduced by the rate of consumption $\textcolor{ctrlcolor}{w_t}$.
The indicator function $\boldsymbol{1}_{\{Q_t > 0\}}$ is used to ensure that the bank account cannot become negative. %(i.e., borrowing is not allowed).
The constants $c > 0$ and $0 \leq \kappa \leq 1$ parameterize the fixed and proportional costs of moving money to/from the bank account.

To prevent the investor from consuming too quickly, we impose the restriction $|\textcolor{ctrlcolor}{w_t}| \leq w^{\max}$.
%We also assume the investor cannot short the risky investment, captured by the restriction $\textcolor{ctrlcolor}{z_\ell} \geq -S_{\textcolor{ctrlcolor}{\xi_{\ell}}-}$.
%Similarly, we assume the investor cannot borrow from the bank, captured by the restriction $\textcolor{ctrlcolor}{z_\ell} \leq Q_{\textcolor{ctrlcolor}{\xi_{\ell}}-}$.
We assume that the control $\textcolor{ctrlcolor}{z_\ell}$ must be chosen such that the portfolio remains solvent (i.e., the risky investment and bank account must remain nonnegative at all times).

The investor's objective is to maximize their utility by consuming optimally.
Letting $x = (s, q)$ and $\textcolor{ctrlcolor}{\theta} = (\textcolor{ctrlcolor}{w}; \textcolor{ctrlcolor}{\xi_1}, \textcolor{ctrlcolor}{z_1}; \textcolor{ctrlcolor}{\xi_2}, \textcolor{ctrlcolor}{z_2}; \ldots)$ denote a control, the investor's utility is captured by the objective function
\[
    J(t, x; \textcolor{ctrlcolor}{\theta}) = \mathbb{E} \left[
        \int_t^T
            e^{-\beta u} \frac{
                \textcolor{ctrlcolor}{ w_u^{ \textcolor{black}{\gamma} } }
            }{\gamma}
            \boldsymbol{1}_{\{ Q_u > 0 \}}
        du
        + e^{-\beta T} \frac{
            \max \left\{ Q_T + \left( 1-\kappa \right) S_T - c, 0 \right\}^\gamma
        }{\gamma}
        \middle | (S_{t-}, Q_{t-}) = x
    \right]
    %\label{eqn:results_consumption_functional}
\]
where $0 < \gamma \leq 1$ captures the investor's aversion to risk and $\beta$ is a nonnegative discount factor.
The term $\textcolor{ctrlcolor}{ w_u^{ \textcolor{black}{\gamma} } } / \gamma$ corresponds to the utility gained from consumption.
The term $\max \left\{ Q_T + \left( 1-\kappa \right) S_T - c, 0 \right\}^\gamma / \gamma$ corresponds to the investor consuming their net worth at the final time (modeling, for example, retirement).

In \cite[Example 8.3]{MR2109687}, standard dynamic programming arguments are used to transform the optimization problem $V(t, x) = \sup_{\textcolor{ctrlcolor}{\theta}} J(t, x; \textcolor{ctrlcolor}{\theta})$ into the equivalent HJBQVI
%\begin{subequations}
    \begin{align}
        \min \left\{
            -V_t - \sup_{\textcolor{ctrlcolor}{w} \in [0, w^{\max}]} \left\{
                %\begin{gathered}
                    \mathcal{L}^{\textcolor{ctrlcolor}{w}} V
                    + e^{-\beta t} \frac{\textcolor{ctrlcolor}{w}^\gamma}{\gamma}
                    \boldsymbol{1}_{\{ q > 0\}}
                %\end{gathered}
            \right\},
            V - \mathcal{M} V
        \right\} & = 0 \text{ on } [0,T) \times [0,\infty)^2
        %\label{eqn:results_consumption_interior} \\
        \nonumber \\
        \min \left\{
            V(T,\cdot) - g,
            V(T, \cdot) - \mathcal{M} V(T, \cdot)
        \right\}
        & = 0 \text{ on } [0,\infty)^2
        %\label{eqn:results_consumption_boundary}
        \label{eqn:results_consumption_hjbqvi}%
    \end{align}
%\end{subequations}
where
\begin{align}
    \mathcal{L}^{\textcolor{ctrlcolor}{w}} V(t, s, q)
    & = \frac{1}{2} \sigma^2 s^2 V_{ss}(t, s, q) + \mu s V_s(t, s, q)
    + \left( rq - \textcolor{ctrlcolor}{w} \boldsymbol{1}_{\{ q > 0\}} \right) V_q(t, s, q)
    \nonumber \\
    \mathcal{M} V(t, s, q)
    & = \sup_{
        %\textcolor{ctrlcolor}{z} \in [-s, q]
        z \in Z(s, q)
    }
    V(
        t,
        s + \textcolor{ctrlcolor}{z},
        %\max\left\{
            q - \textcolor{ctrlcolor}{z} - \kappa \left| \textcolor{ctrlcolor}{z} \right| - c
            %,0
        %\right\}
    )
    \nonumber \\
    Z(s, q)
    & = \left \{
        \textcolor{ctrlcolor}{z} \colon
        s + \textcolor{ctrlcolor}{z} \geq 0
    \right \} \cap \left \{
        \textcolor{ctrlcolor}{z} \colon
        q - \textcolor{ctrlcolor}{z} - \kappa \left| \textcolor{ctrlcolor}{z} \right| - c \geq 0
    \right \}
    \nonumber \\
    g(s, q)
    & = e^{-\beta T} \frac{
        \max \left\{ q + \left( 1-\kappa \right) s - c, 0 \right\}^\gamma
    }{\gamma}.
    \label{eqn:results_consumption_quantities}
\end{align}
Note that the above is a two dimensional HJBQVI with no cross-derivatives and can hence be handled by obvious extensions of the direct control, penalty, and explicit-impulse schemes discussed in the previous chapter (see \cref{subsec:convergence_high_dimension_hjbqvi}).

\subsection{\Artificial{} boundary conditions}
\label{subsec:results_consumption_artificial}

As usual, we do not solve the HJBQVI \cref{eqn:results_consumption_hjbqvi} directly but rather truncate the domain and introduce \artificial{} boundary conditions.
In particular, letting $R_s$ and $R_q$ be large positive numbers, we work on the truncated domain $[0, T] \times [0, R_s] \times [0, R_q]$.

In \cite{MR1976512}, the authors solve an infinite horizon version of the optimal consumption problem.
In that paper, they use the \artificial{} Neumann boundary conditions $V_{ss}(t, R_s, q) = V_s(t, R_s, q) = 0$ and $V_q(t, s, R_q) = 0$.
%In the context of the optimal control problem, these boundary conditions can be thought of as modifying the process $S$ (resp. $Q$) to ``reflect'' back into the truncated domain upon hitting the right hand boundary $s = R_s$ (resp. $q = R_q$).
We follow the approach of \cite{MR1976512} here, defining the operator $\mathcal{L}_R^{\textcolor{ctrlcolor}{w}}$ by
\begin{equation}
    \mathcal{L}_{R}^{\textcolor{ctrlcolor}{w}} V(t, s, q)
    = \left( \frac{1}{2}\sigma^{2}s^{2}V_{ss}(t, s, q)+\mu sV_{s}(t, s, q) \right)
    \boldsymbol{1}_{ \{ s < R_s \}}
    \\
    + \left( rq-\textcolor{ctrlcolor}{w}\boldsymbol{1}_{\{q>0\}} \right) V_{q}(t, s, q)
    \boldsymbol{1}_{ \{q < R_q \} }
    \label{eqn:results_consumption_artificial_operator}
\end{equation}
(compare with $\mathcal{L}^{\textcolor{ctrlcolor}{w}}$ in \cref{eqn:results_consumption_quantities}).
For convenience, we also define the sets
\[
    \Omega = [0,T) \times [0, R_s] \times [0, R_q]
    \textspace \text{and} \textspace
    \partial_T^+ \Omega = \overline{\Omega} \setminus \Omega = \{ T \} \times [0, R_s] \times [0, R_q].
\]
Instead of the HJBQVI \cref{eqn:results_consumption_hjbqvi}, we solve numerically the ``truncated HJBQVI''
\begin{align*}
    \min \left\{
        -V_t - \sup_{\textcolor{ctrlcolor}{w} \in [0, w^{\max}]} \left\{
            \mathcal{L}_R^{\textcolor{ctrlcolor}{w}} V
            + e^{-\beta t} \frac{\textcolor{ctrlcolor}{w}^\gamma}{\gamma}
            \boldsymbol{1}_{\{ q > 0\}}
        \right\},
        V - \mathcal{M}_R V
    \right\} & = 0 \text{ on } \Omega
    \\
    \min \left\{
        V - g,
        V - \mathcal{M}_R V
    \right\}
    & = 0 \text{ on } \partial_T^+ \Omega
\end{align*}
where $\mathcal{L}_R^{\textcolor{ctrlcolor}{w}}$ is given by \cref{eqn:results_consumption_artificial_operator}, $g$ is given by \cref{eqn:results_consumption_quantities}, and
\begin{align*}
    \mathcal{M}_R V(t, s, q)
    & = \sup_{
        \textcolor{ctrlcolor}{z} \in Z_R(s, q)
        %[-s, q] \cap I(s, q)
    }
    V(
        t,
        s + \textcolor{ctrlcolor}{z},
        %\max \left\{
            q - \textcolor{ctrlcolor}{z} - \kappa \left| \textcolor{ctrlcolor}{z} \right| - c
            %,0
        %\right\}
    )
    \\
    %I(s, q)
    %& = \left[
        %\frac{- R_q + q - c}{1 - \kappa},
        %R_s - s
    %\right].
    Z_R(s, q)
    & =
    \left \{
        \textcolor{ctrlcolor}{z} \colon 
        0 \leq s + \textcolor{ctrlcolor}{z} \leq R_s
    \right \}
    \cap
    \left \{
        \textcolor{ctrlcolor}{z} \colon 
        0 \leq q - \textcolor{ctrlcolor}{z} - \kappa \left| \textcolor{ctrlcolor}{z} \right| - c \leq R_q
    \right \}
\end{align*}
Note that $\mathcal{M}_R$ restricts controls to lie in the set $Z_R(s, q)$ to ensure that impulses do not leave the truncated domain (compare with $\mathcal{M}$ in \cref{eqn:results_consumption_quantities}).

\subsection{Convergence tests}

\begin{table}
    \centering \footnotesize
    \begin{tabular}{rrr}
        \toprule
        \multicolumn{2}{r}{Parameter} & Value \tabularnewline
        \midrule
        Maximum withdrawal rate & $w^{\max}$ & 100 \tabularnewline
        Drift & $\mu$ & 0.11 \tabularnewline
        Volatility & $\sigma$ & 0.3 \tabularnewline
        Interest rate & $r$ & 0.07 \tabularnewline
        Proportional transaction cost & $\kappa$ & 0.1 \tabularnewline
        Fixed transaction cost & $c$ & 0.05 \tabularnewline
        Relative risk aversion & $1-\gamma$ & 0.7 \tabularnewline
        Discount factor & $\beta$ & 0.1 \tabularnewline
        Horizon & $T$ & 40 \tabularnewline
        Initial risky investment value & $s_0$ & 45.20 \tabularnewline
        Initial bank account value & $q_0$ & 45.20 \tabularnewline
        Truncated domain boundaries & $(R_s, R_q)$ & $(200,200)$ \tabularnewline
        \bottomrule
    \end{tabular}
    \caption{Parameters for optimal consumption problem from \cite{MR1976512}}
    \label{tab:results_consumption_parameters}
\end{table}

\begin{table}
    \centering \footnotesize
    \begin{tabular}{cccccc}
        \toprule
        $\gls*{meshing_parameter}$ & Timesteps & $s$ points & $q$ points & $\textcolor{ctrlcolor}{w}$ points & $\textcolor{ctrlcolor}{z}$ points \tabularnewline
        \midrule
        1 & 32 & 20 & 20 & 15 & 15 \tabularnewline
        1/2 & 64 & 40 & 40 & 30 & 30 \tabularnewline
        $\vdots$ & $\vdots$ & $\vdots$ & $\vdots$ & $\vdots$ & $\vdots$ \tabularnewline
        \bottomrule
    \end{tabular}
    \caption{Numerical grid for optimal consumption problem}
    \label{tab:results_consumption_grid}
\end{table}

\begin{table}
    \centering \footnotesize
    \subfloat[Direct control scheme]{
        \begin{tabular}{cccccc}
            \toprule
            $\gls*{meshing_parameter}$ & Value $V^{\gls*{meshing_parameter}}(t=0, s_0, q_0)$ & Avg. policy its. & Avg. BiCGSTAB its. & Ratio & Norm. time \tabularnewline
            \midrule
            %1    & 56.0621229141 & 7.63 & 9.77 &      & 1.87e+01 \tabularnewline %1.28
            %1/2  & 58.7392240395 & 8.80 & 16.7 &      & 2.97e+02 \tabularnewline %1.90
            %1/4  & 59.4201246475 & 10.4 & 23.7 & 3.93 & 5.17e+03 \tabularnewline %2.28
            %1/8  & 59.6584129364 & 11.8 & 38.7 & 2.86 & 9.17e+04 \tabularnewline %3.28
            %1/16 & 59.7547798553 & 13.3 & 59.2 & 2.47 & 1.94e+06 \tabularnewline %4.45
            %1/32 & 59.7972061330 & 14.2 & 92.3 & 2.27 & 3.56e+07 \tabularnewline %6.54
            1    & 56.0621229141 & 7.63 & 9.77 &      & 1.63e+01 \tabularnewline %1.28
            1/2  & 58.7392240395 & 8.80 & 16.7 &      & 2.93e+02 \tabularnewline %1.90
            1/4  & 59.4201246475 & 10.4 & 23.7 & 3.93 & 5.66e+03 \tabularnewline %2.28
            1/8  & 59.6584129364 & 11.8 & 38.7 & 2.86 & 1.03e+05 \tabularnewline %3.28
            1/16 & 59.7547798553 & 13.3 & 59.2 & 2.47 & 1.85e+06 \tabularnewline %4.45
            1/32 & 59.7972061330 & 14.2 & 92.3 & 2.27 & 3.05e+07 \tabularnewline %6.54
            \bottomrule
        \end{tabular}
    }

    \subfloat[Penalty scheme]{
        \begin{tabular}{cccccc}
            \toprule
            $\gls*{meshing_parameter}$ & Value $V^{\gls*{meshing_parameter}}(t=0, s_0, q_0)$ & Avg. policy its. & Avg. BiCGSTAB its. & Ratio & Norm. time \tabularnewline
            \midrule
            %1    & 56.0584963190 & 4.09 & 5.85 &      & 1.12e+01 \tabularnewline %1.43
            %1/2  & 58.7390408653 & 3.95 & 6.99 &      & 1.41e+02 \tabularnewline %1.77
            %1/4  & 59.4200754123 & 3.40 & 7.99 & 3.94 & 1.75e+03 \tabularnewline %2.35
            %1/8  & 59.6583990235 & 3.04 & 11.2 & 2.86 & 2.41e+04 \tabularnewline %3.69
            %1/16 & 59.7547779953 & 2.80 & 15.4 & 2.47 & 4.10e+05 \tabularnewline %5.50
            %1/32 & 59.7972150004 & 2.58 & 15.4 & 2.27 & 6.36e+06 \tabularnewline %5.98
            1    & 56.0584963190 & 4.09 & 5.85 &      & 1.03e+01 \tabularnewline 
            1/2  & 58.7390408653 & 3.95 & 6.99 &      & 1.47e+02 \tabularnewline 
            1/4  & 59.4200754123 & 3.40 & 7.99 & 3.94 & 1.99e+03 \tabularnewline 
            1/8  & 59.6583990235 & 3.04 & 11.2 & 2.86 & 2.69e+04 \tabularnewline 
            1/16 & 59.7547779953 & 2.80 & 15.4 & 2.47 & 4.02e+05 \tabularnewline 
            1/32 & 59.7972150004 & 2.58 & 15.4 & 2.27 & 5.86e+06 \tabularnewline 
            \bottomrule
        \end{tabular}
    }

    \subfloat[Explicit-impulse scheme]{
        \begin{tabular}{ccccc}
            \toprule
            $\gls*{meshing_parameter}$ & Value $V^{\gls*{meshing_parameter}}(t=0, s_0, q_0)$ & Avg. BiCGSTAB its. & Ratio & Norm. time \tabularnewline
            \midrule
            %1    & 55.6216321734 & 2.00 &      & 1.00e+00 \tabularnewline
            %1/2  & 58.7820641022 & 2.03 &      & 1.57e+01 \tabularnewline
            %1/4  & 59.4045764001 & 3.00 & 5.08 & 2.51e+02 \tabularnewline
            %1/8  & 59.5693702945 & 4.00 & 3.78 & 4.07e+03 \tabularnewline
            %1/16 & 59.6511861506 & 4.00 & 2.01 & 6.74e+04 \tabularnewline
            %1/32 & 59.7053148416 & 4.00 & 1.51 & 1.10e+06 \tabularnewline
            %1/64 & 59.7483254658 & 4.00 & 1.26 & 1.77e+07 \tabularnewline
            1    & 55.6216321734 & 1.00 &      & 1.00e+00 \tabularnewline
            1/2  & 58.7820641022 & 2.00 &      & 1.55e+01 \tabularnewline
            1/4  & 59.4045764001 & 3.00 & 5.08 & 2.60e+02 \tabularnewline
            1/8  & 59.5693702945 & 4.00 & 3.78 & 4.05e+03 \tabularnewline
            1/16 & 59.6511861506 & 6.00 & 2.01 & 6.68e+04 \tabularnewline
            1/32 & 59.7053148416 & 8.00 & 1.51 & 1.11e+06 \tabularnewline
            1/64 & 59.7483254658 & 10.8 & 1.26 & 1.85e+07 \tabularnewline
            \bottomrule
        \end{tabular}
    }
    \caption{Convergence tests for optimal consumption problem}
    \label{tab:results_consumption_convergence}
\end{table}

For the numerical tests in this section, we use the parameters in \cref{tab:results_consumption_parameters}.
\cref{tab:results_consumption_grid} reports the size of the numerical grid.
Convergence tests are shown in \cref{tab:results_consumption_convergence}.

As in the FEX rate problem, the direct control and penalty schemes are nearly identical in accuracy and exhibit linear convergence.
Unlike the FEX rate problem, the average number of BiCGSTAB iterations per timestep increases much faster for the direct control scheme than the penalty scheme, resulting in larger execution times.
This is due to the matrices associated with the penalty scheme being generally better conditioned than those associated with the direct control scheme, a claim which we have verified numerically.
A possible explanation for this phenomenon is to recall that by \cref{eqn:matrix_penalty_matrix_2} and an application of the Gershgorin circle theorem, the matrix $A(P)$ associated with the penalty scheme has eigenvalues bounded away from zero by a distance of $1 / \Delta \tau$ (\cref{fig:results_gershgorin}).
In the case of the direct control scheme, no such guarantees can be made (see \cref{eqn:matrix_direct_control_matrix_2}).

\begin{figure}
    \centering
    \begin{tikzpicture}[scale=2]
        \draw [thick, ->] (-1,0) -- (3.5,0) node [below] {$\mathbb{R}$};
        \draw [thick, ->] (0,-1) -- (0,1);
        \draw (2,0) circle (1cm);
        \node at (2,0) {\textbullet};
        \node at (2,0.2) {$[A(P)]_{ii}$};
        \node at (0,0) {\textbullet};
        \node at (-0.125,-0.15) {$0$};
        \draw [thick,decorate,decoration={brace,amplitude=10pt}](0,0) -- (1,0) node [midway,yshift=0.7cm] {$1/\Delta \tau$};
    \end{tikzpicture}
    \caption{The $i$-th Gershgorin disc (in the complex plane) for the penalty scheme}
    \label{fig:results_gershgorin}
\end{figure}
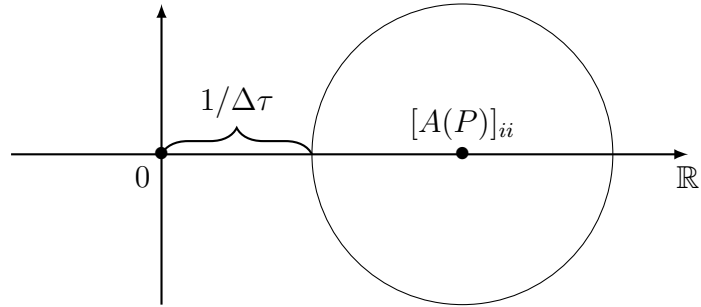

For the explicit-impulse scheme, we note that the average number of BiCGSTAB iterations \emph{per timestep} are nearly integer, suggesting that the number of BiCGSTAB iterations is roughly a constant independent of the timestep $n$.
A possible explanation for this phenomenon is to recall that at each timestep $n$, the explicit-impulse scheme involves solving, for the vector $V^n$, a linear system of the form
\[
    %( A C^{-1} ) ( C V^n ) = y^n
    A V^n = y^n
\]
(see \cref{eqn:schemes_linear_system,rem:schemes_factorization}).
In the above, we have made explicit the dependence of the right hand side on the timestep $n$ by writing $y^n$.
Since the matrix $A$ does not depend on the timestep $n$, we expect BiCGSTAB to have similar performance from timestep to timestep.

\subsection{Optimal control}

\begin{figure}
    \subfloat[Optimal control at the initial time $t=0$ \label{fig:results_consumption_control}]{
        \includegraphics[width=3.25in]{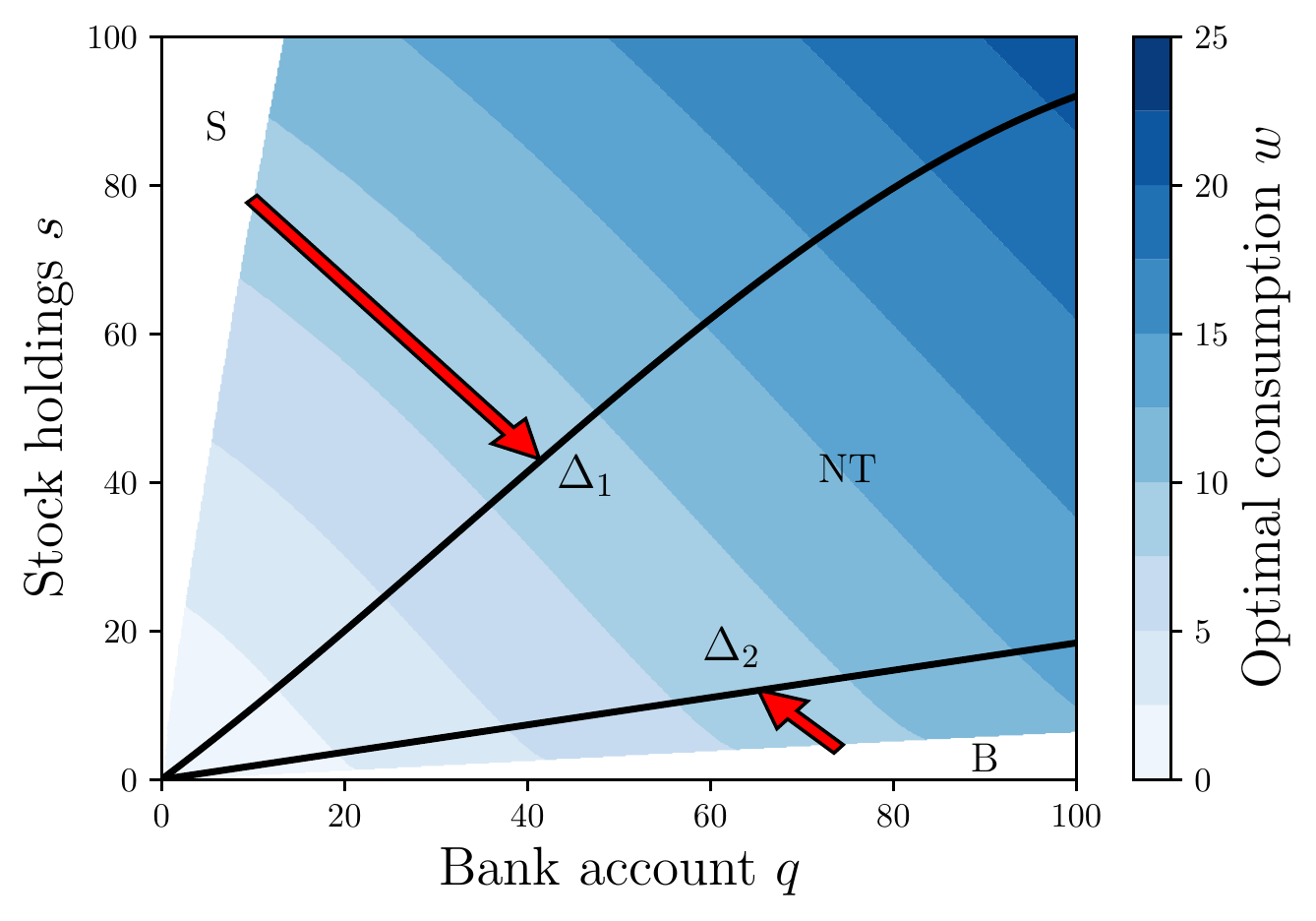}
    }
    \hfill{}
    \subfloat[Value at the initial time $t=0$]{
        \includegraphics[width=3.25in]{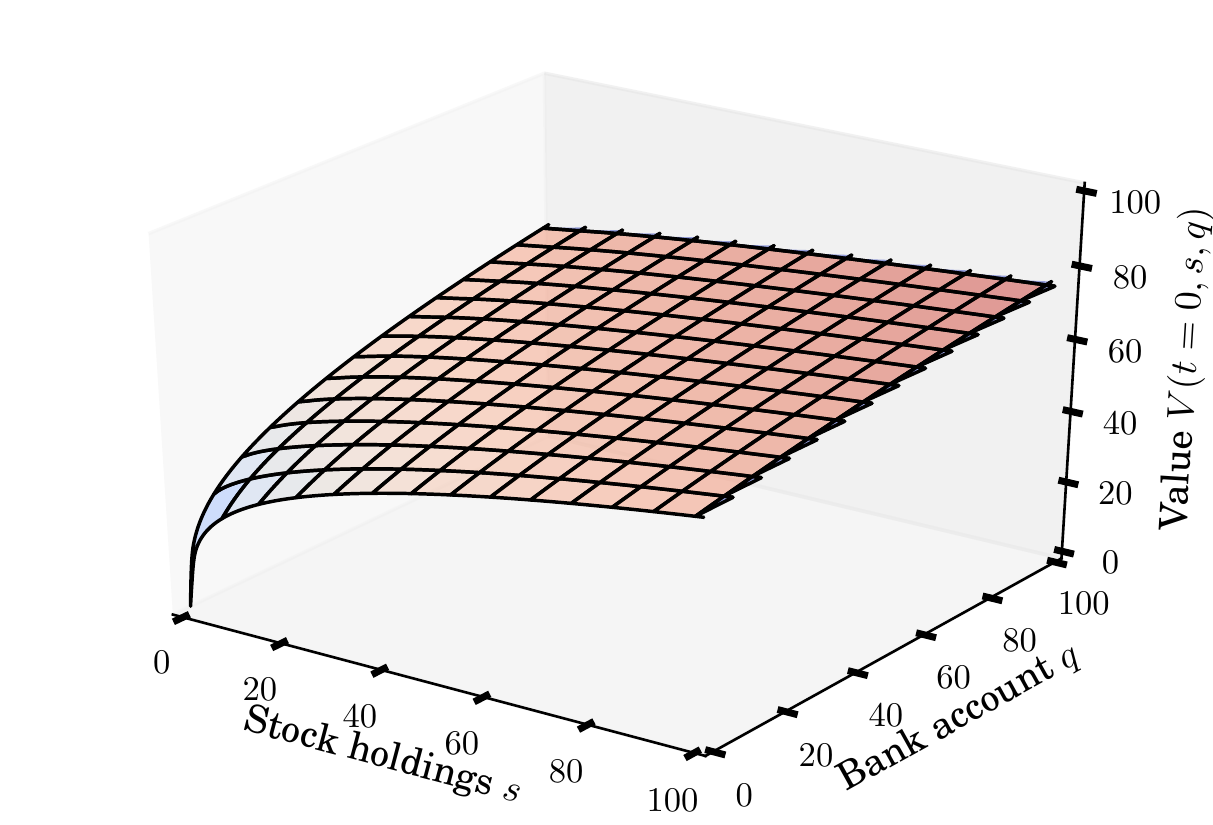}
    }
    \caption{Numerical solution of optimal consumption problem}
    \label{fig:results_consumption_value}
\end{figure}

Consider the optimal control in \cref{fig:results_consumption_control}.
As in \cite{MR1976512}, three regions are observed in an optimal control: the \emph{buy} (B), \emph{sell} (S), and \emph{no transaction} (NT) regions.
In the B and S regions, the controller performs an impulse to jump to the closest of the two lines marked $\Delta_1$ and $\Delta_2$.
In NT, the controller consumes capital.

%%%%%%%%%%%%%%%%%%%%%%%%%%%%%%%%%%%%%%%%%%%%%%%%%%%%%%%%%%%%%%%%%%%%%%%%%%%%%%
\section{Guaranteed minimum withdrawal benefits (GMWBs)} %in variable annuities}
\label{sec:results_gmwb}

A \variableAnnuitiesDescription

A particularly popular variable annuity, the \gls*{GMWB}, was first studied as a singular control problem in \cite{MR2454673}.
Shortly thereafter, it was recast as an impulse control problem in \cite{MR2407322} and solved numerically therein.
Roughly speaking, impulse control is more general than singular control:
%It is well-known that
when the fixed cost of an impulse approaches zero, a solution of an impulse control problem tends to that of a corresponding singular control problem; see, e.g., \cite{MR701523,MR709283,MR1709328}.
%For the reader's convenience, we repeat the setting of \cite{MR2407322} below.

A \gls*{GMWB} is composed of a risky investment $S$ and a guarantee account $Q$.
It is bootstrapped by an up-front payment $s_0$ to an insurer, placed in the risky investment (i.e., $S_0 = s_0$).
A \gls*{GMWB} promises to pay back at least the lump sum $s_0$, assuming that the holder does not withdraw above a certain contract-specified rate.
This is captured by setting $Q_0 = s_0$ and reducing both the risky investment and guarantee account on a dollar-for-dollar basis upon withdrawals.
The holder can continue to withdraw as long as the guarantee account remains positive.
In particular:
\begin{itemize}
    \item At all times, the holder picks the rate at which they withdraw capital continuously.
    \item The holder picks specific times at which to withdraw a finite amount subject to a penalty imposed by the insurer.
\end{itemize}

Let $\textcolor{ctrlcolor}{w_t}$ denote the withdrawal rate at time $t$.
Let $\textcolor{ctrlcolor}{\xi_1} \leq \textcolor{ctrlcolor}{\xi_2} \leq \cdots \leq \infty$ be the times at which the holder withdraws amounts $\textcolor{ctrlcolor}{z_1}, \textcolor{ctrlcolor}{z_2}, \ldots$
Subject to the above, the risky investment $S$ and guarantee account $Q$ evolve according to
%\begin{subequations}
    \begin{align}
        dS_t
        & = (
            (r - \eta ) S_t
            - \textcolor{ctrlcolor}{w_t} \boldsymbol{1}_{ \{S_t, Q_t > 0\} }
        ) dt + \sigma S_t d\gls*{wiener}_t
        & \text{for } t \neq \xi_1, \xi_2, \ldots
        \nonumber \\
        dQ_t
        & = - \textcolor{ctrlcolor}{w_t} \boldsymbol{1}_{\{Q_t > 0\}} dt
        & \text{for } t \neq \xi_1, \xi_2, \ldots
        \nonumber \\
        S_{\textcolor{ctrlcolor}{\xi_\ell}}
        & = \max \{
            S_{\textcolor{ctrlcolor}{\xi_\ell}-} - \textcolor{ctrlcolor}{z_\ell},
            0
        \}
        & \text{for } \ell = \phantom{\xi_1} \mathllap{1}, \phantom{\xi_2} \mathllap{2}, \ldots
        \nonumber \\
        Q_{\textcolor{ctrlcolor}{\xi_\ell}}
        & = Q_{\textcolor{ctrlcolor}{\xi_\ell}-} - \textcolor{ctrlcolor}{z_\ell}
        & \text{for } \ell = \phantom{\xi_1} \mathllap{1}, \phantom{\xi_2} \mathllap{2}, \ldots
        \label{eqn:results_gmwb_sde}%
    \end{align}
%\end{subequations}
where $r$ and $\sigma$ are nonnegative constants representing the interest rate and volatility and $\eta \leq r$ corresponds to a proportional management fee paid to the insurer.
As is usually the case in risk-neutral options pricing, the ``real'' drift $\mu$ of the stock does not make an appearance in \cref{eqn:results_gmwb_sde}.

To prevent the holder from withdrawing too quickly, we impose the restriction $|\textcolor{ctrlcolor}{w_t}| \leq w^{\max}$.
We also assume the holder cannot deposit into or withdraw more than the value of the guarantee account, captured by the restriction $0 \leq \textcolor{ctrlcolor}{z_\ell} \leq Q_{\textcolor{ctrlcolor}{\xi_\ell}-}$.

Letting $x = (s, q)$ and $\textcolor{ctrlcolor}{\theta} = (\textcolor{ctrlcolor}{w}; \textcolor{ctrlcolor}{\xi_1}, \textcolor{ctrlcolor}{z_1}; \textcolor{ctrlcolor}{\xi_2}, \textcolor{ctrlcolor}{z_2}; \ldots)$ denote a control, the value of the \gls*{GMWB} is captured by the objective function
\begin{multline}
    J(t, x; \textcolor{ctrlcolor}{\theta}) = \mathbb{E} \biggl[
        \int_t^T
            e^{-r u} \textcolor{ctrlcolor}{w_u} \boldsymbol{1}_{\{ Q_u > 0 \}}
        du
        + \sum_{t \leq \textcolor{ctrlcolor}{\xi_\ell} \leq T}
            e^{-r \textcolor{ctrlcolor}{\xi_\ell}} \left(
                \left(1 - \kappa\right) \textcolor{ctrlcolor}{z_\ell} - c
            \right) \\
        + e^{-r T} \max \left\{
            S_T,
            \left(1 - \kappa\right) Q_T - c
        \right\}
        \biggm| (S_{t-}, Q_{t-}) = x
    \biggr]
    \label{eqn:results_gmwb_functional}
\end{multline}
where $c > 0$ and $0 \leq \kappa \leq 1$ are fixed and proportional penalties paid for withdrawing.
The term $\max \{ S_T, \left(1 - \kappa\right) Q_T - c \}$ corresponds to the holder receiving the greater of the risky investment or a full withdrawal of the guarantee account subject to any withdrawal penalties.

In \cite[Appendix G]{chen2008numerical}, standard dynamic programming arguments are used to transform the optimization problem $V(t, x) = \sup_{\textcolor{ctrlcolor}{\theta}} J(t, x; \textcolor{ctrlcolor}{\theta})$ into the equivalent HJBQVI\footnote{\cite{chen2008numerical} considers instead the function $V_0(t, x) = e^{r t} \sup_{\textcolor{ctrlcolor}{\theta}} J(t, x; \textcolor{ctrlcolor}{\theta})$. As such, the HJBQVI \cref{eqn:results_gmwb_hjbqvi} and the HJBQVI in \cite{chen2008numerical} are equivalent up to the change of variables $V_0 = e^{r t} V$.}
%\begin{subequations}
    \begin{align}
        \min \left\{
            -V_t - \sup_{\textcolor{ctrlcolor}{w} \in [0, w^{\max}]} \left\{
                %\begin{gathered}
                    \mathcal{L}^{\textcolor{ctrlcolor}{w}} V
                    + e^{-r t} \textcolor{ctrlcolor}{w} \boldsymbol{1}_{\{q > 0\}}
                %\end{gathered}
            \right\},
            V - \mathcal{M}V
        \right\} & = 0 \text{ on } [0,T) \times [0,\infty)^2
        %\label{eqn:results_gmwb_interior} \\
        \nonumber \\
        \min \left\{
            V(T,\cdot) - g,
            V(T, \cdot) - \mathcal{M} V(T, \cdot)
        \right\}
        & = 0 \text{ on } [0,\infty)^2
        %\label{eqn:results_gmwb_boundary}
        \label{eqn:results_gmwb_hjbqvi}%
    \end{align}
%\end{subequations}
where
\begin{align}
    \mathcal{L}^{\textcolor{ctrlcolor}{w}} V(t, s, q)
    & = \frac{1}{2} \sigma^2 s^2 V_{ss}(t, s, q)
    + \left(
        \left(r - \eta\right) s
        - \textcolor{ctrlcolor}{w} \boldsymbol{1}_{\{s, q > 0\}}
    \right) V_s(t, s, q)
    - \textcolor{ctrlcolor}{w} \boldsymbol{1}_{\{q > 0\}} V_q(t, s, q)
    \nonumber \\
    \mathcal{M} V(t, s, q)
    & = \sup_{ \textcolor{ctrlcolor}{z} \in [0, q] } \left\{
        V(t, \max\left\{ s - \textcolor{ctrlcolor}{z}, 0 \right\}, q - \textcolor{ctrlcolor}{z})
        + e^{-r t} \left( \left( 1 - \kappa \right) \textcolor{ctrlcolor}{z} - c \right)
    \right\}
    \nonumber \\
    g(s, q)
    & = e^{-r T} \max \left\{
        s,
        \left(1 - \kappa\right) q - c
    \right\}.
    \label{eqn:results_gmwb_quantities}
\end{align}
%Note that the above is a two dimensional HJBQVI with no cross-derivatives and can hence be handled by obvious extensions of the direct control, penalty, and explicit-impulse schemes discussed in the previous chapter (see \cref{subsec:convergence_high_dimension_hjbqvi}).

\subsection{\Artificial{} boundary conditions}
\label{subsec:results_gmwb_artificial}

As usual, we do not solve the HJBQVI \cref{eqn:results_gmwb_hjbqvi} directly but rather truncate the domain and introduce \artificial{} boundary conditions.
In particular, letting $R_s$ and $R_q$ be large positive numbers, we work on the truncated domain $[0, T] \times [0, R_s] \times [0, R_q]$.

As remarked in \cite{MR2454673,MR2407322}, an \artificial{} boundary condition is not needed at $q = R_q$. This is due to the fact that the coefficient of $V_q$ in \cref{eqn:results_gmwb_quantities} is nonpositive, and hence the characteristics of the PDE are outgoing in the $q$ direction at $q = R_q$.

To derive an appropriate \artificial{} boundary condition at $s = R_s$, we assume as per \cite{MR2454673,MR2407322} that $V$ is asymptotically linear in $s$.
That is, we assume the existence of a smooth function $A$ such that
\[
    V(t, s, q) \sim A(t, q) s
    \textspace \text{as} \textspace s \rightarrow \infty
\]
which implies
\[
    V_t(t, s, q) \sim A_t(t, q) s
    \text{,} \textspace
    V_s(t, s, q) \sim A(t, q)
    \text{,} \textspace \text{and} \textspace
    V_{ss}(t, s, q) \sim 0
    \textspace \text{as} \textspace s \rightarrow \infty.
\]
Substituting the above into $\mathcal{L}^{\textcolor{ctrlcolor}{w}} V(t, s, q)$ defined in \cref{eqn:results_gmwb_quantities}, we find that
\[
    \mathcal{L}^{\textcolor{ctrlcolor}{w}} V(t, s, q)
    = \left(
        \left(r - \eta\right) s
        - \textcolor{ctrlcolor}{w} \boldsymbol{1}_{\{s, q > 0\}}
    \right) V(t, s, q) / s
    - \textcolor{ctrlcolor}{w} \boldsymbol{1}_{\{q > 0\}} V_q(t, s, q)
    \textspace \text{as} \textspace s \rightarrow \infty.
\]
This leads us to define the operator $\mathcal{L}_R^{\textcolor{ctrlcolor}{w}}$ by
\begin{multline}
    \mathcal{L}_R^{\textcolor{ctrlcolor}{w}} V(t, s, q)
    = - \textcolor{ctrlcolor}{w} \boldsymbol{1}_{\{q > 0\}} V_q(t, s, q)
    \\
    + \begin{cases}
        \frac{1}{2} \sigma^2 s^2 V_{ss}(t, s, q)
        + \left(
            \left(r - \eta\right) s
            - \textcolor{ctrlcolor}{w} \boldsymbol{1}_{\{s, q > 0\}}
        \right) V_s(t, s, q)
        & \text{if } s < R_s
        \\
        \left(
            \left(r - \eta\right) s
            - \textcolor{ctrlcolor}{w} \boldsymbol{1}_{\{s, q > 0\}}
        \right) V(t, s, q) / s
        & \text{if } s = R_s.
    \end{cases}
    \label{eqn:results_gmwb_artificial_operator}
\end{multline}
For convenience, we also define the sets
\[
    \Omega = [0,T) \times [0, R_s] \times [0, R_q]
    \textspace \text{and} \textspace
    \partial_T^+ \Omega = \overline{\Omega} \setminus \Omega = \{ T \} \times [0, R_s] \times [0, R_q].
\]
Instead of the HJBQVI \cref{eqn:results_gmwb_hjbqvi}, we solve numerically the ``truncated HJBQVI''
\begin{align*}
    \min \left\{
        -V_t - \sup_{\textcolor{ctrlcolor}{w} \in [0, w^{\max}]} \left\{
            \mathcal{L}_R^{\textcolor{ctrlcolor}{w}} V
            + e^{-r t} \textcolor{ctrlcolor}{w} \boldsymbol{1}_{\{q > 0\}}
        \right\},
        V - \mathcal{M}V
    \right\} & = 0 \text{ on } \Omega
    \nonumber \\
    \min \left\{
        V - g,
        V - \mathcal{M} V
    \right\}
    & = 0 \text{ on } \partial_T^+ \Omega
\end{align*}
where $\mathcal{L}_R^{\textcolor{ctrlcolor}{w}}$ is given by \cref{eqn:results_gmwb_artificial_operator} and $g$ and $\mathcal{M}$ are given by \cref{eqn:results_gmwb_quantities}.

\begin{remark}
    The particular form of the intervention operator $\mathcal{M}$ in the \gls*{GMWB} problem does not satisfy assumption \cref{enum:convergence_negative_cost} of \cref{chap:convergence}, so that the stability arguments of that chapter do not apply.
    However, the stability of the various schemes in this thesis applied to the \gls*{GMWB} problem can be handled by minor modifications of the arguments in \cite[Lemma 5.1]{MR2407322} for the explicit-impulse scheme and \cite[Lemma 5.1]{MR2875254} for the direct control and penalty schemes.
\end{remark}

\subsection{Convergence tests}

\begin{table}
    \centering \footnotesize
    \begin{tabular}{rrr}
        \toprule
        \multicolumn{2}{r}{Parameter} & Value \tabularnewline
        \midrule
        Maximum withdrawal rate & $w^{\max}$ & 10 \tabularnewline
        Risk-free interest rate & $r$ & 0.05 \tabularnewline
        Management fee & $\eta$ & 0 \tabularnewline
        Volatility & $\sigma$ & 0.3 \tabularnewline
        Excess withdrawal penalty & $\kappa$ & 0.1 \tabularnewline
        Fixed transaction cost & $c$ & $10^{-6}$ \tabularnewline
        Horizon & $T$ & 10 \tabularnewline
        Truncated domain boundaries & $(R_s, R_q)$ & $(1000,100)$ \tabularnewline
        \bottomrule
    \end{tabular}
    \caption{Parameters for GMWB problem from \cite{MR2407322}}
    \label{tab:results_gmwb_parameters}
\end{table}

\begin{table}
    \centering \footnotesize
    \begin{tabular}{cccccc}
        \toprule
        $\gls*{meshing_parameter}$ & Timesteps & $s$ points & $q$ points & $\textcolor{ctrlcolor}{w}$ points & $\textcolor{ctrlcolor}{z}$ points \tabularnewline
        \midrule
        1 & 32 & 64 & 50 & 2 & 2\tabularnewline
        1/2 & 64 & 128 & 100 & 2 & 4\tabularnewline
        $\vdots$ & $\vdots$ & $\vdots$ & $\vdots$ & $\vdots$ & $\vdots$\tabularnewline
        \bottomrule
    \end{tabular}
    \caption{Numerical grid for optimal consumption problem}
    \label{tab:results_gmwb_grid}
\end{table}

\begin{table}
    \centering \footnotesize
        \subfloat[Direct control scheme]{
        \begin{tabular}{cccccc}
            \toprule
            $\gls*{meshing_parameter}$ & $V^{\gls*{meshing_parameter}}(t=0, s=100, q=100)$ & Avg. policy its. & Avg. BiCGSTAB its. & Ratio & Norm. time\tabularnewline
            \midrule
            1    & 107.683417498 & 3.47 & 1.47 &      & 1.71e+01 \tabularnewline
            1/2  & 107.706787394 & 4.25 & 1.64 &      & 2.03e+02 \tabularnewline
            1/4  & 107.718780318 & 4.34 & 1.85 & 1.95 & 2.60e+03 \tabularnewline
            1/8  & 107.725782831 & 4.43 & 2.22 & 1.71 & 3.46e+04 \tabularnewline
            1/16 & 107.729641357 & 4.31 & 2.71 & 1.81 & 4.75e+05 \tabularnewline
            1/32 & 107.731755456 & 4.15 & 3.40 & 1.83 & 7.55e+06 \tabularnewline
            \bottomrule
        \end{tabular}
    }

    \subfloat[Penalty scheme]{
        \begin{tabular}{cccccc}
            \toprule
            $\gls*{meshing_parameter}$ & $V^{\gls*{meshing_parameter}}(t=0, s=100, q=100)$ & Avg. policy its. & Avg. BiCGSTAB its. & Ratio & Norm. time\tabularnewline
            \midrule
            1    & 107.682425551 & 3.47 & 1.58 &      & 1.80e+01 \tabularnewline
            1/2  & 107.706388904 & 4.08 & 1.65 &      & 2.06e+02 \tabularnewline
            1/4  & 107.718700668 & 3.95 & 1.76 & 1.95 & 2.45e+03 \tabularnewline
            1/8  & 107.725763667 & 3.98 & 1.97 & 1.74 & 3.22e+04 \tabularnewline
            1/16 & 107.729637421 & 3.71 & 2.39 & 1.82 & 4.34e+05 \tabularnewline
            1/32 & 107.731754559 & 3.32 & 3.01 & 1.83 & 6.62e+06 \tabularnewline
            \bottomrule
        \end{tabular}
    }

    \subfloat[Explicit-impulse scheme]{
        \begin{tabular}{ccccc}
            \toprule
            $\gls*{meshing_parameter}$ & $V^{\gls*{meshing_parameter}}(t=0, s=100, q=100)$ & Avg. BiCGSTAB its. & Ratio & Norm. time\tabularnewline
            \midrule
            1    & 107.423506170 & 1.00 &      & 1.00e+00 \tabularnewline
            1/2  & 107.684431768 & 1.00 &      & 1.02e+01 \tabularnewline
            1/4  & 107.708405901 & 1.00 & 10.9 & 1.39e+02 \tabularnewline
            1/8  & 107.722569027 & 1.00 & 1.70 & 2.03e+03 \tabularnewline
            1/16 & 107.730146084 & 1.00 & 1.87 & 3.31e+04 \tabularnewline
            1/32 & 107.732241020 & 1.98 & 3.62 & 6.10e+05 \tabularnewline
            1/64 & 107.733372937 & 2.90 & 1.85 & 1.13e+07 \tabularnewline
            \bottomrule
        \end{tabular}
    }
    \caption{Convergence tests for GMWB problem}
    \label{tab:results_gmwb_convergence}
\end{table}

\begin{figure}
    \subfloat[Optimal control at the initial time $t=0$ with parameter $\eta=0.03126$ chosen as in \cite{MR2407322}\label{fig:results_gmwb_control}]{
        \includegraphics[width=3.25in]{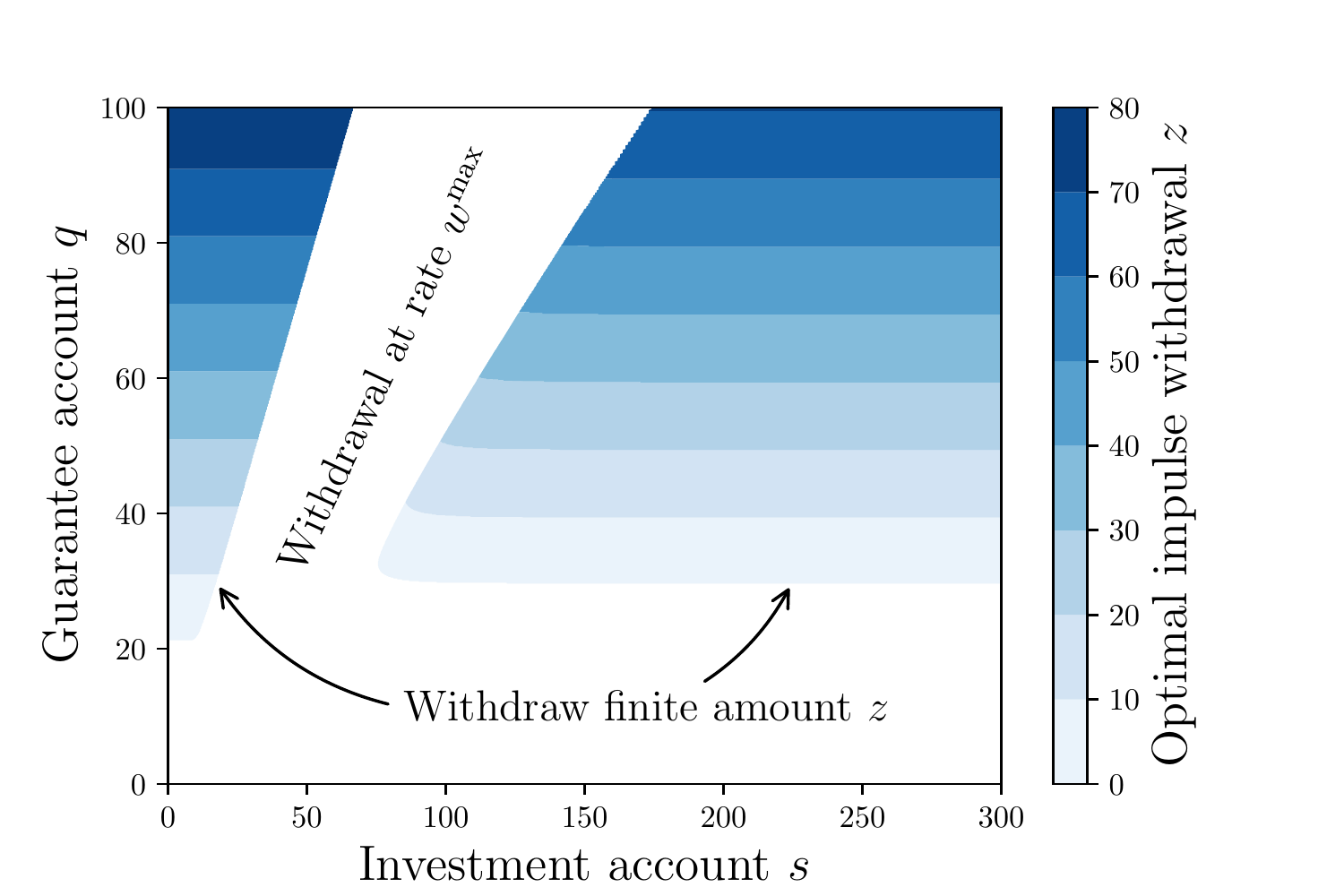}
    }
    \hfill{}
    \subfloat[Value at the initial time $t=0$]{
        \includegraphics[width=3.25in]{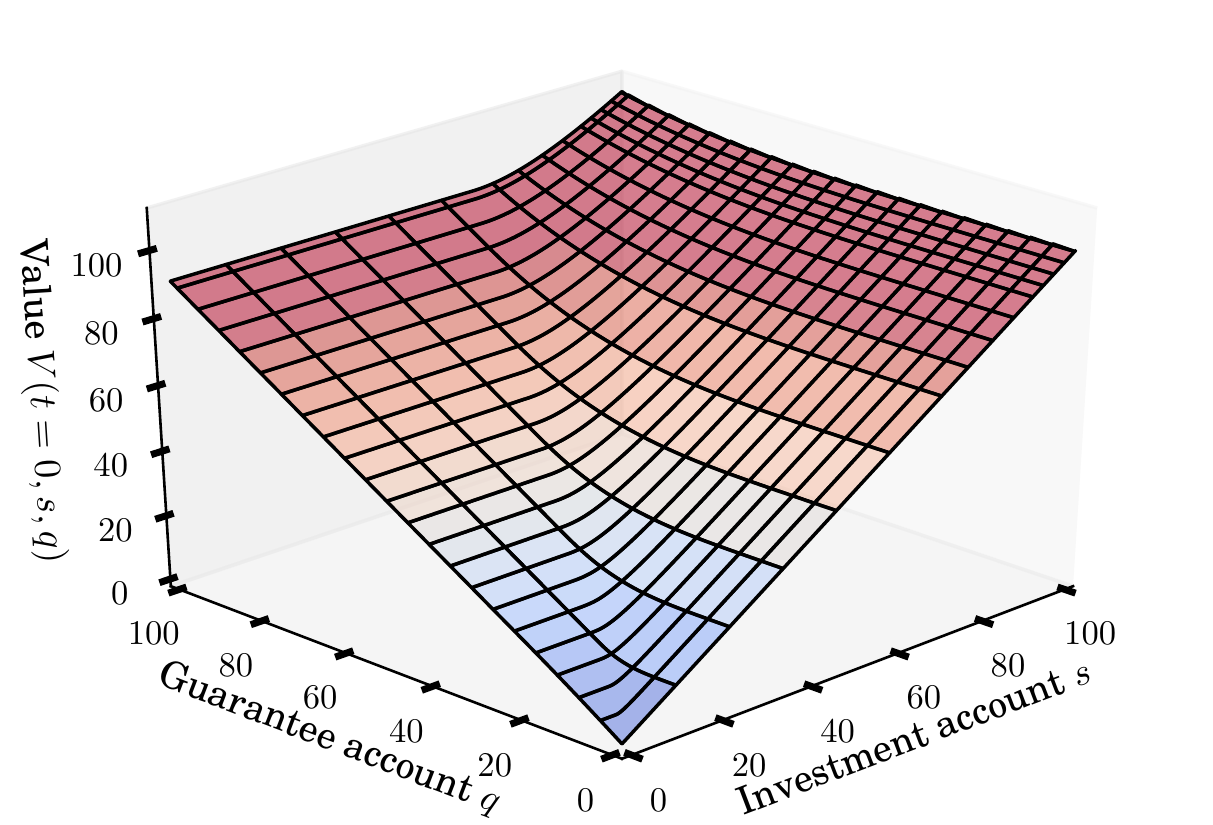}
    }
    \caption{Numerical solution of GMWB problem}
\end{figure}

For the numerical tests in this section, we use the parameters in \cref{tab:results_gmwb_parameters}.
We point out that it is sufficient to take $W^{\gls*{meshing_parameter}} = \{ 0, w^{\max} \}$ as the discretization of the control set $[0, w^{\max}]$ since the argument appearing in the supremum of the HJBQVI
%\[
    %\textcolor{ctrlcolor}{w} \mapsto
    %\frac{1}{2} \sigma^2 s^2 V_{ss}(t,s,q)
    %- \textcolor{ctrlcolor}{w} \boldsymbol{1}_{\{q > 0\}} V_q(t,s,q)
    %+ e^{-r t} \textcolor{ctrlcolor}{w}
    %+ \left(
        %\left(r - \eta\right) s
        %- \textcolor{ctrlcolor}{w} \boldsymbol{1}_{\{s, q > 0\}}
    %\right) V_s(t,s,q)
%\]
is linear in $\textcolor{ctrlcolor}{w}$. %for each fixed point $(t,s,q)$.
\cref{tab:results_gmwb_grid} reports the size of the numerical grid.
Convergence tests are shown in \cref{tab:results_gmwb_convergence}.
Our results agree closely with \cite{MR2407322}, which computes the value as 107.7313.

As in the FEX rate problem, the direct control and penalty schemes are nearly identical in performance and accuracy.
The explicit-impulse scheme, which seems to produce comparably accurate solutions for the GMWB problem, executes much faster than its counterparts.

\subsection{Optimal control}

\cref{fig:results_gmwb_control} shows an optimal control for a \gls*{GMWB}.
Three regions are witnessed.
In two of the three regions, the contract holder performs an impulse to withdraw a finite amount instantaneously.
In the remaining region, the controller withdraws at the constant rate $w^{\max}$.
A more detailed explanation of these regions is given in \cite{MR2407322}.

%%%%%%%%%%%%%%%%%%%%%%%%%%
\section{Infinite horizon}

So far, we have only considered finite horizon problems.
As a proof of concept, we solve in this section the optimal consumption problem over an infinite horizon by the penalty scheme \cref{eqn:schemes_penalty_infinite_horizon}.
This problem was previously considered using iterated optimal stopping in \cite{MR1976512}.
The setting is identical to \cref{results:consumption}, with the exception that $T = \infty$.
In this case, the corresponding HJBQVI is \cite[Eq. (2.31)]{MR1976512}
\[
    \min \left\{
        \beta V - \sup_{\textcolor{ctrlcolor}{w} \in [0, w^{\max}]} \left\{
            %\begin{gathered}
                %\frac{1}{2} \sigma^2 s^2 V_{ss} + \mu s V_s
                %\\
                %+ \left( rq - \textcolor{ctrlcolor}{w} \boldsymbol{1}_{\{ q > 0\}} \right) V_q
                \mathcal{L}^{\textcolor{ctrlcolor}{w}} V
                + \frac{\textcolor{ctrlcolor}{w}^\gamma}{\gamma} \boldsymbol{1}_{\{ q > 0\}}
            %\end{gathered}
        \right\},
        V - \mathcal{M}V
    \right\} = 0 \text{ on } [0,T) \times [0,\infty)^2
\]
where $\mathcal{L}^{\textcolor{ctrlcolor}{w}}$ and $\mathcal{M}$ are defined in \cref{eqn:results_consumption_quantities}.

Similarly to \cref{subsec:results_consumption_artificial}, we can introduce \artificial{} boundary conditions into the above HJBQVI to make it computationally tractable (we omit the details).
Convergence results are shown in \cref{tab:results_infinite_horizon_consumption_convergence} with times normalized to the fastest solve.
We note that the computed values agree roughly with those shown in \cite[Fig. 5.2]{MR1976512}.
The results are obtained by invoking policy iteration exactly once (there are no timesteps in the infinite horizon case) to solve \cref{eqn:schemes_penalty_infinite_horizon}.
The total number of policy iterations and BiCGSTAB iterations per timestep are also reported.

\begin{table}
    \centering \footnotesize
    \begin{tabular}{cccccc}
        \toprule
        $\gls*{meshing_parameter}$ & Value $V^{\gls*{meshing_parameter}}(t=0, s_0, q_0)$ & Tot. policy its. & Tot. BiCGSTAB its. & Ratio & Norm. time \tabularnewline
        \midrule
        1    & 56.2664380074 & 8 &  21 &      & 1.00e+00 \tabularnewline
        1/2  & 59.1841989658 & 9 &  36 &      & 4.41e+00 \tabularnewline
        1/4  & 59.8299121028 & 9 &  58 & 4.52 & 3.19e+01 \tabularnewline
        1/8  & 60.0469132389 & 9 &  82 & 2.98 & 2.43e+02 \tabularnewline
        1/16 & 60.1330774909 & 9 & 121 & 2.52 & 1.93e+03 \tabularnewline
        1/32 & 60.1706514583 & 8 & 185 & 2.29 & 1.40e+04 \tabularnewline
        1/64 & 60.1881729190 & 8 & 328 & 2.14 & 1.30e+05 \tabularnewline
        \bottomrule
    \end{tabular}
    \caption{Convergence tests for infinite horizon optimal consumption problem}
    \label{tab:results_infinite_horizon_consumption_convergence}
\end{table}

%%%%%%%%%%%%%%%%%
\section{Summary}

In this chapter, we applied the direct control, penalty, and explicit-impulse schemes to three classical impulse control problems from finance.
As mentioned in the introduction, some of these problems had not been previously considered numerically.

The explicit-impulse scheme, which requires no policy iteration whatsoever, is much faster than its counterparts.
However, as pointed out in \cref{chap:schemes}, the explicit-impulse scheme can only be used if the horizon is finite and the second derivative coefficient is independent of the control.
Moreover, our results indicate that the direct control and penalty schemes produce nearly identical results and often require roughly the same amount of computation.
In the specific case of the optimal consumption problem, the penalty scheme even outperforms the direct control scheme, requiring far fewer policy iterations per timestep.
Due to the additional effort required to ensure convergence of the direct control scheme as discussed in the introduction of this chapter, we advise against using this scheme altogether.

\iftoggle{thesis}{%
    \input{chapters/bang}
}{}
\iftoggle{thesis}{%
\setcounter{chapter}{6}
}{
\setcounter{chapter}{5}
}
\chapter{Summary and future work}
\label{chap:summary}

This thesis explored the numerical aspects of impulse control.
\iftoggle{thesis}{%
The bulk of the thesis focused on HJBQVIs arising from impulse control, for which various schemes (some new) were considered.
}{
In particular, we considered various schemes for HJBQVIs arising from impulse control problems.
}
Some of these schemes took the form of nonlinear matrix equations (a.k.a. Bellman problems) suitable for policy iteration.
To ensure that policy iteration could be applied to these schemes required existing and new results from the theory of \gls*{wcdd} matrices.
To prove the convergence of these schemes to the viscosity solution, classical results of Barles and Souganidis were extended to the nonlocal setting and applied to the schemes, which were shown to be monotone, stable, and nonlocally consistent.
The schemes were then applied to numerically compute solutions of various classical problems from finance, from which it was determined that the direct control scheme (proposed in \cite{MR1976512}) is dominated by the penalty scheme.

\iftoggle{thesis}{%
The remainder of the thesis focused on PDEs arising from impulse control with fixed impulse times.
In this case, sufficient conditions were found under which it is possible to perform a control reduction, thereby leading to faster numerical methods.
These results were applied to price \gls*{GLWB} contracts.
}{}

In terms of future work, there are several avenues of research that remain to be explored:
\begin{itemize}
    \item
        In \cref{subsec:convergence_high_dimension_hjbqvi}, we mentioned that extending the direct control, penalty, and explicit-impulse schemes to higher dimensions is a nontrivial matter if cross-derivatives are present.
        One possible avenue for future research is to resolve this by incorporating wide-stencils \cite{MR2399429,MR3649430,chen2016monotone} or the interpolation techniques in \cite{MR3042570}.
    \item
        In \cref{app:truncated}, we introduced a truncated approximation to the HJBQVI.
        However, we gave no rigorous bounds on the error between the solutions of the truncated HJBQVI and the original HJBQVI \cref{eqn:introduction_hjbqvi}.
        Obtaining such a result would rigorously justify the use of the truncated approximation, and may also suggest how large to pick our truncated domain in practice.
        % TODO (DS): Talk about why we cannot do this yet
    \iftoggle{thesis}{%
        \item
            In \cref{chap:bang}, we were able to optimize the cost of approximating the intervention operator in a problem involving fixed impulse times.
            It would be interesting to extend this analysis to the setting of the HJBQVI.
            Such an extension would undoubtedly require us to establish convexity and monotonicity properties of solutions to HJBQVIs.
    }{}
    \item
        In \cite{MR2642892}, the authors obtain an alternate representation of the HJBQVI by backwards stochastic differential equations (BSDEs).
        Very recently, researchers have used deep neural networks to implement fast BSDE solvers \cite{han2017deep}.
        Though such methods are not provably convergent, they are extremely fast and can handle high-dimensional (e.g., 100 dimensions) problems.
        One possible avenue for future research is to combine the results of \cite{MR2642892} and \cite{han2017deep} to obtain a fast high-dimensional HJBQVI solver.
    \item
        In the series of works \cite{MR2310697,MR2295621,MR2271747,MR2269875}, Lasry and Lions created the study of mean field games, concerning strategic decision making in very large populations of small interacting agents.
        At the time of writing, no analytic or numeric work exists incorporating impulses into the mean field game setting.
        This is a promising avenue for future research.
\end{itemize}

%----------------------------------------------------------------------
% APPENDICES
%----------------------------------------------------------------------

% The \appendix statement indicates the beginning of the appendices.
\appendix

% Add a title page before the appendices and a line in the Table of Contents
%\addtocontents{toc}{\protect\setcounter{tocdepth}{0}}
\chapter*{APPENDICES}
\addcontentsline{toc}{chapter}{APPENDICES}

\chapter{Supplementary material for Chapter 3}
\label{app:matrix_direct_control}

In this appendix, we take the setting of \cref{exa:matrix_direct_control} in order to prove that the assumptions of \cref{thm:matrix_restriction} are satisfied.
We first establish the \gls*{wcdd} condition:

\begin{lemma}
    $A(P)$ is \gls*{wcdd} for each $P \in \mathcal{P}^\prime$.
\end{lemma}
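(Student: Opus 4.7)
The plan is to combine the two structural lemmas proved in this chapter with an explicit construction of a walk toward the \gls*{sdd} vertex $M/2$.

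First, I would invoke \cref{lem:matrix_direct_control_matrix} to obtain that $A(P)$ is already a \gls*{wdd} Z-matrix with nonnegative diagonals, and \cref{lem:matrix_sdd_iff} to identify the non-\gls*{sdd} rows of $A(P)$ as exactly those indices $i$ with $\textcolor{ctrlcolor}{d_i} = 1$. Since $\mathcal{P}^\prime$ is defined so that $\textcolor{ctrlcolor}{d_{M/2}} = 0$, row $M/2$ is automatically \gls*{sdd} and will serve as the target vertex of our walks.

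Second, I would describe the off-diagonal structure of a row $i$ with $\textcolor{ctrlcolor}{d_i} = 1$. From the sparsity calculation in the proof of \cref{lem:matrix_direct_control_matrix}, combined with the grid-aligned choice of $Z^{\gls*{meshing_parameter}}(\tau^n, x_i)$ made in \cref{exa:schemes_intervention}, the only off-diagonal nonzero entry of row $i$ lies at the column $j_i$ determined by $x_{j_i} = x_i + \textcolor{ctrlcolor}{z_i}$. The sign conditions defining $\mathcal{P}^\prime$ then force $j_i > i$ if $i < M/2$ and $j_i < i$ if $i > M/2$; each impulse-edge points toward the central index $M/2$.

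Third, I would iterate to build the walk. For each row $i$ with $\textcolor{ctrlcolor}{d_i} = 1$, set $i_0 = i$ and $i_{k+1} = j_{i_k}$ for as long as $\textcolor{ctrlcolor}{d_{i_k}} = 1$; if at any point $\textcolor{ctrlcolor}{d_{i_k}} = 0$ we stop, having reached a \gls*{sdd} vertex. Without loss of generality assume $i < M/2$ (the other case is symmetric). The key property to establish is that the sequence $(i_k)_k$ is strictly increasing and stays in $\{i+1, \ldots, M/2\}$, so that monotonicity in the finite range $\{0, \ldots, M/2\}$ forces termination at $i_K = M/2$ in at most $M/2 - i$ steps. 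The walk $i = i_0 \rightarrow i_1 \rightarrow \cdots \rightarrow i_K$ in the adjacency graph of $A(P)$ then witnesses the condition of \cref{def:matrix_wcdd}.

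The main obstacle lies in the step above where we need the walk to remain on the same side of $M/2$ as its starting index: the defining conditions of $\mathcal{P}^\prime$ fix only the sign of $\textcolor{ctrlcolor}{z_i}$, not its magnitude, so \emph{a priori} a jump from $i < M/2$ could land at some $j_i > M/2$ and create an oscillation across the centre. Ruling this out is the technical heart of the proof. The natural approach is either to read the conditions on $\mathcal{P}^\prime$ with the additional (economically natural) restriction that $x_i + \textcolor{ctrlcolor}{z_i}$ never overshoots $m$, so that $j_i \leq M/2$ (respectively $j_i \geq M/2$) automatically, or to supply a Lyapunov-style argument using the potential $\phi(k) = |i_k - M/2|$ together with the finiteness of the grid to exclude the pathological oscillations. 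This part of the argument is where the bulk of the bookkeeping would reside, while the remainder of the proof reduces to direct bookkeeping on the sparsity structure already established.
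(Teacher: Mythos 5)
Your overall approach matches the paper's: invoke \cref{lem:matrix_direct_control_matrix} and \cref{lem:matrix_sdd_iff} to reduce to the case $d_{i_1}=1$, then construct a walk toward the s.d.d.\ vertex $M/2$ using the single out-edge $i \to j_i$ induced by the impulse control. The divergence is in how the two arguments handle the possibility of overshoot. The paper's proof simply asserts ``$z_{i_1} = x_{i_2} - x_{i_1}$ for some $i_1 < i_2 \leq M/2$ by \cref{eqn:matrix_direct_control_filter}'' and then uses monotonicity and boundedness of the increasing index sequence to force termination at $M/2$. You are right to be suspicious of that assertion: as written, \cref{eqn:matrix_direct_control_filter} only fixes the \emph{sign} of $z_i$, so it does not by itself rule out $j_i > M/2$. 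In fact a two-cycle $i_1 \to i_2 \to i_1$ with $i_1 < M/2 < i_2$, $z_{i_1} = x_{i_2} - x_{i_1} > 0$, $z_{i_2} = x_{i_1} - x_{i_2} < 0$, and $d_{i_1} = d_{i_2} = 1$ is admitted by the sign constraints and would produce a matrix that is not w.c.d.d.\ at all, so the gap you flagged is not merely cosmetic.

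Of the two repairs you sketch, only the first one closes the gap, and it is evidently what the paper's proof implicitly relies on: $\mathcal{P}^\prime$ must be read with the stronger restriction $0 < z_i \leq m - x_i$ when $x_i < m$ (and symmetrically for $x_i > m$), so that jumps never cross the centre. With that restriction the increasing-index argument goes through exactly as the paper writes it. The Lyapunov alternative with $\phi(k) = |i_k - M/2|$ does \emph{not} work under the sign-only constraints, for the very reason you identify as the obstacle: an overshooting jump increases $\phi$, and since the walk from any non-s.d.d.\ row is deterministic (each such row has exactly one out-edge), there is no freedom to ``choose a better step'' to make $\phi$ decrease. So the correct conclusion is not that a cleverer potential argument is needed, but that the definition of $\mathcal{P}^\prime$ must be tightened before the lemma is provable.
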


\begin{proof}
    Let $P$ be an arbitrary control in $\mathcal{P}^\prime$.
    It is sufficient to show that for each row $i_1$ of $A(P)$ that is not \gls*{sdd}, we can find a walk $i_1 \rightarrow \cdots \rightarrow i_k$ to an \gls*{sdd} row $i_k$.
    As such, let $i_1$ be a row that is not \gls*{sdd}
    Since the $i$-th row of $A(P)$ is not \gls*{sdd} if and only if $\textcolor{ctrlcolor}{d_i} = 1$ (recall \cref{lem:matrix_sdd_iff}), it follows that $\textcolor{ctrlcolor}{d_{i_1}} = 1$.
    
    By symmetry, it is sufficient to consider the case of $i_1 < M/2$.
    In this case, $\textcolor{ctrlcolor}{z_{i_1}} = x_{i_2} - x_{i_1}$ for some row $i_1 < i_2 \leq M / 2$ by \cref{eqn:matrix_direct_control_filter}.
    If $i_2$ is an \gls*{sdd} row, then $i_1 \rightarrow i_2$ is a walk to an \gls*{sdd} row.
    Otherwise, we can repeat this procedure to produce $i_3, i_4, \ldots$ until arriving at an \gls*{sdd} row $i_k$.
    In this case, $i_1 \rightarrow \cdots \rightarrow i_k$ is a walk to an \gls*{sdd} row.
    We are guaranteed that this procedure terminates since row $M/2$ is \gls*{sdd} due to $\textcolor{ctrlcolor}{d_{M/2}} = 0$ (see \cref{eqn:matrix_direct_control_filter}).
\end{proof}

We now establish \cref{eqn:matrix_invariance}.
The proof, albeit elementary, is somewhat arduous.
As such, we simply sketch the ideas below.

\begin{proof}[Proof sketch for \cref{eqn:matrix_invariance}]
    Fix $n$ and suppose that $V^{n-1} = (V_0^{n-1}, \ldots, V_M^{n-1})^\intercal$ satisfies $V_{M/2-i}^n = V_{M/2+i}^n$ for all $i$.
    Now, let $U = (U_0, \ldots, U_M)^\intercal$ be the unique vector satisfying $H(U; \mathcal{P}^\prime) = 0$.
    By some straightforward yet arduous algebra, one can establish that $U_{M/2-i} = U_{M/2+i}$ for all $i$ and
    \[
        \cdots \leq U_{M/2-2} \leq U_{M/2-1} \leq U_{M/2} \geq U_{M/2+1} \geq U_{M/2+2} \geq \cdots
    \]
    Using these two facts, it is straightforward to establish \cref{eqn:matrix_invariance}.
    Now, by \cref{thm:matrix_restriction}, it follows that $U$ is the unique vector satisfying $H(U; \mathcal{P}) = 0$, and hence $V^n = U$.
    
    Now, since $V^0$ is the zero vector (recall that the terminal condition in \cref{exa:introduction_fex} is $g(x) = 0$), it trivially satisfies $V_{M/2-i}^0 = V_{M/2+i}^0$ for all $i$.
    Therefore, \cref{eqn:matrix_invariance} is established (for all $n$) by induction.
\end{proof}
\chapter{Barles-Souganidis framework and the HJBQVI}
\label{app:barles}

In this appendix, we discuss why it is not possible to na\"{i}vely apply the Barles-Souganidis framework \cite{MR1115933} to prove the convergence of the direct control, penalty, and explicit-impulse schemes for the HJBQVI.
We recommend reading this appendix after \cref{sec:convergence_extension}.

Adapting the notation to match the notation of this thesis, recall that in \cite{MR1115933}, the authors consider numerical schemes of the form
\begin{equation}
    S(
        \gls*{meshing_parameter},
        x,
        V^{\gls*{meshing_parameter}}
    ) = 0 \textspace \text{for } x \in \overline{\Omega}.
    \label{eqn:barles_scheme}
\end{equation}
Compared to our notion of numerical scheme \cref{eqn:convergence_scheme}, the above does not have an explicit treatment of the nonlocal operator $\mathcal{I}$.

Consider now the following alternate definition of viscosity solution for \cref{eqn:convergence_pde}.
\begin{definition}
    An upper (resp. lower) semicontinuous function $V \colon \overline{\Omega} \rightarrow \mathbb{R}$ is a viscosity subsolution (resp. supersolution) of \cref{eqn:convergence_pde} if for all $\varphi \in C^2(\overline{\Omega})$ and $x \in \overline{\Omega}$ such that $V(x) - \varphi(x) = 0$ is a local maximum (resp. minimum) of $V - \varphi$, we have
    \begin{align}
        %F_*(D^2 \varphi(x), D \varphi(x), \varphi(x), [\mathcal{I}\varphi](x), x)
        F_*(x, \varphi(x), D \varphi(x), D^2 \varphi(x), [\mathcal{I}\varphi](x))
        & \leq 0 \nonumber \\
        \text{ (resp. }
        %F^*(D^2 \varphi(x), D \varphi(x), \varphi(x), [\mathcal{I}\varphi](x), x)
        F^*(x, \varphi(x), D \varphi(x), D^2 \varphi(x), [\mathcal{I}\varphi](x))
        & \geq 0
        \text{)}.
        \label{eqn:barles_viscosity_solution}
    \end{align}
    We say $V$ is a viscosity solution if it is both a subsolution and a supersolution.
    \label{def:barles_viscosity_solution}
\end{definition}
Compared to \cref{def:convergence_viscosity_solution}, which is the relevant definition for impulse control problems \cite{MR2568293,MR2109687}, the above alternate definition uses $\mathcal{I} \varphi$ instead of $\mathcal{I} V$ in \cref{eqn:barles_viscosity_solution}.
As such, \cref{def:convergence_viscosity_solution,def:barles_viscosity_solution} are \emph{not equivalent in general}.

In \cite{MR2182141} it is demonstrated that one can use schemes of the form \cref{eqn:barles_scheme} to approximate the solution of \cref{eqn:convergence_pde} when working under the alternate notion of solution specified by \cref{def:barles_viscosity_solution}.
In this case, the relevant notion of consistency is as follows \cite[Eq. (4.1)]{MR2182141}: a scheme $S$ is consistent if for each $\varphi \in C^2(\overline{\Omega})$ and $x \in \overline{\Omega}$
\[
    \liminf_{\substack{
        \gls*{meshing_parameter} \rightarrow 0 \\
        y \rightarrow x \\
        \xi \rightarrow 0
    }} S(
        \gls*{meshing_parameter},
        y,
        \varphi + \xi
        %- e^{-1/\gls*{meshing_parameter}} \boldsymbol{1}_{\overline{\Omega} \setminus \{y\}}
    )
    \geq
    %F_*(D^2 \varphi(x), D \varphi(x), \varphi(x), [\mathcal{I} \varphi](x), x)
    F_*(x, \varphi(x), D \varphi(x), D^2 \varphi(x), [\mathcal{I}\varphi](x))
\]
and
\[
    \limsup_{\substack{
        \gls*{meshing_parameter} \rightarrow 0 \\
        y \rightarrow x \\
        \xi \rightarrow 0
    }} S(
        \gls*{meshing_parameter},
        y,
        \varphi + \xi
        %+ e^{-1/\gls*{meshing_parameter}} \boldsymbol{1}_{\overline{\Omega} \setminus \{y\}}
    )
    \leq
    %F^*(D^2 \varphi(x), D \varphi(x), \varphi(x), [\mathcal{I} \varphi](x), x).
    F^*(x, \varphi(x), D \varphi(x), D^2 \varphi(x), [\mathcal{I}\varphi](x)).
\]
Compared to our notion of nonlocal consistency given by \cref{eqn:convergence_subconsistency,eqn:convergence_superconsistency}, the above does not require the use of half-relaxed limits.
At least intuitively, half-relaxed limits are not needed in this case since the alternate notion of viscosity solution specified by \cref{def:barles_viscosity_solution} uses $\mathcal{I} \varphi$ instead of $\mathcal{I} V$ in \cref{eqn:barles_viscosity_solution}, as discussed in the previous paragraph.

For the HJBQVI, we have the following result, which appears in \cite[Theorem 3.1]{MR2486085}, \cite[Section 4]{MR2735526}, and \cite[Proposition 1.2]{MR784578}.

\begin{proposition}
    \label{prop:barles_uniform_continuity}
    Let $F$ be given by \cref{eqn:convergence_hjbqvi} and $\mathcal{I} = \mathcal{M}$, corresponding to the HJBQVI.
    Let $V \colon \overline{\Omega} \rightarrow \mathbb{R}$ be {\bf uniformly continuous}.
    Then, $V$ is a subsolution (resp. supersolution) in the sense of \cref{def:convergence_viscosity_solution} if and only if it is a subsolution (resp. supersolution) in the sense of \cref{def:barles_viscosity_solution}.
\end{proposition}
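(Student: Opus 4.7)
The plan is to exploit the specific structure of the HJBQVI together with monotonicity and continuity of $\mathcal{M}$. The operator $F$ in (4.3) has the form $F(\cdot, \ell) = \min\{G(\cdot), r - \ell\}$ with $G$ purely local, so both $F_*$ and $F^*$ are non-increasing in the nonlocal argument $\ell$. Moreover, $\mathcal{M}$ is monotone in the sense $V \leq W$ pointwise $\Rightarrow \mathcal{M}V \leq \mathcal{M}W$ pointwise, and continuous on uniformly continuous inputs by continuity of $\Gamma, K$ and compactness of $Z(t,x)$. By adding a constant to $\varphi$ I may assume the normalization $V(x) = \varphi(x)$ at the touching point, after which the only substantive difference between the two definitions is whether the viscosity inequality features $\mathcal{M}V(x)$ or $\mathcal{M}\varphi(x)$. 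I will treat subsolutions; supersolutions are dual, with the roles of upper/lower envelopes and majorants/minorants reversed.

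For the direction Definition 4.2.1 $\Rightarrow$ Definition B.0.1, given a test function $\varphi$ with $V - \varphi$ attaining a local maximum at $x$ with $V(x) = \varphi(x)$, I will build a sequence $(\varphi_n) \subset C^2(\overline{\Omega})$ that (i) coincides with $\varphi$ on a fixed neighborhood of $x$ (so the second-order jet at $x$ and the touching condition are preserved), (ii) majorises $V$ globally, and (iii) converges to $\varphi$ pointwise everywhere, with $\varphi_n(\Gamma(x,z)) \to \varphi(\Gamma(x,z))$ uniformly in $z$. The construction glues $\varphi$ with a smooth global majorant of $V$ (obtained by mollifying $V$ and adding the modulus of continuity) via a smooth partition of unity; uniform continuity of $V$ is what allows the global majorant to exist. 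Applying Definition 4.2.1 to each $\varphi_n$ gives $F_*(x, V(x), D\varphi(x), D^2\varphi(x), \mathcal{M}V(x)) \leq 0$, and global domination $V \leq \varphi_n$ implies $\mathcal{M}V(x) \leq \mathcal{M}\varphi_n(x)$, so the non-increasing dependence of $F_*$ in $\ell$ transfers the inequality. Passing to the limit using continuity of $\mathcal{M}$ at uniformly continuous arguments yields the Definition B.0.1 inequality for $\varphi$.

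For the reverse direction, given a test function $\varphi$ with $V - \varphi$ attaining a local maximum at $x$, I will construct a sequence $(\psi_n) \subset C^2(\overline{\Omega})$ that agrees with $\varphi$ on a shrinking neighborhood of $x$ (preserving $\psi_n(x) = V(x)$, $D\psi_n(x) = D\varphi(x)$, $D^2\psi_n(x) = D^2\varphi(x)$, and the touching-from-above property at $x$) and coincides with a uniform mollification of $V$ outside a slightly larger neighborhood. The key estimate is $\mathcal{M}\psi_n(x) \to \mathcal{M}V(x)$, which follows from uniform continuity of $V$, continuity of $\Gamma, K$, and compactness of $Z(t,x)$ via a standard uniform approximation argument applied to the supremum defining $\mathcal{M}$. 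Each $\psi_n$ satisfies the hypotheses of Definition B.0.1, so that definition yields $F_*(x, V(x), D\varphi(x), D^2\varphi(x), \mathcal{M}\psi_n(x)) \leq 0$, and lower semicontinuity of $F_*$ in $\ell$ lets me pass to the limit and recover the Definition 4.2.1 inequality with $\mathcal{M}V(x)$.

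The hard part will be the careful construction of the approximating sequences $(\varphi_n)$ and $(\psi_n)$ that simultaneously pin down the second-order jet at $x$, maintain the touching configuration, and exhibit the correct global behaviour relative to $V$ so that the intervention operator can be controlled in the limit. Uniform continuity of $V$ is indispensable: it both provides the modulus enabling smooth global approximation and underpins the continuity of $\mathcal{M}$ along these approximations. Without it the two definitions can genuinely fail to coincide, since the test function $\varphi$ is pinned only locally near $x$ while $\mathcal{M}\varphi(x)$ samples $\varphi$ at the possibly distant intervention points $\Gamma(x,z)$; the uniform approximation is exactly the bridge that closes this gap.
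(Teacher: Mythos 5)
The paper does not prove \cref{prop:barles_uniform_continuity}; it cites \cite{MR2486085,MR2735526,MR784578} for it, so there is no in-paper argument to compare against. Your reverse implication (\cref{def:barles_viscosity_solution} $\Rightarrow$ \cref{def:convergence_viscosity_solution}) is sound: glue $\varphi$ near $x$ onto a mollification of $V$ outside a shrinking neighbourhood, use uniform continuity of $V$ both for the mollification error and to bound $|\varphi - V|$ on the shrinking patch via the normalization $\varphi(x)=V(x)$, obtain $\Vert \psi_n - V \Vert_\infty \to 0$ and hence $\mathcal{M}\psi_n(x) \to \mathcal{M}V(x)$, then pass the \cref{def:barles_viscosity_solution} inequality for $\psi_n$ to the limit through the $1$-Lipschitz dependence of $F_*$ and $F^*$ on the nonlocal argument $\ell$.

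Your forward implication, however, contains an internal contradiction. You demand a sequence $(\varphi_n)$ satisfying simultaneously (ii) $\varphi_n \geq V$ globally and (iii) $\varphi_n \to \varphi$ pointwise. If $\varphi(y) < V(y)$ at some $y$ away from $x$ --- entirely possible when $V - \varphi$ has only a local maximum --- then $\varphi_n(y) \geq V(y) > \varphi(y)$ for every $n$, so (iii) fails, $\mathcal{M}\varphi_n(x) \not\to \mathcal{M}\varphi(x)$, and the limiting step of your argument collapses. The repair is that this direction needs no approximating sequence at all: since the \cref{def:convergence_viscosity_solution} inequality depends on $\varphi$ only through its second-order jet at $x$ and on the $\varphi$-independent quantity $\mathcal{M}V(x)$, you may freely promote the local maximum of $V - \varphi$ to a global one; then $V \leq \varphi$ everywhere gives $\mathcal{M}V(x) \leq \mathcal{M}\varphi(x)$ directly, and the non-increasing dependence of $F_*$, $F^*$ on $\ell$ finishes in a single step. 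Incidentally, this forces \cref{def:barles_viscosity_solution} to be read with a \emph{global} maximum (as the paper in fact uses it in the proof of \cref{thm:convergence_result}); with a strictly local maximum the forward implication can fail, since one can arrange the local PDE part of $F$ to be positive at $x$ while $\varphi$ dips below $V$ exactly where an impulse $\Gamma(x,z)$ lands, making $V(x) - \mathcal{M}\varphi(x) > 0$.
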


The need for uniform continuity is summarized succinctly in \cite{MR2486085}:

\begin{quote}
    \emph{``However, note that the operator $\mathcal{M}$ is nonlocal; i.e., $\mathcal{M} \varphi (t, x)$ is not determined by values of $\varphi$ in a neighborhood of $(t, x)$, and $\mathcal{M} \varphi (t, x)$ might be very small if $\varphi$ is small away from $(t, x)$.
    Therefore, one has no control over $\mathcal{M} \varphi (t, x)$ by simply requiring that $u - \varphi$ have a local maximum (resp. minimum) at $(t, x)$.''}
\end{quote}

Since the Barles-Souganidis framework operates by constructing a subsolution and supersolution pair that are only {\bf semicontinuous} (see the details of the proof in \cite[Pg. 276]{MR1115933}), the continuity requirement in \cref{prop:barles_uniform_continuity} is too strong for us to apply the Barles-Souganidis framework to obtain a solution of the HJBQVI in the sense of \cref{def:convergence_viscosity_solution}.
It is this theoretical issue that ultimately led us to the notion of nonlocal consistency as a means to obtain the provable convergence of our schemes.

%\begin{remark}
    It is worthwhile to mention that one way to sidestep the above issue is by using a technique called iterated optimal stopping, in which a solution of the HJBQVI is obtained by considering a \emph{sequence of local PDEs}.
    This approach is described and analyzed in \cite{MR2314777} for the case of infinite horizon (i.e., $T = \infty$) HJBQVIs such as \cref{eqn:schemes_infinite_horizon}.
    However, it is well-known that when extended to the finite horizon (i.e., $T < \infty$) setting, iterated optimal stopping has a high space complexity and prohibitively slow convergence rate \cite{MR3150265}, making it unsuitable for the problems considered in this thesis.
%\end{remark}

\chapter{Truncated domain approximation}
\label{app:truncated}

As discussed in \cref{chap:convergence} (see, in particular, the text below \cref{eqn:convergence_grid}), computing the numerical solution of the HJBQVI as posed on an unbounded domain is computationally intractable.
In light of this, we restrict the HJBQVI to a truncated domain, introducing ``\artificial{}'' boundary conditions posed on the boundary of that domain.

In Subsection \ref{app:truncated_hjbqvi}, we introduce the truncated HJBQVI.
In Subsection \ref{app:truncated_consistency}, we modify the %schemes and
convergence arguments of \cref{chap:convergence} to ensure convergence to the solution of the truncated HJBQVI.
%In subsection \ref{app:truncated_heuristic}, we argue (heuristically) that the solution of the truncated HJBQVI should converge to the solution of the original HJBQVI as the truncated domain is made larger.
%We mention that relying on heuristics in order to substitute a PDE for its truncated version is common practice in the finance literature (see, e.g., %\cite[Section 5]{MR1470506},
%\cite[Section 3.1]{forsyth2007numerical}, and \cite[Section 13.5]{MR2976505}).

\section{Truncated HJBQVI}
\label{app:truncated_hjbqvi}

The truncated HJBQVI is
\begin{subequations}
    \begin{align}
        \min \left\{
            -V_t - \sup_{\textcolor{ctrlcolor}{w} \in W} \left\{
                \frac{1}{2} b(\cdot, \textcolor{ctrlcolor}{w})^2 V_{xx}
                + a(\cdot, \textcolor{ctrlcolor}{w}) V_x
                + f(\cdot, \textcolor{ctrlcolor}{w})
            \right\},
            V - \mathcal{M}V
        \right\} & = 0 \text{ on } \Omega
        \label{eqn:truncated_hjbqvi_interior} \\
        \min \left\{
            -V_t - \sup_{\textcolor{ctrlcolor}{w} \in W} \left\{
                f(\cdot, \textcolor{ctrlcolor}{w})
            \right\},
            V - \mathcal{M}V
        \right\} & = 0 \text{ on } \partial_R^+ \Omega
        \label{eqn:truncated_hjbqvi_spatial_boundary} \\
        \min \left \{
            V - g,
            V - \mathcal{M} V
        \right \}
        & = 0 \text{ on } \partial_T^+ \Omega
        \label{eqn:truncated_hjbqvi_boundary}
    \end{align}
    \label{eqn:truncated_hjbqvi}%
\end{subequations}
where %\frontierFootnote
\[
    \Omega = [0,T) \times (-R, R)
    \text{,} \textspace
    \partial_R^+ \Omega = [0, T) \times \{ -R, R \}
    \text{,} \textspace \text{and} \textspace
    \partial_T^+ \Omega = \{ T \} \times [-R, R].
    %\partial^+ \Omega = \overline{\Omega} \setminus \Omega.
\]
%In the above, the function $g$ is allowed to depend continuously on both time and space (i.e., $g(t, x)$ instead of $g(x)$ as in \cref{chap:convergence}).

\begin{remark}
    \cref{eqn:truncated_hjbqvi_spatial_boundary} corresponds to imposing artificial Neumann boundary conditions $V_{xx}(t, \pm R) = V_x(t, \pm R) = 0$ for all $t \in [0, T)$.
    The techniques in this appendix can be used to handle other boundary conditions, such as replacing \cref{eqn:truncated_hjbqvi_spatial_boundary} by
    \[
        \min \left\{
            V - g,
            V - \mathcal{M}V
        \right\} = 0
        \text{ on } \partial_R^+ \Omega.
    \]
    With the above choice of boundary condition, the truncated HJBQVI is related to maximizing, over all controls $\textcolor{ctrlcolor}{\theta}$, the objective function (compare with \cref{eqn:introduction_functional})
    \[
        J_R(t, x; \textcolor{ctrlcolor}{\theta}) = \mathbb{E} \left[
            \int_t^{\pi} f(u, X_u, \textcolor{ctrlcolor}{w_u}) du
            + \sum_{t \leq \textcolor{ctrlcolor}{\xi_\ell} \leq T} K(\textcolor{ctrlcolor}{\xi_\ell}, X_{\textcolor{ctrlcolor}{\xi_\ell}-}, \textcolor{ctrlcolor}{z_\ell})
            + g(X_{\pi})
            \middle | X_{t-} = x
        \right]
    \]
    where $\pi$ is the minimum of the terminal time $T$ and the first time the process $X$ ``escapes'' the interval $(-R, R)$ (\cref{fig:truncated_first_escape}), defined as
    \[
        \pi = \min \left \{ T, \pi_0 \right \}
        \textspace \text{where} \textspace
        \pi_0 = \inf \left \{
            s \geq t \colon |X_s| \geq R
        \right \}.
    \]
\end{remark}

\begin{figure}
    \centering
    \subfloat{
        \begin{tikzpicture}[scale=5]
            \pgfmathsetseed{48}
            \draw [thick, ->] (0,0) -- (1.25,0);
            \draw [thick, ->] (0,-0.35) -- (0,1);
            \drawArithmeticBrownianMotionPath{colA,thick}{A}{1/\gratio}{1/\gratio}{190}
            \node at (ArithmeticBrownianMotionEnd_A) {\textbullet};
            \node [above=of ArithmeticBrownianMotionEnd_A, yshift=-1cm] {$X_{\pi}$};
            \draw [dotted]
                ($(0,0)!(ArithmeticBrownianMotionEnd_A)!(0,2)$) -- 
                ($(1.25,0)!(ArithmeticBrownianMotionEnd_A)!(1.25,2)$)
            ;
            \node [xshift=-0.5cm] at ($(0,0)!(ArithmeticBrownianMotionEnd_A)!(0,2)$) {$R$};
            \node [yshift=-0.5cm] at ($(0,0)!(ArithmeticBrownianMotionEnd_A)!(2,0)$) {$\pi$};
            \node [yshift=-0.5cm] at (1.15,0) {$T$};
            \draw
                ($(0,-0.025)!(ArithmeticBrownianMotionEnd_A)!(2,-0.025)$) -- ($(0,0.025)!(ArithmeticBrownianMotionEnd_A)!(2,0.025)$)
                ($(-0.025,0)!(ArithmeticBrownianMotionEnd_A)!(-0.025,2)$) -- ($(0.025,0)!(ArithmeticBrownianMotionEnd_A)!(0.025,2)$)
                (1.15,-0.025) -- (1.15,0.025)
            ;
            \pgfmathsetseed{8}
            \draw [thick, ->] (1.75,0) -- (3,0);
            \draw [thick, ->] (1.75,-0.35) -- (1.75,1);
            \drawArithmeticBrownianMotionPath{colA,thick,xshift=1.75cm}{B}{1/\gratio}{1/\gratio}{190}
            \node at (ArithmeticBrownianMotionEnd_B) {$\circ$};
            \node at ($(0,0.95)!(ArithmeticBrownianMotionEnd_B)!(3,0.95)$) {\textbullet};
            \draw [dotted]
                ($(1.75,0)!(ArithmeticBrownianMotionEnd_A)!(1.75,2)$) -- 
                ($(3,0)!(ArithmeticBrownianMotionEnd_A)!(3,2)$)
            ;
            \draw [dashed] (ArithmeticBrownianMotionEnd_B) -- ($(0,0.95)!(ArithmeticBrownianMotionEnd_B)!(3,0.95)$);
            \node [xshift=0.6cm] at (ArithmeticBrownianMotionEnd_B) {$X_{\pi-}$};
            \node [xshift=0.5cm] at ($(0,0.95)!(ArithmeticBrownianMotionEnd_B)!(3,0.95)$) {$X_{\pi}$};
            \node [yshift=-0.5cm] at ($(0,0)!(ArithmeticBrownianMotionEnd_B)!(3,0)$) {$\pi$};
            \node [yshift=-0.5cm] at (2.9,0) {$T$};
            \draw
                ($(1.75,-0.025)!(ArithmeticBrownianMotionEnd_B)!(3,-0.025)$) -- ($(1.75,0.025)!(ArithmeticBrownianMotionEnd_B)!(3,0.025)$)
                ($(1.725,0)!(ArithmeticBrownianMotionEnd_A)!(1.725,1)$) --
                ($(1.775,0)!(ArithmeticBrownianMotionEnd_A)!(1.775,1)$)
                (2.9,-0.025) -- (2.9,0.025)
            ;
            \node [xshift=-0.5cm] at ($(1.75,0)!(ArithmeticBrownianMotionEnd_A)!(1.75,2)$) {$R$};
        \end{tikzpicture}
    }
    \caption{Possible realizations of the first escape time}
    \label{fig:truncated_first_escape}
\end{figure}
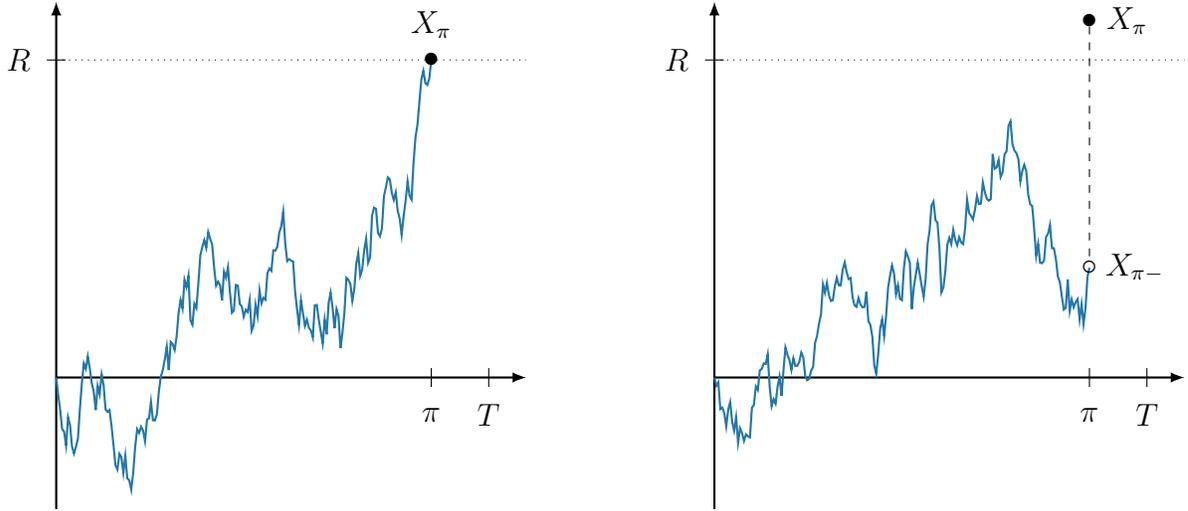

Throughout this appendix, we assume \cref{enum:convergence_continuity}\textendash \cref{enum:convergence_impulse_control_hausdorff} of \cref{chap:convergence} hold. %with the exception that in \cref{enum:convergence_continuity}, since $g$ is now both a function of time and space, we replace the assumption $\mathcal{M} g \leq g$ with the weaker assumption $\mathcal{M} g(T, \cdot) \leq g(T, \cdot)$.
To ensure that impulses do not leave the truncated domain, we also assume the following:

\begin{enumerate}[label=(H\arabic*),start=5]
    \item $(t, \Gamma(t, x, \textcolor{ctrlcolor}{z})) \in \overline{\Omega}$ for all $(t, x) \in \overline{\Omega}$ and $\textcolor{ctrlcolor}{z} \in Z(t, x)$.
\end{enumerate}

Lastly, we mention that in order to write \cref{eqn:truncated_hjbqvi} in the form \cref{eqn:convergence_pde}, we take $\mathcal{I} = \mathcal{M}$ and
\begin{multline}
    %F(A, (q, p), r, \wideEll, (t, x))
    F((t, x), r, (q, p), A, \wideEll)
    \\
    = \begin{cases}
        \min \left\{
            - q - \sup_{\textcolor{ctrlcolor}{w} \in W} \left\{
                \frac{1}{2} b(t, x, \textcolor{ctrlcolor}{w})^2 A
                + a(t, x, \textcolor{ctrlcolor}{w}) p
                + f(t, x, \textcolor{ctrlcolor}{w})
            \right\}, r - \wideEll
        \right\}
        & \text{if } (t, x) \in \Omega \\
        \min \left\{
            - q - \sup_{\textcolor{ctrlcolor}{w} \in W} \left\{
                f(t, x, \textcolor{ctrlcolor}{w})
            \right\},
            r - \wideEll
        \right\}
        & \text{if } (t, x) \in \partial_R^+ \Omega \\
        \min \left\{
            r - g(x),
            r - \wideEll
        \right\}
        & \text{if } (t, x) \in \partial_T^+ \Omega
    \end{cases}
    \label{eqn:truncated_hjbqvi_operator}
\end{multline}
(compare with \cref{eqn:convergence_hjbqvi}).

\begin{remark}
    If, in addition to the requirements of \cref{thm:convergence_comparison}, $a(\cdot, \pm R, \cdot) = b(\cdot, \pm R, \cdot) = 0$, a trivial modification of the arguments in \cref{app:comparison} yield a comparison principle for the truncated HJBQVI.
\end{remark}

\section{Modifying the convergence arguments}
\label{app:truncated_consistency}

In this subsection, we discuss how to modify the convergence arguments in order to ensure convergence to the solution of the truncated HJBQVI \cref{eqn:truncated_hjbqvi}.

Instead of the uniform spacing condition $x_{i+1} - x_i = \Delta x$ and the condition on $M$ in \cref{eqn:convergence_grid}, we work under the assumption that $\{ x_0, \ldots, x_M \}$ is a partition of the interval $[-R, R]$ which satisfies (using Bachmann-Landau notation),
\begin{equation}
    x_{i+1} - x_i = \Theta(\gls*{meshing_parameter}).
    \label{eqn:truncated_spacing}
\end{equation}
Therefore, while the number of points on the spatial grid increases as $h \rightarrow 0$, the location of the boundary points $x_0 = -R$ and $x_M = R$ remains fixed.

The monotonicity and stability of the direct control, penalty, and explicit-impulse schemes are not affected by the modified grid described above.
In light of this, it is sufficient only to show that these schemes are nonlocally consistent (with respect to \cref{eqn:truncated_hjbqvi_operator}).

\begin{lemma}
    \label{lem:truncated_direct_control_consistency}
    %The truncated direct control scheme is monotone, stable, and nonlocally consistent.
    The direct control scheme is nonlocally consistent (with respect to \cref{eqn:truncated_hjbqvi_operator}).
\end{lemma}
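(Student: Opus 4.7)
The plan is to adapt the proof of \cref{lem:convergence_direct_control_consistency} with minimal modifications, since the truncated direct control scheme differs from the untruncated one only in that (i) the spatial grid now satisfies $x_0 = -R$, $x_M = R$ with spacing $\Theta(h)$ rather than uniformly vanishing with $M \Delta x \to \infty$, and (ii) the target operator $F$ acquires a third case on $\partial_R^+ \Omega$. Crucially, the scheme itself needs no structural change: by the convention $(\mathcal{D}_2 U)_0 = (\mathcal{D}_2 U)_M = 0$ and $(\mathcal{D}_\pm U)_0 = (\mathcal{D}_\pm U)_M = 0$ built into \cref{eqn:schemes_second_derivative,eqn:schemes_first_derivative_pm}, the ``interior'' scheme equation \cref{eqn:convergence_direct_control_interior} automatically collapses to the discretized form of \cref{eqn:truncated_hjbqvi_spatial_boundary} at grid points $x_0$ and $x_M$.

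First I would fix $\varphi \in C^{1,2}(\overline{\Omega})$, a uniformly bounded family $(u^h)_{h>0}$, and a sequence $(h_m, s_m, y_m, \xi_m)_m$ with $h_m \to 0$, $(s_m, y_m) \to (t, x) \in \overline{\Omega}$, and $\xi_m \to 0$, choosing grid points $s_m = \tau^{n_m}$, $y_m = x_{i_m}$. I would then split into three cases according to which piece of $\overline{\Omega} = \Omega \cup \partial_R^+\Omega \cup \partial_T^+\Omega$ contains $(t,x)$. When $(t,x) \in \Omega$, we have $-R < x < R$, so for $m$ large enough $0 < i_m < M$, and the Taylor expansions for $\mathcal{D}_2 \varphi^{n_m}$ and $\mathcal{D} \varphi^{n_m}$ go through exactly as in \cref{lem:convergence_direct_control_consistency} (the spacing condition \cref{eqn:truncated_spacing} suffices for the $O(h)$ truncation error). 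The case $(t,x) \in \partial_T^+ \Omega$ requires no new argument, since the terminal equation \cref{eqn:convergence_direct_control_boundary} is unchanged and the analysis of $S_m^{(3)}$ and $S_m^{(4)}$ from \cref{lem:convergence_direct_control_consistency} is not affected by truncation.

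The genuinely new case is $(t, x) \in \partial_R^+ \Omega$; by symmetry assume $x = R$. Here the sequence $(x_{i_m})_m$ may contain indices with $i_m = M$ (boundary) and indices with $i_m < M$ (interior approach), and I would treat the two subsequences separately. On the boundary subsequence, $(\mathcal{D}_2 \varphi^{n_m})_{i_m} = (\mathcal{D} \varphi^{n_m})_{i_m} = 0$ by construction, so $S_m^{(1)}$ converges to $-\varphi_t(t, R) - \sup_{\textcolor{ctrlcolor}{w} \in W} f(t, R, \textcolor{ctrlcolor}{w})$, which is precisely the boundary piece of \cref{eqn:truncated_hjbqvi_operator}. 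On the interior subsequence, the standard Taylor arguments apply and yield the interior piece of $F$ evaluated at $(t, R)$. Both limits lie between $F_*((t,R), \varphi(R), D\varphi(R), D^2\varphi(R), \mathcal{M}\overline{u}(t,R))$ and $F^*(\cdot)$ by the very definition of the semicontinuous envelopes of the piecewise $F$ in \cref{eqn:truncated_hjbqvi_operator}. Combining the two subsequential limits with the analysis of $S_m^{(2)}$ (for which \cref{lem:proofs_convergence_direct_control_consistency_3} applies verbatim) via \cref{lem:proofs_convergence_direct_control_consistency_1} yields the two nonlocal consistency inequalities.

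The main subtlety, and the only place I expect to spend care, is bookkeeping the lower semicontinuous envelope $F_*$ and upper envelope $F^*$ at the ``corner'' points $(T, \pm R)$, where all three branches of \cref{eqn:truncated_hjbqvi_operator} meet and the boundary layer phenomenon noted in \cref{exa:convergence_dirichlet} occurs. There, a single sequence can asymptotically approach any combination of the three branches, and one must verify that each limiting value of $S$ indeed falls between $F_*$ and $F^*$. This is, however, a purely combinatorial check: each branch of the scheme corresponds to one branch of $F$, so every limiting value of $S$ along a subsequence is realized as the limit of $F$ along a matching subsequence of base points, whence it automatically lies in $[F_*, F^*]$.
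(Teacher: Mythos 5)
Your proposal is correct and takes essentially the same approach as the paper's proof. You correctly identify that the boundary behavior is hard-coded into the difference operators ($(\mathcal{D}_2 U)_0 = (\mathcal{D}_2 U)_M = 0$, etc.), so the scheme automatically produces the discretized analogue of the operator in \cref{eqn:truncated_hjbqvi_spatial_boundary} at $x_0$ and $x_M$; the paper records this by introducing a term $S_m^{(5)}$ equal to $S_m^{(1)}$ with the derivative contributions dropped, exactly as you observe. The only presentational difference is how the two possible cases (interior index $0 < i_m < M$ versus boundary index $i_m \in \{0, M\}$) are combined: you pass to subsequences along which the index type is constant and note that every subsequential limit lands in $[F_*, F^*]$; the paper instead bounds the scheme value for every $m$ by $\min\bigl(\min\{S_m^{(1)}, S_m^{(2)}\}, \min\{S_m^{(5)}, S_m^{(2)}\}\bigr)$ from below (and by the analogous $\max$ from above) and then takes limits, invoking \cref{lem:proofs_convergence_direct_control_consistency_1}. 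Both devices prove the same inequalities, and your handling of the corner $(T, \pm R)$ — where the paper merely gestures at \cref{lem:convergence_direct_control_consistency} — is, if anything, more explicit than the paper's.
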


\begin{proof}
    %While the monotonicity and stability of the truncated direct control scheme is handled in the same way as for the (untruncated) direct control scheme, nonlocal consistency (handled below) requires special consideration.

    Let $\Omega = [0,T) \times (-R, R)$ and $\varphi \in C^{1,2}(\overline{\Omega})$.
    Let $\varphi_i^n$ and $\varphi^n$, $(u^{\gls*{meshing_parameter}})_{\gls*{meshing_parameter} > 0}$, and $(\gls*{meshing_parameter}_m, s_m, y_m, \xi_m)_m$ be defined as in the proof of \cref{lem:convergence_direct_control_consistency}.
    By the definition of the scheme,
    \begin{equation}
        S(
            \gls*{meshing_parameter}_m,
            (s_m, y_m),
            \varphi + \xi_m,
            [\mathcal{I}^{\gls*{meshing_parameter}_m} u^{\gls*{meshing_parameter}_m}](s_m, y_m)
        )
        = \begin{cases}
            \min \{ S_m^{(1)}, S_m^{(2)} \} & \text{if } n_m > 0 \text{ and } 0 < i < M \\
            \min \{ S_m^{(5)}, S_m^{(2)} \} & \text{if } n_m > 0 \text{ and } i = 0, M \\
            %\min \{ S_m^{(3)}, S_m^{(2)} \} & \text{if } n_m > 0 \text{ and } i = 0, M \\
            S_m^{(3)} & \text{if } n_m = 0
        \end{cases}
        \label{eqn:truncated_direct_control_consistency_1}
    \end{equation}
    (compare with \cref{eqn:proofs_convergence_direct_control_consistency_1}) where $S_m^{(1)}$, $S_m^{(2)}$, and $S_m^{(3)}$ are defined in \cref{eqn:proofs_convergence_direct_control_consistency_2} and
    \[
        S_m^{(5)}
        = -\sup_{\textcolor{ctrlcolor}{w} \in W^{\gls*{meshing_parameter}_m}} \left \{
            \frac{\varphi_{i_m}^{n_m-1} - \varphi_{i_m}^{n_m}}{\Delta \tau}
            + f_{i_m}^{n_m}(\textcolor{ctrlcolor}{w})
        \right \}.
    \]
    %(with $g(y_m)$ replaced by $g(s_m, y_m)$ since the function $g$ is now allowed to depend on time).
    Similarly to \cref{lem:proofs_convergence_direct_control_consistency_1}, we have
    \begin{equation}
        \lim_{m \rightarrow \infty} S_m^{(5)}
        = -\varphi_t(t, x) - \sup_{\textcolor{ctrlcolor}{w} \in W} \left \{
            f(t, x, \textcolor{ctrlcolor}{w})
        \right \}.
        \label{eqn:truncated_direct_control_consistency_2}
    \end{equation}
    
    Suppose that $(s_m, y_m) \rightarrow (t, x)$ where $t < T$ and $x = \pm R$, corresponding to the \artificial{} boundary condition (the other cases can be handled as in the proof of \cref{lem:convergence_direct_control_consistency}).
    Due to symmetry, we will focus only on the case of $x = + R$.
    
    Since $s_m \rightarrow t$, we may assume that $s_m < T$ (or, equivalently, $n_m > 0$) for each $m$.
    In this case, by \cref{eqn:truncated_direct_control_consistency_1},
    \[
        S(
            \gls*{meshing_parameter}_m,
            (s_m, y_m),
            \varphi + \xi_m,
            [\mathcal{I}^{\gls*{meshing_parameter}_m} u^{\gls*{meshing_parameter}_m}](s_m, y_m)
        )
        \geq \min \left(
            \min \{ S_m^{(1)}, S_m^{(2)} \},
            %\min \{ S_m^{(3)}, S_m^{(2)} \}
            \min \{ S_m^{(5)}, S_m^{(2)} \}
        \right).
    \]
    Taking limit inferiors of both sides of the above inequality and applying %\cref{lem:proofs_convergence_direct_control_consistency_1,eqn:proofs_convergence_direct_control_consistency_3,eqn:proofs_convergence_direct_control_consistency_4_sub,eqn:proofs_convergence_direct_control_consistency_8},
    \cref{lem:proofs_convergence_direct_control_consistency_1,eqn:proofs_convergence_direct_control_consistency_3,eqn:proofs_convergence_direct_control_consistency_8,eqn:truncated_direct_control_consistency_2},
    \begin{multline*}
        \liminf_{m\rightarrow\infty}
        S(
            \gls*{meshing_parameter}_m,
            (s_m, y_m),
            \varphi + \xi_m,
            [\mathcal{I}^{\gls*{meshing_parameter}_m} u^{\gls*{meshing_parameter}_m}](s_m, y_m)
        ) \\
        \geq \min \biggl(
            \min \biggl\{
                -\varphi_t(t, x) - \sup_{\textcolor{ctrlcolor}{w} \in W} \left\{
                    \frac{1}{2} b(t, x, \textcolor{ctrlcolor}{w})^2 \varphi_{xx}(t, x)
                    + a(t, x, \textcolor{ctrlcolor}{w}) \varphi_x(t, x)
                    + f(t, x, \textcolor{ctrlcolor}{w})
                \right\}, \\
                \varphi(t,x) - \mathcal{M}\overline{u}(t,x)
            \biggr\}, \,
            \min \biggl\{
                -\varphi_t(t, x) - \sup_{\textcolor{ctrlcolor}{w} \in W} \left\{
                    f(t, x, \textcolor{ctrlcolor}{w})
                \right\},
                %\varphi(t,x) - g(t, x),
                \varphi(t,x) - \mathcal{M}\overline{u}(t,x)
            \biggr\}
        \biggr) \\
        = F_*(
            (t, x),
            \varphi(t, x),
            D \varphi (t, x),
            D^2 \varphi (t, x),
            \mathcal{M} \overline{u}(t, x),
        )
    \end{multline*}
    where $F$ is defined in \cref{eqn:truncated_hjbqvi_operator}.
    As in the proof of \cref{lem:convergence_direct_control_consistency}, this implies \cref{eqn:proofs_convergence_direct_control_consistency_6_sub}.
    \cref{eqn:proofs_convergence_direct_control_consistency_6_sup} is established similarly.
\end{proof}

Recall that in the case of an unbounded domain, nonlocal consistency of the (untruncated) penalty scheme in \cref{lem:convergence_penalty_consistency} was obtained by a simple modification of the proof of nonlocal consistency of the (untruncated) direct control scheme in \cref{lem:convergence_direct_control_consistency}.
The situation is analogous here.
Namely, to obtain the nonlocal consistency of the truncated penalty scheme, we need only replace the definition of $S_m^{(2)}$ by \cref{eqn:convergence_penalty_consistency_1}.
We summarize below.

\begin{lemma}
    %The truncated penalty scheme is monotone, stable, and nonlocally consistent.
    The penalty scheme is nonlocally consistent (with respect to \cref{eqn:truncated_hjbqvi_operator}).
\end{lemma}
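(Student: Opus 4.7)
The plan is to combine the two modifications already used in the preceding proofs: the one in Lemma \ref{lem:convergence_penalty_consistency}, which converts a direct-control consistency argument into a penalty consistency argument, and the one in Lemma \ref{lem:truncated_direct_control_consistency}, which adapts the untruncated direct control argument to accommodate the artificial boundary condition on $\partial_R^+ \Omega$. Neither modification interferes with the other, so the proof should be essentially a ``merge'' of Lemmas \ref{lem:convergence_penalty_consistency} and \ref{lem:truncated_direct_control_consistency}.

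More precisely, I would first set up exactly the notation of the proof of Lemma \ref{lem:truncated_direct_control_consistency}: fix $\varphi \in C^{1,2}(\overline{\Omega})$, a family $(u^{\gls*{meshing_parameter}})_{\gls*{meshing_parameter} > 0}$ with half-relaxed limits $\overline{u},\underline{u}$, a sequence $(\gls*{meshing_parameter}_m, s_m, y_m, \xi_m)_m$ with $s_m = \tau^{n_m}$ and $y_m = x_{i_m}$, and the quantities $S_m^{(1)}, S_m^{(3)}, S_m^{(5)}$ from that proof. I would then replace $S_m^{(2)}$ with its penalty analogue
\[
    \hat{S}_m^{(2)} = \varphi_{i_m}^{n_m} + \xi_m - (\mathcal{M}_{n_m} u^{n_m,\gls*{meshing_parameter}_m})_{i_m} + \epsilon S_m^{(1)}
\]
as introduced in \cref{eqn:convergence_penalty_consistency_1}. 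By the definition of the truncated penalty scheme, equation \cref{eqn:truncated_direct_control_consistency_1} holds verbatim with $S_m^{(2)}$ replaced by $\hat{S}_m^{(2)}$.

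The key observation, already made in the proof of Lemma \ref{lem:convergence_penalty_consistency}, is that since $\epsilon \to 0$ as $h \to 0$ by \cref{eqn:convergence_penalty_vanishes} and $S_m^{(1)}$ is bounded in the limit (it converges to the value appearing in \cref{eqn:proofs_convergence_direct_control_consistency_3}), the perturbation term $\epsilon S_m^{(1)}$ vanishes in the limit, so that
\[
    \liminf_{m \to \infty} \hat{S}_m^{(2)} = \liminf_{m \to \infty} S_m^{(2)}
    \textspace \text{and} \textspace
    \limsup_{m \to \infty} \hat{S}_m^{(2)} = \limsup_{m \to \infty} S_m^{(2)}.
\]
Consequently, the limit inferior and limit superior calculations in the three cases ($t < T$ with $x \in (-R,R)$, $t < T$ with $x = \pm R$, and $t = T$) carry through unchanged from the proof of Lemma \ref{lem:truncated_direct_control_consistency}, yielding the nonlocal subconsistency inequality \cref{eqn:proofs_convergence_direct_control_consistency_6_sub} and superconsistency inequality \cref{eqn:proofs_convergence_direct_control_consistency_6_sup} with $F$ now given by \cref{eqn:truncated_hjbqvi_operator}.

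There is no real obstacle here; the argument is mechanical once the two earlier modifications are combined. The only mild subtlety is to verify that on the artificial boundary $x = \pm R$, the penalty perturbation $\epsilon S_m^{(1)}$ still vanishes with $\epsilon \to 0$ (its limit is now the value in \cref{eqn:truncated_direct_control_consistency_2} rather than in \cref{eqn:proofs_convergence_direct_control_consistency_3}, but boundedness is all that is needed). As such, the proof can be written very briefly, stating that one replaces $S_m^{(2)}$ by $\hat{S}_m^{(2)}$ in the proof of Lemma \ref{lem:truncated_direct_control_consistency} and invokes \cref{eqn:convergence_penalty_vanishes} to conclude that no other changes are needed.
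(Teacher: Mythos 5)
Your proof is correct and takes essentially the same route as the paper: the paper likewise remarks that nonlocal consistency of the truncated penalty scheme follows by replacing $S_m^{(2)}$ with $\hat{S}_m^{(2)}$ from \cref{eqn:convergence_penalty_consistency_1} in the proof of \cref{lem:truncated_direct_control_consistency} and invoking \cref{eqn:convergence_penalty_vanishes}. Your additional note about the boundary case (that the perturbation's limit is \cref{eqn:truncated_direct_control_consistency_2} but only boundedness matters) is a nice clarification that the paper leaves implicit; note also that at $i_m = 0, M$ one has $(\mathcal{D}_2 \varphi^{n_m})_{i_m} = (\mathcal{D} \varphi^{n_m})_{i_m} = 0$, so $S_m^{(1)}$ and $S_m^{(5)}$ coincide there and the apparent discrepancy you flagged resolves itself.
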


Unlike the truncated direct control and truncated penalty schemes, we cannot immediately prove the nonlocal consistency of the truncated explicit-impulse scheme.
In particular, due to the fact that the point $x_i + a_i^n(\textcolor{ctrlcolor}{w}) \Delta \tau$ appearing in \cref{eqn:schemes_lagrangian_derivative} may be outside of the truncated domain $[-R, R]$, the approximation \cref{eqn:schemes_lagrangian_derivative} may introduce overstepping error.
In order to get around this issue, we will have to modify the spatial grid so that the distance between the first and last two grid points vanishes sublinearly:

\begin{lemma}
    Suppose that, instead of \cref{eqn:truncated_spacing}, the points $\{ x_0, \ldots, x_M \}$ are a partition of $[-R, R]$ which satisfies (using Bachmann-Landau notation)
    \begin{align*}
        x_{i+1} - x_i
        & = \Theta(\gls*{meshing_parameter})
        & \text{for } 0 < i < M-1 \phantom{.} \\
        x_{i+1} - x_i
        & = o(\sqrt{\gls*{meshing_parameter}}) \cap \omega(\gls*{meshing_parameter})
        & \text{for } \phantom{0 < i < M-1} \mathllap{i = 0, M-1} .
    \end{align*}
    %Then, the truncated explicit-impulse scheme is nonlocally consistent.
    Then, the explicit-impulse scheme is nonlocally consistent (with respect to \cref{eqn:truncated_hjbqvi_operator}).
\end{lemma}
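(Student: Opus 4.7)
The plan is to mimic the proof of \cref{lem:convergence_explicit_impulse_consistency}, splicing in the boundary handling that appeared in the proof of the truncated direct control lemma (\cref{lem:truncated_direct_control_consistency}). The only real work is to verify that the two new grid conditions together tame (i) the overstepping that arises when the Lagrangian foot $x_{i} + a_{i}^{n}(\textcolor{ctrlcolor}{w})\Delta\tau$ leaves the numerical domain, and (ii) the extra truncation error $O((\Delta x)^2/\Delta\tau)$ introduced at the two grid cells adjacent to the boundary.

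First I would set up the sequence $(\gls*{meshing_parameter}_m, s_m, y_m, \xi_m)_m$ exactly as in \cref{lem:convergence_direct_control_consistency}, with $s_m = \tau^{n_m}$ and $y_m = x_{i_m}$. I would split on the location of $(t,x)$: for $t<T$, $x\in(-R,R)$ we recover the proof of \cref{lem:convergence_explicit_impulse_consistency} essentially verbatim, since eventually $i_m$ is bounded away from $0$ and $M$, so $\Delta x = \Theta(\gls*{meshing_parameter}_m)$ locally and the Taylor expansion \cref{eqn:proofs_convergence_explicit_impulse_consistency_1} carries an $O(\gls*{meshing_parameter}_m)$ remainder. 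For the terminal boundary $t=T$ the argument is identical to the corresponding piece of \cref{lem:convergence_direct_control_consistency}, the only difference being the explicit-impulse substitution of $\hat S_m^{(2)}=\varphi_{i_m}^{n_m}+\xi_m-(\mathcal M_{n_m}u^{n_m-1,\gls*{meshing_parameter}_m})_{i_m}$.

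The nontrivial case is $t<T$ with $x=\pm R$. Here I would proceed exactly as in the proof of \cref{lem:truncated_direct_control_consistency}: the scheme at $i_m\in\{0,M\}$ reduces to the artificial-boundary form that yields the limit $-\varphi_t - \sup_{\textcolor{ctrlcolor}{w}\in W}\{f(t,x,\textcolor{ctrlcolor}{w})\}$ (call it $S_m^{(5)}$), while the scheme at interior $i_m$ yields the full operator. Taking $\liminf$/$\limsup$ and invoking \cref{lem:proofs_convergence_direct_control_consistency_1,lem:proofs_convergence_direct_control_consistency_3} then produces $F_*$ and $F^*$ with $F$ as in \cref{eqn:truncated_hjbqvi_operator}.

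The main obstacle, and the reason for the new grid hypothesis, is controlling the term $\gls*{interp}(\varphi^{n_m-1}+\xi_m,\, x_{i_m} + a_{i_m}^{n_m}(\textcolor{ctrlcolor}{w})\Delta\tau)$ when $i_m$ is the interior grid point nearest $\pm R$ (i.e.\ $i_m=1$ or $i_m=M-1$). I would argue in two steps. For the overstepping issue, since $a$ is continuous on a compact set hence bounded, $|a_{i_m}^{n_m}(\textcolor{ctrlcolor}{w})\Delta\tau| \leq \gls*{const}\,\Delta\tau = \gls*{const}\,\gls*{meshing_parameter}_m$, while the new hypothesis gives $x_1-x_0 = \omega(\gls*{meshing_parameter}_m)$, so $(x_1-x_0)/\gls*{meshing_parameter}_m\to\infty$. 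For $m$ large enough, $x_{i_m}+a_{i_m}^{n_m}(\textcolor{ctrlcolor}{w})\Delta\tau$ thus lies in $[x_0,x_M]$ uniformly in $\textcolor{ctrlcolor}{w}\in W^{\gls*{meshing_parameter}_m}$, so the extrapolation branch of $\gls*{interp}$ is never invoked. For the Taylor error, the remainder in \cref{eqn:proofs_convergence_explicit_impulse_consistency_1} is $O((\Delta x)^2/\Delta\tau)$ where now $\Delta x = x_1-x_0 = o(\sqrt{\gls*{meshing_parameter}_m})$; hence $(\Delta x)^2/\Delta\tau = o(\gls*{meshing_parameter}_m)/\Theta(\gls*{meshing_parameter}_m) = o(1)$, and the expansion still yields $\varphi_t(t,x)+a(t,x,\textcolor{ctrlcolor}{w})\varphi_x(t,x)$ in the limit. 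With these two estimates in hand, the rest of the consistency argument is the same bookkeeping as in \cref{lem:convergence_explicit_impulse_consistency,lem:truncated_direct_control_consistency}, and the case $x=-R$ is symmetric.
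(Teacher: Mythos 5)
Your argument is correct and follows essentially the same approach as the paper: bound the Lagrangian foot away from overstepping via the $\omega(\gls*{meshing_parameter})$ condition on the boundary cells, then absorb the interpolation remainder $O((\Delta x)^2/\Delta\tau)$ via the $o(\sqrt{\gls*{meshing_parameter}})$ condition. The only cosmetic difference is that the paper states the overstepping bound for \emph{all} interior $i_m$ at once, by observing that $y_m + a_{i_m}^{n_m}(\textcolor{ctrlcolor}{w})\Delta\tau \leq x_{M-1} + \Vert a\Vert\,\Delta\tau = R - (x_M - x_{M-1}) + O(\gls*{meshing_parameter}_m)$ and the rightmost gap dominates, whereas you single out $i_m = 1, M-1$ as the worst case; these are the same estimate read in two directions, and the rest of the bookkeeping (splicing in the $S_m^{(5)}$ boundary term from the truncated direct-control proof) is identical.
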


Before giving the proof, recall that a function is in $\omega(\gls*{meshing_parameter})$ if it vanishes (as $h \rightarrow 0$)  strictly \emph{slower} than $h$ (i.e., sublinearly) while a function is in $o(\sqrt{\gls*{meshing_parameter}})$ if it vanishes strictly \emph{faster} than the function $\sqrt{h}$.
A function is in $o(\sqrt{\gls*{meshing_parameter}}) \cap \omega(\gls*{meshing_parameter})$ if it satisfies both these requirements.

\begin{proof}
    Let $\Omega = [0,T) \times (-R, R)$ and $\varphi \in C^{1,2}(\overline{\Omega})$.
    Let $\varphi_i^n$ and $\varphi^n$ and $(\gls*{meshing_parameter}_m, s_m, y_m, \xi_m)_m$ be defined as in the proof of \cref{lem:convergence_direct_control_consistency}.
    
    Now, suppose $-R < y_m < R$ (or, equivalently, $0 < i_m < M$).
    Let
    \[
        \Vert a \Vert_{\Omega \times W}
        = \sup_{(t,x,\textcolor{ctrlcolor}{w}) \in \Omega \times W}
        a(t, x, \textcolor{ctrlcolor}{w})
        < \infty.
    \]
    Then,
    \begin{align*}
        y_m + a_{i_m}^{n_m}(\textcolor{ctrlcolor}{w}) \Delta \tau
        & \leq x_{M-1} + \Vert a \Vert_{\Omega \times W} \Delta \tau \\
        & = x_M - \left( x_M - x_{M-1} \right)
        + \gls*{const} \Vert a \Vert_{\Omega \times W} \gls*{meshing_parameter}_m \\
        & = R - \left( x_M - x_{M-1} \right)
        + \gls*{const} \gls*{meshing_parameter}_m
    \end{align*}
    so that for $m$ sufficiently large, $y_m + a_{i_m}^{n_m}(\textcolor{ctrlcolor}{w}) \Delta \tau \leq R$ since $x_M - x_{M-1} = \omega(\gls*{meshing_parameter}_m)$.
    By a symmetric argument, we also have that $y_m + a_{i_m}^{n_m}(\textcolor{ctrlcolor}{w}) \Delta \tau \geq -R$ (i.e., there is no overstepping error for $m$ sufficiently large).
    Therefore,
    \begin{multline*}
        \frac{
            \gls*{interp}(
                \varphi^{n_m-1} + \xi_m,
                y_m
                + a_{i_m}^{n_m}(\textcolor{ctrlcolor}{w})
                \Delta \tau
            )
            - \varphi(\tau^{n_m}, y_m) - \xi_m
        }{\Delta \tau} \\
        = \frac{
            \varphi(
                s_m + \Delta \tau,
                y_m
                + a_{i_m}^{n_m}(\textcolor{ctrlcolor}{w})
                \Delta \tau
            )
            - \varphi(s_m, y_m)
            + \xi_m - \xi_m
        }{\Delta \tau} + O\left( \frac{(x_{k_m+1} - x_{k_m})^2}{\Delta \tau} \right) \\
        = \varphi_t(t, x)
        + a(t, x, \textcolor{ctrlcolor}{w}) \varphi_x(t, x)
        + O\left( \frac{(x_{k_m+1} - x_{k_m})^2}{\Delta \tau} + \Delta \tau \right) \\
        = \varphi_t(t, x)
        + a(t, x, \textcolor{ctrlcolor}{w}) \varphi_x(t, x)
        + O\left( \frac{(x_{k_m+1} - x_{k_m})^2}{\Delta \tau} + \Delta \tau \right).
    \end{multline*}
    Now, by the $o(\sqrt{\gls*{meshing_parameter}})$ requirement,
    \[
        O\left( \frac{(x_{k_m+1} - x_{k_m})^2}{\Delta \tau} + \Delta \tau \right)
        = o\left( \frac{(\sqrt{\gls*{meshing_parameter}_m})^2}{\gls*{meshing_parameter}_m} \right)
        + O( \gls*{meshing_parameter}_m )
        \rightarrow 0
        \textspace \text{as} \textspace
        m \rightarrow \infty.
    \]
    The remainder of the proof follows the usual arguments as in \cref{lem:convergence_explicit_impulse_consistency}.
\end{proof}

\begin{remark}
    If $a(\cdot, R, \cdot) \leq 0$, the condition $x_M - x_{M-1} = o(\sqrt{h}) \cap \omega(h)$ is not needed, since no overstepping can occur.
    A symmetric claim is true for the left boundary in the case of $a(\cdot, -R, \cdot) \geq 0$.
\end{remark}

\chapter{Comparison principle proof}
\label{app:comparison}

In this appendix, we prove the comparison principle in \cref{thm:convergence_comparison}.
Throughout this appendix, we assume \cref{enum:convergence_continuity}\textendash \cref{enum:convergence_impulse_control_hausdorff} of \cref{chap:convergence} along with the conditions on the coefficients $a$ and $b$ in the statement of \cref{thm:convergence_comparison}.
Furthermore, we use the notation $\Omega = [0, T) \times \mathbb{R}$, $D^2 \varphi = \varphi_{xx}$, and $D \varphi = (\varphi_t, \varphi_x)$.

Our first order of business is to perform a change of variables to introduce a discount factor $\beta > 0$ into the HJBQVI.
In particular, we define $\mathcal{M}_\beta$ by
\[
    \mathcal{M}_\beta V(t, x)
    = \sup_{\textcolor{ctrlcolor}{z} \in Z(t, x)} \left\{
        V(
            t,
            %x +
            \Gamma(t, x, \textcolor{ctrlcolor}{z})
        )
        + K_\beta(t, x, \textcolor{ctrlcolor}{z})
    \right\}
\]
and consider the discounted HJBQVI
\begin{align*}
    \min \left\{
        - V_t + \beta V - \sup_{\textcolor{ctrlcolor}{w} \in W} \left\{
            \frac{1}{2} b(\cdot, \textcolor{ctrlcolor}{w})^2 V_{xx}
            {+} a(\cdot, \textcolor{ctrlcolor}{w}) V_x
            {+} f_\beta(\cdot, \textcolor{ctrlcolor}{w})
        \right\},
        V - \mathcal{M}_\beta V
    \right\} & = 0 \text{ on } [0,T) \times \mathbb{R}
    \\
    \min \left \{
        V(T, \cdot) - g_\beta,
        V(T, \cdot) - \mathcal{M}_\beta V(T, \cdot)
    \right \}
    & = 0 \text{ on } \mathbb{R}
\end{align*}
where $K_\beta(t, x) = e^{\beta t} K(t, x)$, $f_\beta(t, x) = e^{\beta t} f(t, x)$, and $g_\beta(x) = e^{\beta T} g(x)$.
To write the discounted HJBQVI in the form \cref{eqn:convergence_pde}, we take $\mathcal{I} = \mathcal{M}_\beta$ and
\begin{multline}
    F((t, x), r, (q, p), A, \wideEll) \\
    = \begin{cases}
        \min \left\{
            - q + \beta r - \sup_{\textcolor{ctrlcolor}{w} \in W} \left\{
                \frac{1}{2} b(x, \textcolor{ctrlcolor}{w})^2 A
                + a(x, \textcolor{ctrlcolor}{w}) p
                + f_\beta(t, x, \textcolor{ctrlcolor}{w})
            \right\}, r - \wideEll
        \right\} & \text{if } t > 0 \\
        \min \left\{
            r - g_\beta(x),
            r - \wideEll
        \right\}
        & \text{if } t = 0.
    \end{cases}
    \label{eqn:comparison_discounted_hjbqvi}
\end{multline}
Note that setting $\beta = 0$ returns to the undiscounted HJBQVI specified by \cref{eqn:convergence_hjbqvi}.
Now, note that if $V$ is a viscosity subsolution of the \emph{undiscounted} HJBQVI, then the function $(t, x) \mapsto e^{\beta t} V(t, x)$ is a viscosity subsolution (resp. supersolution) of the \emph{discounted} HJBQVI.
Therefore, it is sufficient to prove the comparison principle for the discounted HJBQVI.

We now repeat two results regarding the intervention operator, taken from \cite{MR2568293}.

\begin{lemma}[{\cite[Lemma 4.3]{MR2568293}}]
    \label{lem:comparison_intervention_1}
    Let $U, V \in \gls*{bdd}$.
    If $U \geq V$, then $\mathcal{M}_\beta U \geq \mathcal{M}_\beta V$ (i.e., $\mathcal{M}_\beta$ is monotone).
    Moreover, $\mathcal{M}_\beta U_*$ (resp. $\mathcal{M}_\beta U^*$) is lower (resp. upper) semicontinuous and $\mathcal{M}_\beta U_* \leq (\mathcal{M}_\beta U)_*$ (resp. $(\mathcal{M}_\beta U)^* \leq \mathcal{M}_\beta U^*$).
\end{lemma}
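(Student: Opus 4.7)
The plan is to verify the three assertions in the order they are stated, relying on the continuity assumptions in \cref{enum:convergence_continuity,enum:convergence_impulse_control_hausdorff} together with the compactness of $Z(t,x)$.

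First, monotonicity is immediate: if $U \geq V$ pointwise, then for every $(t,x) \in \overline{\Omega}$ and every $\textcolor{ctrlcolor}{z} \in Z(t,x)$, the integrand defining $\mathcal{M}_\beta$ satisfies $U(t, \Gamma(t,x,\textcolor{ctrlcolor}{z})) + K_\beta(t,x,\textcolor{ctrlcolor}{z}) \geq V(t, \Gamma(t,x,\textcolor{ctrlcolor}{z})) + K_\beta(t,x,\textcolor{ctrlcolor}{z})$. Taking the supremum over $\textcolor{ctrlcolor}{z} \in Z(t,x)$ on both sides gives $\mathcal{M}_\beta U \geq \mathcal{M}_\beta V$. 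I would dispose of this in one line.

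Next, I would prove the lower semicontinuity of $\mathcal{M}_\beta U_*$ (the upper semicontinuity of $\mathcal{M}_\beta U^*$ is symmetric). Fix $(t,x) \in \overline{\Omega}$, let $(t_n, x_n) \to (t,x)$, and let $\epsilon > 0$. Using compactness of $Z(t,x)$, choose $\textcolor{ctrlcolor}{z^\epsilon} \in Z(t,x)$ that $\epsilon$-attains the supremum defining $\mathcal{M}_\beta U_*(t,x)$. By the Hausdorff-continuity assumption \cref{enum:convergence_impulse_control_hausdorff} on $(t,x) \mapsto Z(t,x)$, I can pick $\textcolor{ctrlcolor}{z_n} \in Z(t_n, x_n)$ with $\textcolor{ctrlcolor}{z_n} \to \textcolor{ctrlcolor}{z^\epsilon}$. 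The continuity of $\Gamma$ then yields $(t_n, \Gamma(t_n, x_n, \textcolor{ctrlcolor}{z_n})) \to (t, \Gamma(t,x,\textcolor{ctrlcolor}{z^\epsilon}))$, so the definition of $U_*$ gives
\[
\liminf_{n \to \infty} U_*(t_n, \Gamma(t_n, x_n, \textcolor{ctrlcolor}{z_n})) \geq U_*(t, \Gamma(t,x,\textcolor{ctrlcolor}{z^\epsilon})),
\]
while continuity of $K_\beta$ yields convergence of the cost term. Bounding $\mathcal{M}_\beta U_*(t_n, x_n)$ from below by the value obtained at $\textcolor{ctrlcolor}{z_n}$ and passing to the limit inferior produces $\liminf_n \mathcal{M}_\beta U_*(t_n, x_n) \geq \mathcal{M}_\beta U_*(t,x) - \epsilon$. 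Since $\epsilon > 0$ was arbitrary, this establishes lower semicontinuity.

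Finally, for the inequality $\mathcal{M}_\beta U_* \leq (\mathcal{M}_\beta U)_*$, fix $(t,x) \in \overline{\Omega}$ and any $\textcolor{ctrlcolor}{z} \in Z(t,x)$. For any sequence $(t_n, x_n) \to (t,x)$, I would again use \cref{enum:convergence_impulse_control_hausdorff} to extract $\textcolor{ctrlcolor}{z_n} \in Z(t_n, x_n)$ with $\textcolor{ctrlcolor}{z_n} \to \textcolor{ctrlcolor}{z}$. The lower bound
\[
\mathcal{M}_\beta U(t_n, x_n) \geq U(t_n, \Gamma(t_n, x_n, \textcolor{ctrlcolor}{z_n})) + K_\beta(t_n, x_n, \textcolor{ctrlcolor}{z_n}),
\]
combined with continuity of $\Gamma$ and $K_\beta$ and the very definition of $U_*$, gives $\liminf_n \mathcal{M}_\beta U(t_n, x_n) \geq U_*(t, \Gamma(t,x,\textcolor{ctrlcolor}{z})) + K_\beta(t,x,\textcolor{ctrlcolor}{z})$. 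Taking the infimum over admissible sequences to obtain $(\mathcal{M}_\beta U)_*(t,x)$ on the left and then the supremum over $\textcolor{ctrlcolor}{z} \in Z(t,x)$ on the right produces the desired inequality. The symmetric statement for the upper envelopes follows by reversing inequalities and limits throughout.

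The only nontrivial ingredient at any step is the Hausdorff-continuity of $(t,x) \mapsto Z(t,x)$, which is exactly what lets us lift a near-optimizer (or arbitrary admissible control) at the limit point to a nearby admissible control at each $(t_n, x_n)$; this is the main mechanism I expect to invoke in all three arguments. Everything else is a routine application of the definition of semicontinuous envelope together with the continuity of $\Gamma$ and $K_\beta$ from \cref{enum:convergence_continuity}.
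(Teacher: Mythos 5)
The paper does not supply a proof of this lemma; it cites it verbatim as {\cite[Lemma~4.3]{MR2568293}} and uses it as an external ingredient in the comparison-principle argument of the appendix. So there is no internal proof against which to compare yours, and the relevant question is simply whether your reconstruction is sound. It is.

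Your three steps are the natural ones and they go through: monotonicity is a one-line argument of exactly the kind you give; lower semicontinuity of $\mathcal{M}_\beta U_*$ follows from the Hausdorff continuity of $(t,x)\mapsto Z(t,x)$ (which is available because the appendix assumes \cref{enum:convergence_continuity}\textendash \cref{enum:convergence_impulse_control_hausdorff}), continuity of $\Gamma$ and $K_\beta$, and the lower semicontinuity of $U_*$; and the envelope inequality can be extracted exactly as you describe, using an arbitrary $\textcolor{ctrlcolor}{z}\in Z(t,x)$ and lifting it along any approximating sequence. One small slip of phrasing: the bound $\liminf_n U_*(t_n,\Gamma(t_n,x_n,\textcolor{ctrlcolor}{z_n})) \geq U_*(t,\Gamma(t,x,\textcolor{ctrlcolor}{z^\epsilon}))$ is an instance of the lower semicontinuity of $U_*$, not literally ``the definition of $U_*$''; the definition gives you that $U_*$ is l.s.c., and that property supplies the inequality.

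Worth noting for economy: once you have monotonicity and the l.s.c.\ of $\mathcal{M}_\beta U_*$ established, the final inequality $\mathcal{M}_\beta U_* \leq (\mathcal{M}_\beta U)_*$ is immediate without re-running the lifting argument. Indeed $U_* \leq U$ pointwise, so monotonicity gives $\mathcal{M}_\beta U_* \leq \mathcal{M}_\beta U$; and since $\mathcal{M}_\beta U_*$ is lower semicontinuous while $(\mathcal{M}_\beta U)_*$ is the largest l.s.c.\ minorant of $\mathcal{M}_\beta U$, the inequality $\mathcal{M}_\beta U_* \leq (\mathcal{M}_\beta U)_*$ follows by maximality. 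Your direct derivation is equally valid; this variant simply reuses the first two assertions and avoids a second pass through the sequence-extraction machinery.
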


\begin{lemma}[{\cite[Lemma 5.5]{MR2568293}}]
    \label{lem:comparison_intervention_2}
    Let $U, V \in \gls*{bdd}$.
    Then,
    \[
        \mathcal{M}_\beta (\lambda U + \left(1 - \lambda\right) V)
        \leq \lambda \mathcal{M}_\beta U + \left(1 - \lambda\right) \mathcal{M}_\beta V
        \textspace \text{for }
        0 \leq \lambda \leq 1
    \]
    (i.e., $\mathcal{M}_\beta$ is convex).
\end{lemma}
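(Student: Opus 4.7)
The plan is to prove the inequality pointwise by directly manipulating the supremum defining $\mathcal{M}_\beta$. The key observation is that $\mathcal{M}_\beta$ is a supremum of affine (indeed, translations) of $U$, and the supremum of a family of affine functions is convex; the convexity inequality then falls out of the subadditivity of $\sup$.

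Concretely, I would fix $(t,x) \in \overline{\Omega}$ and $\lambda \in [0,1]$, and begin by writing out the left-hand side directly from the definition of $\mathcal{M}_\beta$:
\[
    \mathcal{M}_\beta(\lambda U + (1-\lambda)V)(t,x)
    = \sup_{\textcolor{ctrlcolor}{z} \in Z(t,x)} \left\{ \lambda U(t,\Gamma(t,x,\textcolor{ctrlcolor}{z})) + (1-\lambda) V(t,\Gamma(t,x,\textcolor{ctrlcolor}{z})) + K_\beta(t,x,\textcolor{ctrlcolor}{z}) \right\}.
\]
Next I would split the cost term as $K_\beta = \lambda K_\beta + (1-\lambda) K_\beta$ and regroup, so that the argument of the supremum becomes
\[
    \lambda \bigl[ U(t,\Gamma(t,x,\textcolor{ctrlcolor}{z})) + K_\beta(t,x,\textcolor{ctrlcolor}{z}) \bigr]
    + (1-\lambda) \bigl[ V(t,\Gamma(t,x,\textcolor{ctrlcolor}{z})) + K_\beta(t,x,\textcolor{ctrlcolor}{z}) \bigr].
\]
Finally I would invoke the elementary subadditivity inequality $\sup_z (a(z) + b(z)) \leq \sup_z a(z) + \sup_z b(z)$ (valid for real-valued $a,b$ with nonnegative coefficients in front, here $\lambda$ and $1-\lambda$) to pull the convex combination outside the supremum, arriving at $\lambda \mathcal{M}_\beta U(t,x) + (1-\lambda) \mathcal{M}_\beta V(t,x)$.

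There is essentially no obstacle: the result is a one-line consequence of the fact that the supremum of a sum is at most the sum of suprema, applied after splitting $K_\beta$ as $\lambda K_\beta + (1-\lambda) K_\beta$. No assumption on compactness of $Z(t,x)$, continuity of the coefficients, or attainment of the supremum is needed for this step — the inequality holds for any nonempty control set. The only thing to verify is that $\lambda U + (1-\lambda) V \in B(\overline{\Omega})$ so that $\mathcal{M}_\beta$ applied to it is well-defined, which is immediate since $B(\overline{\Omega})$ is a vector space.
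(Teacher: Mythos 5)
Your proof is correct. The paper does not supply its own argument here — it cites the result directly from [MR2568293, Lemma~5.5] — but the argument you give (split $K_\beta = \lambda K_\beta + (1-\lambda)K_\beta$, regroup so each bracket is the argument of a single $\mathcal{M}_\beta$, then use $\sup_z\{\lambda a(z) + (1-\lambda)b(z)\} \leq \lambda\sup_z a(z) + (1-\lambda)\sup_z b(z)$) is exactly the standard one-line proof that a pointwise supremum of affine-in-$V$ operators is convex, and it is what the cited lemma is based on. Your remark that no compactness of $Z(t,x)$ or attainment of the supremum is needed is also correct: only nonemptiness of $Z(t,x)$ and boundedness of $U,V$ (so that everything is finite and $\lambda U + (1-\lambda)V \in B(\overline{\Omega})$) matter.
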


The following, also to be used in the proof of comparison, is a well-known result from real analysis:

\begin{proposition}[{\cite[Problem 2.4.17]{MR1751334}}]
    \label{prop:comparison_limsup_product}
    Let $(a_{n})_{n}$ and $(b_{n})_{n}$ be sequences of nonnegative numbers. If $a_{n}$ converges to a positive number $a$, $\limsup_{n\rightarrow\infty}a_{n}b_{n}=a\limsup_{n\rightarrow\infty}b_{n}$.
\end{proposition}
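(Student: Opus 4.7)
The plan is to split into two cases based on whether $L := \limsup_{n \to \infty} b_n$ is finite. In either case, the positivity of $a$ will be used through the observation that since $a_n \to a > 0$, there exists $N$ such that $a_n \geq a/2 > 0$ for all $n \geq N$, and simultaneously $(a_n)$ is bounded above by some constant $M$.

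In the case $L = \infty$, I would deduce from $a_n b_n \geq (a/2) b_n$ for $n \geq N$ that $\limsup_n a_n b_n \geq (a/2) \limsup_n b_n = \infty$, which agrees with the convention $a \cdot \infty = \infty$ used on the right hand side.

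In the case $L < \infty$, the sequence $(b_n)$ is eventually bounded, so the decomposition $a_n b_n = a b_n + (a_n - a) b_n$ exhibits a main term $a b_n$ plus an error term satisfying $|(a_n - a) b_n| \leq |a_n - a|(L + 1) \to 0$ for $n$ sufficiently large. Then I would invoke the standard fact that if $y_n \to 0$ and $(x_n)$ is bounded, $\limsup_n (x_n + y_n) = \limsup_n x_n$ (which follows from the two-sided inequalities $\limsup_n x_n + \liminf_n y_n \leq \limsup_n (x_n + y_n) \leq \limsup_n x_n + \limsup_n y_n$) to conclude $\limsup_n a_n b_n = \limsup_n a b_n$. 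Finally, since $a > 0$ is a positive constant, $\limsup_n a b_n = a \limsup_n b_n = a L$, closing the argument.

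The only subtle point — and it is mild — is ensuring that the hypothesis $a > 0$ is genuinely used: if $a$ were allowed to vanish, the identity $\limsup_n a b_n = a \limsup_n b_n$ would fail whenever $L = \infty$, since $0 \cdot \infty$ is indeterminate. Positivity of $a$ is precisely what rules this out and, through the eventual lower bound $a_n \geq a/2$, also handles the $L = \infty$ case. No further machinery beyond elementary properties of $\limsup$ is required.
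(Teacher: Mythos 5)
Your proof is correct. The paper gives no proof of its own for this proposition—it simply cites it as Problem 2.4.17 of the referenced problem book—so there is no approach to compare against. Your two-case argument (via the decomposition $a_n b_n = a b_n + (a_n - a) b_n$ and the eventual lower bound $a_n \geq a/2$) is a clean and self-contained way to establish the result; all the steps check out, including the subtraction of the vanishing error term using the sandwich $\limsup x_n + \liminf y_n \leq \limsup(x_n + y_n) \leq \limsup x_n + \limsup y_n$, and you correctly identify that $a > 0$ is essential only to rule out the indeterminate form $0 \cdot \infty$ when $\limsup_n b_n = \infty$.
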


We now give a result that describes the regularity of the ``non-impulse'' part of the discounted HJBQVI.
Below, we use $\preceq$ to denote the positive semidefinite order.

\begin{lemma}
    \label{lem:comparison_non_impulse_continuity}
    Let $H$ be given by
    \[%\begin{equation}
        H(t, x, p, A)
        = %\beta r
        - \sup_{\textcolor{ctrlcolor}{w} \in W} \left \{
            \frac{1}{2} b(x, \textcolor{ctrlcolor}{w})^2 A
            + a(x, \textcolor{ctrlcolor}{w}) p
            + f_\beta(t, x, \textcolor{ctrlcolor}{w})
        \right \}
        %\label{eqn:comparison_hamiltonian}
    \]%\end{equation}
    (compare with \cref{eqn:comparison_discounted_hjbqvi}).
    Then, there exists a positive constant $C$ such that for each compact set $D \subset \mathbb{R}$, there exists a modulus of continuity $\varpi$ such that for all $(t, x, X), (s, y, Y) \in [0,T] \times D \times \mathbb{R}$ satisfying
    \[
        \begin{pmatrix}
            X \\
            & -Y
        \end{pmatrix}
        \preceq
        3 \alpha
        \begin{pmatrix}
            1 & -1 \\
            -1 & 1
        \end{pmatrix}
    \]
    and all positive constants $\alpha$ and $\epsilon$,
    \begin{multline*}
        H(
            s, y,
            \alpha \left( x - y \right) - \epsilon y,
            Y - \epsilon
            %r^{\prime},
        )
        - H(
            t, x,
            \alpha \left( x - y \right) + \epsilon x,
            X + \epsilon
            %r,
        ) \\
        \leq
        %\beta \left( r^{\prime} - r \right) +
        C\, ( \alpha \, |x-y|^2 + \epsilon\, (1 + |x|^2 + |y|^2))
        + \varpi( \left| (t,x) - (s,y) \right| ).
    \end{multline*}
\end{lemma}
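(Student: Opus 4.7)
The plan is to exploit the concave structure of $H$ (it is an infimum over $w$ of affine functions in $(p,A)$) to reduce the estimate to a pointwise bound in $w$, then treat the three summands ($A$-term, $p$-term, $f_\beta$-term) separately using the Lipschitz/boundedness hypotheses on $a$, $b$, and the uniform continuity of $f_\beta$ on compacts.

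First I would rewrite $H(t,x,p,A) = \inf_{\textcolor{ctrlcolor}{w} \in W}\{-\tfrac12 b(x,\textcolor{ctrlcolor}{w})^2 A - a(x,\textcolor{ctrlcolor}{w}) p - f_\beta(t,x,\textcolor{ctrlcolor}{w})\}$ and use the elementary inequality $\inf_w \Phi_1(w) - \inf_w \Phi_2(w) \leq \sup_w (\Phi_1(w) - \Phi_2(w))$. Applied with the specified arguments, this reduces the lemma to showing that for every $\textcolor{ctrlcolor}{w} \in W$,
\begin{align*}
    \Delta(\textcolor{ctrlcolor}{w}) := {}& \tfrac12\bigl(b(x,\textcolor{ctrlcolor}{w})^2(X+\epsilon) - b(y,\textcolor{ctrlcolor}{w})^2(Y-\epsilon)\bigr) \\
    & + a(x,\textcolor{ctrlcolor}{w})\bigl(\alpha(x-y)+\epsilon x\bigr) - a(y,\textcolor{ctrlcolor}{w})\bigl(\alpha(x-y)-\epsilon y\bigr) \\
    & + f_\beta(t,x,\textcolor{ctrlcolor}{w}) - f_\beta(s,y,\textcolor{ctrlcolor}{w})
\end{align*}
is bounded by the right-hand side of the claim, uniformly in $\textcolor{ctrlcolor}{w}$.

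Next I would dispatch each line. For the $A$-term, write $b(x,\textcolor{ctrlcolor}{w})^2 X - b(y,\textcolor{ctrlcolor}{w})^2 Y = \operatorname{trace}(BB^\intercal M)$ with $B=(b(x,\textcolor{ctrlcolor}{w}),\,b(y,\textcolor{ctrlcolor}{w}))^\intercal$ and $M=\operatorname{diag}(X,-Y)$. Since $BB^\intercal \succeq 0$, the matrix inequality in the hypothesis gives $\operatorname{trace}(BB^\intercal M) \leq 3\alpha(b(x,\textcolor{ctrlcolor}{w})-b(y,\textcolor{ctrlcolor}{w}))^2 \leq 3\alpha C^2 |x-y|^2$ by the Lipschitz bound on $b$ (uniform in $\textcolor{ctrlcolor}{w}$). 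The leftover $\tfrac12\epsilon(b(x,\textcolor{ctrlcolor}{w})^2 + b(y,\textcolor{ctrlcolor}{w})^2)$ is controlled by $\epsilon \cdot \operatorname{const}\cdot(1+|x|^2+|y|^2)$ using the linear growth $|b(x,\textcolor{ctrlcolor}{w})| \leq |b(0,\textcolor{ctrlcolor}{w})| + C|x|$ together with the boundedness of $\textcolor{ctrlcolor}{w}\mapsto b(0,\textcolor{ctrlcolor}{w})$ on the compact set $W$. For the $p$-term, rearrange as $\alpha(x-y)(a(x,\textcolor{ctrlcolor}{w})-a(y,\textcolor{ctrlcolor}{w})) + \epsilon(a(x,\textcolor{ctrlcolor}{w}) x + a(y,\textcolor{ctrlcolor}{w}) y)$; the first piece is $\leq C\alpha|x-y|^2$ by Lipschitz continuity, and the second is $\leq \epsilon \cdot \operatorname{const}\cdot(1+|x|^2+|y|^2)$ by the same linear-growth argument applied to $a$ (with $|a(x,\textcolor{ctrlcolor}{w}) x| \leq \tfrac12(a(x,\textcolor{ctrlcolor}{w})^2 + x^2)$).

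Finally, for the $f_\beta$-term, the function $(t,x,\textcolor{ctrlcolor}{w}) \mapsto f_\beta(t,x,\textcolor{ctrlcolor}{w}) = e^{\beta t} f(t,x,\textcolor{ctrlcolor}{w})$ is continuous on $[0,T]\times D\times W$ (invoking assumption \cref{enum:convergence_continuity}), hence uniformly continuous on this compact set. This supplies a modulus $\varpi$ (depending on $D$) such that $|f_\beta(t,x,\textcolor{ctrlcolor}{w}) - f_\beta(s,y,\textcolor{ctrlcolor}{w})| \leq \varpi(|(t,x)-(s,y)|)$ uniformly in $\textcolor{ctrlcolor}{w}$. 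Summing the three estimates yields the stated bound with $C$ depending only on the Lipschitz constants and the $W$-sup-norms of $a(0,\cdot)$, $b(0,\cdot)$, and $\varpi$ depending on $D$ through the uniform continuity of $f_\beta$. The main technical obstacle is being careful that all constants are independent of $\textcolor{ctrlcolor}{w}$ and of the matrix variables $X,Y$; the matrix inequality is used exactly once, on the nonnegative rank-one matrix $BB^\intercal$, which is the standard Ishii-type trick and poses no real difficulty once the infimum has been pulled out as above.
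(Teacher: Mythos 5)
Your proposal is correct and follows essentially the same route as the paper's proof: the reverse-supremum inequality to reduce to a pointwise bound in $\textcolor{ctrlcolor}{w}$, the rank-one trace trick $\operatorname{trace}(BB^\intercal M)$ with the Ishii matrix inequality for the diffusion term, Lipschitz plus linear growth for the drift, and uniform continuity on the compact $[0,T]\times D\times W$ for $f_\beta$. One small point worth flagging: your bound $3\alpha(b(x,\textcolor{ctrlcolor}{w})-b(y,\textcolor{ctrlcolor}{w}))^2 \leq 3\alpha C^2|x-y|^2$ is exactly the one needed to match the $C\alpha|x-y|^2$ term in the lemma's conclusion (and is what makes the $\alpha_n \to \infty$ limit work in the comparison principle later); the paper's own proof appears to contain a slip at that line, bounding $3\alpha(b(x)-b(y))^2$ by $3\alpha\,\gls*{const}\,(1+|x|^2+|y|^2)$ instead, which does not have the right $\alpha$-dependence — so your version is the one that should stand.
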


\begin{proof}
    First, note that
    \begin{multline}
        H(
            s, y,
            \alpha \left( x - y \right) - \epsilon y,
            Y - \epsilon
            %r^{\prime},
        )
        - H(
            t, x,
            \alpha \left( x - y \right) + \epsilon x,
            X + \epsilon
            %r,
        )
        \\
        \leq
        \sup_{\textcolor{ctrlcolor}{w} \in W} \left \{
            \begin{gathered}
                \frac{1}{2} \left(
                    b(x, \textcolor{ctrlcolor}{w})^2 X
                    - b(y, \textcolor{ctrlcolor}{w})^2 Y
                \right)
                + \frac{\epsilon}{2} \left(
                    b(x, \textcolor{ctrlcolor}{w})^2
                    + b(y, \textcolor{ctrlcolor}{w})^2
                \right) \\
                + \alpha \left(
                    a(x, \textcolor{ctrlcolor}{w})
                    - a(y, \textcolor{ctrlcolor}{w})
                \right) \left( x - y \right)
                + \epsilon \left(
                    a(x, \textcolor{ctrlcolor}{w}) x
                    + a(y, \textcolor{ctrlcolor}{w}) y
                \right)
            \end{gathered}
        \right \} %\\
        %+ \beta \left( r^{\prime} - r \right)
        + \varpi( \left| (t,x) - (s,y) \right| )
        \label{eqn:comparison_hamiltonian_difference}
    \end{multline}
    where we have used the fact that since $[0,T] \times D \times W$ is compact and $f$ is continuous, we can find a modulus of continuity $\varpi$ satisfying
    \[
        f_\beta(t, x, \textcolor{ctrlcolor}{w})
        - f_\beta(s, y, \textcolor{ctrlcolor}{w})
        \leq \varpi( \left| (t,x) - (s,y) \right| )
        \textspace
        \text{for } (t, x), (s, y) \in [0,T] \times D \text{ and } \textcolor{ctrlcolor}{w} \in W.
    \]
    Omitting the dependence on $\textcolor{ctrlcolor}{w}$ for brevity, we can write the argument in the supremum in \cref{eqn:comparison_hamiltonian_difference} as
    \begin{equation}
        \frac{1}{2} \left( b(x)^2 X - b(y)^2 Y \right)
        + \frac{\epsilon}{2} \left( b(x)^2 + b(y)^2 \right)
        + \alpha \left( a(x) - a(y) \right) \left( x - y \right)
        + \epsilon \left( a(x) x + a(y) y \right).
        \label{eqn:comparison_hamiltonian_supremum_argument}
    \end{equation}
    Now, recall that any Lipschitz function $\ell(\cdot)$ satisfies
    \[
        \left| \ell(x) - \ell(y) \right| \leq \gls*{const} \left| x - y \right|
        \textspace \text{and} \textspace
        \left| \ell(x) \right| \leq \gls*{const} \left(1 + \left| x \right| \right).
    \]
    Therefore, since $a$ is Lipschitz, we have
    \[
        \left( a(x) - a(y) \right) \left( x - y \right)
        \leq \left| a(x) - a(y) \right| \left| x - y \right|
        \leq \gls*{const} \left| x - y \right|^2.
    \]
    and
    \begin{multline*}
        a(x) x + a(y) y
        \leq \left| a(x) \right| \left| x \right| + \left| a(y) \right| \left| y \right|
        \leq \gls*{const} \left(
            \left( 1 + \left| x \right| \right) \left | x \right|
            + \left( 1 + \left| y \right| \right) \left | y \right|
        \right) \\
        \leq \gls*{const} \, ( 1 + \left| x \right|^2 + \left| y \right|^2 ).
    \end{multline*}
    Similarly, since $b$ is Lipschitz, we have
    \[
        b(x)^2 + b(y)^2
        \leq \gls*{const} \, ( (1 + \left| x \right| )^2 + (1 + \left| y \right| )^2 )
        \leq \gls*{const} \, ( 1 + \left| x \right|^2 + \left| y \right|^2 ).
    \]
    and
    \begin{align*}
        b(x)^2 X - b(y)^2 Y
        & = \gls*{trace} \left(
            \begin{pmatrix}
                b(x)^2 & b(x)b(y) \\
                b(y)b(x) & b(y)^2
            \end{pmatrix}
            \begin{pmatrix}
                X \\
                & -Y
            \end{pmatrix}
        \right) \\
        & \leq 3 \alpha \gls*{trace} \left(
            \begin{pmatrix}
                b(x)^2 & b(x)b(y) \\
                b(y)b(x) & b(y)^2
            \end{pmatrix}
            \begin{pmatrix}
                1 & -1 \\
                -1 & 1
            \end{pmatrix}
        \right) \\
        & = 3 \alpha \left( b(x) - b(y) \right)^2 \\
        & \leq 3 \alpha \gls*{const} \, ( 1 + \left| x \right|^2 + \left| y \right|^2 ).
    \end{align*}
    Now, the desired result follows by applying the above inequalities to \cref{eqn:comparison_hamiltonian_supremum_argument}.
\end{proof}

The next result establish that the discounted HJBQVI has no boundary layer (recall \cref{exa:convergence_dirichlet}) at time $t = T$.
The proof follows closely \cite[Remark 3.2]{MR2857450}, which establishes the absence of a boundary layer for a Hamilton-Jacobi-Bellman (HJB) equation.

\begin{lemma}
    \label{lem:comparison_no_boundary_layer}
    Let $V$ be a subsolution (resp. supersolution) of the discounted HJBQVI.
    Then, for all $x \in \mathbb{R}$,
    \begin{align*}
        \min \left \{
            V(T, x) - g_\beta(x),
            V(T, x) - \mathcal{M}_\beta V(T, x)
        \right \} & \leq 0 \\
        \text{(resp. } \min \left \{
            V(T, x) - g_\beta(x),
            V(T, x) - \mathcal{M}_\beta V(T, x)
        \right \} & \geq 0 \text{)}.
    \end{align*}
\end{lemma}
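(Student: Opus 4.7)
The plan is to prove the subsolution case by contradiction; the supersolution case follows by a symmetric argument using the lower semicontinuous envelope $V_*$ together with the lower semicontinuity of $\mathcal{M}_\beta V_*$ from \cref{lem:comparison_intervention_1}. I would suppose, for contradiction, that there exists $x_0 \in \mathbb{R}$ with $V(T, x_0) > g_\beta(x_0)$ and $V(T, x_0) > \mathcal{M}_\beta V(T, x_0)$, and construct a test function $\varphi$ for which $V - \varphi$ has a local maximum at some point $(t^*, x^*)$ close to $(T, x_0)$ at which all three arguments in the $\min$ defining $F_*$ are strictly positive, contradicting the subsolution property.

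For fixed $\mu \in (0, 1)$ and $K > 0$, I would consider
\[
    \varphi_K(t, x) = V(T, x_0) + K(T - t) + |x - x_0|^2 + \mu|x - x_0|^4.
\]
Since $V$ is bounded and $\varphi_K(t, x) \to +\infty$ as $|x| \to \infty$, the upper semicontinuous function $V - \varphi_K$ attains its supremum on $[0,T] \times \mathbb{R}$ at some $(t^*_K, x^*_K)$. Comparing with $V(T, x_0) - \varphi_K(T, x_0) = 0$ yields
\[
    K(T - t^*_K) + |x^*_K - x_0|^2 + \mu|x^*_K - x_0|^4 \leq V(t^*_K, x^*_K) - V(T, x_0) \leq 2\|V\|_\infty,
\]
so $K(T - t^*_K)$ and $|x^*_K - x_0|$ are bounded uniformly in $K$ and $T - t^*_K \to 0$ as $K \to \infty$. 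Passing to a subsequence with $x^*_K \to x^\infty$, the upper semicontinuity of $V$ applied at $(T, x^\infty)$ combined with the inequality $V(t^*_K, x^*_K) \geq V(T, x_0) + |x^*_K - x_0|^2 + \mu|x^*_K - x_0|^4$ forces $x^\infty = x_0$, so $(t^*_K, x^*_K) \to (T, x_0)$.

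Computing $\varphi_{K,t}(t^*_K, x^*_K) = -K$, $\varphi_{K,x}(t^*_K, x^*_K) = 2(x^*_K - x_0) + 4\mu(x^*_K - x_0)^3$, and $\varphi_{K,xx}(t^*_K, x^*_K) = 2 + 12\mu(x^*_K - x_0)^2$, the PDE part of the subsolution inequality at $(t^*_K, x^*_K)$ takes the form
\[
    K + \beta V(t^*_K, x^*_K) - \sup_{\textcolor{ctrlcolor}{w} \in W} \!\left\{ \tfrac{1}{2} b(x^*_K, \textcolor{ctrlcolor}{w})^2 \varphi_{K,xx} + a(x^*_K, \textcolor{ctrlcolor}{w}) \varphi_{K,x} + f_\beta(t^*_K, x^*_K, \textcolor{ctrlcolor}{w}) \right\}.
\]
Since $(t^*_K, x^*_K)$ lies in a bounded neighborhood of $(T, x_0)$ and $a, b, f_\beta$ are continuous (hence bounded on that neighborhood times the compact $W$), the supremum is bounded above by a constant $C$ independent of $K$; for $K > C + \beta\|V\|_\infty$ the PDE part is strictly positive.

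Applying $F_* \leq 0$ at $(t^*_K, x^*_K)$: when $t^*_K < T$, strict positivity of the PDE part forces $V(t^*_K, x^*_K) \leq \mathcal{M}_\beta V(t^*_K, x^*_K)$; when $t^*_K = T$, the same positivity rules out the PDE part, and both $V(T, x^*_K) > g_\beta(x^*_K)$ and $V(T, x^*_K) > \mathcal{M}_\beta V(T, x^*_K)$ hold for $K$ large by continuity of $g_\beta$ and upper semicontinuity of $\mathcal{M}_\beta V$, yielding an immediate contradiction. In the former case, combining $V(t^*_K, x^*_K) \geq V(T, x_0)$ from the maximum property with the upper semicontinuity of $\mathcal{M}_\beta V$ (\cref{lem:comparison_intervention_1}) and letting $K \to \infty$ gives $V(T, x_0) \leq \limsup_K \mathcal{M}_\beta V(t^*_K, x^*_K) \leq \mathcal{M}_\beta V(T, x_0)$, contradicting the initial assumption. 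The main obstacle is balancing the two competing roles of the spatial penalty in $\varphi_K$: it must be strong enough to force $x^*_K \to x_0$ (so the upper semicontinuity of $\mathcal{M}_\beta V$ can be invoked at $(T, x_0)$ itself), while being weak enough to keep $\sup_{\textcolor{ctrlcolor}{w}} \{\tfrac{1}{2} b^2 \varphi_{K,xx} + a \varphi_{K,x} + f_\beta\}$ bounded uniformly in $K$ (so that the PDE part is dominated by the $K$ coming from $-\varphi_{K,t}$). The quartic penalty with fixed small $\mu$ achieves this compromise, since its contribution to the second derivative remains bounded on any bounded neighborhood of $x_0$ but its growth at infinity ensures attainment of the supremum.
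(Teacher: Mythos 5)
Your approach shares the same core idea as the paper's proof---exploit the sign of the time derivative at $t=T$ by inserting a large $K(T-t)$ term so that the PDE branch of the $\min$ is forced positive, leaving only the terminal branches---but you attempt to carry it out more explicitly by \emph{constructing} a test function, whereas the paper's proof is much terser: it starts from an arbitrary $\varphi \in C^{1,2}(\overline{\Omega})$ with $V-\varphi$ maximized at $(T,x)$ and simply perturbs it to $\psi = \varphi + c(T-t)$, which preserves the maximum and adds $+c$ to the time-derivative term, then takes $c$ large. Your explicit construction is a reasonable thing to attempt, but the specific construction you use has a genuine gap.

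The gap is in the step where you conclude $x^*_K \to x_0$. You assert that the upper semicontinuity of $V$ combined with the inequality $V(t^*_K, x^*_K) \geq V(T, x_0) + |x^*_K-x_0|^2 + \mu|x^*_K-x_0|^4$ ``forces $x^\infty = x_0$,'' but passing to the limit only gives
\[
    V(T, x^\infty) \;\geq\; V(T, x_0) + |x^\infty-x_0|^2 + \mu|x^\infty-x_0|^4,
\]
which is entirely consistent with $x^\infty \neq x_0$: it merely says $V(T,\cdot)$ exceeds $V(T,x_0)$ by more than the penalty at $x^\infty$. For instance, if $V(T,x) = 2|x-x_0|^2$ in a neighborhood of $x_0$ (extended to a bounded continuous function), the spatial maximum of $V(T,\cdot)-\varphi_K(T,\cdot)$ sits at $|x-x_0| = 1/\sqrt{2\mu} \neq 0$, not at $x_0$. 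In other words, a penalty with a \emph{fixed} small leading coefficient cannot control $V$'s local oscillation; it only provides coercivity at infinity. Once $x^\infty \neq x_0$, your final limit gives $V(T,x_0) \leq \mathcal{M}_\beta V(T,x^\infty)$, which does not contradict $V(T,x_0) > \mathcal{M}_\beta V(T,x_0)$, and similarly the $g_\beta$ branch no longer yields a contradiction. The fix is standard: argue by contradiction with a strict gap $2\epsilon$, use continuity of $g_\beta$ and upper semicontinuity of $\mathcal{M}_\beta V$ (from \cref{lem:comparison_intervention_1}) to pick $\delta > 0$ so that both $g_\beta$ and $\mathcal{M}_\beta V$ are within $\epsilon$ of their values at $(T,x_0)$ on a $\delta$-neighborhood, and then use a \emph{$\delta$-dependent but $K$-independent} quadratic penalty $C|x-x_0|^2$ with $C > 3\|V\|_\infty/\delta^2$. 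This confines the maximizer to the $\delta$-ball for every $K$ (since outside that ball the penalty alone exceeds $2\|V\|_\infty$), while keeping $\varphi_{K,xx} = 2C$ bounded uniformly in $K$ so that the PDE branch is still dominated by $K$. Then $V(t^*_K,x^*_K)\geq V(T,x_0)$ beats both $g_\beta(x^*_K)$ and $\mathcal{M}_\beta V(t^*_K,x^*_K)$ by at least $\epsilon$, giving the desired contradiction without needing $x^*_K \to x_0$ at all.
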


\begin{proof}
    Let $V$ be a subsolution.
    Let $\varphi\in C^{1,2}(\overline{\Omega})$ and $(T,x)$ be a maximum point of $V - \varphi$.
    Define $\psi(t, \cdot) = \varphi(t, \cdot) + c(T - t)$ where $c$ is a positive constant, to be chosen later.
    Since $c(T - t)$ is nonnegative, the point $(T, x)$ is also a maximum point of $V - \psi$.
    Since $\psi_t = \varphi_t - c$, it follows that
    \begin{multline*}
        F_*(
            (T, x),
            V(T,x),
            D \psi(T,x),
            D^2 \psi(T,x),
            [\mathcal{I} V](T,x)
        )
        = \min \biggl \{
            - \varphi_t(T, x)
            + \beta V(T, x) \\
            %- \sup_{\textcolor{ctrlcolor}{w} \in W} \left\{
                %\frac{1}{2} b(x, \textcolor{ctrlcolor}{w})^2 \varphi_{xx}(T, x)
                %{+} a(x, \textcolor{ctrlcolor}{w}) \varphi_x(T, x)
                %{+} f_\beta(T, x, \textcolor{ctrlcolor}{w})
            %\right\}
            -
            %H(\psi_{xx}(T, x), \psi_x(T, x), T, x)
            H(T, x, \psi_x(T, x), \psi_{xx}(T, x))
            + c,
            V(T, x) - g_\beta(x),
            V(T, x) - \mathcal{M}_\beta V(T, x)
        \biggr \}
        \leq 0
    \end{multline*}
    where $F$ is given by \cref{eqn:comparison_discounted_hjbqvi}, $\mathcal{I} = \mathcal{M}_\beta$, and $H$ is defined in \cref{lem:comparison_non_impulse_continuity}.
    By picking $c$ large enough,
    \[
        \min \left \{
            V(T, x) - g_\beta(x),
            V(T, x) - \mathcal{M}_\beta V(T, x)
        \right \} \leq 0,
    \]
    as desired.
    The supersolution case is handled symmetrically.
\end{proof}

The next lemma allows us to construct a family of ``strict'' supersolutions $\{ V_\lambda \}_{\lambda \in (0, 1)}$ of the discounted HJBQVI by taking combinations of an ordinary supersolution and a specific constant.
A similar technique is used for a related problem in \cite[Lemma 3.2]{MR1232083}.

\begin{lemma}
    \label{lem:comparison_strict_supersolutions}
    Let $V$ be a supersolution of the discounted HJBQVI.
    Define
    \[
        c
        = \max\{
            (\Vert f_\beta \Vert_{\infty} + 1) / \beta,
            \Vert g_\beta \Vert_{\infty} + 1
        \}
    \] and $\xi = \min \{1, K_0 \}$ where $K_0 = -\sup_{t, x, \textcolor{ctrlcolor}{z}} K(t, x, \textcolor{ctrlcolor}{z}) > 0$.
    Let $V_\lambda = (1 - \lambda) V + \lambda c$ where $0 < \lambda < 1$.
    Then, for all $\varphi \in C^{1,2}(\overline{\Omega})$ and $(t, x) \in \Omega$ such that $V_\lambda - \varphi$ has a local maximum (resp. minimum) at $(t, x)$, we have
    \begin{equation}
        %F(D^2 \varphi(t,x), D \varphi(t,x), V_\lambda(t, x), [\mathcal{I} V_\lambda](t, x), (t, x))
        F((t, x), V_\lambda(t, x), D \varphi(t,x), D^2 \varphi(t,x), [\mathcal{I} V_\lambda](t, x))
        \geq \lambda \xi
        \label{eqn:comparison_strict_supersolutions_result_1}
    \end{equation}
    where $F$ is given by \cref{eqn:comparison_discounted_hjbqvi} and $\mathcal{I} = \mathcal{M}_\beta$.
    Moreover, for all $x \in \mathbb{R}$,
    \begin{equation}
        \min \left \{
            V_\lambda(T, x) - g_\beta(x),
            V_\lambda(T, x) - \mathcal{M}_\beta V_\lambda(T, x)
        \right \}
        \geq \lambda \xi.
        \label{eqn:comparison_strict_supersolutions_result_2}
    \end{equation}
\end{lemma}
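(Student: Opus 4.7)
The strategy is to view the constant function $c$ as a ``reference'' strict supersolution carrying built-in slack $\xi$ in each of the two components of $F$, and then to transfer this slack to $V_\lambda$ via a convex-combination argument that exploits the convexity of $\mathcal{M}_\beta$ (\cref{lem:comparison_intervention_2}). The specific choice of $c$ is tailored so that $\beta c \geq \Vert f_\beta \Vert_\infty + 1$ and $c \geq \Vert g_\beta \Vert_\infty + 1$, meaning that the constant $c$ overshoots the PDE Hamiltonian (evaluated on zero derivatives) and the terminal payoff $g_\beta$ by at least $1$; likewise $c - \mathcal{M}_\beta c(t,x) = -\sup_{\textcolor{ctrlcolor}{z}} K_\beta(t,x,\textcolor{ctrlcolor}{z}) \geq K_0$, since $K_\beta(t,x,\textcolor{ctrlcolor}{z}) = e^{\beta t} K(t,x,\textcolor{ctrlcolor}{z}) \leq -K_0$ for $t \geq 0$.

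For the interior inequality \cref{eqn:comparison_strict_supersolutions_result_1}, I would first observe that if $V_\lambda - \varphi$ has a local minimum at $(t,x) \in \Omega$ then, since $1 - \lambda > 0$, the function $V - \tilde\varphi$ has a local minimum at the same point, where $\tilde\varphi = (\varphi - \lambda c)/(1 - \lambda) \in C^{1,2}(\overline{\Omega})$. Because $t < T$, $F$ is continuous at $(t,x)$, and the supersolution property of $V$ yields both $-\tilde\varphi_t + \beta V - \sup_{\textcolor{ctrlcolor}{w}}\{\tfrac{1}{2} b^2 \tilde\varphi_{xx} + a \tilde\varphi_x + f_\beta\} \geq 0$ and $V - \mathcal{M}_\beta V \geq 0$ at $(t,x)$. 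To transfer the PDE inequality to $V_\lambda$, I would split $f_\beta = (1-\lambda) f_\beta + \lambda f_\beta$, apply the subadditivity $\sup_{\textcolor{ctrlcolor}{w}}(A + B) \leq \sup_{\textcolor{ctrlcolor}{w}} A + \sup_{\textcolor{ctrlcolor}{w}} B$, and pull out the factor $(1-\lambda)$; this bounds the PDE piece of $V_\lambda$ from below by $(1-\lambda) \cdot 0 + \lambda(\beta c - \Vert f_\beta \Vert_\infty) \geq \lambda \geq \lambda \xi$. For the impulse piece, \cref{lem:comparison_intervention_2} gives $\mathcal{M}_\beta V_\lambda \leq (1-\lambda) \mathcal{M}_\beta V + \lambda \mathcal{M}_\beta c$, whence $V_\lambda - \mathcal{M}_\beta V_\lambda \geq (1-\lambda)(V - \mathcal{M}_\beta V) + \lambda (c - \mathcal{M}_\beta c) \geq \lambda K_0 \geq \lambda \xi$.

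For the terminal inequality \cref{eqn:comparison_strict_supersolutions_result_2}, I would apply \cref{lem:comparison_no_boundary_layer} to deduce the pointwise bounds $V(T,x) \geq g_\beta(x)$ and $V(T,x) \geq \mathcal{M}_\beta V(T,x)$. Combining the first with the definition of $V_\lambda$ gives $V_\lambda(T,x) - g_\beta(x) \geq \lambda (c - g_\beta(x)) \geq \lambda (c - \Vert g_\beta \Vert_\infty) \geq \lambda \geq \lambda \xi$, while a second application of the convexity bound gives $V_\lambda(T,x) - \mathcal{M}_\beta V_\lambda(T,x) \geq \lambda K_0 \geq \lambda \xi$; taking the minimum yields the claim.

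The main obstacle is the PDE component of the first inequality. Because the Hamiltonian is a \emph{nonlinear} supremum in $\textcolor{ctrlcolor}{w}$, it does not decompose linearly across a convex combination; the correct recipe is to split the free term $f_\beta$ into $(1-\lambda)$ and $\lambda$ parts and then use subadditivity of $\sup_{\textcolor{ctrlcolor}{w}}$ to isolate the $\lambda(\beta c - \Vert f_\beta \Vert_\infty)$ slack. Once this decomposition is in place, the remaining steps are routine bookkeeping built on \cref{lem:comparison_intervention_2,lem:comparison_no_boundary_layer} and the explicit slack baked into $c$.
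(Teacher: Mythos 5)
Your proposal is correct and follows essentially the same route as the paper's proof: the same change of test function $\tilde\varphi = (\varphi - \lambda c)/(1-\lambda)$, the same invocation of \cref{lem:comparison_no_boundary_layer} for the pointwise supersolution bounds, the same use of convexity of $\mathcal{M}_\beta$ (\cref{lem:comparison_intervention_2}), and the same slack estimates $\beta c - \Vert f_\beta \Vert_\infty \geq 1$, $c - g_\beta \geq 1$, $c - \mathcal{M}_\beta c \geq K_0$. The one presentational difference is that you isolate the $\lambda(\beta c - f_\beta)$ slack by explicitly splitting $f_\beta$ and invoking subadditivity of $\sup_{\textcolor{ctrlcolor}{w}}$, whereas the paper works pointwise in a fixed control $\textcolor{ctrlcolor}{w} \in W$ and performs the same algebra before passing to the supremum; these are equivalent, and your version makes the convex-combination structure slightly more explicit. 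One small refinement in your favor: you correctly note that $K_\beta(t,x,\textcolor{ctrlcolor}{z}) = e^{\beta t}K(t,x,\textcolor{ctrlcolor}{z}) \leq -K_0$ for $t \geq 0$ (the paper's displayed chain for $c - \mathcal{M}_\beta c$ mistakenly writes $K$ where $K_\beta$ is meant, though the conclusion $\geq K_0$ still holds).
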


\begin{proof}
    %Being a supersolution, $V$ is lower semicontinuous by definition.
    %Therefore, $V_\lambda$ is lower semicontinuous.
    Let $(t, x, \psi) \in \Omega \times C^{1,2}(\overline{\Omega})$ be such that $V_\lambda(t,x) - \psi(t,x) = 0$ is a local minimum of $V_\lambda - \psi$.
    Letting $\lambda^\prime = 1 - \lambda$ for brevity and $\varphi = (\psi - \lambda c) / \lambda^\prime$, it follows that $(t,x)$ is also a local minimum point of $V - \varphi$ since
    \[
        \lambda^{\prime}\left(V-\varphi\right)
        =\lambda^{\prime}\left(V-\left(\psi-\lambda c\right)/\lambda^{\prime}\right)
        =\lambda^{\prime}V+\lambda c-\psi
        =V_{\lambda}-\psi.
    \]
    Since $V$ is a supersolution, we can find $\textcolor{ctrlcolor}{w} \in W$ such that
    \begin{align}
        0
        & \geq
        \lambda^\prime \left(
            \varphi_t(t, x)
            - \beta V(t, x)
            + \frac{1}{2} b(x, \textcolor{ctrlcolor}{w})^2 \varphi_{xx}(t, x)
            + a(x, \textcolor{ctrlcolor}{w}) \varphi_x(t, x)
            + f_\beta(t, x, \textcolor{ctrlcolor}{w})
        \right)
        \nonumber \\
        & =
            \psi_t(t, x)
            - \beta \left( V_\lambda(t, x) - \lambda c \right)
            + \frac{1}{2} b(x, \textcolor{ctrlcolor}{w})^2 \psi_{xx}(t, x)
            + a(x, \textcolor{ctrlcolor}{w}) \psi_x(t, x)
            + \lambda^\prime f_\beta(t, x, \textcolor{ctrlcolor}{w})
        \nonumber \\
        & =
            \psi_t(t, x)
            - \beta V_\lambda(t, x)
            + \frac{1}{2} b(x, \textcolor{ctrlcolor}{w})^2 \psi_{xx}(t, x)
            + a(x, \textcolor{ctrlcolor}{w}) \psi_x(t, x)
            + f_\beta(t, x, \textcolor{ctrlcolor}{w})
        \nonumber \\
        & \textspace \textspace
            + \lambda \left(
                \beta c
                - f_\beta(t, x, \textcolor{ctrlcolor}{w})
            \right)
        \nonumber \\
        & \geq
            \psi_t(t, x)
            - \beta V_\lambda(t, x)
            + \frac{1}{2} b(x, \textcolor{ctrlcolor}{w})^2 \psi_{xx}(t, x)
            + a(x, \textcolor{ctrlcolor}{w}) \psi_x(t, x)
            + f_\beta(t, x, \textcolor{ctrlcolor}{w})
            + \lambda \xi
        \label{eqn:comparison_strict_supersolutions_1}
    \end{align}
    where the last inequality follows from
    \[
        \beta c
        - f_\beta(t, x, \textcolor{ctrlcolor}{w})
        \geq 
        \beta \left( \Vert f_\beta \Vert_\infty + 1 \right) / \beta
        - \Vert f_\beta \Vert_\infty
        = 1
        \geq \xi.
    \]
    
    Now, note that the convexity of $\mathcal{M}_\beta$ (\cref{lem:comparison_intervention_2}) yields
    \[
        V_\lambda - \mathcal{M}_\beta V_\lambda
        = V_\lambda - \mathcal{M}_\beta ( \lambda^\prime V + \lambda c )
        \geq V_\lambda - \lambda^\prime \mathcal{M}_\beta V - \lambda \mathcal{M}_\beta c
    \]
    where we have, with a slight abuse of notation, used $c$ in the above to denote a constant function (i.e., $c(t, x) = c$).
    Note also that, for each point $(t, x)$,
    \[
        c - \mathcal{M}_\beta c(t, x)
        = c - \sup_{ \textcolor{ctrlcolor}{z} \in Z(t, x) } \left \{
            c + K(t, x, \textcolor{ctrlcolor}{z})
        \right \}
        \geq K_0
        \geq \xi.
    \]
    Now, using once again the fact that $V$ is a supersolution along with \cref{lem:comparison_no_boundary_layer}, we have that $V \geq \mathcal{M}_\beta V$.
    Therefore,
    \begin{equation}
        V_\lambda - \mathcal{M}_\beta V_\lambda
        \geq V_\lambda - \lambda^\prime V - \lambda \mathcal{M}_\beta c
        \geq \lambda \left( c - \mathcal{M}_\beta c \right)
        \geq \lambda \xi.
        \label{eqn:comparison_strict_supersolutions_2}
    \end{equation}
    
    Now, let $x \in \mathbb{R}$ be arbitrary.
    Using once again the fact that $V$ is a supersolution along with \cref{lem:comparison_no_boundary_layer}, we have that $V(T, x) \geq g_\beta(x)$.
    Therefore,
    \begin{equation}
        V_\lambda(T, x) - g_\beta(x)
        = \lambda^\prime \left( V(x) - g_\beta(x) \right) + \lambda \left( c - g_\beta(x) \right)
        \geq \lambda \left( c - g_\beta(x) \right)
        \geq \lambda \xi
        \label{eqn:comparison_strict_supersolutions_3}
    \end{equation}
    where the last inequality follows from
    \[
        c - g_\beta(x)
        \geq \Vert g_\beta \Vert + 1 - \Vert g_\beta \Vert_\infty
        = 1
        \geq \xi.
    \]
    
    The proof is concluded by noting that inequalities \cref{eqn:comparison_strict_supersolutions_1,eqn:comparison_strict_supersolutions_2} imply \cref{eqn:comparison_strict_supersolutions_result_1}
    while inequalities \cref{eqn:comparison_strict_supersolutions_2,eqn:comparison_strict_supersolutions_3} imply \cref{eqn:comparison_strict_supersolutions_result_2}.
\end{proof}

We are finally ready to prove the comparison principle.
The main tool that we use is the Crandall-Ishii lemma, for which we refer to the ``User's Guide to Viscosity Solutions'' \cite[Theorem 3.2]{MR1118699}.
In the proof, we use the notation $\mathscr{P}_\Omega^{2, \pm}$ and $\overline{\mathscr{P}}_\Omega^{2, \pm}$ to denote the parabolic semijets defined in \cite[Section 8]{MR1118699}.

\begin{proof}[Proof of \cref{thm:convergence_comparison}]
    Let $U$ be a bounded subsolution and $V$ be a bounded supersolution of the discounted HJBQVI.
    Let $c$ be given as in \cref{lem:comparison_strict_supersolutions} and define $V_m = (1 - 1 / m) V + c / m$ for all integers $m > 1$.
    Note that
    \[
        \sup_{\Omega} \left \{ U - V_m \right \}
        = \sup_{\Omega} \left \{ U - V + \left( V - c \right) / m \right \}
        \geq \sup_{\Omega} \left \{ U - V \right \} - \left( \Vert V \Vert_\infty + c \right) / m.
    \]
    Therefore, to prove the comparison principle, it is sufficient to show that $U - V_m \leq 0$ along a subsequence of $(V_m)_m$
    We establish it for all $m$.
    
    To that end, fix $m$ and suppose $\delta = \sup_{\Omega} \{ U - V_m \} > 0$.
    Letting $\nu > 0$, we can find $(t^\nu, x^\nu) \in \Omega$ such that $U(t^\nu, x^\nu) - V(t^\nu, x^\nu) \geq \delta - \nu$.
    Let
    \[
        \varphi(t, x, s, y)
        =\frac{\alpha}{2}\left(\left|t-s\right|^{2}+\left|x-y\right|^{2}\right)
        +\frac{\epsilon}{2}\left(\left|x\right|^{2}+\left|y\right|^{2}\right)
    \]
    be a smooth function parameterized by constants $\alpha > 0$ and $0 < \epsilon \leq 1$.
    Further let $\Phi(t,x,s,y) = U(t,x) - V_m(s,y) - \varphi(t,x,s,y)$ and note that
    \begin{align*}
        \sup_{(t,x,s,y) \in \Omega^2} \Phi(t, x, s, y)
        & \geq \sup_{(t, x) \in \Omega} \Phi(t, x, t, x) \\
        & = \sup_{(t, x) \in \Omega} \left \{
            U(t,x) - V_m(t,x) - \epsilon \left| x \right|^2
        \right \} \\
        & \geq U(t^\nu, x^\nu) - V_m(t^\nu, x^\nu) - \epsilon \left| x^\nu \right|^2 \\
        & \geq \delta - \nu - \epsilon \left| x^\nu \right|^2.
    \end{align*}
    We henceforth assume $\nu$ and $\epsilon$ are small enough (e.g., pick $\nu \leq \delta / 4$ and $\epsilon \leq \delta / (4 |x^\nu|^2)$) to ensure that $\delta - \nu - \epsilon | x^\nu |^2$ is positive.
    
    Since $U$ and $V_m$ are bounded (and thus trivially of subquadratic growth), it follows that $\Phi$ admits a maximum at $(t_\alpha, x_\alpha, s_\alpha, y_\alpha) \in \overline{\Omega}^2$ such that
    \begin{equation}
        \Vert U \Vert_\infty + \Vert V_m \Vert_\infty
        \geq U(t_\alpha, x_\alpha) - V_m(s_\alpha, y_\alpha)
        \geq \delta - \nu - \epsilon |x^\nu|^2 + \varphi(t_\alpha, x_\alpha, s_\alpha, y_\alpha)
        \label{eqn:comparison_principle_proof_inequality_0}
    \end{equation}
    Since $-\epsilon | x^\nu |^2 \geq - | x^\nu |^2$, the above inequality implies that
    \[
        \alpha\left(|t_{\alpha}-s_{\alpha}|^{2}+|x_{\alpha}-y_{\alpha}|^{2}\right)
        +\epsilon\left(|x_{\alpha}|^{2}+|y_{\alpha}|^{2}\right)
    \]
    is bounded independently of $\alpha$ and $\epsilon$ (but not of $\nu$ since $| x^\nu |$ may be arbitrarily large).
    
    Now, for fixed $\epsilon$, consider some sequence of increasing $\alpha$, say $(\alpha_{n})_{n}$, such that $\alpha_{n}\rightarrow\infty$.
    To each $\alpha_{n}$ is associated a maximum point $(t_{n},x_{n},s_{n},y_{n}) = (t_{\alpha_{n}},x_{\alpha_{n}},s_{\alpha_{n}},y_{\alpha_{n}})$.
    By the discussion above, $\{(t_{n},x_{n},s_{n},y_{n})\}_{n}$ is contained in a compact set. 
    Therefore, $(\alpha_{n},t_{n},x_{n},s_{n},y_{n})_{n}$ admits a subsequence whose four last components converge to some point $(\hat{t},\hat{x},\hat{s},\hat{y})$.
    With a slight abuse of notation, we relabel this subsequence $(\alpha_{n},t_{n},x_{n},s_{n},y_{n})_{n}$,
    forgetting the original sequence.
    It follows that $\hat{x}=\hat{y}$ since otherwise $|\hat{x}-\hat{y}|>0$ and \cref{prop:comparison_limsup_product} implies
    \[
        \limsup_{n \rightarrow \infty} \left\{
            \alpha_n |x_n - y_n|^2
        \right \}
        = \limsup_{n \rightarrow \infty} \alpha_n \left| \hat{x} - \hat{y} \right|^2
        = \infty,
    \]
    contradicting the boundedness discussed in the previous paragraph.
    The same exact argument yields $\hat{t}=\hat{s}$.
    Moreover, letting $\varphi_{n} = \varphi(t_{n},x_{n},s_{n},y_{n};\alpha_{n})$,
    \begin{align}
        0 \leq \limsup_{n \rightarrow \infty} \varphi_n
        & \leq \limsup_{n \rightarrow \infty} \left\{
            U(t_n, x_n) - V_m(s_n, y_n)
        \right\} - \delta + \nu + \epsilon|x^\nu|^2
        \nonumber \\
        & \leq U(\hat{t}, \hat{x}) - V_m(\hat{t}, \hat{x})
        - \delta + \nu + \epsilon | x^\nu|^2
        \label{eqn:comparison_principle_proof_inequality_1}
    \end{align}
    and hence
    \begin{equation}
        0
        < \delta - \nu - \epsilon | x^\nu |^2
        \leq U(\hat{t}, \hat{x}) - V_m(\hat{t}, \hat{x}).
        \label{eqn:comparison_principle_proof_inequality_2}
    \end{equation}
    
    By \cref{lem:comparison_strict_supersolutions}, $V_m(s_n, y_n) - \mathcal{M}_\beta V_m(s_n, y_n) \geq \xi / m$.
    Suppose, in order to arrive at a contradiction, $(\alpha_n, t_n, x_n, s_n, y_n)_n$ admits a subsequence along which $U(t_n, x_n) - \mathcal{M}_\beta U(t_n, x_n) \leq 0$.
    As usual, we abuse slightly the notation and temporarily refer to this subsequence as $(\alpha_n, t_n, x_n, s_n, y_n)_n$.
    Combining the inequalities $V_m(s_n, y_n) - \mathcal{M}_\beta V_m(s_n, y_n) \geq \xi / m$ and $U(t_n, x_n) - \mathcal{M}_\beta U(t_n, x_n) \leq 0$, we get
    \begin{align*}
        -\xi/m
        & \geq U(t_n, x_n) - V_m(s_n, y_n) - \left(
            \mathcal{M}_\beta U(t_n, x_n) - \mathcal{M}_\beta V_m(s_n, y_n)
        \right) \\
        & \geq \delta - \nu - \epsilon |x^\nu|^2
        + \mathcal{M}_\beta V_m(s_n, y_n) - \mathcal{M}_\beta U(t_n, x_n).
    \end{align*}
    Taking limit inferiors with respect to $n \rightarrow \infty$ of both sides of the above inequality and using the semicontinuity established in \cref{lem:comparison_intervention_1} yields
    \[
        - \xi / m
        \geq \delta - \nu - \epsilon|x^\nu|^2
        + \mathcal{M}_\beta V_m(\hat{t}, \hat{x}) - \mathcal{M}_\beta U(\hat{t}, \hat{x}).
    \]
    It follows, by the upper semicontinuity of $U$, that the supremum in $\mathcal{M}_\beta U(\hat{t}, \hat{x})$ is achieved at some $\textcolor{ctrlcolor}{\hat{z}} \in Z(\hat{t}, \hat{x})$.
    Therefore,
    \[
        - \xi / m
        \geq \delta - \nu - \epsilon| x^\nu |^2
        + V_m(
            \hat{t}, %\hat{x} +
            \Gamma(\hat{t}, \hat{x}, \textcolor{ctrlcolor}{\hat{z}})
        )
        - U(
            \hat{t}, %\hat{x} +
            \Gamma(\hat{t}, \hat{x}, \textcolor{ctrlcolor}{\hat{z}})
        )
        \geq - \nu - \epsilon| x^\nu |^2.
    \]
    Taking $\nu$ and $\epsilon$ small enough yields a contradiction.
    By virtue of the above, we may assume that our original sequence $(\alpha_n, t_n, x_n, s_n, y_n)_n$ whose four last components converge to $(\hat{t}, \hat{x}, \hat{s}, \hat{y})$ satisfies $U(t_n, x_n) - \mathcal{M}_\beta U(t_n, x_n) > 0$ for all $n$.
    
    Now, suppose $\hat{t} = T$.
    By \cref{lem:comparison_strict_supersolutions}, $V_m(T, \hat{x}) - \mathcal{M}_\beta V_m(T, \hat{x}) \geq \xi / m$ and $V_m(T, \hat{x}) - g_\beta(\hat{x}) \geq 0$.
    If $U(T,\hat{x}) - \mathcal{M}_\beta U(T,\hat{x}) \leq 0$, we arrive at a contradiction by an argument similar to the argument in the previous paragraph.
    It follows that $U(T, \hat{x}) - g_\beta(\hat{x}) \leq 0$ and hence $U(T,\hat{x}) - V_m(T,\hat{x}) \leq 0$, contradicting \cref{eqn:comparison_principle_proof_inequality_2}.
    We conclude that $\hat{t} < T$ so that we may safely assume $(t_n, x_n, s_n, y_n) \in \Omega$ for all $n$.
    
    Define the shorthand derivative notation
    \[
        \partial_t \varphi_n = \frac{\partial \varphi}{\partial t}(t_n, x_n, s_n, y_n; \alpha_n)
    \]
    and $\partial_x \varphi_n$, $\partial_s \varphi_n$, and $\partial_y \varphi_n$ similarly.
    We are now in a position to apply the Crandall-Ishii lemma \cite[Theorem 3.2]{MR1118699}, which implies the existence of $X_n, Y_n \in \mathbb{R}$ satisfying
    \[
        ( \partial_t \varphi_n, \partial_x \varphi_n, X_n + \epsilon )
        \in \overline{\mathscr{P}}_\Omega^{2, +} U(t_n, x_n),
        \textspace
        ( \partial_s \varphi_n, \partial_y \varphi_n, Y_n - \epsilon )
        \in \overline{\mathscr{P}}_\Omega^{2, -} V_m(s_n, y_n),
    \]
    and
    \[
        -3 \alpha_n
        \begin{pmatrix}
            1 \\
            & 1
        \end{pmatrix}
        \preceq
        \begin{pmatrix}
            X_n \\
            & -Y_n
        \end{pmatrix}
        \preceq
        3 \alpha_n
        \begin{pmatrix}
            1 & -1 \\
            -1 & 1
        \end{pmatrix}.
    \]
    Due to our choice of $\varphi$, we get
    \[
        \partial_t \varphi_n
        = \alpha_n \left(t_n -s_n \right)
        = - \partial_s \varphi_n
    \]
    along with
    \[
        \partial_x \varphi_n
        = \alpha_n (x_n - y_n) + \epsilon x_n
        \textspace \text{and} \textspace
        \partial_y \varphi_n
        = - \alpha_n(x_n - y_n) +\epsilon y_n.
    \]
    Therefore, since $U(t_n, x_n) - \mathcal{M}_\beta U(t_n, x_n) > 0$,
    \begin{align}
        - \partial_t \varphi_n
        + \beta U(t_n, x_n)
        + H(
            t_n, x_n,
            \alpha_n(x_n - y_n) + \epsilon x_n,
            X_n + \epsilon
        )
        & \leq 0
        \nonumber \\
        - \partial_t \varphi_n
        + \beta V_m(s_n, y_n)
        + H(
            s_n, y_n,
            \alpha_n(x_n - y_n) - \epsilon y_n,
            Y_n - \epsilon
        )
        & \geq 0
        \label{eqn:comparison_principle_proof_inequality_3}
    \end{align}
    where $H$ is defined in \cref{lem:comparison_non_impulse_continuity}.
    We can combine the inequalities in \cref{eqn:comparison_principle_proof_inequality_3} and apply \cref{lem:comparison_non_impulse_continuity} to get
    \begin{align}
        0
        & \leq
        H(
            s_n, y_n,
            \alpha_n(x_n - y_n) - \epsilon y_n,
            Y_n - \epsilon
        )
        - H(
            t_n, x_n,
            \alpha_n(x_n - y_n) + \epsilon x_n,
            X_n + \epsilon
        )
        \nonumber \\
        & \textspace \textspace
        + \beta \left( V_m(s_n, y_n) - U(t_n, x_n) \right)
        \nonumber \\
        & \leq
        C \, (
            \alpha_n| x_n - y_n|^2
            + \epsilon(1 + | x_n |^2 + | y_n |^2)
        )
        + \varpi( \left| (t_n, x_n) - (s_n, y_n) \right| )
        \nonumber \\
        & \textspace \textspace
        + \beta \left( V_m(s_n, y_n) - U(t_n, x_n) \right)
        \nonumber \\
        & \leq
        2 C \left( \varphi_n + \epsilon \right)
        + \varpi( \left| (t_n, x_n) - (s_n, y_n) \right| )
        + \beta \left( V_m(s_n, y_n) - U(t_n, x_n) \right)
        \label{eqn:comparison_principle_proof_inequality_4}
    \end{align}
    where $\varpi$ is a modulus of continuity.
    Moreover, by \cref{eqn:comparison_principle_proof_inequality_0},
    \begin{equation}
        V_m(s_n, y_n) - U(t_n, x_n)
        \leq - \delta + \nu + \epsilon |x^\nu|^2,
        \label{eqn:comparison_principle_proof_inequality_5}
    \end{equation}
    and by \cref{eqn:comparison_principle_proof_inequality_1},
    \begin{equation}
        \limsup_{n \rightarrow \infty} \varphi_n
        \leq \nu + \epsilon |x^\nu|^2.
        \label{eqn:comparison_principle_proof_inequality_6}
    \end{equation}
    Applying \cref{eqn:comparison_principle_proof_inequality_5} to \cref{eqn:comparison_principle_proof_inequality_4}, taking the limit superior as $n\rightarrow\infty$ of both sides, and finally applying \cref{eqn:comparison_principle_proof_inequality_6} to the resulting expression yields
    \[
        \delta
        \leq \gls*{const} \left(
            \nu + \epsilon + \epsilon |x^\nu|^2
        \right)
    \]
    ($\gls*{const}$ above depends on $\beta$ and $C$).
    Since $\delta$ is positive, picking $\nu$ small enough and taking $\epsilon \rightarrow 0$ in the above inequality yields the desired contradiction.
\end{proof}
\iftoggle{thesis}{%
    \input{appendices/greens}
    \input{appendices/mortality}
}{}

%----------------------------------------------------------------------
% END MATERIAL
%----------------------------------------------------------------------

% B I B L I O G R A P H Y
% -----------------------

\cleardoublepage % This is needed if the book class is used, to place the anchor in the correct page,
                 % because the bibliography will start on its own page.
                 % Use \clearpage instead if the document class uses the "oneside" argument
\phantomsection  % With hyperref package, enables hyperlinking from the table of contents to bibliography

% The following statement causes the title "References" to be used for the bibliography section:
\renewcommand*{\bibname}{References}

% Add the References to the Table of Contents
\addcontentsline{toc}{chapter}{\textbf{References}}

%\bibliography{uw-ethesis}
\begin{refcontext}[sorting=nty]
\printbibliography
\end{refcontext}

\end{document}